\theoremstyle{plain}
\newtheorem{lemma}{Lemma}[section]
\newtheorem{theorem}[lemma]{Theorem}
\newtheorem{proposition}[lemma]{Proposition}
\newtheorem{definition}[lemma]{Definition}
\theoremstyle{remark}
\newtheorem{remark}[lemma]{Remark}
\newtheorem{example}[lemma]{Example}
\newcommand{\RR}{\mathbb{R}}
\newcommand{\N}{\mathcal{N}}
\newcommand{\LLL}{\mathcal{L}}
\newcommand{\MM}{\mathbf{M}}
\newcommand{\NN}{\mathbb{N}}
\def\P{P}
\newcommand{\esper}{\mathbb{E}}
\def\O{\mathcal{O}}
\def\a{\alpha}
\def\ex{\text{ex}}
\def\la{\lambda}
\def\si{\sigma}
\def\Dep{L}
\def\WDep{\tilde{L}}
\def\WDepOne{\tilde{L} \langle 1 \rangle}
\def\WK{\tilde{K}}
\def\eg{{\it e.g.}}
\def\ie{{\it i.e. }}
\def\eps{\varepsilon}
\def\PPP{\mathcal{P}}
\def\III{\mathcal{I}}
\def\bC{\bm{C}}
\def\bD{\bm{D}}
\def\ka{\kappa}
\def\cN{\overline{N}}
\def\R{R}
\def\RPP{H}
\def\uu{\bm{u}}
\def\vv{\bm{v}}
\def\ww{\bm{w}}
\def\tPhi{\widetilde{\Phi}}
\def\bs{\bm{s}}
\def\bt{\bm{t}}
\def\bu{\bm{u}}
\def\bv{\bm{v}}
\def\One{\bm{1}}
\newcommand{\MWST}[1]{\mathcal{M} \big( #1 \big)}
\newcommand{\ev}{\text{ev}}
\newcommand{\odd}{\text{odd}}
\DeclareMathOperator{\pairs}{pairs}
\DeclareMathOperator{\Cr}{Cr}
\DeclareMathOperator{\CC}{CC}
\DeclareMathOperator{\Var}{Var}
\DeclareMathOperator{\Cov}{Cov}
\DeclareMathOperator{\irr}{irr}
\DeclareMathOperator{\Aut}{Aut}
\DeclareMathOperator{\Mset}{\sc MSet}
\author[V.~Féray]{Valentin Féray}
\address{Institut für Mathematik, Universität Zürich, Winterthurerstrasse 190, 8057 Zürich, Switzerland}
\email{valentin.feray@math.uzh.ch}
\thanks{
VF is partially supported by SNF grant nb 149461 ``Dual combinatorics of Jack polynomials''.}
\keywords{dependency graphs, combinatorial central limit theorems, cumulants, spanning trees,
random graphs, random permutations, simple exclusion process, Markov chains.}
\subjclass[2010]{Primary: 60F05. Secondary: 60C05, 05C80, 82C05, 60J10.}
\title
{Weighted dependency graphs}
\begin{document}

\maketitle

\begin{abstract}
The theory of dependency graphs is a powerful toolbox to prove asymptotic normality
of sums of random variables.
In this article, we introduce a more general notion of weighted dependency graphs
and give normality criteria in this context.
We also provide generic tools to prove that some weighted graph
is a weighted dependency graph for a given family of random variables.

To illustrate the power of the theory,
we give applications to the following objects:
uniform random pair partitions, the random graph model $G(n,M)$, uniform random permutations,
the symmetric simple exclusion process and multilinear statistics on Markov chains.
The application to random permutations gives a bivariate extension
of a functional central limit theorem of Janson and Barbour.
On Markov chains, we answer positively an open question of Bourdon and Vallée
on the asymptotic normality of subword counts in random texts generated by a Markovian source.
\end{abstract}

%

\tableofcontents

\section{Introduction}
\subsection{Background: dependency graphs}
The central limit theorem is one of the most famous results in probability theory :
it states that suitably renormalized sums of {\em independent identically distributed}
 random variables with finite variance
converge towards a standard Gaussian variable.

It is rather easy to relax the {\em identically distributed} assumption.
The Lindeberg criterion, see {\em e.g.} \cite[Chapter 27]{BillingsleyProbMeasure}, gives a sufficient
(and almost necessary) criterion 
for a sum of {\em independent} random variables to converge towards a Gaussian law
(after suitable renormalization).

Relaxing independence is more delicate and there is no universal theory to do it.
One of the ways, among many others,
is given by the theory of dependency graphs.
A dependency graph encodes the dependency structure 
in a family of random variables: roughly we take a vertex for each variable in the family 
and connect dependent random variables by edges.
The idea is that, if the degrees in a sequence of dependency graphs do not grow too fast,
then the corresponding variables behave as if independent
and the sum of the corresponding variables is asymptotically normal.
Precise normality criteria using dependency graphs have been given
by Petrovskaya/Leontovich, Janson, Baldi/Rinott and Mikhailov 
\cite{PetrovskayaLeontovich:Dep_Graph,JansonDependencyGraphs,Baldi_Rinott:DepGraphs_Stein,MikhailovDependencyGraphs}.

These results are powerful black boxes to prove asymptotic normality of sums 
of {\em partially dependent} variables
and can be applied in many different contexts.
The original motivation of Petrovskaya and Leontovich
comes from the mathematical modelization of cell populations \cite{PetrovskayaLeontovich:Dep_Graph}.
On the other hand, Janson was interested in random graph theory:
dependency graphs are used to prove central limit theorems
for some statistics, such as subgraph counts, in $G(n,p)$
 \cite{Baldi_Rinott:DepGraphs_Stein,JansonDependencyGraphs,JansonRandomGraphs};
 see also \cite{Penrose:Geometric_RG} for applications
 to {\em geometric} random graphs.
The theory has then found a field of application in geometric probability,
where central limit theorems have been proven for various statistics
on random point configurations:
the lengths of the nearest-neighbour graph, of the Delaunay triangulation and
of the Voronoi diagram of these random points \cite{DepGraph_GeomProba,DepGraph_GeomProba2}, 
or the area of their convex hull \cite{BV07}.
More recently it has been used to prove asymptotic normality of pattern counts
in random permutations \cite{BonaMonotonePatterns,JansonHitczenko:DepGraph_Patterns}.
Dependency graphs also generalize the notion of $m$-dependence 
\cite{HoeffdingRobbins:M_Dep,Berk:Unbounded_M_Dep},
widely used in statistics \cite{DasGuptaBookStat}.
All these examples illustrate the importance of the theory of dependency graphs.

\subsection{Overview of our results}
The goal of this article is to introduce a notion of weighted dependency graphs;
see \cref{Def:Main}.
As with usual dependency graphs,
we want to prove the asymptotic normality of sums of random variables
$X_n = \sum_{i=1}^{N_n} Y_{n,i}$.
Again, we take a vertex for each variable $Y_{n,i}$ in the family 
and connect dependent random variables with an edge.
The difference is that edges may now carry weights in $(0,1)$.
If two variables are almost independent (in a sense that will be made precise),
the weight of the corresponding edge is small.
Our main result, \cref{ThmMain}, is 
a normality criterion for {\em weighted dependency graphs}:
roughly, instead of involving degrees as Janson's or Baldi/Rinott's criteria,
we can now use the {\em weighted degree}, which is in general smaller.

This of course needs to quantify in some sense the ``dependency'' between random variables.
This is done using the notion of {\em joint cumulants}, 
and maximum spanning trees of weighted graphs
(which is a classical topic in algorithmics literature; see \cref{sec:SpanningTree}).
\medskip

As explained in \cref{SectComparisonWeightedUsual},
our normality criterion contains Janson's criterion and 
natural applications of Mikhailov's criterion.
Unfortunately, we are not able to deal with variables $Y_{n,i}$
with only few finite moments,
as in the result of Baldi and Rinott.

On the other hand, and most importantly,
the possibility of having small weights on edges
extends significantly the range of application of the theory.
Indeed, in this article we provide several examples where weighted dependency graphs
are used to prove asymptotic normality of sums of {\em pairwise dependent} random variables
(for such families, the only usual dependency graph is the complete graph,
and the standard theory of dependency graphs is useless).
Examples given in the article involve pair partitions,
the random graph model $G(n,M)$, permutations, 
 statistical mechanics
and finally Markov chains.

Except for variance estimates in some examples, our normality criterion is
easy to apply.
Proving that a given graph is a weighted dependency graph might be difficult
a priori, but we provide general statements that reduce it in several cases
to an elementary moment computation (see detail in \cref{ssec:Proving_WDG}).
Therefore the present article gives simple proofs of central limit theorems
on a large variety of objects,
that are hard or non-accessible via other methods.

Before describing specifically the results obtained on each of these objects,
let us mention that weighted dependency graphs can also be used to prove multivariate asymptotic normality
and functional central limit theorems;
rather than giving a cumbersome general theorem for that,
we refer the reader to examples in
\cref{SubsecSingleIndexPermStat,SubsecDoubleIndexPermStat,SubsecSSEPParticles}.

\label{SectIntroApplications}
\subsubsection{Random pair partitions}
Our first example deals with uniform random pair partitions
of a $2n$ element set.
This model is the starting point of the configuration model
in random graph theory (see {\em e.g.} \cite[Chapter 9]{JansonRandomGraphs})
and has also recently appeared in theoretical physics \cite{CourtielYeats:Random_Chord_Diagrams}.

We consider the number of {\em crossings} in such a random pair partitions.
This is a natural statistics in the combinatorics literature, see {\em e.g.} \cite{CrossingsNestings}.
A central limit theorem for this statistics has been given by Flajolet and Noy
\cite{FlajoletNoy:CLT_Crossings}.
We give an alternate proof of this result (see \cref{ThmAsympNormPP}) that does not rely on the explicit
formula for the generating function.
Our method can be extended to give a central limit theorem for the number
of $k$-crossings, for which no explicit generating function is available,
but, for simplicity of notation in this first example,
we only treat the case of crossings.


\subsubsection{Random graphs}
The second example deals with the random graph model $G(n,M)$,
that is a uniform random graph among all graphs with vertex set
$\{1,\cdots,n\}$ and $M$ edges.
This is the model considered by Erd\H{o}s and Rényi in their
seminal paper of 1959 \cite{ErdosRenyi:RandomGraphs}.

Since the number of edges is prescribed, the presence
of distinct edges are not independent events, unlike in $G(n,p)$.
Therefore the usual theory of dependency graph cannot be used,
but weighted dependency graphs work fine on this model.

To illustrate this, we consider the subgraph count statistics;
{\em i.e.} we fix a finite graph $H$ and look at the number of copies of $H$
in the random graph $G(n,M)$.
We prove a central limit theorem
for these statistics, when $n$ and $M$ go together to infinity in a suitable way
(\cref{Thm:CLT_SubgraphCounts}).

This central limit theorem is a weaker version of a theorem of Janson 
\cite[Theorem 19]{JansonOrthogonalDecomposition}
(who gets the same result with slightly weaker hypotheses).
We nevertheless think that the proof given here is interesting,
since it parallels completely the proof with usual dependency graphs
that can be done for the companion model $G(n,p)$:
we refer to \cite[Chapter 6]{JansonRandomGraphs} for the application
of dependency graphs to central limit theorem for subgraph counts in $G(n,p)$.
In comparison, Janson's approach involves martingales in the continuous time model $G(n,t)$
and a stopping time argument.

\subsubsection{Random permutations}
The study of uniform random permutations is a wide subject in probability theory
and, as for random graphs, it would be hopeless to try and do a comprehensive presentation of it.
Relevant to this paper,
Hoeffding \cite{HoeffdingCombinatorialCLT}
has given a central limit theorem
for what can be called {\em simply indexed permutation statistics}.
The latter is a statistic of the form
\[X_n=\sum_{i=1}^n a^{(n)}(i,\pi(i)),\]
where $\pi$ is a uniform random permutation of size $n$ 
and $a^{(n)}$ a sequence of real matrices with appropriate conditions.

Hoeffding's result has been extended and refined in many directions, including the following ones.
\begin{itemize}
  \item In \cite{Bolthausen_CLT_Permutations}, Bolthausen used Stein's method
    to give an upper bound for the speed of convergence 
    in Hoeffding's central limit theorem.
  \item This work has then been extended to doubly indexed permutation statistics
    (called DIPS for short) 
    by Zhao, Bai, Chao and Liang \cite{CLT_DIPS}.
    Barbour and Chen \cite{BarbourChen:DIPS}
    have then given new bounds on the speed of convergence,
    that are sharper in many situations.
DIPS have been used in various contexts in statistics;
we refer the reader to \cite{CLT_DIPS, BarbourChen:DIPS} and references therein
for background on these objects.
  \item In another direction, Barbour and Janson
    have established a {\em functional central limit theorem}
    for single indexed permutation statistics \cite{BarbourJansonFunctionalCLT}.
\end{itemize}
Using weighted dependency graphs, we provide
a {\em functional central limit theorem} for doubly indexed permutation statistics;
see \cref{ThmFunctionalCLTDIPS}.
This can be seen as an extension of Barbour and Janson's theorem
or a functional version of Zhao, Bai, Chao and Liang's result
(note however that, in the simply indexed case,
our hypotheses are slightly stronger than the ones of Barbour and Janson
and that we cannot provide a speed of convergence).
There is {\em a priori} no obstruction in obtaining an
extension for {\em $k$-indexed} permutation statistics,
except maybe that the general statement and 
the computation of covariance limits in specific examples
may become quickly cumbersome.

\subsubsection{Stationary configuration of SSEP}
The symmetric simple exclusion process (SSEP) is
a classical model of statistical physics 
that represents a system outside equilibrium.
Its success in the physics literature 
is mainly due to the fact that 
it is tractable mathematically
and displays phase transition phenomena.
We refer the reader to \cite{derrida2007SurveyASEP}
for a survey of results on SSEP and related models
from a mathematical physics viewpoint.

The description of the invariant measure, or {\em steady state},
of SSEP (and more generally the asymmetric version {\em ASEP}), 
has also attracted the interest of the combinatorics community
in the recent years.
This question is indeed connected to the hierarchy of orthogonal polynomials
and has led to the study of new combinatorial objects,
such as permutation tableaux and staircase tableaux 
\cite{CorteelWilliamsASEPGeneral,CorteelStanleyStantonWilliams:ASEP_AskeyWilson}.

In this paper we prove that indicator random variables,
which indicate the presence of particles at given locations in the steady state,
have a natural weighted dependency graph structure.
As an application we give a functional central limit theorem 
for the particle distribution function in the steady state, \cref{ThmFCLTParticles}.
An analogue result for the density function, which is roughly the derivative
of the particle distribution function has been given
by Derrida, Enaud, Landim and Olla \cite{DerridaEtAl:CLT_Density_ASEP}.
Their result holds in the more general setting of ASEP
and it would be interesting to generalize our approach to ASEP as well.

\subsubsection{Markov chains}
Our last application deals with the number of occurrences
of a given subword in a text generated by a Markov source.
More precisely, let $(M_k)_{k \ge 0}$ be an aperiodic irreducible
Markov chain on a finite state space $S$.
Assume that $M_0$ is distributed according
to the stationary distribution $\pi$ of the chain
and denote $w_n=(M_0,M_1,\ldots,M_n)$.
We are interested in the number of times $X_n$ that a given word $v=s_1 \cdots s_m$
occurs as a subword of $w_n$,         
possibly adding some additional constraints,
such as adjacency of some letters of $v$ in $w_n$.

This problem, motivated by intrusion detection in computer science
and identifying meaningful bits of DNA in molecular biology,
has attracted the attention of the analysis of algorithm community
in the nineties; we refer the reader to \cite{FlajoletValleePatterns}
for detailed motivations and references on the subject.

A central limit theorem for $X_n$ was obtained in some particular cases:
\begin{itemize}
  \item when we are only counting consecutive occurrences of $v$, {\em i.e.}
    the number of {\em factors} of $w_n$ that are equal to $v$
    (see Régnier and Szpankowski \cite{Regnier_Spankowski:CLTPattern},
    or Bourdon and Vallée \cite{BourdonValleePatternsGaussian}
    for an extension to {\em probabilistic dynamical} sources);
  \item or when the letters $M_1,M_2,\ldots,M_n$ of $w_n$ are independent
    (see Flajolet, Szpankowski and Vallée \cite{FlajoletValleePatterns}).
  \item Another related result is a central limit theorem 
    by Nicodème, Salvy and Flajolet \cite{NicodemeSalvyFlajoletCLTPositionOccurences}
    for the number of {\em occurrence positions},
    i.e. positions where an occurrence of the pattern terminates.    
    This statistics is quite different from the number of occurrences itself,
    since the number of occurrence positions is always bounded by the length of the word.
\end{itemize}
Despite all these results,
the number of occurrences in the general subword case with a Markov source was left open
by these authors; see \cite[Section 4.4]{Bourdon_Vallee:Pattern_Matching}.
Using weighted dependency graphs, 
we are able to fill this gap; see \cref{Thm:TCL_Patterns_Markov_Sources}.

Note that there is a rich literature on central limit theorems
for {\em linear statistics} on Markov chains $(M_n)_{n \ge 0}$,
that is statistics of the form $S_N^f:=\sum_{i=0}^N f(M_n)$
for a function $f$ on the state space.
We refer the reader to \cite{Jones:CLT_Markov} and references therein
for numerous results in this direction, in particular on infinite state spaces.
In \cite{LivreOrange:Cumulants}, the authors study through cumulants
linear statistics on mixing sequences (including Markov chains; Chapter 4)
and {\em multilinear} statistics on independent identically distributed random variables (Chapter 5).
It seems however that there is a lack of tools to study multilinear statistics on Markov chains
such as the above considered subword count statistics.
The theory of weighted dependency graphs introduced here is such a tool.

\subsubsection{Homogeneity versus spatial structure}
It is worth noticing that the previous examples have various structures.
The first three are homogeneous in the sense that there is a transitive automorphism
group acting on the model.
This is reflected in the corresponding weighted dependency graphs that have all equal weights.

In comparison, the last two examples have a linear structure:
particles in SSEP are living on a line and a Markov chain is canonically indexed by $\NN$.
For Markov chains, this is reflected in the corresponding weighted dependency graph,
since the weights decrease exponentially with the distance.
On the contrary, SSEP has a homogeneous weighted dependency graph (all weights are equal to $1/n$),
which comes as a surprise for the author and indicates a quite different dependency structure
from the Markov chain setting.

The possibility to cover models with various dependency structures
is, in the author's opinion, a nice feature of weighted dependency graphs.

\subsection{Finding weighted dependency graphs}
\label{ssec:Proving_WDG}
The proof of our normality criterion (\cref{ThmMain})
is quite elementary and easy.
Therefore, one could argue that the difficulty of proving a central limit
theorem has only been shifted to the difficulty of finding 
an appropriate weighted dependency graph.
Indeed, proving that a given weighted graph $\WDep$ is                           
a weighted dependency graph for a given family of random variables                    
$\{Y_\alpha, \alpha \in A\}$ 
consists in establishing bounds on all joint cumulants 
$\kappa(Y_\alpha; \alpha \in B)$, where $B$ is a multiset of elements of $A$.
We refer to this problem as {\em proving the correctness} of the weighted dependency graph $\WDep$.
Attacking it head-on is rather challenging.
(The definition of joint cumulants is given in \cref{EqDefCumulant};                 
the precise bound that should be proved can be found in \cref{EqFundamental},         
but is not relevant for the discussion here.)

To avoid this difficulty, we give in \cref{SectTools} three general results that help
proving the correctness of a weighted dependency graph.
These results make the application of our normality criterion much easier in general,
and almost immediate in some cases.

Before describing these three tools,
let us observe that 
proving the correctness of a {\em usual} dependency graph $\Dep$ is usually straightforward;
it is most of the time
an immediate consequence of the definition of the model we are working on.
Therefore the existing literature does not provide any tool for that.

\begin{enumerate}
  \item Our first tool (\cref{PropAlternate}) is an equivalence of the definition
    with a slightly different set of inequalities involving cumulants
    of product of random variables.
    When the random variables $Y_\alpha$ are Bernoulli random variables,
    we can then use the trivial fact $Y_\alpha^m=Y_\alpha$
    to reduce (most of the time significantly)
    the number of inequalities to establish.
  \item The second tool (\cref{PropEqSCQF}) shows the equivalence of bounds
    on cumulants and bounds on an auxiliary quantity defined as
    \[ P_r=\prod_{\delta \subseteq [r]} \esper\left[ \prod_{i\in \delta} Y_{\alpha_i} \right]^{(-1)^{|\delta|}}.\]
    At first sight, one might think that this new expression is not simpler to bound than cumulants,
    but its advantage is that it is {\em multiplicative}:
    if moments $\esper\big[ \prod_{i\in \delta} Y_{\alpha_i} \big]$ have a natural factorization,
    then $P_r$ factorizes accordingly and we can bound each factor separately.
  \item The third tool (\cref{PropProducts}) is a stability property
    of weighted dependency graphs by {\em products}.
    Namely, if we prove that some basic variables admit a weighted dependency graph,
    we obtain for free a weighted dependency graphs for monomials in these basic variables.
    A typical example of application is the following:
    in the random graph setting, we prove that the indicator variables
    corresponding to presence of edges have a weighted dependency graph
    and we automatically obtain a similar result for presence of triangles
    or of copies of any given fixed graph.
\end{enumerate}
Items 1 and 3 are both used in all applications described in \cref{SectIntroApplications}
and reduces the proof of the correctness of the relevant weighted dependency graph
to bounding specific simple cumulants.
For random pair partitions, random permutations and random graphs,
this bound directly follows from an easy computation of joint moments 
and item 2 above.
In summary, the proof of correctness of the weighted dependency graph
is rather immediate in these cases.

For SSEP, we also make use of an induction relation for joint cumulants
obtained by Derrida, Lebowitz and Speer \cite{DerridaLongRangeCorrelation}
(joint cumulants are called {\em truncated correlation functions} in this context).
The Markov chain setting uses linear algebra considerations
and a recent expression of joint cumulants in terms of the so-called {\em boolean cumulants},
due to Arizmendi, Hasebe, Lehner and Vargas \cite{ArizmendiHasebeLehnerVargas2014}
(see also \cite[Lemma 1.1]{LivreOrange:Cumulants}).
Boolean cumulants have been introduced in {\em non-commutative probability theory}
\cite{SpeicherWoroudi:Boolean_Cumulants,Lehner2004cumulants}
and their appearance here is rather intriguing.
\medskip

To conclude this section, let us mention that in each case,
the proof of correctness of the weighted dependency graph relies
on some expression for the joint moments of the variables $Y_\alpha$.
This expression might be of various forms:
explicit expressions in the first three cases,
an induction relation in the case of SSEP or
a matrix expression for Markov chains,
but we need such an expression.
In other words, weighted dependency graphs can be used
to study what could be called {\em locally integrable systems},
that is systems in which the joint moments
of the basic variables $Y_\alpha$ can be computed.
Such systems are not necessarily integrable in the sense
that there is no tractable expression for the generating function
or the moments of $X=\sum_{\alpha \in A} Y_\alpha$,
so that classical asymptotic methods can {\em a priori} not be used.
In particular, in all the examples above, it seems hopeless to analyse
the moments $\esper[X^r]$ by expanding them directly in terms
of joint moments.

\subsection{Usual dependency graphs: behind the central limit theorem.}
\label{SectBeyondCLT}
We have focused so far on the question of asymptotic normality.
However, usual dependency graphs can be used to establish
other kinds of results.
The first family of such results consists in
refinements of central limit theorems.
\begin{itemize}
    \item In their original paper \cite{Baldi_Rinott:DepGraphs_Stein}
     Baldi and Rinott have combined
    dependency graphs with Stein's method.
    In addition to providing a central limit theorem,
    this approach yields precise estimates
     for the Kolmogorov distance between a renormalized version of $X_n$ and the Gaussian distribution.
     For more general and in some cases sharper bounds,
     we also refer the reader to \cite{ChenShao:DepGraph_Stein}.
      An alternate approach to Stein's method,
      based on {\em mod-Gaussian convergence} and Fourier analysis,
      can also be used to establish sharp bounds in Kolmogorov distance
      in the context of dependency graphs,
      see \cite{ModGaussian2}.
      \item Another direction, addressed in \cite{DE12,ModGaussian1},
       is the validity domain of the central limit theorem.
 \end{itemize}
 
 The Gaussian law is not the only limit law that is accessible with the dependency graph approach.
 Convergence to Poisson distribution can also be proved this way,
 as demonstrated in \cite{ArratiaGoldsteinGordon:Stein_Poisson};
 again, this result has found applications, e.g., in the theory
 of random geometric graphs \cite{Penrose:Geometric_RG}.

 We now leave convergence in distribution to discuss probabilities of rare events:
 \begin{itemize}
    \item 
 In \cite{Jan04}, S. Janson has established some large deviation upper bound 
 involving the fractional chromatic number of the dependency graph.
\item
 Another important, historically first use of dependency graphs
 is the {\em Lovász local lemma} \cite{LLL,Spencer:LLL}.
 The goal here is to find a lower bound for the probability that $X_n=0$
 when $Y_{n,i}$ are indicator random variables,
 that is the probability that none of the $Y_{n,i}$ is equal to $1$.
 This inequality has found a large range of application to prove by probabilistic arguments
 the existence of an object (often a graph) with given properties:
 this is known as the {\em probabilistic method}, see \cite[Chapter 5]{AlonSpencer}.
 \end{itemize}

\subsection{Future work}
We believe that weighted dependency graphs may be useful 
in a number of different models and that they are worth being studied further.
An application of weighted dependency
graphs to the $d$-dimensional Ising model is given in a joint paper with Dousse \cite{WDG_Ising}.
In a work in progress,
we also use them to study statistics in uniform set-partitions
and obtain a far-reaching generalization of a result of
Chern, Diaconis, Kane and Rhoades \cite{CLT_SetPartitionsStatistics}.

Proving the correctness of these weighted dependency graphs
again use the tools from \cref{SectTools} of this paper.
In the case of Ising model, we also need the theory of cluster expansions.
\medskip

Another source of examples of
weighted dependency graphs
is given by determinantal point processes (see, {\em e.g.}, \cite[Chapter 4]{DPP}):
indeed, for such processes, it has been observed by Soshnikov 
that cumulants have rather nice expressions \cite[Lemma 1]{Soshnikov:CLT_DPP}.
This fits in the framework of weighted dependency graphs
and the stability by taking monomials in the initial variables
may enable to study multilinear statistics on such models.
This is a direction that we plan to investigate in future work.
\medskip

The results of the present article also invite to consider the following models.
\begin{itemize}
  \item Uniform $d$-regular graphs: the weighted dependency graph 
    for pair partitions presented in \cref{SectMatchings}
    gives bounds on joint cumulants in the configuration model.
    It would be interesting to have similar bounds
    for uniform $d$-regular graphs, especially when $d$ tends to infinity,
    in which case the graph given by the configuration model
    is simple with probability tending to $0$. 
    The fact that joint moments of presence of edges
    have no simple expression for $d$-regular graphs
    is an important source of difficulty here.
  \item The asymmetric version of SSEP, called ASEP:
    finding a weighted dependency graph for this statistical mechanics model
    is closely related to the conjecture made in \cite{DerridaLongRangeCorrelation},
    on the scaling limit of the truncated correlation functions.
  \item Markov chains on infinite state spaces:
    as mentioned earlier, there is an important body of literature on CLT for 
    linear statistics $\sum_{n=0}^N f(M_n)$ on such models, see \cite{Jones:CLT_Markov}.
    Does \cref{PropWDGInMarkov}, which gives a weighted dependency graph for Markov chain
    on a finite state space, generalize under some of these criteria?
    This would potentially give access to CLT for multilinear statistics on these models\ldots
\end{itemize}
\medskip

Finally, because of the diversity of examples,
it would be of great interest to adapt some of the results mentioned in \cref{SectBeyondCLT}
to weighted dependency graphs.
An approach to do this would be to use recent results on mod-Gaussian convergence 
\cite{ModGaussian1,ModGaussian2}.
Unfortunately, this requires {\em uniform bounds} on cumulants of the sum $X_n$,
which are at the moment out of reach for weighted dependency graphs in general.

\subsection{Outline of the paper}
The paper is organized as follows.
\begin{itemize}
  \item Standard notation and definitions are given in \cref{SectPreliminaries}.
  \item \cref{sec:SpanningTree} gives some background about maximum spanning trees,
    a notion used in our bounds for cumulants.
  \item The definition of weighted dependency graphs and the associated normality criterion
    are given in \cref{SectWDG}.
  \item \cref{SectTools} provides tools to prove the correctness of weighted dependency graphs.
  \item The next five sections (from \ref{SectMatchings} to \ref{SectMarkov})
    are devoted to the applications described in \cref{SectIntroApplications}.
  \item Appendices give a technical proof, some variance estimations
    and adequate tightness criteria for the functional central limit theorems,
    respectively.
\end{itemize}

\section{Preliminaries}
\label{SectPreliminaries}

\subsection{Set partitions}
\label{SectPrelim}
\label{SubsecSetPartitions}
The combinatorics of set partitions is central in the theory of cumulants 
 and is important in this article.
 We recall here some well-known facts about them.

A {\em set partition} of a set $S$ is a (non-ordered) family of non-empty disjoint
subsets of $S$ (called {\em blocks} of the partition), whose union is $S$.
We denote by $\#(\pi)$ the number of blocks of $\pi$.

Denote $\PPP(S)$ the set of set partitions of a given set $S$.
Then $\PPP(S)$ may be endowed with a natural partial order:
the {\em refinement} order.
We say that $\pi$ is {\em finer} than $\pi'$ or $\pi'$ {\em coarser} than $\pi$
(and denote $\pi \leq \pi'$)
if every part of $\pi$ is included in a part of $\pi'$.

Endowed with this order, $\PPP(S)$ is a complete lattice, which means that
each family $F$ of set partitions admits a join (the finest set partition
which is coarser than all set partitions in $F$, denoted with $\vee$)
and a meet (the coarsest set partition
which is finer than all set partitions in $F$, denoted with $\wedge$).
In particular, there is a maximal element $\{S\}$ (the partition in only one
part) and a minimal element $\{ \{x\}, x \in S\}$ (the partition in singletons).


Lastly, denote $\mu$ the M\"obius function of the partition lattice $\PPP(S)$.
In this paper, we only use evaluations of $\mu$ at pairs $(\pi,\{S\})$,
{\em i.e.} where the second argument is the maximum element of $\PPP(S)$.
In this case, the value of the M\"obius function is given by:
\begin{equation}\label{EqValueMobius}
    \mu(\pi, \{S\})=(-1)^{\#(\pi)-1} (\# (\pi)-1)!.
\end{equation}

\subsection{Joint cumulants}
For random variables $X_1,\dots,X_r$ with finite moments
living in the same probability space
(with expectation denoted $\esper$),
we define their {\em joint cumulant} (or {\em mixed cumulant}) as
\begin{equation}
    \kappa (X_1,\dots,X_r) = [t_1 \dots t_r] \log 
    \bigg( \esper \big( \exp(t_1 X_1 + \dots + t_r X_r) \big) \bigg).
    \label{EqDefCumulant}
\end{equation}
As usual, $[t_1 \dots t_r] F$ stands for the coefficient of $t_1 \dots t_r$ 
in the series expansion of $F$ in positive powers of $t_1, \dots, t_r$.
The finite moment assumption ensures that the function is analytic
around $t_1=\cdots=t_r=0$.
If all random variables $X_1,\cdots,X_r$ are equal to the same variable $X$,
we denote $\kappa_r(X)=\kappa(X,\dots,X)$ and this is the usual {\em cumulant}
of a single random variable.
\medskip.

Joint cumulants have a long history in statistics
and theoretical physics and it is rather hard
to give a reference for their first appearance.
Their most useful properties are summarized in \cite[Proposition 6.16]{JansonRandomGraphs}
--- see also \cite{LeonovShiryaevCumulants}.
\begin{itemize}
    \item It is a symmetric multilinear functional.
    \item If the set of variables $\{X_1,\dots,X_r\}$ can be split
        into two mutually independent sets of variables, then
        the joint cumulant vanishes;
    \item Cumulants can be expressed in terms of joint moments and vice-versa, as follows:
\begin{align}
    \esper \big( X_1 \cdots X_r \big) &= \sum_{\pi \in \PPP([r])} \prod_{C \in \pi}
    \kappa(X_i; i \in C);
    \label{EqCumulant2Moment} \\
    \kappa (X_1,\dots,X_r) &= \sum_{\pi \in \PPP([r])} \mu(\pi, \{[r]\})
    \prod_{C \in \pi} \esper\left( \prod_{i \in C} X_i \right).
    \label{EqMoment2Cumulant}
\end{align}
        Hence, knowing all joint cumulants amounts to knowing all joint moments.
\end{itemize}
Because of the symmetry, it is natural to consider joint cumulants of {\em multisets} 
of random variables. 

The second property above has a converse.
Since we have not been able to find it in the literature,
we provide it with a proof.
\begin{proposition}
    Let $A=A_1 \sqcup A_2$ be a finite set, partitioned into two parts.
    Let $\{Y_\a,\a \in A\}$ be a family of random variables
    defined on the same probability space,
    such that each $Y_\a$ is determined by its moments.
    We assume that for each multiset $B$ of $A$ that contains
    elements of both $A_1$ and $A_2$,
    \[ \ka(Y_\a ; \a \in B)= 0.\]
    Then $\{Y_\a,\a \in A_1\}$ and $\{Y_\a,\a \in A_2\}$ are independent.
    \label{PropVanishingCumImpliesInd}
\end{proposition}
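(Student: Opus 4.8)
The plan is to show that the joint moment generating function factorizes, which by the assumption that each $Y_\a$ is determined by its moments (hence the families are too) yields independence. Introduce formal (or small real) variables $t_\a$ for $\a \in A$ and consider
\[ F\big((t_\a)_{\a \in A}\big) = \log \esper\Big[ \exp\Big( \sum_{\a \in A} t_\a Y_\a \Big) \Big]. \]
The key identity is that $F$ is the generating function of joint cumulants: expanding the exponential and the logarithm, one has
\[ F\big((t_\a)_{\a \in A}\big) = \sum_{B} \frac{1}{\prod_\a m_\a(B)!}\, \ka(Y_\a; \a \in B)\, \prod_{\a \in A} t_\a^{m_\a(B)}, \]
where the sum runs over multisets $B$ of $A$ and $m_\a(B)$ denotes the multiplicity of $\a$ in $B$ (this is the multivariate analogue of \cref{EqDefCumulant}, and follows from multilinearity and symmetry of joint cumulants together with \cref{EqCumulant2Moment}). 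By hypothesis, every term indexed by a multiset $B$ meeting both $A_1$ and $A_2$ vanishes. Hence $F$ splits as $F_1\big((t_\a)_{\a \in A_1}\big) + F_2\big((t_\a)_{\a \in A_2}\big)$, where $F_j$ collects the terms supported on multisets of $A_j$. Setting the variables of $A_2$ (resp.\ $A_1$) to zero identifies $F_1$ (resp.\ $F_2$) as the logarithm of the joint moment generating function of $\{Y_\a, \a \in A_1\}$ (resp.\ $\{Y_\a, \a \in A_2\}$).

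Exponentiating, this gives the factorization of mixed moments: for any multisets $B_1$ of $A_1$ and $B_2$ of $A_2$,
\[ \esper\Big[ \prod_{\a \in B_1} Y_\a \cdot \prod_{\a \in B_2} Y_\a \Big] = \esper\Big[ \prod_{\a \in B_1} Y_\a \Big]\cdot \esper\Big[ \prod_{\a \in B_2} Y_\a \Big], \]
obtained by extracting the appropriate coefficient of a monomial in the $t_\a$'s on both sides of $\exp F = \exp F_1 \cdot \exp F_2$. In particular, for any finite subsets $\{\a_1,\dots,\a_p\} \subseteq A_1$ and $\{\beta_1,\dots,\beta_q\} \subseteq A_2$ and any nonnegative integer exponents, the mixed moment of the $Y_{\a_i}$ and $Y_{\beta_j}$ factorizes. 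Since each $Y_\a$ is determined by its moments, a standard moment-problem argument (applied coordinatewise, using that a finite family is determined by its joint moments when each marginal is moment-determinate --- see, e.g., the multivariate moment problem or simply reduce to characteristic functions if the $Y_\a$ are bounded, as in all our applications) upgrades this factorization of moments to independence of the two families.

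The main obstacle is purely analytic: passing from ``all mixed moments factorize'' to ``the two $\sigma$-algebras are independent.'' For random variables that are not bounded this is where the hypothesis ``each $Y_\a$ is determined by its moments'' is essential, and one must be slightly careful, since moment-determinacy of each individual variable does not in general imply joint moment-determinacy of an infinite family. However, it does for finite families (a finite product of moment-determinate measures is moment-determinate, by a theorem going back to Petersen), and independence is a statement about finite subfamilies, so this suffices; alternatively, in every application in this paper the $Y_\a$ are bounded, and then one argues directly with characteristic functions, the factorization of moments giving $\esper[e^{i(s\sum_{\a\in B_1} Y_\a + u\sum_{\a \in B_2}Y_\a)}] = \esper[e^{is\sum Y_\a}]\esper[e^{iu\sum Y_\a}]$ by termwise summation of the (absolutely convergent) Taylor series. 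The combinatorial heart of the argument --- the vanishing of mixed cumulants forcing the log-MGF to split --- is elementary and is the step I would write out in full detail.
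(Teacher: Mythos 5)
Your proof is correct and, at its core, takes the same route as the paper: both arguments reduce to Petersen's theorem (joint moment-determinacy from marginal moment-determinacy) applied to compare the given family with the independent product coupling, the paper doing so by matching all joint cumulants directly while you first extract the equivalent factorization of mixed moments from the vanishing of mixed cumulants. The only caveat is that the manipulation of $\log\esper[\exp(\sum_\a t_\a Y_\a)]$ must be read purely as a formal power series identity (equivalently, as the finite combinatorial moment--cumulant relations), since finiteness of all moments does not guarantee that the moment generating function converges anywhere.
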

\begin{proof}
    Since each $Y_\a$ is determined by its moments, from a theorem of Petersen \cite{PetersenMultidimMoment},
    we know that the multivariate random variable $(Y_\a,\a \in A_1 \cup A_2)$ is also determined by its joint moments,
    or equivalently by its joint cumulants.
    Consider random variables $(Z_\a,\a \in A_1 \cup A_2)$ such that $(Z_\a,\a \in A_1)$ (resp. $(Z_\a,\a \in A_2)$)
    has the same (multi-variate) distribution than $(Y_\a,\a \in A_1)$ (resp. $(Y_\a,\a \in A_2)$) and such that
    $\{Z_\a,\a \in A_1\}$ and $\{Z_\a,\a \in A_2\}$ are independent.
    Because of the equalities of multi-variate distribution,
    if the multiset $B$ is composed either only by elements of $A_1$
    or only by elements of $A_2$, then
    \[\ka(Z_\a,\a \in B) = \ka(Y_\a,\a \in B). \]
    On the other hand, if $B$ contains elements of both $A_1$ and $A_2$, 
    then $\{Z_\a,\a \in B\}$ can be split into two mutually independent sets:
    $\{Z_\a,\a \in B \cap A_1\}$ and $\{Z_\a,\a \in B \cap A_2\}$. Therefore,
    \[\ka(Z_\a,\a \in B) =0.\]
    But, for such $B$, one has $\ka(Y_\a,\a \in B) =0$ by hypothesis.

    Finally all joint cumulants of $(Y_\a,\a \in A_1 \cup A_2)$ and $(Z_\a,\a \in A_1 \cup A_2)$
    coincide and, therefore, both random vectors have the same distribution
    (recall that the first one is determined by its joint moments).
    Therefore $\{Y_\a,\a \in A_1\}$ and $\{Y_\a,\a \in A_2\}$ are independent, as claimed.
\end{proof}
\begin{remark}
    We do not know whether the hypothesis ``determined by their moment'' can be relaxed or not.
\end{remark}

\subsection{Multisets}
As mentioned above it is natural to consider joint cumulants
of {\em multisets} of random variables, so let us fix some terminology.

For a multiset $B$, we denote by $|B|$ the total number of elements
({\em i.e.} counted with multiplicities)
and $\#(B)$ the number of {\em distinct} elements.
Furthermore $B_1 \uplus B_2$ is by definition the disjoint union of 
the multisets $B_1$ and $B_2$, {\em i.e.} the multiplicity of an element
in $B_1 \uplus B_2$ is the sum of its multiplicity in $B_1$ and $B_2$.

The set of multisets of elements of $A$ is denoted by $\Mset(A)$,
while $\Mset_{\le m}(A)$ is the subset of multisets with $|B|\le m$.

\subsection{Graphs}
\begin{definition}
  A graph is a pair $(V,E)$, where $V$ is the vertex set
  and $E$ the edge set.
  Elements of $E$ are 2-element subsets of $V$
  (our graphs are simple loopless graphs).
  All graphs considered in this paper are finite.

  We denote by $\CC(\Dep)$ the partition of the vertex set of a graph $\Dep$ into connected components.
  Consequently, $|\CC(\Dep)|$ is the number of connected components of $\Dep$.
\end{definition}
Two types of graphs appear here:
dependency graphs throughout the paper
and random graphs in \cref{SectErdosRenyi}.
The former are tools to prove central limit theorems,
while the latter are the objects of study, and they should not be confused.
Following \cite{JansonRandomGraphs}, we use the letter $\Dep$ for dependency graphs,
and we reserve the more classical $G$ for random graphs.
\medskip

If $B$ is a multiset of vertices of $\Dep$,
we can consider the graph $\Dep[B]$ induced by $\Dep$ on $B$ and defined as follows:
the vertices of $\Dep[B]$ correspond to elements of $B$ (if $B$ contains an element with multiplicity $m$,
then $m$ vertices correspond to this element),
and there is an edge between two vertices if the corresponding vertices of $\Dep$ are equal 
or connected by an edge in $\Dep$.
\medskip

Finally we say that two subsets (or multisets) $A_1$ and $A_2$ of vertices of $\Dep$ are {\em disconnected}
if they are disjoint and 
there is no edge in $\Dep$ that has an extremity in $A_1$ and an extremity in $A_2$.

\subsection{Weighted graphs}
\label{subsec:weighted_graphs}
An {\em edge-weighted graph} $\WDep$, or {\em weighted graph} for short,
is a graph $\Dep$ in which
each edge $e$ is assigned a weight $w_e$.
In this article we restrict ourselves to weights $w_e$
with $w_e \in [0,1]$.
Edges not in the graph can be thought of as edges of weight $0$,
all our definitions are consistent with this convention.
\medskip

The induced graph of a weighted graph $\WDep$ on a multiset $B$
has a natural weighted graph structure.
We put on each edge of $\WDep[B]$ the weight of the corresponding edge in $\WDep$;
if the edge connects two copies of the same vertex of $\WDep$,
there is no corresponding edge in $\WDep$ and we put weight $1$.
\medskip

If $I$ and $J$ are subsets (or multisets) of vertices of a weighted graph $\WDep$,
we write $W(I,J)$ for the maximal weight of an edge connecting a vertex of $I$ and a vertex of $J$.
If $I \cap J \neq \emptyset$, then $W(I,J)=1$.
On the contrary, if $I$ and $J$ are disconnected, we set $W(I,J)=0$.\medskip

This enables to define powers of weighted graphs.
\begin{definition}
    Let $\WDep$ be a weighted graph with vertex set $A$
    and $m$ be a positive integer.
    The $m$-th power of $\WDep$ is the graph with vertex set $\Mset_{\le m}(A)$ such that
    $I$ and $J$ are linked by an edge unless $I$ and $J$ are disjoint and disconnected in $\Dep$.
    Moreover the edge $(I,J)$ has weight $W(I,J)$.
    We denote this weighted graph $\WDep^m$.
    \label{DefPowerWG}
\end{definition}

\subsection{Asymptotic notation}
We use the symbol $u_n \asymp v_n$ (resp. $u_n \ll v_n$, $u_n \gg v_n$) to say that
$\lim_{n \to \infty} \frac{u_n}{v_n}$ is a nonzero constant (resp. $0$, $+\infty$) as $n \to \infty$.
In particular, $v_n$ should be nonzero for $n$ sufficiently large.

\section{Spanning trees}
\label{sec:SpanningTree}

As we shall see in the next section,
our definition of weighted dependency graphs involves
the {\em maximal weight of a spanning tree of a given weighted graph}.
In this section, we recall this notion
and prove a few lemmas that we use later in the paper.

\subsection{Maximum spanning tree}
\begin{definition}
A spanning tree of a graph $\Dep=(V,E)$ is
a subset $E'$ of $E$ such that $(V,E')$ is a tree.

More generally, we say that a subset $E'$ of $E$ forms a spanning subgraph of $\Dep$
if $(V,E')$ is connected.
\end{definition}

If $\WDep$ is a weighted graph, 
we say that the weight $w(T)$ of a spanning tree of $\WDep$
is the {\em product} of the weights of the edges in $T$.
The maximum weight of a spanning tree of $\WDep$
is denoted $\MWST{\WDep}$.
This parameter is central in our work.

If $\WDep$ is disconnected, we set $\MWST{\WDep}=0$ for convenience.

\begin{example}
An easy case which appears a few times in the paper is the case of a connected graph $\WDep$
with $r$ vertices and all weights equal to the same value, say $\eps$.
Then all spanning trees have weight $\eps^{r-1}$ so that $\MWST{\WDep}=\eps^{r-1}$.\smallskip

For a less trivial example, consider the weighted graph of \cref{fig:exWDep}. 
    The red edges form a spanning tree of weight $\eps^2 \cdot (\eps)^2 = \eps^4$.
    It is easy to check that there is no spanning trees with bigger weight so that
    $\MWST{\WDep}=\eps^4$ in this case.
\end{example}
\begin{figure}[ht]
    \begin{center}
        \includegraphics{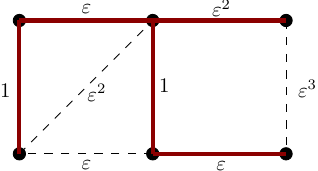}
    \end{center}
    \caption{Example of a weighted graph with a spanning tree of maximal weight.
    Fat red edges are edges of the maximum weight spanning tree, the other edges of the graph are dotted
    for more readability.}
    \label{fig:exWDep}
\end{figure}

Finding a spanning tree with maximum weight is a well-studied
question in the algorithmics literature: see \cite[Chapter 23]{IntroductionAlgorithms}
(the usual convention is to define the weight of a spanning
tree as the {\em sum} of the weights of its edges
and to look for a spanning tree of
{\em minimal} weight, but this is of course equivalent,
up to replacing weights with the logarithms of their inverses).

\subsection{Prim's algorithm and the reordering lemma}
There are several classical algorithms to find a spanning tree with maximum weight.
We describe here Prim's algorithm, which is useful for our work.
\medskip

Assume $\WDep$ is a connected weighted graph.
Choose arbitrarily a vertex $v$ in the graph and set initially $A=\{v\}$ and $T= \emptyset$.
We iterate the following procedure:
find the edge with maximum weight connecting a vertex $v$ in $A$ with a vertex $w$ outside $A$
(since $\WDep$ is connected, there is at least one such edge),
then add $w$ to $A$ and $\{v,w\}$ to $T$.
It is easy to check that at each step, $T$ is always a tree with vertex set $A$
and a general result ensures that at each step, $T$ is included in a spanning tree of maximum weight of $\WDep$
\cite[Corollary 23.2]{IntroductionAlgorithms}.
Note also that the weight of the edge $\{v,w\}$ is equal to $W(\{w\},A)$.
We stop the iteration when $A$ is the vertex set of $\WDep$,
and $T$ is then a spanning tree of maximum weight.
\medskip

The correctness of this algorithm implies the following lemma.
\begin{lemma}
  Let $\WDep$ be a weighted graph with $r$ vertices.
  There exists an ordering $(\beta_1,\dots,\beta_r)$
  of its vertex set such that
  \begin{equation}
    \prod_{j=1}^{r-1} W \big( \{\beta_{j+1}\};\left\{ \beta_1,\cdots,\beta_{j} \right\} \big)
  = \MWST{\WDep}.
  \label{Eq:Well_Ordered}
\end{equation}
  \label{LemReordering}
\end{lemma}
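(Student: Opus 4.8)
The plan is to obtain the required ordering as a by-product of running Prim's algorithm on $\WDep$, as described in the previous subsection. First suppose $\WDep$ is connected. I would run the algorithm starting from an arbitrarily chosen vertex, which I call $\beta_1$; for $j=1,\dots,r-1$, I let $\beta_{j+1}$ be the vertex added to the set $A$ during the $j$-th iteration, and $e_j$ the edge added to $T$ at that iteration. This produces an ordering $(\beta_1,\dots,\beta_r)$ of the vertex set, and by construction $A=\{\beta_1,\dots,\beta_j\}$ just before the $j$-th iteration, with $e_j$ joining $\beta_{j+1}$ to this set. Two facts already recorded in the discussion of the algorithm then finish the proof: (i) the weight of $e_j$ equals $W(\{\beta_{j+1}\};\{\beta_1,\dots,\beta_j\})$, and (ii) the output $T=\{e_1,\dots,e_{r-1}\}$ is a spanning tree of maximum weight, by the correctness of Prim's algorithm \cite[Corollary 23.2]{IntroductionAlgorithms}. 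Combining these,
\[
  \prod_{j=1}^{r-1} W\big(\{\beta_{j+1}\};\{\beta_1,\dots,\beta_j\}\big)
  = \prod_{j=1}^{r-1} w_{e_j}
  = w(T) = \MWST{\WDep},
\]
which is exactly \cref{Eq:Well_Ordered}.

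It remains to treat the case where $\WDep$ is disconnected, for which $\MWST{\WDep}=0$ by convention. Here I would simply pick any ordering that lists the vertices of one connected component $C$ first, say $C=\{\beta_1,\dots,\beta_k\}$ with $k<r$. Then $\beta_{k+1}$ lies in a different component, hence is disconnected from $\{\beta_1,\dots,\beta_k\}=C$, so the factor $W(\{\beta_{k+1}\};\{\beta_1,\dots,\beta_k\})$ vanishes and the product in \cref{Eq:Well_Ordered} equals $0=\MWST{\WDep}$.

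I do not expect a genuine obstacle: the statement is little more than a reformulation of the correctness of Prim's algorithm. The one point to be careful about is the identification in (i) of the weight of $e_j$ with $W(\{\beta_{j+1}\};\{\beta_1,\dots,\beta_j\})$ — this uses that $\beta_{j+1}$ is precisely the \emph{outside} endpoint picked by the greedy step, since any heavier edge from $\{\beta_1,\dots,\beta_j\}$ to $\beta_{j+1}$ would already have been chosen — together with checking that the degenerate situations (zero-weight edges, or a disconnected graph) are consistent with the convention $\MWST{\WDep}=0$, which is why the disconnected case is split off above.
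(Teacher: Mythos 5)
Your proof is correct and follows essentially the same route as the paper: run Prim's algorithm, read off the ordering from the order in which vertices are added, and use that the $j$-th greedy edge has weight $W(\{\beta_{j+1}\};\{\beta_1,\dots,\beta_j\})$ together with the correctness of the algorithm. The only cosmetic difference is in the disconnected case, which the paper handles by adding weight-$0$ edges to reduce to the connected case rather than by exhibiting an ordering directly; both variants are fine.
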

\begin{proof}
  Adding edges of weight $0$ to the graph does not change any side of the above equality,
  so we can assume that $\WDep$ is connected.
\medskip

  We apply Prim's algorithm, as described above, and we denote vertices of $\WDep$
  by $\beta_1,\dots,\beta_r$ in the order in which they are added to the set $A$.
  Then $W \big( \{\beta_{j+1}\};\left\{ \beta_1,\cdots,\beta_{j} \right\} \big)$
  is the weight of the edge added in the $j$-th iteration of the algorithm.
  Therefore the LHS of \cref{Eq:Well_Ordered} is the weight of the spanning tree
  constructed by Prim's algorithm.
  Since this is a spanning tree of maximum weight, this weight is $\MWST{\WDep}$.
\end{proof}

\begin{remark}
    \label{RkReordering}
    In the special case where $\WDep$ has only edges of weight $1$,
    the lemma states the following:
    if $\WDep$ is connected,
    there exists an ordering $(\beta_1,\ldots,\beta_r)$ of its vertices
    such that each $\beta_\ell$ is in the neighbourhood of $(\beta_1,\ldots,\beta_{\ell-1})$.
    This easy particular case is used in the dependency graph literature,
    but with weighted dependency graphs, we need \cref{LemReordering}
    in its full generality.
\end{remark}

\subsection{Inequalities on maximal weights of spanning trees}

We now state some inequalities on maximal weights, that are useful in the sequel.
We first introduce some notation.

If $\Delta$ is a subset of $B$, we denote $\Pi(\Delta)$ the multiset partition of $B$
which has $\Delta$ and singletons as blocks.
Furthermore, if $\bm{\Delta}=(\Delta_1,\cdots,\Delta_\ell)$ is a family of subsets of $B$,
then we denote 
\hbox{$\pi_{\bm{\Delta}} = \Pi(\Delta_1) \vee \cdots \vee \Pi(\Delta_\ell)$.}
Note that if $\Delta_1$, \dots, $\Delta_\ell$ are the parts of a partition $\pi$,
then trivially $\pi_{\bm{\Delta}}=\pi$.

Finally, edges of weight $1$ will play a somewhat special role in weighted dependency graphs.
We therefore denote $\WDepOne$ the subgraph formed by edges with weight $1$.
\begin{lemma}
    \label{LemmaSetPartAndPi1}
    Let $\WDep$ be a weighted graph with vertex set $B$
    and $(\Delta_1,\dots,\Delta_s)$ a family of subsets of $B$. 
    We assume that
    $\pi_{\bm{\Delta}} \vee \CC(\WDepOne)=\{B\}$.
    Then
   \[ \left( \prod_{i=1}^s \MWST{\WDep[\Delta_i]} \right) \le \MWST{\WDep} .\]
\end{lemma}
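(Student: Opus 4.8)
The plan is to combine the reordering lemma (\cref{LemReordering}) applied to each induced subgraph $\WDep[\Delta_i]$ with a greedy construction of a spanning subgraph of $\WDep$, and then observe that a spanning subgraph contains a spanning tree of at least as large a weight. First I would apply \cref{LemReordering} to each $\WDep[\Delta_i]$ to obtain an ordering of the vertices of $\Delta_i$ realizing $\MWST{\WDep[\Delta_i]}$ as a product of ``closed-neighbourhood'' weights $W(\{\beta\};\{\text{predecessors}\})$; this gives, inside $\WDep$, a forest $F_i$ on vertex set $\Delta_i$ (identifying $\Delta_i$ with the corresponding subset of $B$ — note that the edges of $\WDep[\Delta_i]$ are exactly the edges of $\WDep$ with both endpoints in $\Delta_i$, since $\Delta_i$ is a genuine subset, not a multiset with repetitions) whose edge-weight product equals $\MWST{\WDep[\Delta_i]}$.

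Next I would form the union $F := \bigcup_{i=1}^s F_i$ of these forests, viewed as a subgraph of $\WDep$ on vertex set $B$. The product of all edge weights of $F$ is at least $\prod_{i=1}^s \MWST{\WDep[\Delta_i]}$ (it could be larger if some edge is used by several $F_i$, since weights are in $[0,1]$; one can also simply note each $F_i$ is a subgraph of $F$ and weights are $\le 1$). The key structural point is that $F$ together with the weight-$1$ edges of $\WDep$ spans $B$: each $F_i$ connects up the vertices within each block of $\pi_{\bm{\Delta}}$ — more precisely the connected components of $F = \bigcup_i F_i$ refine, hence equal or coarsen appropriately, the partition $\pi_{\bm\Delta}$ — and by hypothesis $\pi_{\bm\Delta} \vee \CC(\WDep_1) = \{B\}$, so adding the weight-$1$ edges of $\WDep_1$ merges all these components into a single one. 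Thus $F \cup \WDep_1$ is a spanning subgraph of $\WDep$, and its total weight-product equals that of $F$ (the extra edges have weight $1$), which is at least $\prod_{i=1}^s \MWST{\WDep[\Delta_i]}$.

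Finally I would extract from the spanning subgraph $F \cup \WDep_1$ a spanning tree $T$ by deleting edges lying on cycles one at a time; since all weights are $\le 1$, removing edges only increases the product of the remaining weights, so $w(T) \ge \prod_{i=1}^s \MWST{\WDep[\Delta_i]}$. As $\MWST{\WDep}$ is the maximum over all spanning trees, this yields $\prod_{i=1}^s \MWST{\WDep[\Delta_i]} \le w(T) \le \MWST{\WDep}$, as desired. The step I expect to require the most care is the bookkeeping in the previous paragraph: verifying precisely that the connected components of $\bigcup_i F_i$, joined with those of $\WDep_1$, generate $\pi_{\bm\Delta} \vee \CC(\WDep_1)$, and handling the (harmless) possibility that the $\Delta_i$ overlap or that some induced subgraph is disconnected — in which case $\MWST{\WDep[\Delta_i]} = 0$ by convention and the inequality is trivial, so one may freely assume each $\WDep[\Delta_i]$ is connected.
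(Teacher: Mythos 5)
Your proof is correct and follows essentially the same route as the paper's: take a maximum-weight spanning tree in each $\WDep[\Delta_i]$, union them with the weight-$1$ edges (the join hypothesis guarantees this spans $B$), extract a spanning tree, and use that weights lie in $[0,1]$ so deleting edges only increases the product. The only cosmetic difference is that you obtain the trees $T_i$ via \cref{LemReordering} rather than taking them directly, and you spell out the degenerate disconnected case, which the paper leaves implicit.
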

\begin{proof}
  Consider a spanning tree $T_i$ of maximum weight
  in each induced graph $\WDep[\Delta_i]$.
  Each $T_i$ can be seen as a subset of edges of the original graph $\WDep$.
  Let $S$ be the union of the $T_i$ and of the set of edges of weight $1$.
  The condition $\pi_{\bm{\Delta}} \vee \CC(\WDepOne)=\{B\}$ ensures
  that the edge set $S$ forms a spanning subgraph of $\WDep$.
  Therefore we can extract from it a spanning tree $T$. Then
  \[ w(T) = \prod_{e \in T} w(e) \ge \prod_{e \in S} w(e) \ge \prod_{i=1}^s \prod_{e \in T_i}  w(e) = \prod_{i=1}^s \MWST{\WDep[\Delta_i]}.\]
  But, since $T$ is a spanning tree of $\WDep$, we have $w(T) \le \MWST{\WDep}$,
  which completes the proof.
\end{proof}

Our next lemma uses the notion of $m$-th power of a weighted graph,
which was defined in \cref{subsec:weighted_graphs}.
\begin{lemma}
    Let $I_1,\cdots,I_r$ be multisets of vertices of a weighted graph $\WDep$.
    We consider a partition $\pi$ of $I_1 \uplus \cdots \uplus I_r$ such that
    \begin{equation}
        \pi \vee \big\{ I_1,\cdots,I_r \big\} = \big\{ I_1 \uplus \cdots \uplus I_r \big\}.
        \label{eq:Pi_is_Rough}
      \end{equation}
     Then we have   
    \[ \prod_{i=1}^s \MWST{\WDep[\pi_i]} \le \MWST{\WDep^m [\{I_1,\cdots,I_r\}] },\]
    where $\WDep^m$ is the $m$-th power of $\WDep$. 
    \label{LemProdMWSTPi}
\end{lemma}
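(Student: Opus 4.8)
The plan is to mimic the proof of \cref{LemmaSetPartAndPi1}: take a maximum spanning tree $T_i$ in each induced subgraph $\WDep[\pi_i]$, view the union of the $T_i$ together with a suitable set of ``free'' edges of weight $1$ as a spanning subgraph of the power graph $\WDep^m[\{I_1,\dots,I_r\}]$, extract a spanning tree, and compare weights. The crucial point is that inside each multiset $I_j$, any two copies of the same vertex (or vertices joined in $\WDep$) are, \emph{as vertices of $\WDep^m$}, linked by an edge of weight $1$: indeed $\WDep^m$ has as vertices the multisets $I_1,\dots,I_r$, and by construction $I_j$ and $I_j$ are not disjoint, hence the ``internal'' structure of each $I_j$ is collapsed into the single vertex $I_j$ of $\WDep^m$ at the cost of weight-$1$ edges only when we pass through it. So the real content is a translation between spanning trees in $\WDep[\pi_i]$ (whose vertices are elements lying in various $I_j$'s) and spanning trees in $\WDep^m$ (whose vertices are the whole multisets $I_j$).

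Concretely, here is how I would carry it out. First, as in \cref{LemmaSetPartAndPi1}, add edges of weight $0$ so we may assume $\WDep^m[\{I_1,\dots,I_r\}]$ is connected (if it is disconnected both sides are $0$ by convention, using that $\pi\vee\{I_1,\dots,I_r\}=\{I_1\uplus\cdots\uplus I_r\}$ forces $\CC(\WDep^m[\{I_1,\dots,I_r\}])$ to coarsen to one block only when the power graph is connected; otherwise the right side is $0$ and we must check the left side is $0$ too — this needs a small argument, see below). For each block $\pi_i$ of $\pi$, pick a maximum spanning tree $T_i$ of $\WDep[\pi_i]$; each edge $e=\{a,b\}$ of $T_i$ joins an element $a$ of some $I_{j(a)}$ and an element $b$ of some $I_{j(b)}$, and by definition of the induced weighted graph on $\pi_i$ it carries a positive weight, which by \cref{DefPowerWG} is at most $W(I_{j(a)},I_{j(b)})$, the weight of the corresponding edge of $\WDep^m$ (when $j(a)=j(b)$ this edge is the ``loop'' collapsed to weight $1$, which we simply discard). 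Mapping each $T_i$-edge to the corresponding $\WDep^m$-edge gives a multiset $S_i$ of edges of $\WDep^m[\{I_1,\dots,I_r\}]$, and the key combinatorial claim is that $\bigcup_i S_i$, together with the weight-$1$ edges of $\WDep^m[\{I_1,\dots,I_r\}]$ joining copies/neighbours within a common $I_j$, spans $\WDep^m[\{I_1,\dots,I_r\}]$: this follows from hypothesis \cref{eq:Pi_is_Rough}, because the partition of $I_1\uplus\cdots\uplus I_r$ induced by the connected components of $\bigcup_i T_i\cup\{\text{weight-1 edges inside }I_j\}$ is exactly (at least as coarse as) $\pi\vee\{I_1,\dots,I_r\}=\{I_1\uplus\cdots\uplus I_r\}$, and connectivity of the element-level graph translates into connectivity of the $I_j$-level graph $\WDep^m[\{I_1,\dots,I_r\}]$. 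Then extract a spanning tree $T$ of $\WDep^m[\{I_1,\dots,I_r\}]$ from this spanning subgraph and estimate
\[
 \MWST{\WDep^m[\{I_1,\dots,I_r\}]} \ge w(T) \ge \prod_{i=1}^s \prod_{e\in T_i} w(e) = \prod_{i=1}^s \MWST{\WDep[\pi_i]},
\]
where the middle inequality uses that the weights we kept are $\ge$ the weights of the discarded $T_i$-edges only in the trivial sense, more precisely that $w(T)$ is a product over a sub-multiset of the edges (of weight $\le 1$) appearing in $\bigcup_i S_i\cup\{\text{weight-1 edges}\}$ and every edge of $S_i$ has weight $\ge$ the weight of the $T_i$-edge it came from.

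The step I expect to be the main obstacle is the bookkeeping in the ``extract a spanning tree'' comparison: unlike in \cref{LemmaSetPartAndPi1}, the edges of $T_i$ do not literally lie in $\WDep^m$, so one has to be careful that (a) the weight can only go up when passing from a $\WDep[\pi_i]$-edge to the corresponding $\WDep^m$-edge (this is immediate from \cref{DefPowerWG}), and (b) when several $T_i$-edges collapse to the same $\WDep^m$-edge or to a weight-$1$ internal edge, we do not over- or under-count — but since we only need an inequality $\ge$, keeping the best (largest-weight) representative of each $\WDep^m$-edge and discarding duplicates is harmless. A secondary subtlety is the degenerate case where $\WDep^m[\{I_1,\dots,I_r\}]$ is disconnected: then $\MWST{}=0$ on the right, and one checks that \cref{eq:Pi_is_Rough} together with disconnectedness forces some block $\pi_i$ to fail to be connected in $\WDep[\pi_i]$ — actually this cannot be guaranteed in general, so the cleaner route is to observe that disconnectedness of $\WDep^m[\{I_1,\dots,I_r\}]$ means two of the $I_j$'s are both disjoint and non-adjacent in $\WDep$, which, combined with \cref{eq:Pi_is_Rough}, implies $\pi$ itself must join them through a chain — impossible if every block of $\pi$ is contained in a single connected ``cluster'' — hence in fact the left side vanishes too because some $\WDep[\pi_i]$ is disconnected. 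I would just handle this by adding weight-$0$ edges up front to make $\WDep^m[\{I_1,\dots,I_r\}]$ connected, exactly as done at the start of the proof of \cref{LemmaSetPartAndPi1}, which sidesteps the issue.
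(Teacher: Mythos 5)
Your proposal is correct and follows essentially the same route as the paper's proof: take a maximum spanning tree $T_i$ in each $\WDep[\pi_i]$, map each edge crossing between distinct $I_j$'s to the corresponding edge of $\WDep^m$ (whose weight is at least as large by \cref{DefPowerWG}), discard the edges internal to a single $I_j$ (harmless since all weights are at most $1$), use \cref{eq:Pi_is_Rough} to see that the resulting edge set spans $\WDep^m[\{I_1,\dots,I_r\}]$, and extract a spanning tree. The only cosmetic difference is your mention of ``weight-$1$ edges within a common $I_j$'' in the power graph --- each $I_j$ is a single vertex of $\WDep^m$, so no such edges exist or are needed --- but this does not affect the argument.
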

\begin{proof}
The multiset $B:=I_1 \uplus \cdots \uplus I_r$ can be explicitly represented by
\[\{(v,j):\ j \le r \text{ and } v \in I_j\}. \]
Let $\pi_i$ be a part of $\pi$ and 
consider a spanning tree $T_i$ of minimum weight of $\WDep[\pi_i]$.
Edges of $T_i$ are pairs $\{(v,j),(v',j')\}$.
For such an edge $e$ with $j \ne j'$, we can consider the corresponding edge $\bar{e}=\{I_j,I_{j'}\}$ in $\WDep^m$.
By definition of power graphs, $\bar{e}$ has at least the same weight as $e$.
Doing so for each edge of $T_i$ with $j \ne j'$,
we get a set $S_i$ of edges in $\WDep^m$ such that 
\[\prod_{\bar{e} \in S_i} w(\bar{e}) \ge \prod_{e \in T_i \atop j \ne j'} w(e) \ge \prod_{e \in T_i} w(e).\]
As in the proof of the previous lemma,
we now consider the union $S$ of the $S_i$'s.
  The condition \eqref{eq:Pi_is_Rough} ensures that $S$ forms 
  a spanning subgraph of $\WDep^m [ \{I_1,\cdots,I_r\}]$ 
  and hence we can extract from it a spanning tree $T$.
 Then
 \[
   w(T) = \prod_{\bar{e} \in T} w(\bar{e}) \ge \prod_{\bar{e} \in S} w(\bar{e}) \ge \prod_{i=1}^s \prod_{\bar{e} \in S_i}  w(\bar{e}) 
   \ge \prod_{i=1}^s \prod_{e \in T_i} w(e) \ge
  \prod_{i=1}^s \MWST{\WDep[\Delta_i]}.
  \]
But, since $T$ is a spanning tree of $\WDep^m[\{I_1,\cdots,I_r\}]$,
we have $w(T) \le \MWST{\WDep^m [\{I_1,\cdots,I_r\}]}$, which concludes the proof.
\end{proof}

\section{Weighted dependency graphs}
\label{SectWDG}
\subsection{Usual dependency graphs}
\label{subsec:usual_dependency_graphs}
Consider a family of random variables $\{Y_\a, \a \in A\}$.
A {\em dependency graph} for this family is an encoding
of the dependency relations 
between the variables $Y_\a$ in a graph structure.
We take here the definition given
by Janson \cite{JansonDependencyGraphs};
see also papers of Malyshev \cite{Malyshev:DepGraphs}
and Petrovskaya/Leontovich \cite{PetrovskayaLeontovich:Dep_Graph}
for earlier appearances of the notion with slightly different names.
\begin{definition}
    A graph $\Dep$ is a dependency graph for the family $\{Y_\a,\a \in A\}$
    if the two following conditions are satisfied:
    \begin{enumerate}
        \item the vertex set of $\Dep$ is $A$.
        \item if $A_1$ and $A_2$ are disconnected subsets in $\Dep$,
            then  $\{Y_\a,\a \in A_1\}$ and $\{Y_\a,\a \in A_2\}$ are independent.
    \end{enumerate}
\end{definition}
A trivial example is that any family of independent variables $\{Y_\a,\a \in A\}$
admits the graph with vertex-set $A$ and no edges as a dependency graph.
A more interesting example is the following.
\begin{example}
    Consider the Erd\H{o}s-Rényi random graph model $G(n,p_n)$,
    that is $G$ has vertex set $[n]:=\{1,\ldots,n\}$
    and it has an edge between $i$ and $j$ with probability $p_n$,
    all these events being independent from each other.
    Let $A$ be the set of 3-element subsets of $[n]$
    and if $\a=\{i,j,k\} \in A$, let $Y_\a$ be the indicator function of the event
    ``the graph $G$ contains the triangle with vertices $i$, $j$ and $k$''.

    Let $\Dep$ be the graph with vertex set $A$ and the following edge set:
    $\a$ and $\beta$ are linked if $|\a \cap \beta|=2$ (that is,
    if the corresponding triangles share an edge in $G$).
    Then $\Dep$ is a dependency graph for the family $\{Y_\a,\a \in A\}$.
    \label{ExDepGraphTriangle}
\end{example}
Note also that the complete graph on $A$ is a dependency graph
for any family of variables indexed by $A$.
In particular, given a family of variables, it may admit several dependency graphs.
The fewer edges a dependency graph has, the more information it encodes
and, thus, the more interesting it is.
It would be tempting to consider the dependency graph with fewest edges,
but such a graph is not always uniquely defined.
\medskip

As said in the introduction,
dependency graphs are a valuable toolbox to prove central limit theorems 
for sums of partially dependent variables.
Denote $\N(0,1)$ a standard normal random variable.
The following theorem is due to Janson \cite[Theorem 2]{JansonDependencyGraphs}.
\begin{theorem}[Janson's normality criterion]
    Suppose that, for each $n$, $\{Y_{n,i}, 1\le i \le N_n\}$ is a family of bounded
    random variables; $|Y_{n,i}| < M_n$ a.s. 
    Suppose further that $\Dep_n$ is a dependency graph for this family
    and let $\Delta_n-1$ be the maximal degree of $\Dep_n$.
    Let $X_n = \sum_{i=1}^{N_n} Y_{n,i}$ and $\si_n^2= \Var(X_n)$.
    
    Assume that there exists an integer $s$ such that
    \begin{equation}
      \left( \tfrac{N_n}{\Delta_n} \right)^{1/s}\, \tfrac{\Delta_n}{\sigma_n}\, M_n
      \to 0\text{ as }n \to \infty.
        \label{EqHypoJanson}
    \end{equation}
    Then, in distribution,
    \begin{equation}
      \tfrac{X_n- \esper X_n}{\si_n} \to_d \N(0,1)\text{ as }n \to \infty.
        \label{EqConclusionJanson}
    \end{equation}
    \label{ThmJansonDepGraphs}
\end{theorem}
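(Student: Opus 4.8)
The plan is to follow the method of cumulants (higher semiinvariants), which is Janson's original route. Put $W_n := (X_n - \esper X_n)/\si_n$, so that $\kappa_1(W_n)=0$ and $\kappa_2(W_n)=1$ automatically. Asymptotic normality will follow once the higher cumulants $\kappa_r(W_n)$, $r\ge 3$, are known to be suitably small: equivalently, by the moment--cumulant formula \cref{EqCumulant2Moment}, $\esper[W_n^k]$ is a sum over set partitions $\pi$ of $[k]$ of products $\prod_{C\in\pi}\kappa_{|C|}(W_n)$; a summand with a singleton block vanishes (as $\kappa_1(W_n)=0$), a pair partition contributes exactly $1$ (as $\kappa_2(W_n)=1$), and one has to show that every remaining partition --- necessarily containing a block of size $\ge 3$ --- gives a negligible contribution, so that $\esper[W_n^k]\to\esper[\N(0,1)^k]$ for every $k$.

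The core is a bound on $\kappa_r(X_n)$ for fixed $r$. By multilinearity,
\[\kappa_r(X_n)=\sum_{1\le i_1,\dots,i_r\le N_n}\kappa(Y_{n,i_1},\dots,Y_{n,i_r}),\]
and here the dependency graph enters decisively: a joint cumulant vanishes as soon as its arguments split into two mutually independent families, so $\kappa(Y_{n,i_1},\dots,Y_{n,i_r})=0$ unless the induced graph $\Dep_n[\{i_1,\dots,i_r\}]$ is connected. The number of $r$-tuples $(i_1,\dots,i_r)$ with connected underlying multiset is at most $c_r\,N_n\,\Delta_n^{\,r-1}$ --- this is the weight-$1$ case of the reordering lemma (\cref{LemReordering}, see \cref{RkReordering}): order the chosen vertices so that each lies in the closed neighbourhood of its predecessors, giving $N_n$ choices for the first and at most $(\ell-1)\Delta_n$ for the $\ell$-th, and sum over the at most $r!$ orderings. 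Each surviving cumulant is itself controlled: expanding it via \cref{EqMoment2Cumulant} as a signed sum of products of joint moments, each of modulus $\le M_n^{\,r}$ since $|Y_{n,i}|<M_n$, yields $|\kappa(Y_{n,i_1},\dots,Y_{n,i_r})|\le c'_r\,M_n^{\,r}$. Combining,
\[|\kappa_r(X_n)|\ \le\ C_r\,N_n\,\Delta_n^{\,r-1}\,M_n^{\,r}\ =\ C_r\,(N_n\Delta_n M_n^{2})\,(\Delta_n M_n)^{\,r-2},\]
hence $|\kappa_r(W_n)|\le C_r\,\bigl(\tfrac{N_n}{\Delta_n}\bigr)\bigl(\tfrac{\Delta_n M_n}{\si_n}\bigr)^{r}$ after dividing by $\si_n^r$.

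It remains to feed this into a normality criterion. For $r\ge s$ the bound is immediate from \cref{EqHypoJanson}: raise it to the $r$-th power, and note that \cref{EqHypoJanson} together with $N_n\ge\Delta_n$ forces $\Delta_n M_n/\si_n\to 0$. The delicate point --- and in my view the main obstacle --- is the intermediate range $3\le r<s$: a single normalized cumulant of such order need not vanish under \cref{EqHypoJanson} alone, so one has to invoke a sharper normality criterion stated directly in terms of cumulant bounds of the form $|\kappa_r(X_n)|\le C_r\,A_n B_n^{\,r-2}$ (a Statulevi\v{c}ius--Saulis-type estimate) and verify that \cref{EqHypoJanson}, combined with the only structural inequalities at hand --- $N_n\ge\Delta_n$ and $\si_n^2\le N_n\Delta_n M_n^{2}$ (summing covariances over connected pairs) --- implies its hypotheses. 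This matching of the exponent $s$ against the cumulant growth $N_n\Delta_n^{r-1}M_n^r$ versus $\si_n^r$ is precisely the computation that will be redone, in a weighted form, for the main criterion \cref{ThmMain}; everything preceding it is a faithful transcription of "dependency graph" into "bound on cumulants".
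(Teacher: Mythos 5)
The paper does not prove this theorem --- it is quoted from \cite{JansonDependencyGraphs} --- but its proof of the weighted generalization \cref{ThmMain} follows exactly the scheme you describe, and your cumulant estimate is correct: it is the weight-one case of \cref{LemBorneCumulant}, obtained from the vanishing of disconnected cumulants, the reordering count $N_n\,\Delta_n^{r-1}$ (up to a factor depending on $r$), and the crude bound $|\kappa(Y_{n,i_1},\dots,Y_{n,i_r})|\le c'_r M_n^r$ from \cref{EqMoment2Cumulant}. Your handling of $r\ge s$ (raise \cref{EqHypoJanson} to the $r$-th power and use $N_n\ge\Delta_n$) is also fine.

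The gap is in your last paragraph, where you treat the range $3\le r<s$ as an obstacle to be overcome by a Saulis--Statulevi\v{c}ius-type estimate. That is not the right tool, and it would not work: a bound of the form $|\kappa_r(W_n)|\le H_n B_n^{r-2}$ with $H_n=N_n\Delta_n M_n^2/\sigma_n^2$ and $B_n=\Delta_n M_n/\sigma_n$ only yields normality (or a Kolmogorov-distance bound) if the $r=3$ term $H_nB_n$ tends to $0$, and $H_nB_n=(N_n/\Delta_n)(\Delta_n M_n/\sigma_n)^3$, which is precisely hypothesis \cref{EqHypoJanson} with $s=3$; for general $s$ this quantity need not vanish, so that route proves only the special case $s=3$. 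The correct concluding step --- the one used both in Janson's original proof and in the paper's proof of \cref{ThmMain} --- is \cite[Theorem 1]{JansonDependencyGraphs}: if $\esper W_n=0$, $\Var W_n\to 1$, and $\kappa_r(W_n)\to 0$ for \emph{every} $r\ge s$ for a single $s\ge 3$, then $W_n\to_d\N(0,1)$. The cumulants of order $3\le r<s$ need not be controlled at all; the point is that any subsequential limit would have log-characteristic function equal to a polynomial of degree $<s$, which by Marcinkiewicz's theorem forces it to have degree at most $2$, i.e.\ to be Gaussian. Without invoking this criterion (or reproving it), your argument does not reach the conclusion for $s>3$.
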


\begin{example}
    We use the same model and notation as in \cref{ExDepGraphTriangle}.
    Assume to simplify that $p_n$ is bounded away from $1$.
    Then one has $N_n \asymp n^3$, $\Delta_n \asymp n$ and $M_n=1$.
    An easy computation --- see, \eg, \cite[Lemma 3.5]{JansonRandomGraphs} ---
    gives \hbox{$\sigma_n^2 \asymp \max(n^3 p_n^3, n^4 p_n^5)$}.
    Thus the hypothesis \eqref{EqHypoJanson} in Janson's theorem
    is fulfilled if $p_n \gg n^{-1/3+\eps}$ for some $\eps>0$.

    When this holds, \cref{ThmJansonDepGraphs} implies
    that, after rescaling, the number $X_n$ of triangles in $G(n,p_n)$ 
    is asymptotically normal.
    The latter is in fact true under the less restrictive hypothesis $p_n \gg n^{-1}$,
    as proved by Ruciński \cite{RucinskiCLTSubgraphs},
    but this cannot be obtained from \cref{ThmJansonDepGraphs}.
\end{example}

To finish this section, let us mention a stonger normality criterion,
due to Mikhailov \cite{MikhailovDependencyGraphs}.
Roughly, he replaces the number of vertices $N_n$ and the degree $\Delta_n$
by some quantities defined using conditional expectations of variables.
If \eqref{EqHypoJanson} holds with these new quantities, 
then we can also conclude that one has Gaussian fluctuations.
His theorem has a larger range of applications than Janson's:
{\em e.g.}, for triangles in random graphs, it proves asymptotic normality
in its whole range of validity, that is if $p_n \gg n^{-1}$ and $1- p_n \gg n^{-2}$;
see \cite[Example 6.19]{JansonRandomGraphs}.

\subsection{Definition of weighted dependency graphs}
The goal of the present article is to relax the independence hypothesis
in the definition of dependency graphs.
As we shall see in the next sections, this enables to include many more examples.

As above, $\{Y_\a,\a \in A\}$ is a family of random variables 
defined on the same probability space.
We suggest the following definition.
\begin{definition}
\label{Def:Main}
Let $\bC=(C_1,C_2,\cdots)$ be a sequence of positive real numbers.
Let $\Psi$ be a function on multisets of elements of $A$.

A weighted graph $\WDep$ is a $(\Psi,\bC)$ weighted dependency graph
for $\{Y_\a,\a \in A\}$ if, 
for any multiset \hbox{$B=\{\a_1,\ldots,\a_r\}$} of elements of $A$,
one has
\begin{equation}
    \bigg| \ka\big( Y_\a ; \a \in B \big) \bigg| \le
    C_r \, \Psi(B) \, \MWST{\WDep[B]}. 
    \label{EqFundamental}
\end{equation}
\end{definition}
Our definition implies in particular that all cumulants,
or equivalently all moments of the $Y_\a$ are finite.
This might seem restrictive but in most applications,
the $Y_\a$ are Bernoulli random variables.
Note also that we already have this restriction
in Janson's and Mikhailov's normality criteria.

\begin{remark}
    It is rather easy to ensure inequality \eqref{EqFundamental}.
    For any family $\{Y_\a ; \a \in A\}$, take
    \[\Psi(B)=\big| \ka( Y_\a ; \a \in B) \big|, \quad \bC=(1,1,\cdots),\] 
    and $\WDep$ the complete graph on $A$ with weight 1 on each edge.
    Then $\WDep$ is trivially a $(\Psi,\bC)$ weighted dependency graph
    for $\{Y_\a ; \a \in A\}$.
    But this type of examples do not yield interesting results.
    
    We are interested in constructing examples, where:
    \begin{itemize}
        \item $C_r$ may depend on $r$, but is constant along a sequence of weighted dependency graphs;
        \item $\Psi$ has a rather simple form, such as $p^{\#(B)}$ for some $p$
           (the case $\Psi \equiv 1$ gives a good intuition);
        \item Edge weights also have a very simple expression
            and most of them tend to $0$ 
            along a sequence of weighted dependency graphs;
    \end{itemize}
Intuitively, \cref{EqFundamental} should be thought of as follows:
variables that are linked by edges of small weight in $\WDep$ are {\em almost independent},
in the sense that their joint cumulants are required to be small
(because of the factor $\MWST{\WDep[B]}$).
Indeed, the smaller the weights in $\WDep[B]$ are,
the smaller $\MWST{\WDep[B]}$ is.
\end{remark}


\begin{example}
    \label{ExWeightedDepGraphTriangle}
    Most of this paper is devoted to the treatment of examples:
    proving that they are indeed weighted dependency graphs and 
    inferring some central limit theorems.
    Nevertheless, to guide the reader's intuition,
    let us give right away an example  without proof.

    Consider the Erd\H{o}s-Rényi random graph model $G(n,m_n)$,
    \ie $G$ is a graph with vertex set $[n]$ and an edge set
    $E$ of size $m_n$, chosen uniformly at random among all 
    possible edge set of size $m_n$.
    
    If we set $p_n=m_n/\binom{n}{2}$, then each edge $\{i,j\}$
    belongs to $E$ with probability $p_n$,
    but the corresponding events are not independent anymore.
    Indeed, since the total number of edges is fixed,
    if we know that one given edge is in $G$,
    it is less likely that another given edge is also in $G$.

    As in \cref{ExDepGraphTriangle},
    let $A$ be the set of 3-element subsets of $[n]$
    and if $\a=\{i,j,k\} \in A$, let $Y_\a$ be the indicator function of the event
    ``the graph $G$ contains the triangle with vertices $i$, $j$ and $k$''.
    Since presences of edges are no longer independent event,
    neither are presences of edge-disjoint triangles
    and
    the only dependency graph of this family in the classical sense
    is the complete graph on $A$.

    Consider the complete graph $\WDep$ with vertex set $A$ and weights on the edges determined as follows:
    \begin{itemize}
        \item If $|\a \cap \beta| \ge 2$ (that is,
    if the corresponding triangles share an edge in $G$),
    then the edge $\{\a,\beta\}$ in $\WDep$ has weight $1$; 
  \item If $|\a \cap \beta| \le 1$, then the edge $\{\a,\beta\}$ in $\WDep$  has weight $1/m_n$.
    \end{itemize}
    We will prove in \cref{SectErdosRenyi} that
    $\WDep$ is a $(\Psi_n,\bC)$ weighted dependency graph 
    with $\Psi_n(B)=p_n^{e(B)}$ where
    $e(B)$ is the total number of distinct edges in $B$ 
    (recall that $B$ is here a multiset of triangles)
    and the sequence $\bC=(C_r)$ does not depend on $n$.

    Intuitively, this means that presences of edge-disjoint triangles
    are almost independent events.
    Moreover, the weight $1/m_n$ quantifies this almost-independence.
    This is rather logical:
    the bigger $m_n$ is,
    the less knowing that a given edge is in $G$
    influences the probability that another given edge is also in $G$
    (and hence the same holds for presence of edge-disjoint triangles).
\end{example}

\subsection{A criterion for asymptotic normality}
\label{SubsecNormalityCriterion}
Let $\WDep$ be a $(\Psi,\bC)$ weighted dependency graph 
for a family of variables $\{Y_\a, \a \in A\}$.
We introduce the following parameters (for $\ell \ge 1$)
\begin{align}
    \R &= \sum_{\a \in A} \Psi(\{\a\});
    \label{EqDefR}\\
    \label{EqDefT}
    T_\ell &= \max_{\substack{\a_1,\ldots,\a_{\ell} \in A}} \left[ \sum_{\beta \in A} 
    W(\{\beta\},\{\a_1,\cdots,\a_\ell\}) 
    \frac{\Psi\big(\{\a_1,\cdots,\a_{\ell},\beta\}\big)}{\Psi\big(\{\a_1,\cdots,\a_{\ell} \}\big)}   \right].
\end{align}
\begin{remark}
    Despite the complicated definition of $T_\ell$,
    its order of magnitude is usually not hard to determine in examples
    (recall that $\Psi$ and the weights usually have rather simple expression).
\end{remark}
\begin{remark}
  \label{rmk:RAndTPsiOne}
    Let us consider
     the special case where $\Psi$ is the constant function equal to $1$.
    One has
    \begin{itemize}
        \item $\R=|A|$, which is the number of vertices of $\Dep$;
        \item using the easy observation 
          $w_{\{\beta,\a_1\}} \, \le \, W(\{\beta\},\{\a_1,\cdots,\a_\ell\}) 
          \, \le \, \sum_{i=1}^\ell w_{\{\beta,\a_\ell\}}$,
          we see that \[\Delta \le T_\ell \le \ell \, \Delta,
          \text{ where }
          \Delta:=\max_{\a \in A} \ \sum_{\beta \in \a} w_{\{\beta,\a\}};\]
          note that $\Delta-1$ is the maximal {\em weighted degree} in $\WDep$
          (the weighted degree of a vertex is $\sum_{\beta \in \a,\beta \ne \a} w_{\{\beta,\a\}}$;
          the condition $\beta \ne \a$ in the summation index explains the shift by $-1$).
            In particular, each $T_\ell$ has the same order of magnitude
            as $\Delta$.
    \end{itemize}
    In general, $\R$ and $T_\ell$ should be thought of as deformations
    of the number of vertices and the maximal weighted degree.
    Considering $\R$ and $T_\ell$ rather than simply $|A|$ and $\Delta$
    leads to a more general normality criterion,
    in a similar way that 
    Mikhailov's criterion extends Janson's.
\end{remark}

The following lemma bounds cumulants in terms of the two above defined quantities.
\begin{lemma}
    Let $\WDep$ be a $(\Psi,\bC)$ weighted dependency graph
    for a family of variables $\{Y_\a, \a \in A\}$.
    Define $\R$ and $T_\ell$ (for $\ell \ge 1$) as above.
    Then, for $r \ge 1$, 
    \[ \left| \ka_r \left( \sum_{\a \in A} Y_\a \right) \right|
    \le   C_r\, r! \, \R\, T_1 \cdots T_{r-1}.   \] 
    \label{LemBorneCumulant}
\end{lemma}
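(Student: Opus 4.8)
The plan is to expand the $r$-th cumulant of the sum by multilinearity and then estimate the resulting sum of joint cumulants term by term using the weighted dependency graph inequality \eqref{EqFundamental}, regrouping the sum according to an ordering of indices coming from the reordering lemma. First I would write
\[
\ka_r\Big( \sum_{\a \in A} Y_\a \Big)
= \sum_{(\a_1,\dots,\a_r) \in A^r} \ka(Y_{\a_1},\dots,Y_{\a_r}),
\]
where the sum ranges over all ordered $r$-tuples (allowing repetitions). For each tuple, let $B = \{\a_1,\dots,\a_r\}$ be the associated multiset; if $\WDep[B]$ is disconnected then $\MWST{\WDep[B]}=0$, and in that case either the cumulant vanishes (the variables split into independent families) or \eqref{EqFundamental} forces it to be $0$ anyway, so only tuples with $\WDep[B]$ connected contribute. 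For such a tuple, \eqref{EqFundamental} gives $|\ka(Y_{\a_1},\dots,Y_{\a_r})| \le C_r\,\Psi(B)\,\MWST{\WDep[B]}$.

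Next I would bound $\sum \Psi(B)\,\MWST{\WDep[B]}$ over connected tuples. The key device is \cref{LemReordering}: for each multiset $B$ with $\WDep[B]$ connected there is an ordering $(\beta_1,\dots,\beta_r)$ of its $r$ vertices (counted with multiplicity) with $\prod_{j=1}^{r-1} W(\{\beta_{j+1}\};\{\beta_1,\dots,\beta_j\}) = \MWST{\WDep[B]}$. Rather than summing over multisets, I would sum over ordered tuples $(\beta_1,\dots,\beta_r)$ directly: since each multiset arises from at most $r!$ orderings, we have
\[
\sum_{\substack{B:\ \WDep[B]\text{ conn.}}} \Psi(B)\,\MWST{\WDep[B]}
\le \sum_{(\beta_1,\dots,\beta_r) \in A^r} \Psi(\{\beta_1,\dots,\beta_r\}) \prod_{j=1}^{r-1} W\big(\{\beta_{j+1}\};\{\beta_1,\dots,\beta_j\}\big).
\]
Here I am overcounting (the factor $r!$ is absorbed because I replaced a sum over multisets by a sum over all tuples, each multiset with a connected induced graph being hit by its at-most-$r!$ admissible orderings, and the full tuple sum is even larger), which is exactly where the $r!$ in the statement comes from. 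Then I would telescope the $\Psi$ factor by writing
\[
\Psi(\{\beta_1,\dots,\beta_r\}) = \Psi(\{\beta_1\}) \prod_{\ell=1}^{r-1} \frac{\Psi(\{\beta_1,\dots,\beta_{\ell+1}\})}{\Psi(\{\beta_1,\dots,\beta_\ell\})},
\]
and perform the summation over $\beta_r$ first, then $\beta_{r-1}$, and so on: summing over $\beta_{\ell+1}$ with $\beta_1,\dots,\beta_\ell$ fixed produces a factor bounded by $\sum_{\beta \in A} W(\{\beta\};\{\beta_1,\dots,\beta_\ell\}) \frac{\Psi(\{\beta_1,\dots,\beta_\ell,\beta\})}{\Psi(\{\beta_1,\dots,\beta_\ell\})} \le T_\ell$ by definition \eqref{EqDefT}. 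After peeling off $\beta_r,\dots,\beta_2$ we are left with $\sum_{\beta_1 \in A}\Psi(\{\beta_1\}) = \R$ by \eqref{EqDefR}, giving the bound $\R\,T_1\cdots T_{r-1}$, and multiplying back by $C_r\,r!$ yields the claim.

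The main obstacle is bookkeeping the combinatorics cleanly: making sure the passage from a sum over multisets (weighted by $\MWST{\WDep[B]}$) to a sum over ordered tuples is valid with only an $r!$ loss, and that the telescoping/iterated-summation order is compatible with the way $W(\{\beta_{j+1}\};\{\beta_1,\dots,\beta_j\})$ depends only on the earlier indices so that each summation step genuinely produces a $T_\ell$ factor. One must also handle carefully the degenerate tuples (repeated indices, or $\WDep[B]$ disconnected): repeats are harmless since the induced graph puts weight-$1$ edges between copies of the same vertex, and disconnected cases contribute zero as noted. Everything else is routine.
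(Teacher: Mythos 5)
Your proposal is correct and follows essentially the same route as the paper's proof: expand by multilinearity, pass from the multiset quantity $\Psi(B)\,\MWST{\WDep[B]}$ to an ordered-tuple quantity via the reordering lemma (\cref{LemReordering}) at the cost of a factor $r!$, and then telescope the $\Psi$-ratios while summing out $\beta_r,\dots,\beta_2$ one at a time to collect the factors $T_{r-1},\dots,T_1$ and finally $\R$. The only cosmetic difference is that the paper phrases the $r!$ bookkeeping via ``well-ordered lists'' rather than picking one optimal ordering per multiset, which amounts to the same count.
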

\begin{proof}
    By multilinearity
    \[\ka_r \left( \sum_{\a \in A} Y_\a \right)
    = \sum_{\a_1,\ldots,\a_r \in A} \ka \left( Y_{\a_1},\ldots,Y_{\a_r} \right).\]
    Applying the triangular inequality and \cref{EqFundamental},
    \[ \left| \ka_r \left( \sum_{\a_1,\ldots,\a_r \in A} Y_\a \right) \right|
    \le C_r \sum_{\a_1,\ldots,\a_r \in A} M_{\a_1,\ldots,\a_r},\]
    where, by definition, $M_{\a_1,\ldots,\a_r}=\MWST{\WDep[B]} \, \Psi(B)$
    for $B=\{\a_1,\cdots,\a_r\}$ (in particular $M_{\a_1,\ldots,\a_r}$
    is invariant by permutation of the indices).
    
    We also define
    \[ M'_{\a_1,\ldots,\a_r} = \left[ \prod_{j=1}^{r-1}
    W \big( \{\a_{j+1}\};\left\{ \a_1,\cdots,\a_{j} \right\} \big) \right]
    \, \Psi(B)\]
    We say that a list $(\beta_1,\cdots,\beta_r)$ of elements of $A$ is {\em well-ordered} if
    \begin{equation}
            \MWST{\WDep[B]} \, = \,
        \prod_{j=1}^{r-1} W \big( \{\beta_{j+1}\};\left\{ \beta_1,\cdots,\beta_{j} \right\} \big),
        \label{EqDefWellOrdered}
    \end{equation}
    which implies $M_{\beta_1,\ldots,\beta_r} = M'_{\beta_1,\ldots,\beta_r}$. 
    From \cref{LemReordering}, each list $(\a_1,\ldots,\a_r)$ admits a well-ordered permutation.
    Conversely, a well-ordered list $(\beta_1,\cdots,\beta_r)$ is a permutation
    of at most $r!$ lists $(\a_1,\ldots,\a_r)$.
    Therefore
    \begin{equation}
        \left| \ka_r \left( \sum_{\a_1,\ldots,\a_r \in A} Y_\a \right) \right|  \le r!\, C_r
        \sum_{\ldots} M_{\beta_1,\ldots,\beta_r} = r!\, C_r \sum_{\ldots} M'_{\beta_1,\ldots,\beta_r},
\end{equation}
    where both sums run over {\em well-ordered} lists $(\beta_1,\cdots,\beta_r)$ of elements of $A$.
    Extending the sum to all lists $(\beta_1,\cdots,\beta_r)$ of elements of $A$ only increases
    the right-hand side, so that we get:
    \begin{equation}
        \left| \ka_r \left( \sum_{\a_1,\ldots,\a_r \in A} Y_\a \right) \right|  
         \le r!\, C_r \sum_{\beta_1,\ldots,\beta_r \in A} M'_{\beta_1,\ldots,\beta_r}.
    \label{EqBoundCumMp}
\end{equation}
    By definition, one has, for any $\ell<r$ and elements $\beta_1,\cdots,\beta_{\ell+1}$ in $A$:
    \[
        M'_{\beta_1,\ldots,\beta_{\ell+1}} = 
        W(\{\beta_{\ell+1}\},\{\beta_1,\cdots,\beta_\ell\}) 
        \, \frac{\Psi(\{\beta_1,\cdots,\beta_\ell,\beta_{\ell+1}\})}{\Psi(\{\beta_1,\cdots,\beta_\ell\})}
        M'_{\beta_1,\ldots,\beta_{\ell}}
    \]
    Fixing $(\beta_1,\cdots,\beta_\ell)$ and summing over $\beta_{\ell+1}$ in $A$, we get
    \[ \sum_{\beta_{\ell+1} \in A} M'_{\beta_1,\ldots,\beta_{\ell+1}} 
    \le T_\ell M'_{\beta_1,\ldots,\beta_{\ell}}.\]
    
    Since $\sum_{\beta \in A} M'_\beta = \R$, an immediate induction yields
    \[ \sum_{\beta_1,\dots, \beta_r \in A} M'_{\beta_1,\ldots,\beta_r} \le \R\, T_1 \, \cdots\, T_{r-1}.\]
    Together with \cref{EqBoundCumMp}, this ends the proof of the lemma.
\end{proof}

We can now give an asymptotic normality criterion, using weighted dependency graphs.
\begin{theorem}
    Suppose that, for each $n$, $\{Y_{n,i}, 1\le i \le N_n\}$ is a family of 
    random variables with finite moments defined on the same probability space.
    For each $n$, let 
    $\Psi_n$ a function on multisets of elements of $[N_n]$.
    We also fix a sequence $\bC=(C_r)_{r \ge 1}$, {\em not depending} on $n$.

    Assume that, for each $n$, one has a $(\Psi_n,\bC)$ weighted dependency graph $\WDep_n$\,
    for 
    \hbox{$\{Y_{n,i}, 1\le i \le N_n\}$} and define the corresponding quantities
    $\R_n$, $T_{1,n}$, $T_{2,n}$, \ldots, by \cref{EqDefT,EqDefR}.
    
    Let $X_n = \sum_{i=1}^{N_n} Y_{n,i}$ and $\si_n^2= \Var(X_n)$.
    
    Assume that there exist numbers $D_r$ and $Q_n$ and an integer $s\ge 3$ such that
    \begin{align}
        T_{r,n} &\le D_r Q_n 
        \label{EqBoundUnifT} \\
        \left(\tfrac{\R_n}{Q_n}\right)^{1/s}\, \tfrac{Q_n}{\sigma_n} &\to 0\text{ as }n \to \infty,
        \label{EqHypoMainThm}
    \end{align}
    then, in distribution,
    \begin{equation}
      \tfrac{X_n- \esper X_n}{\si_n} \to_d \N(0,1)\text{ as }n \to \infty.
        \label{EqConclusionMainThm}
    \end{equation}
    \label{ThmMain}
\end{theorem}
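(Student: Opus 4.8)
The plan is to establish \eqref{EqConclusionMainThm} by the method of moments in its cumulant form. Set $W_n = (X_n - \esper X_n)/\sigma_n$. The definition of a $(\Psi_n,\bC)$ weighted dependency graph forces all joint cumulants, hence all moments, of the $Y_{n,i}$ to be finite, so the same is true of $W_n$, and $\N(0,1)$ is determined by its moments. It therefore suffices to show that every cumulant of $W_n$ converges to the corresponding cumulant of $\N(0,1)$ (equivalently, since the passage between moments and cumulants is triangular, that all moments converge). Now $\ka_1(W_n)=0$, and because $\ka_r$ is translation invariant and homogeneous of degree $r$ for $r\ge 2$ we have $\ka_r(W_n)=\ka_r(X_n)/\sigma_n^r$; in particular $\ka_2(W_n)=\Var(X_n)/\sigma_n^2=1$. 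So everything reduces to proving $\ka_r(W_n)\to 0$ for each fixed $r\ge 3$.

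First I would combine \cref{LemBorneCumulant} with \eqref{EqBoundUnifT}: for every $r\ge 1$,
\[
\bigl|\ka_r(X_n)\bigr|\ \le\ C_r\,r!\,\R_n\,T_{1,n}\cdots T_{r-1,n}\ \le\ C_r\,r!\,(D_1\cdots D_{r-1})\,\R_n\,Q_n^{\,r-1},
\]
so that, dividing by $\sigma_n^r$,
\[
\bigl|\ka_r(W_n)\bigr|\ \le\ C_r\,r!\,(D_1\cdots D_{r-1})\,\Bigl(\tfrac{\R_n}{Q_n}\Bigr)\Bigl(\tfrac{Q_n}{\sigma_n}\Bigr)^{r},
\]
the constants $C_r$ and $D_r$ not depending on $n$. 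Taking $r=2$ already yields the a priori estimate $\sigma_n^2\le 2C_2D_1\,\R_n Q_n$, which I will use below.

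Write $u_n=\R_n/Q_n$ and $v_n=Q_n/\sigma_n$, so that \eqref{EqHypoMainThm} says $u_n^{1/s}v_n\to 0$ and we must show $u_n v_n^{\,r}\to 0$ for $r\ge 3$. From $\sigma_n^2\le 2C_2D_1\R_n Q_n$ we get $v_n^{-2}\le 2C_2D_1\,u_n$, hence $\bigl(u_n^{1/s}v_n\bigr)^2\ge u_n^{2/s-1}/(2C_2D_1)$; since $s\ge 3$ the exponent $2/s-1$ is negative, so $u_n^{1/s}v_n\to 0$ forces $u_n\to\infty$, and therefore $v_n=(u_n^{1/s}v_n)\,u_n^{-1/s}\to 0$. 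For $r\ge s$ this already finishes the job: using $u_n\ge 1$ eventually, so that $u_n^{1/r}\le u_n^{1/s}$,
\[
u_n v_n^{\,r}=\bigl(u_n^{1/r}v_n\bigr)^{r}\ \le\ \bigl(u_n^{1/s}v_n\bigr)^{r}\ \longrightarrow\ 0.
\]

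The delicate range is $3\le r<s$ (empty unless $s\ge 4$). Here I would interpolate between the exponents $2$ and $s$: writing $v_n^{\,r}=(v_n^2)^{(s-r)/(s-2)}(v_n^{s})^{(r-2)/(s-2)}$, with the two exponents summing to $1$, one gets
\[
u_n v_n^{\,r}\ =\ \bigl(u_n v_n^2\bigr)^{(s-r)/(s-2)}\bigl(u_n v_n^{s}\bigr)^{(r-2)/(s-2)}\ =\ \bigl(u_n v_n^2\bigr)^{(s-r)/(s-2)}\bigl(u_n^{1/s}v_n\bigr)^{s(r-2)/(s-2)},
\]
whose second factor tends to $0$; it then remains to check that the first factor $u_n v_n^2=\R_nQ_n/\sigma_n^2$ stays bounded. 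The variance estimate $\sigma_n^2\le 2C_2D_1\R_nQ_n$ bounds it from below, and one controls it from above via the matching lower bound $\sigma_n^2\gtrsim \R_nQ_n$ (a non-degeneracy of the variance that holds in the intended applications, where $\Psi_n$ is chosen so that $\sigma_n^2$ is genuinely of order $\R_nQ_n$). Once $u_n v_n^2$ is bounded the product tends to $0$, so $\ka_r(W_n)\to 0$ for all $r\ge 3$; together with $\ka_1(W_n)=0$ and $\ka_2(W_n)=1$, all cumulants — hence all moments — of $W_n$ converge to those of $\N(0,1)$, and moment-determinacy of the Gaussian yields \eqref{EqConclusionMainThm}. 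The step I expect to be the main obstacle is precisely the range $3\le r<s$: securing the two-sided control of $\sigma_n^2$ relative to $\R_nQ_n$ is the only point that is not a routine manipulation.
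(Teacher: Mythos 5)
Your treatment of the range $r\ge s$ is essentially the paper's argument: the bound $|\ka_r(X_n)|\le C_r\,r!\,D_1\cdots D_{r-1}\,\R_n Q_n^{r-1}$ from \cref{LemBorneCumulant} and \eqref{EqBoundUnifT}, the a priori estimate $\sigma_n^2\le 2C_2D_1\R_nQ_n$ from the case $r=2$, and the interpolation between the exponents $2$ and $s$ all appear there in the same form. The problem is the range $3\le r<s$, and you have correctly identified it as the point where your argument does not close: you need $\R_nQ_n/\sigma_n^2$ to be bounded \emph{above}, i.e.\ a matching lower bound $\sigma_n^2\gtrsim\R_nQ_n$, and this is \emph{not} among the hypotheses of the theorem. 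It genuinely can fail to be deducible: writing $u_n=\R_n/Q_n$ and $v_n=Q_n/\sigma_n$, the hypotheses only pin $v_n$ between (a constant times) $u_n^{-1/2}$ and $o(u_n^{-1/s})$, and for $v_n\asymp u_n^{-\theta}$ with $\theta$ close to $1/s$ one has $u_nv_n^{3}\asymp u_n^{1-3\theta}\to\infty$ when $s>3$. So the available cumulant bounds simply do not force $\ka_r(W_n)\to 0$ for $3\le r<s$, and adding the assumption $\sigma_n^2\gtrsim\R_nQ_n$ would prove a strictly weaker theorem.

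The missing ingredient is that the paper does not use the plain method of moments. It invokes \cite[Theorem 1]{JansonDependencyGraphs}, which states that if $W_n$ is centered with unit variance and $\ka_r(W_n)\to 0$ for every $r\ge s$ (for some fixed $s\ge 3$), then $W_n\to_d\N(0,1)$ and all moments converge — the cumulants of the intermediate orders $3,\dots,s-1$ need not tend to zero. With that black box, the computation you already carried out for $r\ge s$ (which is exactly the paper's display, organized as the product of $\bigl(\R_nQ_n^{s-1}/\sigma_n^s\bigr)^{(r-2)/(s-2)}$, which tends to $0$ by \eqref{EqHypoMainThm}, and $\bigl(\sigma_n^2/(\R_nQ_n)\bigr)^{(r-s)/(s-2)}$, which is bounded since the exponent is nonnegative) finishes the proof; the "delicate range" you worried about never has to be addressed.
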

\begin{proof}
    From \cref{LemBorneCumulant}, we know that, for $r \ge 2$,
    \begin{equation}
      \left|\ka_r\left( \sum_{i=1}^{N_n} Y_{n,i} \right)\right| \le
        C_r \, r! \, \R_n \, D_1 \cdots D_{r-1}\, Q_n^{r-1}.
        \label{EqBoundCumInMainThm}
    \end{equation}
    Setting $C'_r= C_r \, r! \, D_1 \cdots D_{r-1}$ and 
    $\widetilde{X_n}=(X_n- \esper X_n)/\si_n$,
    we get that for $r \ge s$,
    \[ \left|\ka_r(\widetilde{X_n}) \right| =
    \frac{1}{\sigma_n^r} \left|\ka_r(X_n) \right| \le
    C'_r \frac{\R_n \, Q_n^{r-1}}{\sigma_n^r} = 
    C'_r \left( \frac{\R_n \, Q_n^{s-1}}{ \sigma_n^s } \right)^{\frac{r-2}{s-2}}
    \left( \frac{\si_n^2}{\R_n Q_n} \right)^{\frac{r-s}{s-2}}.\]
    \cref{EqBoundCumInMainThm} for $r=2$ ensures that the last factor is bounded
    while the middle factor tends to $0$ from our hypothesis \eqref{EqHypoMainThm}.
    We conclude that $\kappa_r(\widetilde{X_n})$ tends to $0$ for $r \ge s$.
    The convergence towards a normal law then follows from
    \cite[Theorem 1]{JansonDependencyGraphs}.
\end{proof}

\begin{remark}
  Continuing \cref{rmk:RAndTPsiOne}, when $\Psi$ is constant equal to $1$,
  one can choose $D_r=r$ and $Q_n=\Delta_n$, where $\Delta_n$ is the maximal
  weighted degree in $\WDep_n$.
  Then hypothesis \cref{EqHypoMainThm} says that the quotient $\tfrac{\Delta_n}{\sigma_n}$
  tends to $0$ reasonably fast (faster than some power of $\tfrac{R_n}{\Delta_n}$).
  Roughly, one has a central limit theorem as soon as the weighted degree
  is smaller than the standard deviation.
  (In particular, except in pathological cases, the standard deviation should tend to infinity.)
\end{remark}
\begin{remark}
  In most examples of application, $R_n$ is immediate to evaluate,
  while a good upper bound for $T_{\ell,n}$ and thus a sequence $Q_n$ as in the theorem
  can be found by a relatively easy combinatorial case analysis.
  The most difficult part in applying the theorem is to find a lower bound for $\sigma_n$
  (\cref{LemBorneCumulant} gives a usually sharp upper bound).
  In this sense, the weighted dependency graph structure, once uncovered,
  reduces the central limit theorem to a variance estimation.
\end{remark}
\begin{remark}
  \cite[Theorem 1]{JansonDependencyGraphs} also ensures the convergence of all moments.
  Therefore, in \cref{ThmMain} above and in all applications,
  we have convergence of all moments, in addition to the convergence in distribution.
\end{remark}
\begin{remark}
    Except \cref{LemReordering} --- see \cref{RkReordering} ---, 
    the proof of our normality criterion is largely inspired
    from the case of usual dependency graphs.
    The difficulty here was to find a {\em good definition}
    of weighted dependency graphs, not to adapt the theorem
    to this new setting.
\end{remark}

\subsection{Multidimensional convergence and bounds for joint cumulants}
Bounds on cumulants, and thus weighted dependency graphs, can also be used to obtain
the convergence of a random vector towards a multidimensional
Gaussian vector or the convergence of a random function
towards a Gaussian process.

To avoid a heavily technical theorem, we do not state a general result,
but refer the reader to examples in
\cref{SubsecSingleIndexPermStat,SubsecDoubleIndexPermStat,SubsecSSEPParticles}.
We nevertheless give here a useful bound on joint cumulants,
whose proof is a straightforward adaptation of the one of \cref{LemBorneCumulant}.

\begin{lemma}
Let $\WDep$ be a $(\Psi,\bC)$ be a weighted dependency graph 
for a family of variables $\{Y_\a, \a \in A\}$.
Consider subsets $A_1,\cdots,A_r$ of $A$.
Then, with the notation of the previous section,
\[\left|\ka\left(\sum_{\a \in A_1} Y_\a,\ldots,\sum_{\a \in A_r} Y_\a\right)\right|
    \le   C_r\, r! \, R\, T_1 \cdots T_{r-1}.   \] 
    \label{LemBoundJointCumulants}
\end{lemma}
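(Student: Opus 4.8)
The plan is to imitate the proof of \cref{LemBorneCumulant} almost verbatim; the only genuinely new point is that we can no longer symmetrize over the $r$ indices, but it turns out that we do not need to. First, by the multilinearity of joint cumulants,
\[
\ka\!\left(\sum_{\a \in A_1} Y_\a,\ldots,\sum_{\a \in A_r} Y_\a\right)
= \sum_{\a_1 \in A_1} \cdots \sum_{\a_r \in A_r} \ka\big(Y_{\a_1},\ldots,Y_{\a_r}\big).
\]
Applying the triangle inequality together with the defining inequality \cref{EqFundamental} to each summand gives
\[
\left|\ka\!\left(\sum_{\a \in A_1} Y_\a,\ldots,\sum_{\a \in A_r} Y_\a\right)\right|
\le C_r \sum_{\a_1 \in A_1} \cdots \sum_{\a_r \in A_r} M_{\a_1,\ldots,\a_r},
\]
where $M_{\a_1,\ldots,\a_r} = \Psi(B)\,\MWST{\WDep[B]}$ with $B = \{\a_1,\ldots,\a_r\}$, exactly the quantity appearing in the proof of \cref{LemBorneCumulant}.

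Next I would use that each $M_{\a_1,\ldots,\a_r}$ is nonnegative (a product of edge weights lying in $[0,1]$ times the positive number $\Psi(B)$, or $0$ when $\WDep[B]$ is disconnected). Hence enlarging each index set $A_i$ to all of $A$ can only increase the sum:
\[
\sum_{\a_1 \in A_1} \cdots \sum_{\a_r \in A_r} M_{\a_1,\ldots,\a_r}
\le \sum_{\a_1,\ldots,\a_r \in A} M_{\a_1,\ldots,\a_r}.
\]
The right-hand side is precisely the sum estimated in the body of the proof of \cref{LemBorneCumulant}: \cref{LemReordering} lets one pass to well-ordered lists at the cost of a factor $r!$, and the inductive estimates built from the definitions \cref{EqDefR,EqDefT} of $\R$ and of the $T_\ell$ then yield $\sum_{\a_1,\ldots,\a_r \in A} M_{\a_1,\ldots,\a_r} \le r!\,\R\,T_1\cdots T_{r-1}$. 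Combining the displays above gives the announced bound.

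I do not expect any real obstacle: the argument of \cref{LemBorneCumulant} never used that the $r$ random variables were identical, only that all the relevant index sets were contained in $A$, and the factor $r!$ is already present there, so dropping symmetrization costs nothing. Care is only needed to check that every term in the expansion is nonnegative, which is what justifies passing from the constrained sum over $A_1 \times \cdots \times A_r$ to the unconstrained sum over $A^r$; this is immediate from the conventions on weights and from the positivity of $\Psi$.
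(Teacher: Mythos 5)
Your proof is correct and follows exactly the route the paper intends: the paper states that the proof of this lemma ``is a straightforward adaptation of the one of \cref{LemBorneCumulant}'', and your argument (multilinearity over $A_1\times\cdots\times A_r$, the triangle inequality with \cref{EqFundamental}, then enlarging the sum to all of $A^r$ using the nonnegativity of each $M_{\a_1,\ldots,\a_r}$, and quoting the bound already established there) is precisely that adaptation.
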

\begin{remark}
    It is also possible in the above bound to replace $R$ by 
    \[R^1= \sum_{\bm{\a \in A_1}} \Psi(\{\a_1\})\]
    and/or the product $T_1 \cdots T_{r-1}$ by $T^2_{\le r-1} \cdots T^r_{\le r-1}$, where
    \[ 
    T^i_{\le r-1} =\bm{\max_{\ell \le r-1}} \max_{\substack{\a_1,\ldots,\a_{\ell} \in A}} \left[ \sum_{\bm{\beta \in A_i}} 
    W(\{\beta\},\{\a_1,\cdots,\a_\ell\}) 
    \frac{\Psi\big(\{\a_1,\cdots,\a_{\ell},\beta\}\big)}{\Psi\big(\{\a_1,\cdots,\a_{\ell} \}\big)}   \right].
    \]
    The maximum over $\ell$ in the equation above comes from the reordering argument,
    that is the use of \cref{LemReordering} in the proof of \cref{LemBorneCumulant}.
    We do not know what is the index of the element taken from $A_i$
    in the reordered sequence $(\beta_1,\cdots,\beta_r)$.
    The only thing we can ensure is that $\beta_1=\alpha_1$
    (since we can choose arbitrarily the first vertex in Prim's algorithm;
    see the proof of \cref{LemReordering}),
    which allows us to use $R^1$ instead of $R$.

    This slight improvement of the bound is not used in the applications given in this paper.
    It could however be useful if we wanted to prove, say, a multivariate
    convergence result for numbers of copies of subgraphs of different sizes in $G(n,m)$;
    see \cref{SectGraphs} for the corresponding univariate statement.

    Note that, with this improvement, the bound given for the joint cumulant
    is not symmetric in $A_1$,\ldots,$A_r$,
    while the quantity to bound obviously is.
\end{remark}

\subsection{Comparison between usual and weighted dependency graphs}
\label{SectComparisonWeightedUsual}
In this Section, we compare at a formal level
the notions of weighted dependency graphs and
of usual dependency graphs.
The results of this Section are not needed in the rest of the paper
and it can safely be skipped.
\medskip

The key observation here is the following:
if the induced weighted graph $\WDep[B]$ is disconnected,
then $\MWST{\WDep[B]}$ is $0$ by definition,
and hence \eqref{EqFundamental} states that the corresponding
joint cumulant should be $0$.
\medskip

For the next proposition, we need to introduce some terminology.
   Let $\{Y_\a,\a \in A\}$ be a family of random variables         
       defined on the same probability space.
We say that a function $\Psi$ on multisets of $A$ dominates joint moments,
if for any multiset $B$ and multiset partition $\pi$ of $B$:
    \[\left| \prod_{C \in \pi} \esper\left( \prod_{\a \in C} Y_\a \right) \right|
    \le \Psi(B).\]
    Examples include:
    \begin{itemize}
        \item  Assume that the variables $\{Y_\a,\a \in A\}$ are uniformly bounded by a constant $M$,
            \ie, for any $\a$, one has $|Y_\a| \le M$ a.s. Then
            for any multiset $B$ and multiset partition $\pi$ of $B$, one has
            \[\left| \prod_{C \in \pi} \esper\left( \prod_{\a \in C} Y_\a \right) \right|          
            \le M^{|B|}.\]
            In other terms, the function $\Psi$ defined by $\Psi(B)=M^{|B|}$
            dominates joint moments.
        \item  More generally, a repetitive use of Hölder inequality,
            together with the monotonicity of the $r$-th norm yields the following:
            for any multiset $B$ and multiset partition $\pi$ of $B$, one has
            \[\left| \prod_{C \in \pi} \esper\left( \prod_{\a \in C} Y_\a \right) \right|          
            \le \prod_{C \in \pi} \prod_{\a \in C} \esper\left( |Y_\a|^{|C|} \right)^{1/|C|}
            \le \prod_{\a \in B} \esper\left( |Y_\a|^{|B|} \right)^{1/|B|}.\]
            In other terms, the function $\Psi$ defined by 
            $\Psi(B)=\prod_{\a \in B} \esper\left( |Y_\a|^{|B|} \right)^{1/|B|}$
            dominates joint moments.
        \item As a more concrete example, consider triangles in random graphs,
            as in \cref{ExDepGraphTriangle,ExWeightedDepGraphTriangle}.
            In both models $G(n,p_n)$ and $G(n,M_n)$, the function $\Psi(B)=p_n^{e(B)}$
            dominates joint moments.
    \end{itemize}
\begin{proposition}
    \label{PropFromUsualToWeightOne}
    Let $\{Y_\a,\a \in A\}$ be a family of random variables
    defined on the same probability space,
    with a dependency graph $\Dep$.
    
    Set $C_r=(r!)^2$ and consider a function $\Psi$ on multisets of $A$
    that dominates joint moments.
    Consider also the weighted graph $\WDep$,
    obtained by assigning weight $1$ to each edge.

    Then $\WDep$ is a $(\Psi,\bC)$ weighted dependency graph for $\{Y_\a,\a \in A\}$.
\end{proposition}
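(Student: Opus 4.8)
The plan is to use the fact that all edges of $\WDep$ carry weight $1$: for any multiset $B$ the quantity $\MWST{\WDep[B]}$ is then equal to $1$ if the induced graph $\WDep[B]$ is connected, and to $0$ otherwise. Accordingly I split the verification of \eqref{EqFundamental} into these two cases.

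Suppose first that $\WDep[B]$ is disconnected, so that the right-hand side of \eqref{EqFundamental} is $0$ and we must show $\ka(Y_\a;\a\in B)=0$. Since any two copies of a repeated element of $B$ are joined by an edge of $\WDep[B]$, the connected components of $\WDep[B]$ keep all copies of a given element together, and hence $B$ can be written as a disjoint union $B=B_1\uplus B_2$ of two non-empty multisets whose underlying subsets $S_1,S_2\subseteq A$ are disconnected in $\Dep$. By the defining property of the dependency graph $\Dep$, the families $\{Y_\a:\a\in S_1\}$ and $\{Y_\a:\a\in S_2\}$ are independent; consequently the collection $(Y_\a)_{\a\in B}$ (with multiplicities) splits into two mutually independent groups, and the standard vanishing property of joint cumulants gives $\ka(Y_\a;\a\in B)=0$, as required.

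Now suppose $\WDep[B]$ is connected, so $\MWST{\WDep[B]}=1$ and it suffices to prove $\bigl|\ka(Y_\a;\a\in B)\bigr|\le (r!)^2\,\Psi(B)$, where $r=|B|$ and $B=\{\a_1,\dots,\a_r\}$. Applying the moment-to-cumulant inversion \eqref{EqMoment2Cumulant}, the triangle inequality, the explicit value \eqref{EqValueMobius} of the Möbius function, and the assumption that $\Psi$ dominates joint moments, one obtains
\[
\bigl|\ka(Y_{\a_1},\dots,Y_{\a_r})\bigr|
\le \sum_{\pi\in\PPP([r])}|\mu(\pi,\{[r]\})|\;\Bigl|\prod_{C\in\pi}\esper\Bigl(\prod_{i\in C}Y_{\a_i}\Bigr)\Bigr|
\le \Psi(B)\sum_{\pi\in\PPP([r])}(\#(\pi)-1)!\,.
\]
It then remains to verify the numerical inequality $\sum_{\pi\in\PPP([r])}(\#(\pi)-1)!\le (r!)^2$. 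Grouping set partitions of $[r]$ by their number $k$ of blocks, the left-hand side equals $\sum_{k=1}^{r}\stirling{r}{k}(k-1)!$; since $\stirling{r}{k}\,k!$ counts surjections $[r]\to[k]$ it is at most $k^r$, whence $\stirling{r}{k}(k-1)!\le k^{r-1}\le r^{r-1}$ and the sum is at most $r^r$. Finally $r^r\le (r!)^2$ because $i(r+1-i)\ge r$ for every $1\le i\le r$, so that $(r!)^2=\prod_{i=1}^{r}i(r+1-i)\ge r^r$. This establishes \eqref{EqFundamental} with $C_r=(r!)^2$ and finishes the proof.

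The argument is essentially bookkeeping, and I do not foresee a genuine difficulty. The two points that require a little care are the treatment of repeated elements of $B$ in the disconnected case (equal vertices being adjacent in induced graphs is exactly what makes the splitting $B=B_1\uplus B_2$ possible) and the final numerical bound. The latter is extremely wasteful --- the sum is in fact bounded by the ordered Bell number $\sum_k\stirling{r}{k}k!$, which grows only like $r!$ times an exponential --- but $C_r=(r!)^2$ is a clean choice and all we need, since $\bC$ is only required to be independent of the index $n$ along a sequence of weighted dependency graphs.
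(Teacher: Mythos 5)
Your proof is correct and follows essentially the same route as the paper: the same case split on whether $\WDep[B]$ is connected, vanishing of cumulants via the dependency-graph property in the disconnected case, and the moment--cumulant inversion \eqref{EqMoment2Cumulant} together with \eqref{EqValueMobius} and the domination hypothesis in the connected case. The only (harmless) divergence is in how the combinatorial sum is bounded by $(r!)^2$ --- the paper simply uses $|\mu(\pi,\{[r]\})|\le r!$ and $|\PPP([r])|\le r!$, whereas you go through Stirling numbers and surjection counts; both are valid, and your extra care with repeated elements of $B$ in the disconnected case is a welcome clarification of a point the paper passes over quickly.
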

\begin{proof}
    We have to check that the inequality \eqref{EqFundamental} holds for any multiset $B$.
    Consider two cases:
    \begin{itemize}
        \item Assume $B$ is disconnected in $\Dep$. Since $\Dep$ is a dependency graph for $\{Y_\a,\a \in A\}$,
            this implies that the set of variables $\{Y_\a, \a \in A\}$
            can be split into two mutually independent sets of variables
            and $\ka(Y_\a ; \a \in B)=0$, as wanted.
        \item Otherwise, $\WDep$ contains at least one spanning tree, and since all edges have weight $1$,
          all spanning trees have weight $1$.
         Thus $\MWST{\WDep[B]}=1$ and we should prove:
            \[ \bigg| \ka\big( Y_\a ; \a \in B \big) \bigg| \le
            (r!)^2 \Psi(B).\]
            This can be deduced easily from \cref{EqMoment2Cumulant},
            the fact that $\Psi$ dominates joint moments and
            the inequalities $|\mu(\pi,\{[\ell]\}))| \le r!$
            and $|\PPP([r])| \le r!$. \qedhere
    \end{itemize}
\end{proof}
\noindent Conversely, the unweighted version of a $(\Psi,\bC)$ weighted dependency graph 
is also a usual dependency graph,
as soon as each variable $Y_\a$ is determined by its moments,
as shown by the following proposition.
\begin{proposition}
    Let $\{Y_\a,\a \in A\}$ be a family of random variables
    with finite moments defined on the same probability space,
    such that each $Y_\a$ is determined by its moments.
    Let $\bC$ and $\Psi$ be arbitrary
    and assume that we have a $(\Psi,\bC)$ weighted dependency graph $\WDep$ 
    for the family $\{Y_\a,\a \in A\}$.
    Denote $\Dep$ the unweighted version of $\WDep$.

    Then $\Dep$ is a usual dependency graph for the family $\{Y_\a,\a \in A\}$.
\end{proposition}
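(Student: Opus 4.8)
The plan is to check the two defining conditions of a usual dependency graph directly, the first being immediate and the second reducing to the converse of the vanishing-cumulants property, which is available as \cref{PropVanishingCumImpliesInd}. Condition (1) holds by construction, since $\Dep$ has the same vertex set $A$ as $\WDep$. For condition (2), fix two disconnected subsets $A_1$ and $A_2$ of vertices of $\Dep$. As $\Dep$ has an edge exactly where $\WDep$ does, $A_1$ and $A_2$ are disjoint and there is no edge of $\WDep$ with one endpoint in $A_1$ and the other in $A_2$. I must show that $\{Y_\a,\a \in A_1\}$ and $\{Y_\a,\a \in A_2\}$ are independent, and for this it suffices, by \cref{PropVanishingCumImpliesInd} applied to the finite family $\{Y_\a,\a \in A_1\cup A_2\}$ partitioned into the parts $A_1$ and $A_2$, to prove that $\ka(Y_\a;\a\in B)=0$ for every multiset $B$ of $A_1\cup A_2$ containing elements of both $A_1$ and $A_2$.

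So fix such a $B$ and consider the induced weighted graph $\WDep[B]$. Its vertices split into those arising from $B\cap A_1$ and those arising from $B\cap A_2$, and both families are non-empty. A vertex of the first kind and a vertex of the second kind correspond to distinct elements of $A$ (since $A_1\cap A_2=\emptyset$) which are not joined by an edge of $\WDep$ (since $A_1$ and $A_2$ are disconnected in $\WDep$); by the definition of the induced graph, such a pair is therefore not adjacent in $\WDep[B]$. Hence no edge of $\WDep[B]$ joins the $A_1$-part to the $A_2$-part, so $\WDep[B]$ is disconnected. By our convention for disconnected graphs, $\MWST{\WDep[B]}=0$, and then \eqref{EqFundamental} gives $\big|\ka(Y_\a;\a\in B)\big| \le C_{|B|}\,\Psi(B)\,\MWST{\WDep[B]}=0$, as required.

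Applying \cref{PropVanishingCumImpliesInd} now yields the independence of $\{Y_\a,\a\in A_1\}$ and $\{Y_\a,\a\in A_2\}$, which is condition (2); hence $\Dep$ is a usual dependency graph for $\{Y_\a,\a\in A\}$. I do not expect any real obstacle here: the only point worth spelling out is that an edge of $\WDep[B]$ crossing between $A_1$ and $A_2$ would have to come from an edge of $\WDep$ between $A_1$ and $A_2$ (there being no repeated vertices across the two parts), and no such edge exists; the hypothesis that each $Y_\a$ is determined by its moments is used only to invoke \cref{PropVanishingCumImpliesInd}.
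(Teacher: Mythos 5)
Your proof is correct and follows essentially the same route as the paper: show that for any multiset $B$ meeting both $A_1$ and $A_2$ the induced graph $\WDep[B]$ is disconnected, so $\MWST{\WDep[B]}=0$ and \eqref{EqFundamental} forces the joint cumulant to vanish, then conclude by \cref{PropVanishingCumImpliesInd}. The only difference is that you spell out in more detail why no edge of $\WDep[B]$ can cross between the two parts, which the paper leaves implicit.
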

\begin{proof}
    Let $A_1$ and $A_2$ be disconnected subsets of $A$ in $\Dep$.
    We should prove that $\{Y_\a,\a \in A_1\}$ and $\{Y_\a,\a \in A_2\}$
    are independent.

    Let $B$ be a multiset of elements of $A_1 \sqcup A_2$
    that contains elements in both $A_1$ and $A_2$.
    Then the induced weighted graph $\WDep[B]$ has at least two connected component
    because $B \cap A_1$ and $B \cap A_2$ are disconnected.
    Therefore $\MWST{\WDep[B]}=0$.
    Since $\WDep$ is $(\Psi,\bC)$ weighted dependency graph for $\{Y_\a,\a \in A\}$,
    \cref{EqFundamental} implies that
    \[\ka(Y_\a,\a \in B) =0.\]
    From \cref{PropVanishingCumImpliesInd}, we conclude that
    $\{Y_\a,\a \in A_1\}$ and $\{Y_\a,\a \in A_2\}$
        are independent.
\end{proof}

We can now argue that \cref{ThmMain}
contains Janson's normality criterion.
For each $n\ge 1$, let $\{Y_{n,i},\, 1\le i \le N_n\}$ be a family
of bounded random variables with dependency graph $\Dep_n$.
Consider the weighted graph $\WDep_n$ obtained from $\Dep_n$
by assigning weight $1$ to each edge
and set $\Psi(B)=M_n^{|B|}$, where $M_n$ is an upper bound for all $|Y_{n,i}|$.
From \cref{PropFromUsualToWeightOne},
$\WDep$ is a $(\Psi,\bC)$ weighted dependency graph for $\{Y_{n,i},\, 1\le i \le N_n\}$
with $C_r=(r!)^2$.
Define $R_n$ and $T_{\ell,n}$ as in \cref{SubsecNormalityCriterion}.
If $\Delta_n-1$ is the maximal degree in $\Dep_n$,
then $R_n=M_n\, N_n$ and $T_{\ell,n} \le \ell M_n (\Delta_n)$ for $\WDep_n$.
In particular we can choose $Q_n= M_n \Delta_n$
and condition \eqref{EqHypoMainThm} in our normality criterion 
reduces to \eqref{EqHypoJanson} in Janson's.

On the other hand our theorem does not contain formally
Mikhailov normality criterion \cite{MikhailovDependencyGraphs}.
But it contains classical examples. Again, one should see the dependency graph in each example
as a weighted dependency graph with weight 1 on each edge and choose $\Psi$ as follows:
\begin{itemize}
    \item in the example at the end of Mikhailov's paper \cite{MikhailovDependencyGraphs},
        variables are indexed by pairs of elements of $[n]$, so that a multiset
        $B$ of such pairs can be interpreted as a multigraph $G(B)$ of vertex set $[n]$.
        Then the function $\Psi(B)=N^{-|\CC(G(B))|}$ dominates joint moments
        and we can apply our theorem to prove asymptotic normality,
        in exactly the same way as with Mikhailov's theorem.
    \item for triangles in random graphs \cite[Example 6.19]{JansonRandomGraphs},
        choose $\Psi(B)=p_n^{e(B)}$, as suggested before \cref{PropFromUsualToWeightOne}.
\end{itemize}
In each case, we leave details to the reader.

\section{Finding weighted dependency graphs}
\label{SectTools}
In general, the main difficulty in order to apply \cref{ThmMain}
is to check that $\Dep_n$ is indeed a weighted dependency graph
for the family $\{Y_{n,i}, 1\le i \le N_n\}$ of random variables.
Indeed, one should establish the bound \eqref{EqFundamental},
which may be quite cumbersome.
In this section, we give a few lemmas and propositions
that help in this task in different contexts.

\subsection{An alternate formulation}
In this section, we will see that instead of \eqref{EqFundamental},
one can show a slightly different set of inequalities.
Intuitively, this set of inequalities puts an emphasis on edges of weight $1$,
which, in most applications, relate incompatible events.

We require an extra assumption on the function $\Psi$.
\begin{definition}
    Let $A$ be a set
    and $\Psi$ a 
    function on multisets of elements of $A$.
    Then $\Psi$ is called {\em super-multiplicative} if,
    for any multisets $B_1$ and $B_2$,
    $\Psi( B_1 \uplus B_2 ) \ge \Psi(B_1) \Psi(B_2)$.
\end{definition}
\begin{proposition}
Let $\{Y_\a, \a \in A\}$ be a family of random variables 
defined on the same probability space.
Consider a weighted graph $\WDep$ with vertex set $A$,
a {\em super-multiplicative} function $\Psi$ on multisets of elements of $A$
and a sequence $\bD=(D_r)_{r \ge 1}$.

Assume that, for any multiset $B$ of elements of $A$,
one has
\begin{equation}
    \left| \ka\left( \prod_{\a \in B_1} Y_{\a},
    \cdots, \prod_{\a \in B_\ell}  Y_{\a}  \right) \right| \le
    D_{|B|} \, \Psi(B) \, \MWST{\WDep[B]}, 
    \label{EqFundamentalVariant}
\end{equation}
where $B_1$, \ldots, $B_\ell$ are the vertex sets of the connected components
of the graph $\WDepOne[B]$, 
that is the graph induced by edges of weight $1$ of $\WDep$ on $B$.
       
Then $\WDep$ is a $(\Psi,\bC)$ weighted dependency graph 
for the family $\{Y_\a, \a \in A\}$,
for some sequence $\bC$ that depends only on $\bD$.
\label{PropAlternate}
\end{proposition}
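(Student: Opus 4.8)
The plan is to reconstruct the joint cumulant $\ka(Y_\a;\a\in B)$ from the cumulants of products appearing in \eqref{EqFundamentalVariant}. The key algebraic tool is the formula expressing a joint cumulant of ``atomic'' variables in terms of joint cumulants of products: for a partition $\rho$ of $B$ with blocks $B_1,\dots,B_\ell$, one has an identity of the form $\ka\big(\prod_{\a\in B_1}Y_\a,\dots,\prod_{\a\in B_\ell}Y_\a\big)=\sum_{\pi}\ldots$ over set partitions $\pi$ of $B$ that are ``coarser than $\rho$ in a connected sense'' (this is the standard Leonov--Shiryaev / Malyshev formula; see \cite{LeonovShiryaevCumulants,JansonRandomGraphs}). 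Inverting this, I would write
\[
\ka(Y_\a;\a\in B)=\sum_{\text{partitions }\rho\text{ of }B}\ c_\rho\ \prod_{\text{blocks }R\text{ of }\rho}\ka\Big(\text{products over the connected pieces of }\WDep_1[R]\Big),
\]
with integer coefficients $c_\rho$ bounded in absolute value by something like $(|B|!)$. First I would make precise which partitions $\rho$ actually contribute: only those for which the blocks, together with the weight-$1$ edges, glue everything together appropriately; the cleanest bookkeeping is to sum over all $\rho$ and note that the non-contributing terms carry coefficient $0$.

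Next, for each contributing $\rho$ I must bound $\prod_R \big|\ka(\cdots)\big|$. Each factor is estimated by hypothesis \eqref{EqFundamentalVariant}, giving a bound $D_{|R|}\,\Psi(R)\,\MWST{\WDep[R]}$ — but one has to be careful: \eqref{EqFundamentalVariant} is stated for the \emph{connected components of $\WDep_1[R]$} as the list of products, so it applies verbatim to each block $R$. Multiplying over blocks of $\rho$, the $\Psi$-factors combine: by \emph{super-multiplicativity} $\prod_R\Psi(R)\le\Psi(B)$ (using $B=\biguplus_R R$ and iterating the defining inequality). The $\MWST$-factors combine via \cref{LemmaSetPartAndPi1}: the hypothesis there, $\pi_{\bm\Delta}\vee\CC(\WDep_1)=\{B\}$, is exactly the condition that makes $\rho$ contribute, so $\prod_R\MWST{\WDep[R]}\le\MWST{\WDep[B]}$. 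The $D$-factors are absorbed into a constant depending only on $r=|B|$ and on $\bD$ (bound $D_{|R|}\le\max_{j\le r}D_j$ and there are at most $r$ blocks). Collecting everything, $\big|\ka(Y_\a;\a\in B)\big|\le C_r\,\Psi(B)\,\MWST{\WDep[B]}$ with $C_r$ depending only on $r$ and $\bD$ (through the number of partitions of an $r$-set, the coefficient bound, and $\max_{j\le r}D_j$), which is \eqref{EqFundamental}.

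The main obstacle is the first step: getting the inversion formula in precisely the right shape, namely expressing $\ka(Y_\a;\a\in B)$ as a signed sum of products of cumulants-of-products indexed by set partitions of $B$, and — crucially — checking that every partition $\rho$ with a nonzero coefficient satisfies the hypothesis $\pi_{\bm\Delta}\vee\CC(\WDep_1)=\{B\}$ of \cref{LemmaSetPartAndPi1} (with $\bm\Delta$ the family of blocks of $\rho$ and $\WDep_1$ restricted to $B$). If some contributing $\rho$ failed this, the corresponding $\MWST$-product would not be dominated by $\MWST{\WDep[B]}$. I expect this to work because the relevant combinatorial identity for cumulants of products already has a built-in connectivity constraint: only partitions whose blocks are connected to each other through the structure being summed contribute, and here that structure is exactly the weight-$1$ graph together with the blocks of $\rho$ — which is the join condition. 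A clean way to present this is to first handle the purely algebraic identity as a lemma (or cite \cite[Proposition 6.16]{JansonRandomGraphs} and its product-cumulant companion), then feed it into the two inequalities \cref{LemmaSetPartAndPi1} and super-multiplicativity, keeping all the $r$-dependent constants lumped into $C_r$ at the very end.
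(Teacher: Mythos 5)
Your plan assembles exactly the three ingredients the paper uses --- the Leonov--Shiryaev formula, super-multiplicativity of $\Psi$, and \cref{LemmaSetPartAndPi1} --- so it is the same circle of ideas; the only real difference is organizational, and it sits precisely at the step you flag as the main obstacle. The paper avoids writing any closed-form inversion: it inducts on $r=|B|$, applies Leonov--Shiryaev to $\ka\big(\prod_{B_1}Y_\a,\ldots,\prod_{B_\ell}Y_\a\big)$ to get a sum over multiset partitions $\pi$ with $\pi\vee\{B_1,\ldots,B_\ell\}=\{B\}$ of products $\prod_i\ka(Y_\a;\a\in\pi_i)$, isolates the term $\pi=\{B\}$, and bounds every other factor by the \emph{inductive conclusion} \eqref{EqFundamental} on strict sub-multisets (not by the hypothesis \eqref{EqFundamentalVariant}). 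The connectivity condition of \cref{LemmaSetPartAndPi1} is then literally the Leonov--Shiryaev summation constraint, since $\{B_1,\ldots,B_\ell\}$ is the partition into connected components of $\WDep_1[B]$; nothing further needs checking. Your one-shot version does work --- unfolding the recursion produces your signed sum, and an induction on the depth of the unfolding shows every contributing $\rho$ satisfies $\rho\vee\CC(\WDep_1[B])=\{B\}$, because each level's constraint glues the blocks of the next level and weight-$1$ edges inside a sub-block are a subset of those in $B$ --- but establishing that identity together with its connectivity constraint amounts to redoing the induction. I would therefore present the argument inductively rather than seeking the inversion formula as a citable black box; the constants $C_r$ then come out recursively in terms of $\bD$ and $C_1,\ldots,C_{r-1}$, as required.
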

\begin{proof}
    We have to check that the inequality \eqref{EqFundamental} holds for any multiset $B$.
    We proceed by induction on the size $r$ of the multiset $B$.

    Consider the case $r=1$. From \cref{EqFundamentalVariant},
    we know that, for any $\a \in A$, one has:
    \[| \esper(Y_\a) | \le D_1 \Psi(\{\a\}),\]
    so that, if we set $C_1=D_1$, \cref{EqFundamental} holds for 
    all $1$-element sets $B=\{\a\}$.

    Let $r>1$ and assume that \eqref{EqFundamental} holds for all multisets $\tilde{B}$ of size $\ell < r$.
    Fix a multiset $B$ of size $r$ and define $B_1,\dots,B_\ell$ as in above.
    Using a formula of Leonov and Shiryaev for cumulants of products \cite{LeonovShiryaevCumulants}
    --- see also \cite[Theorem 4.4]{Sniady2006fluctuations} ---, one has
    \[ \ka\left( \prod_{\a \in B_1} Y_{\a}, 
                \cdots, \prod_{\a \in B_\ell}  Y_{\a}  \right) 
    =\sum_{\pi \perp \bm{B}} \, \prod_{i=1}^s \ka \big( Y_\a; \a \in \pi_i \big), \]
    where the sum runs over multiset partitions $\pi=\{\pi_1,\cdots,\pi_s\}$ of $B$
    such that $\pi \vee \{B_1,\cdots,B_\ell\} = \{B\}$; 
    we denote this condition by $\pi \perp \bm{B}$
    (for a discussion on multiset partitions, see \cref{rmk:Mset_partitions}
    at the end of the proof).
    We isolate the term corresponding to $\pi=\{B\}$ on the right hand-side and 
    rewrites this as:
    \begin{equation}
        \ka(Y_\a, \a \in B) = \ka\left( \prod_{\a \in B_1} Y_{\a},                               
                    \cdots, \prod_{\a \in B_\ell}  Y_{\a}  \right) -
                    \sum_{\substack{\pi \perp \bm{B} \\ \pi \ne \{B\}}} \, \prod_{i=1}^s \ka \big( Y_\a; \a \in \pi_i \big).
     \label{EqTech1}
 \end{equation}
     But by assumption
     \begin{equation}
         \left| \ka\left( \prod_{\a \in B_1} Y_{\a},                               
          \cdots, \prod_{\a \in B_\ell}  Y_{\a}  \right) \right|
          \le D_r \, \Psi(B) \, \MWST{\WDep[B]}.
          \label{EqHypoAlternate}
      \end{equation}
    Moreover, the induction hypothesis asserts that if $\pi_i$ is a strict subset of $B$, one has
    \[ \big|\ka \big( Y_\a; \a \in \pi_i \big) \big| \le
    C_{|\pi_i|} \, \Psi(\pi_i) \, \MWST{\WDep[\pi_i]}. \]
    If $\pi$ is a set partition of $B$ different from $\{B\}$, all its parts are strict subsets of $B$
    and we have
    \begin{equation}
        \left| \prod_{i=1}^s \ka \big( Y_\a; \a \in \pi_i \big) \right| \le
    \left( \prod_{i=1}^s C_{|\pi_i|} \right) \,
    \left( \prod_{i=1}^s \Psi(\pi_i) \right) \, 
    \left( \prod_{i=1}^s \MWST{\WDep[\pi_i]} \right).
    \label{EqBoundProductCumulants}
\end{equation}
    From the super-multiplicativity, the middle factor is at most $\Psi(B)$.
    Moreover, under the hypothesis $\pi \perp \bm{B}$,
    the last factor is at most $\MWST{\WDep[B]}$, as proved in \cref{LemmaSetPartAndPi1}
    (for the graph $\WDep[B]$ with $\Delta_i=\pi_i$).
    Finally, from \cref{EqTech1,EqHypoAlternate,EqBoundProductCumulants},
    we get:
    \[ \big|\ka(Y_\a, \a \in B) \big| \le
    \left(D_r + \sum_{\substack{\pi \perp \bm{B} \\ \pi \ne \{B\}}} C_{|\pi_i|}\right)
    \, \Psi(B) \, \MWST{\WDep[B]}.\]
    This ends the proof of \eqref{EqFundamental} by setting
    \[C_r= D_r + \sum_{\substack{\pi \in \PPP(B) \\ \pi \ne \{B\}}} C_{|\pi_i|};\]
    observe that the right-hand side depends indeed only on the size $r$ of $B$,
    and not on $B$ itself.
\end{proof}
\begin{remark}
  \label{rmk:Mset_partitions}
  In the previous proof and in \cref{SectProd} below,
  we sum over all multiset partitions $\pi$ of a multiset $B$.
If $B=\{b_1,\dots,b_r\}$, this means that we consider all set partitions of $\{1,\dots,r\}$
and associate with each one a multiset partitions of $B$ by replacing $i$ by $b_i$ within each part.
For example, the multiset $\{a,b,b\}$ has five multiset partitions:
$\{\{a\},\{b\},\{b\}\}$, 
$\{\{a,b,b\}\}$,
$\{\{a\},\{b,b\}\}$ and twice
$\{\{a,b\},\{b\}\}$.
In particular, the number of multiset partitions counted with multiplicity
of a multiset of size $r$ is the $r$-th Bell number,
independently of whether this multiset has repeated elements or not.

With this convention, Leonov and Shiryaev formula clearly holds
with cumulants of multisets.
Indeed the case with equal variables can be obtained from specialization
of the generic case and this does not change the summation set.
\end{remark}
\begin{remark}
  \label{RkAlternateConverse}
  We will see in \cref{SectProd} a converse of \cref{PropAlternate}:
   for any weighted dependency graph with a super-multiplicative  function $\Psi$,
   \cref{EqFundamentalVariant} holds. In fact, a more general bound for cumulants
   of products of the $Y_\a$ holds; see \cref{EqBoundCumulantProducts,RkJustifAlternateConverse}.
\end{remark}
\begin{remark}
    \label{RmkDiscreteBernoulli}
    \cref{PropAlternate} is in particularly useful
    when $\WDep$ has no edges of weight $1$  and $Y_\a$ are Bernoulli variables.
    In this case, each connected component $B_i$ of the induced graph $\Dep[B_i]$
    contains only one distinct element $\beta_i$, with multiplicity $m_i \ge 1$.
    Then 
    \[\prod_{\a \in B_i} Y_\a= Y_{\beta_i}^{m_i}=Y_{\beta_i}, \]
    where the last equality comes from the assumption that $Y_{\beta_i}$ is a Bernoulli variable.
    Therefore, to prove that $\WDep$ is a $(\Psi,\bC)$ weighted dependency graph
    for the family $\{Y_\a, \a \in A\}$,
    it is enough to bound 
    $\ka(Y_\a,\a \in B)$, for {\em subsets} $B$ of $A$ (and not all {\em multisets}).
\end{remark}

\subsection{Small cumulants and quasi-factorization}
\label{SectSCQF}
Let $\ell \ge 1$ and $\uu=(u_\Delta)_{\Delta \subseteq [\ell]}$ be a family of 
real numbers indexed by subsets of $[\ell]$. We shall always assume $u_\emptyset\neq 0$.
Typically, $u_\Delta$ are the joint moments $\esper\left( \prod_{j \in \Delta} Y_j \right)$
of a family $(Y_1,\cdots,Y_\ell)$ of random variables,
but it is convenient not to assume this.

For any  subset $\Delta$ of $[\ell]$, we set
\begin{equation}
    \kappa_{\Delta}(\uu) := \sum_{\substack{\pi \in \PPP(\Delta) } } 
\mu(\pi, \{\Delta\}) \prod_{B \in \pi} \frac{u_B}{u_\emptyset}. 
\label{EqDefCumulants}
\end{equation}
If $\uu$ is the family of joint moments of $(Y_1,\cdots,Y_\ell)$,
then $u_\emptyset=1$ and $\kappa_{\Delta}(\uu)$ is simply the joint cumulant of the subfamily $\{Y_j, \, j \in \Delta\}$.

\begin{definition}\label{DefSmallCumulants}
    Fix some $\ell \ge 1$ and consider a {\em sequence} $\big(\uu^{(n)}\big)_{n\ge 1}$
    of lists, each indexed by subsets of $[\ell]$.
    Let also, for each $n \ge 1$, $\WDep^n$ be a weighted graph with vertex set $[\ell]$.
    We say that $\big(\uu^{(n)}\big)_{n\ge 1}$ has the $\WDep^n$ small cumulant property 
    if, for any subset $\Delta \subseteq [\ell]$ of size at least $2$, one has
    \begin{equation}\label{EqSmallCumulants}
      \big| \kappa_{\Delta}(\uu^{(n)}) \big| =
     \left(\textstyle \prod_{i \in \Delta} \frac{u_{\{i\}}}{u_\emptyset} \right) \cdot O\big( \MWST{\WDep^n[\Delta]} \big).
    \end{equation}
\end{definition}
\bigskip

Note that \cref{EqSmallCumulants} is similar to \cref{EqFundamental},
so that we are interested in establishing the small cumulant property.
We will see that it is equivalent to another property,
that we call quasi-factorization property and is in some cases easier to establish.
\medskip

We now assume that, for any $\Delta \subseteq [\ell]$, one has $u_\Delta \ne 0$.
Then we also introduce the auxiliary quantity $\P_\Delta(\uu)$ implicitly defined by
the property: 
for any subset $\Delta \subseteq [\ell]$,
\begin{equation}
     u_\Delta/u_\emptyset = \prod_{\delta \subseteq \Delta} \P_{\delta}(\uu).
    \label{EqUP}
\end{equation}
In particular, we always have $\P_\emptyset(\uu)=1$ and $\P_{\{i\}}(\uu)=u_{\{i\}}/u_\emptyset$. 
Using M\"obius inversion on the boolean lattice, we have explicitly:
for any subset $\Delta \subseteq [\ell]$ with $\Delta \ne \emptyset$,
\[ \P_\Delta(\uu)= \prod_{\delta \subseteq \Delta}                       
\left( \frac{u_\delta}{u_\emptyset} \right)^{(-1)^{|\Delta|-|\delta|}}
=\prod_{\delta \subseteq \Delta} \left( u_\delta \right)^{(-1)^{|\Delta|-|\delta|}}.\]
\begin{definition}\label{DefQuasiFactorization}
Fix some $\ell \ge 1$ and consider a {\em sequence} $\big(\uu^{(n)}\big)_{n\ge 1}$
of lists, each indexed by subsets of $[\ell]$, such that for $n$ large enough and any $\Delta \subseteq [\ell]$,
one has $u^{(n)}_\Delta \ne 0$.
We also consider, for each $n \ge 1$, a weighted graph $\WDep^n$ with vertex set $[\ell]$.
We say that $\big(\uu^{(n)}\big)_{n\ge 1}$
has the $\WDep^n$ quasi-factorization property
if, for any subset $\Delta \subseteq [\ell]$ of size at least $2$, one has
\begin{equation}\label{EqQuasiFactorization}
    \P_\Delta(\uu^{(n)}) = 1 + \O\big( \MWST{\WDep^n[\Delta]} \big).
\end{equation}
\end{definition}

The following proposition, generalizing
\cite[Lemma 2.2]{FerayRandomPermutationsCumulants},
is be used repeatedly in this article.
It says that the two above properties are equivalent.
\begin{proposition}
Let $\ell \ge 1$ and
$\WDep^n$ be a sequence of weighted graph, each with vertex set $[\ell]$.
We also consider a {\em sequence} $\big(\uu^{(n)}\big)_{n\ge n_0}$
of lists of real numbers, each indexed by subsets of $[\ell]$.
Finally assume that, for each $n \ge n_0$ and each $\Delta \subseteq [\ell]$, we have
$u^{(n)}_\Delta \ne 0$.

If $\big(\uu^{(n)}\big)_{n\ge n_0}$ has the $\WDep^n$ quasi-factorization property,
then it also has the $\WDep^n$ small cumulant property.
Assume moreover that the maximal weight of $\WDep^n$ tends to $0$.
Then the converse also holds:
$\big(\uu^{(n)}\big)_{n\ge n_0}$ has 
the $\WDep^n$ small cumulant property
if and only if it has
the $\WDep^n$ quasi-factorization property.
\label{PropEqSCQF}
\end{proposition}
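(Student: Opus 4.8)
The plan is to reduce both implications to a single combinatorial identity expressing $\kappa_\Delta(\uu)$ in terms of the quantities $\P_\gamma(\uu)-1$, $\gamma\subseteq\Delta$, and then to read off the two directions from it, the direct one via \cref{LemmaSetPartAndPi1} and the converse by induction on $|\Delta|$.

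\emph{Step 1: the key identity.} Fix $\Delta$ with $|\Delta|\ge 2$ and write $m_\delta=u_\delta/u_\emptyset$. Recall that \cref{EqDefCumulants} is the moment-to-cumulant formula; equivalently, by the exponential formula, $\kappa_\Delta(\uu)=[\prod_{i\in\Delta}t_i]\log\bigl(\sum_{\delta\subseteq\Delta}m_\delta\prod_{i\in\delta}t_i\bigr)$. Substituting $m_\delta=\prod_{\gamma\subseteq\delta}\P_\gamma(\uu)$ (see \cref{EqUP}) into $\kappa_\Delta(\uu)=\sum_{\pi\in\PPP(\Delta)}\mu(\pi,\{\Delta\})\prod_{C\in\pi}m_C$, one observes that in every term $\prod_{C\in\pi}m_C$ the factor $\P_{\{i\}}(\uu)$ occurs exactly once for each $i\in\Delta$, while each $\P_\gamma(\uu)$ with $|\gamma|\ge 2$ occurs with multiplicity $0$ or $1$; hence $\kappa_\Delta(\uu)/\prod_{i\in\Delta}\P_{\{i\}}(\uu)$ is a \emph{multilinear} polynomial in the variables $q_\gamma:=\P_\gamma(\uu)-1$ ($\gamma\subseteq\Delta$, $|\gamma|\ge 2$), with integer coefficients depending only on $\Delta$, hence bounded in terms of $\ell$. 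The decisive point --- generalizing \cite[Lemma~2.2]{FerayRandomPermutationsCumulants} --- is that the coefficient of a monomial $\prod_{\gamma\in\Gamma}q_\gamma$ vanishes unless $\bigvee_{\gamma\in\Gamma}\Pi(\gamma)=\{\Delta\}$, and that the coefficient of $q_\Delta$ alone equals $1$. For the vanishing I would specialize $q_{\gamma'}=0$ for $\gamma'\notin\Gamma$: then $m_B$ factors as $\prod_j m_{B\cap\Lambda_j}$ along the blocks $\Lambda_1,\dots,\Lambda_k$ of $\bigvee_{\gamma\in\Gamma}\Pi(\gamma)$, so the series $\sum_{\delta\subseteq\Delta}m_\delta\prod_{i\in\delta}t_i$ factors accordingly and its logarithm is a sum of terms each missing some variable $t_i$; its coefficient of $\prod_{i\in\Delta}t_i$ is therefore $0$ whenever $k\ge 2$, and with it the coefficient of $\prod_{\gamma\in\Gamma}q_\gamma$. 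The statement about $q_\Delta$ is a one-line computation. Altogether
\[
  \kappa_\Delta(\uu)=\Bigl(\,\textstyle\prod_{i\in\Delta}\P_{\{i\}}(\uu)\Bigr)\Bigl[\bigl(\P_\Delta(\uu)-1\bigr)+\sum_{\Gamma}c_\Gamma\,\textstyle\prod_{\gamma\in\Gamma}\bigl(\P_\gamma(\uu)-1\bigr)\Bigr],
\]
the sum being over sets $\Gamma\ne\{\Delta\}$ of subsets of $\Delta$ of size $\ge 2$ with $\bigvee_{\gamma\in\Gamma}\Pi(\gamma)=\{\Delta\}$, and the $c_\Gamma$ bounded in terms of $\ell$.

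\emph{Step 2: quasi-factorization $\Rightarrow$ small cumulants.} Assume \cref{EqQuasiFactorization} for all $\Delta$ with $|\Delta|\ge 2$. For each $\Gamma$ occurring in the identity (including $\Gamma=\{\Delta\}$), every $\gamma\in\Gamma$ has $|\gamma|\ge 2$, so $\P_\gamma(\uu^{(n)})-1=O(\MWST{\WDep^n[\gamma]})$; since $\bigvee_{\gamma\in\Gamma}\Pi(\gamma)=\{\Delta\}$, \cref{LemmaSetPartAndPi1} applied to $\WDep^n[\Delta]$ gives $\prod_{\gamma\in\Gamma}\MWST{\WDep^n[\gamma]}\le\MWST{\WDep^n[\Delta]}$, whence $\prod_{\gamma\in\Gamma}\bigl(\P_\gamma(\uu^{(n)})-1\bigr)=O(\MWST{\WDep^n[\Delta]})$. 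Summing the finitely many terms and using $\P_{\{i\}}(\uu^{(n)})=u^{(n)}_{\{i\}}/u^{(n)}_\emptyset$ gives \cref{EqSmallCumulants}.

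\emph{Step 3: small cumulants $\Rightarrow$ quasi-factorization (when the maximal weight of $\WDep^n$ tends to $0$).} I would induct on $|\Delta|$, there being nothing to prove for $|\Delta|\le 1$. For $|\Delta|\ge 2$, split the sum in the identity according to whether $\Delta\in\Gamma$ and factor $q_\Delta$ out of the terms with $\Delta\in\Gamma$; this rewrites the identity as $\kappa_\Delta(\uu)/\prod_{i\in\Delta}\P_{\{i\}}(\uu)=A(\uu)+\bigl(\P_\Delta(\uu)-1\bigr)\bigl(1+B(\uu)\bigr)$, where $A(\uu)$ is a bounded combination of products $\prod_{\gamma\in\Gamma}(\P_\gamma(\uu)-1)$ with $\bigvee_{\gamma\in\Gamma}\Pi(\gamma)=\{\Delta\}$ and all $|\gamma|<|\Delta|$, and $B(\uu)$ is a bounded combination of \emph{nonempty} products of $(\P_\gamma(\uu)-1)$ with all $|\gamma|<|\Delta|$. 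Evaluating at $\uu^{(n)}$: the left-hand side is $O(\MWST{\WDep^n[\Delta]})$ by the small-cumulant hypothesis, $A(\uu^{(n)})=O(\MWST{\WDep^n[\Delta]})$ by the induction hypothesis together with \cref{LemmaSetPartAndPi1}, and --- because $\MWST{\WDep^n[\gamma]}\le(\text{maximal weight of }\WDep^n)^{|\gamma|-1}\to 0$ for $|\gamma|\ge 2$ --- the induction hypothesis gives $\P_\gamma(\uu^{(n)})-1\to 0$ for $2\le|\gamma|<|\Delta|$, so $B(\uu^{(n)})\to 0$ and $1+B(\uu^{(n)})$ is bounded away from $0$ for $n$ large. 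Solving for $\P_\Delta(\uu^{(n)})-1$ then yields \cref{EqQuasiFactorization} for $\Delta$, completing the induction. I expect Step~1 --- and within it the vanishing of $c_\Gamma$ unless the $\gamma$'s connect $\Delta$ --- to be the real work: it is exactly what makes \cref{LemmaSetPartAndPi1} applicable, and the point where the multiplicative cumulant structure, rather than a crude moment estimate, is used; once it is in hand, Steps~2 and~3 are bookkeeping, the converse needing only the hypothesis on weights in order to invert the factor $1+B$.
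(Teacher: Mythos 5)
Your proof is correct. The forward implication is essentially the paper's argument in a different packaging: your ``key identity'' is exactly the paper's expansion \cref{EqCumulantsSumSets} (with the paper's $R^{(n)}_\Delta$ playing the role of your $q_\Delta$), the vanishing of $c_\Gamma$ when $\bigvee_{\gamma\in\Gamma}\Pi(\gamma)\neq\{\Delta\}$ is obtained there directly from the defining property of the M\"obius function where you obtain it by factorizing the moment generating series over the blocks of the join, and both arguments then conclude with \cref{LemmaSetPartAndPi1}. The converse is where you genuinely diverge. The paper introduces an auxiliary family $\vv^{(n)}$ agreeing with $\uu^{(n)}$ on proper subsets but forced to satisfy $\P_{[\ell]}(\vv^{(n)})=1$, applies the already-proved forward implication to $\vv^{(n)}$, and compares $\kappa_{[\ell]}(\uu^{(n)})$ with $\kappa_{[\ell]}(\vv^{(n)})$, whose difference is $u^{(n)}_{[\ell]}-v^{(n)}_{[\ell]}$. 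You instead solve the key identity for $q_\Delta$, writing the normalized cumulant as $A+q_\Delta(1+B)$ and dividing by $1+B$; the hypothesis that the maximal weight tends to $0$ enters for you through $B\to 0$, and for the paper through $u^{(n)}_\Delta\to 1$ (needed to pass from an estimate on products of the $u_\delta$ to one on $\P_{[\ell]}$). Your route is more direct and avoids the auxiliary family, at the price of having to check that the coefficient of $q_\Delta$ alone equals $1$ and that the cofactor $1+B$ is eventually bounded away from zero, both of which you do. One presentational remark: reading off coefficients in Step 1 is legitimate because the $\P_\gamma$ may be treated as free parameters (any assignment arises from some family $\uu$ via \cref{EqUP}); this is implicit in your specialization argument and deserves a sentence.
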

\begin{proof}
   The proof is an adaptation of 
   the one of \cite[Lemma 2.2]{FerayRandomPermutationsCumulants}.
    \bigskip

    We first assume that $u_\emptyset=1$ and $u_{\{i\}}=1$ for all $i$ in $[\ell]$,
    so that the product in \cref{EqUP} can be taken over subsets $\delta$
    with $|\delta| \ge 2$.

    Let us start by the fact that the quasi-factorization property
    implies the small cumulant property.
    For $n \ge n_0$ and a subset $\Delta \subseteq [\ell]$
    we set $R^{(n)}_\Delta=\P_{\Delta}(\uu^{(n)})-1$.
    The quasi-factorisation property asserts that
    that 
    $R^{(n)}_\Delta = \O(\MWST{\WDep^n[\Delta]})$ whenever $|\Delta| \ge 2$.
    We need to prove that this implies the small cumulant property,
    {\em i.e.} that, for any $\Delta \subseteq [\ell]$ with $|\Delta| \ge 2$,
    we have \hbox{$\kappa_{\Delta}(\uu^{n}) =\O\big(\MWST{\WDep^n[\Delta]}\big)$}.
    It is in fact enough to prove it for $\Delta = [\ell]$.
    The case of smaller $\Delta$ then follows by considering
    a smaller family of sequence $\big(\uu^{(n)}\big)_{n\ge n_0}$, indexed by subsets of $\Delta$.

    Fix a set partition $\pi \in \PPP(\ell)$.
    For a block $B$ of $\pi$, one has, expanding the product in \eqref{EqUP}:
    \[ u^{(n)}_{B} = \prod_{\Delta \subseteq B \atop |\Delta| \ge 2} (1+R^{(n)}_{\Delta})
    = \sum_{\{\Delta_1,\dots,\Delta_m\}} R^{(n)}_{\Delta_1} \dots R^{(n)}_{\Delta_m},
    \]
    where the sum runs over all finite sets of (distinct) subsets of $B$
    of size at least $2$ (in particular, the size $m$ of the set is not fixed).
    Therefore,
    \[\prod_{B \in \pi} u^{(n)}_B= \sum_{\{\Delta_1,\dots,\Delta_m\}} R^{(n)}_{\Delta_1} \dots R^{(n)}_{\Delta_m},
    \]
    where the sum runs over all finite sets  
    of (distinct) subsets of $[\ell]$ of size at least $2$
    such that each $\Delta_i$ is contained in a block of $\pi$.
    In other terms, for each $i \in [m]$, $\pi$ must be coarser
    than the partition $\Pi(\Delta_i)$,
    which, by definition, has $\Delta_i$ and singletons as blocks.
    Finally, from \cref{EqDefCumulants}
    \begin{equation}\label{EqCumulantsSumSets}
      \kappa_{[\ell]}(\uu^{(n)}) = \sum_{\{\Delta_1,\dots,\Delta_m\} \atop \Delta_i \subseteq [\ell]} 
      R^{(n)}_{\Delta_1} \dots R^{(n)}_{\Delta_m} 
    \left( \sum_{\pi \in \PPP([\ell]) \atop \forall i, \ \pi \geq \Pi(\Delta_i)}
    \mu(\pi, \{[\ell]\}) \right).
\end{equation}
The condition on $\pi$ can be rewritten as
\[\pi \geq \Pi(\Delta_1) \vee \dots \vee  \Pi(\Delta_m).\]
Hence, by definition of the M\"obius function, the sum in the parenthesis
is equal to $0$, unless we have \hbox{$\Pi(\Delta_1) \vee \dots \vee  \Pi(\Delta_m) = \{[\ell]\}$}.
From \cref{LemmaSetPartAndPi1},
this implies the inequality
\[ \prod_{i=1}^m \MWST{\WDep^n[\Delta_i]} \le \MWST{\WDep^n}.\]
But recall that by hypothesis $R^{(n)}_\Delta = \O(\MWST{\WDep^n[\Delta]})$.
Therefore, 
if \hbox{$\Pi(\Delta_1) \vee \dots \vee  \Pi(\Delta_m) = \{[\ell]\}$}, then
\[R^{(n)}_{\Delta_1} \cdots R^{(n)}_{\Delta_m} =\O \left( \prod_{i=1}^m \MWST{\WDep^n[\Delta_i]} \right)
=\O( \MWST{\WDep^n} ).\]
In other words, all non-zero summands in \eqref{EqCumulantsSumSets} are $\O( \MWST{\WDep^n} )$.
Since the summation index set in \eqref{EqCumulantsSumSets} does not depend on $n$,
we conclude that $\kappa_{[\ell]}(\uu^{(n)})=O\big( \MWST{\WDep^n} \big)$,
which ends the proof of the first implication.
\bigskip

Let us now consider the converse statement.
We proceed by induction on $\ell$ and we assume that, 
for all $\ell'$ smaller than a given $\ell \geq 2$, 
the $\WDep_n$ small cumulant property implies the 
$\WDep_n$ quasi factorization property.

Consider a sequence of lists $(\uu^{(n)})_{n \ge n_0}$
such that, for any $\Delta \subseteq [\ell]$ with $|\Delta| \ge 2$,
one has $\ka_\Delta(\uu^{(n)}) = \O(\MWST{\WDep^n[\Delta]} )$.
By induction hypothesis, 
for all $\Delta \subsetneq [\ell]$,
one has $\P_\Delta(\uu^{(n)})-1 = \O(\MWST{\WDep^n[\Delta]} )$.

Since the maximal weight of $\WDep^n[\Delta]$ tends to $0$,
the quantity $\MWST{\WDep^n[\Delta]}$ also tends to $0$ for all $\Delta \subseteq [\ell]$ with $|\Delta| \ge 2$.
Thus $\P_\Delta(\uu^{(n)})$ tends to $1$.
From \cref{EqUP}, this implies that, for any $\Delta \subseteq [\ell]$,
the sequence $u^{(n)}_\Delta$ also tends to $1$.
This estimate is useful below.

Back to the proof, we have to establish that 
\[ \P_{[\ell]}(\uu^{(n)})-1 =
\prod_{\Delta \subseteq [\ell]} (u^{(n)}_\Delta)^{(-1)^{\ell-|\Delta|}} -1 =O\bigg( \MWST{\WDep^n}\bigg) \]
Thanks to the estimates above for $u^{(n)}_\Delta$, this is equivalent to the fact that
\begin{equation}\label{EqRewriteQuasiFact}
  u^{(n)}_{[\ell]} - \prod_{\Delta \subsetneq [\ell]} 
  (u^{(n)}_{\Delta})^{(-1)^{\ell-1-|\Delta|}} = O\bigg( \MWST{\WDep^n}\bigg)
  \end{equation}
  Define now an auxiliary family $(\vv^{(n)})_{n \ge n_0}$ defined by:
  \[v^{(n)}_\Delta = \begin{cases}
      u^{(n)}_\Delta & \text{ if }\Delta \subsetneq [\ell];\\
      \prod_{\delta \subsetneq [\ell]}          
      (u^{(n)}_{\delta})^{(-1)^{\ell-1-|\delta|}} & \text{ for }\Delta=[\ell].
  \end{cases}\]
Clearly, $\P_\Delta(\vv)=\P_\Delta(\uu)$ for $\Delta \subsetneq [\ell]$
and $\P_{[\ell]}(\vv)=1$, so that
the family $\vv$ has the $\WDep^n$ quasi-factorization property.
Thus, using the first
part of the proof, it also has the $\WDep^n$ small cumulant property.
In particular:
\[ \kappa_{[\ell]}(\vv^{(n)}) =O\bigg( \MWST{\WDep^n}\bigg).\]
But, by hypothesis
\[ \kappa_{[\ell]}(\uu^{(n)})  =O\bigg( \MWST{\WDep^n}\bigg).\]
As $v_\Delta=u_\Delta$ for $\Delta \subsetneq [\ell]$, one has:
\[u_{[\ell]} - v_{[\ell]} =
\kappa_{[\ell]}(\uu) - \kappa_{[\ell]}(\vv) = O\bigg( \MWST{\WDep^n}\bigg),\]
which proves \eqref{EqRewriteQuasiFact}.
\bigskip

The general case follows directly from the case $u_\emptyset=u_{\{i\}}=1$
by considering the family 
\[w^{(n)}_\Delta=\frac{(u_\Delta/u_\emptyset)}{\prod_{i \in \Delta} \frac{u_{\{i\}}}{u_\emptyset}}.\]
    Indeed, for $|\Delta| \ge 2$,
    \begin{align*}
        \P_\Delta(\ww)&=\P_\Delta(\uu);\\
        K_\Delta(\ww)&=K_\Delta(\uu)/\prod_{h \in \Delta} \left( \frac{u_{\{h\}}}{u_\emptyset} \right). \qedhere
    \end{align*}
\end{proof}
When the maximal weight in $\WDep^n$ tends to zero,
we write ``$\big(\uu^{(n)}\big)_{n\ge n_0}$ has the $\WDep^n$ SC/QF property''
(since the two properties are equivalent in this case).
Furthermore, when $\WDep^n$ is a complete graph with weight $\eps_n$ on each edge,
we say that ``$\big(\uu^{(n)}\big)_{n\ge n_0}$ has the $\eps_n$ SC/QF property''
(instead of the ``$\WDep^n$ SC/QF property'').
In the following lemma, we collect a few easy facts on the SC/QF property.

\begin{lemma}
    \begin{enumerate}
        \item If, for each $n$, $u^{(n)}_\Delta=u^{(n)}$ does not depend on $\Delta$,
            then $(\uu^{(n)})_{n \ge 1}$ has the $\bm{0}$-SC/QF property,
            where $\bm{0}$ stands for the  graph on vertex-set $[\ell]$ with no edges.
        \item If, for each $n$, $(\uu^{(n)})$ is multiplicative, 
            that is $u^{(n)}_\Delta=\prod_{i \in \Delta} u^{(n)}_{\{i\}}$,
            then $(\uu^{(n)})_{n \ge 1}$ has the $\bm{0}$-SC/QF property.
        \item Let $(\WDep^n)_{n \ge n_0}$ and $(\WK^n)_{n \ge n_0}$ two sequences
            of weighted graphs with maximal weight tending to $0$ and
            assume that the weight of $\{i,j\}$ in $\WDep^n$ is always smaller
            than or equal to the corresponding weight in $\WK^n$.

            If a sequence $(\uu^{(n)})_{n \ge 1}$ has the $\WDep^n$-SC/QF property,
            then it also has the $\WK^n$-SC/QF property.
        \item Consider two {\em sequences} $\big(\uu^{(n)}\big)_{n\ge n_0}$                 
            and $\big(\vv^{(n)}\big)_{n\ge n_0}$, both with the $\WDep^n$ SC/QF property.      
            Then their entry-wise product $\bm{u}^{(n)} \cdot \bm{v}^{(n)}$                   
            and their entry-wise quotient $\uu^{(n)}/\vv^{(n)}$ both have the $\WDep^n$ SC/QF property.
        \item Moreover, if $u_\emptyset=v_\emptyset$,
          then any linear combination $\la \uu^{(n)} + \mu \vv^{(n)}$
            with only non-zero terms for $n$ sufficiently large
            also has the $\WDep^n$-SC/QF property.
    \end{enumerate}
    \label{LemTrivialSCQF}
\end{lemma}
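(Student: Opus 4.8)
The plan is to express everything through the quasi-factorization version of the SC/QF property. The starting observation is that $\P_\Delta$ has the closed form $\P_\Delta(\uu)=\prod_{\delta\subseteq\Delta}(u_\delta)^{(-1)^{|\delta|}}$ (M\"obius inversion on the boolean lattice applied to \cref{EqUP}); hence for fixed $\Delta$ the map $\uu\mapsto\P_\Delta(\uu)$ is multiplicative in $\uu$, is unchanged when all entries of $\uu$ are multiplied by one common nonzero scalar, and equals $1$ for every $\Delta\neq\emptyset$ when $\uu$ is constant (because $\sum_{\delta\subseteq\Delta}(-1)^{|\delta|}=0$). Since every weighted graph appearing in the statement has maximal weight tending to $0$ — by hypothesis, or because it is the discrete graph $\bm0$, for which $\MWST{\bm0[\Delta]}=0$ whenever $|\Delta|\ge 2$ — \cref{PropEqSCQF} lets me replace the SC property by the estimate $\P_\Delta(\uu^{(n)})=1+O(\MWST{\WDep^n[\Delta]})$ for all $|\Delta|\ge 2$.

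Items (1) and (2) then follow at once: if $u^{(n)}_\Delta$ is independent of $\Delta$ then $\P_\Delta(\uu^{(n)})=1$ for $\Delta\neq\emptyset$; and if $\uu^{(n)}$ is multiplicative then, by uniqueness in \cref{EqUP}, $\P_{\{i\}}(\uu^{(n)})=u^{(n)}_{\{i\}}/u^{(n)}_\emptyset$ while $\P_\Delta(\uu^{(n)})=1$ for $|\Delta|\ge 2$; in both cases this is exactly the $\bm0$-SC/QF property. For (3) I would note that $\MWST{\cdot}$ is monotone non-decreasing under a pointwise increase of edge weights — the weight $\prod_{e\in T}w_e$ of a fixed spanning tree increases, the set of spanning trees only grows when a missing edge is added, and a maximum of non-decreasing functions is non-decreasing — so $\MWST{\WDep^n[\Delta]}\le\MWST{\WK^n[\Delta]}$ and any $O(\MWST{\WDep^n[\Delta]})$ is an $O(\MWST{\WK^n[\Delta]})$. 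For (4), multiplicativity of $\P_\Delta$ gives $\P_\Delta(\uu^{(n)}\cdot\vv^{(n)})=(1+a_n)(1+b_n)$ and $\P_\Delta(\uu^{(n)}/\vv^{(n)})=(1+a_n)/(1+b_n)$ with $a_n,b_n=O(\MWST{\WDep^n[\Delta]})$; since this quantity tends to $0$, both expressions are $1+O(\MWST{\WDep^n[\Delta]})$.

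Item (5) is the only substantial point. I would first reduce to the case $\vv^{(n)}\equiv\bm1$: the family $\bm{s}^{(n)}:=\la(\uu^{(n)}/\vv^{(n)})+\mu\bm1$ satisfies $\bm{s}^{(n)}\cdot\vv^{(n)}=\la\uu^{(n)}+\mu\vv^{(n)}$; the quotient $\uu^{(n)}/\vv^{(n)}$ has the SC/QF property by (4), the constant family $\bm1$ has it by (1) and (3), both have $\emptyset$-entry $1$ (this is where $u_\emptyset=v_\emptyset$ enters), and the hypothesis that the combination has only nonzero entries says precisely that $\bm{s}^{(n)}$ does; so, by (4) once more, it suffices to show that $\la\uu^{(n)}+\mu\bm1$ has the SC/QF property whenever $\uu^{(n)}$ does and $u^{(n)}_\emptyset=1$. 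After rescaling $\la,\mu$ we may also assume $\la+\mu=1$. At this stage I would pass back to the small-cumulant form and try to prove $\kappa_\Delta(\la\uu^{(n)}+\mu\bm1)=\la\,\kappa_\Delta(\uu^{(n)})+O(\MWST{\WDep^n[\Delta]})$ for $|\Delta|\ge2$, which suffices since $\kappa_\Delta(\bm1)=0$.

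Establishing this last identity is the step I expect to be the genuine obstacle. The function $\kappa_\Delta$ is not linear in the entries $(u_\delta)$ — expanding it over set partitions of $\Delta$ produces products $u_{\delta_1}u_{\delta_2}\cdots$ once $|\Delta|\ge 4$ — and the normalizing factors $\prod_{i\in\Delta}u_{\{i\}}$ are not preserved by the linear combination, so the singleton entries have to be tracked throughout. The plan would be to substitute $\la+\mu\,u_\delta$ for each entry, expand, and regroup, aiming to show via \cref{LemmaSetPartAndPi1} — which bounds $\prod_i\MWST{\WDep^n[\Delta_i]}$ by $\MWST{\WDep^n[\Delta]}$ when $\Pi(\Delta_1)\vee\cdots\vee\Pi(\Delta_m)=\{\Delta\}$ — that the surviving correction terms are all $O(\MWST{\WDep^n[\Delta]})$; this should be organized as an induction on $|\Delta|$, mirroring the second half of the proof of \cref{PropEqSCQF}. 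Pinning down exactly which combinations of the perturbations $\P_\Delta(\uu^{(n)})-1$ survive the alternating sum, and choosing the right auxiliary family to induct with, is where I expect the real work to lie.
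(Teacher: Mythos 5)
Your treatment of items (1)--(4) is correct and is essentially the paper's own argument: the paper likewise just observes that $\P_\Delta(\uu^{(n)})=1$ in cases (1) and (2), calls (3) trivial, and gets (4) from the multiplicativity of $\uu\mapsto\P_\Delta(\uu)$ together with \cref{PropEqSCQF}.

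Item (5) is where your proposal has a genuine gap, and your instinct that it is the hard point is well founded. Your reduction to $\vv^{(n)}\equiv\bm{1}$ via $\bm{s}^{(n)}=\la(\uu^{(n)}/\vv^{(n)})+\mu\bm{1}$ is legitimate, but the estimate you then defer, $\ka_\Delta(\la\uu^{(n)}+\mu\bm{1})=\la\,\ka_\Delta(\uu^{(n)})+O(\MWST{\WDep^n[\Delta]})$, cannot be established: the non-linearity of $\ka_\Delta$ that you flag is a real obstruction, not a technicality. Normalizing $u_\emptyset=v_\emptyset=1$ and setting $\ww=\la\uu+\mu\vv$, a direct computation for $\Delta=\{1,2\}$ gives
\[
\ka_{\{1,2\}}(\ww)=\frac{\la\,\ka_{\{1,2\}}(\uu)+\mu\,\ka_{\{1,2\}}(\vv)}{\la+\mu}
+\frac{\la\mu\,\big(u_{\{1\}}-v_{\{1\}}\big)\big(u_{\{2\}}-v_{\{2\}}\big)}{(\la+\mu)^2},
\]
and the second term is independent of $\MWST{\WDep^n[\Delta]}$. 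Taking $\uu^{(n)}\equiv 1$ (item (1)) and $\vv^{(n)}$ multiplicative with $v^{(n)}_{\{1\}}=v^{(n)}_{\{2\}}=2$ (item (2)), and $\la=\mu=1$, one gets $\ka_{\{1,2\}}(\uu^{(n)}+\vv^{(n)})=1/4$, which does not tend to $0$ although all entries are non-zero and both summands have the SC/QF property. So item (5) fails as stated; it requires the additional hypothesis that the singleton entries of $\uu^{(n)}$ and $\vv^{(n)}$ coincide (which kills the obstruction term above). For the same reason the identity $\ka_\Delta(\la\uu+\mu\vv)=(\la+\mu)^{-|\Delta|}\big(\la\ka_\Delta(\uu)+\mu\ka_\Delta(\vv)\big)$ used in the paper's proof of (5) does not hold: expanding $\prod_{B\in\pi}(\la u_B+\mu v_B)$ leaves mixed products $u_B v_{B'}$ that do not cancel across partitions. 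Note that only items (1), (2) and (4), which you do prove, are used in the applications of this lemma elsewhere in the paper.
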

\begin{proof} 
    For (1) and (2), observe that $\P_\Delta\big(\bm{u}^{(n)})=1$. Item (3) is trivial.
    (4) follows from the following easy identities:
    for any $\Delta \subseteq [\ell]$ and $n$ sufficiently large (to avoid a division by $0$),
\[ \P_\Delta\big(\bm{u}^{(n)} \cdot \bm{v}^{(n)}\big) = \P_\Delta(\bm{u}^{(n)}) \cdot  \P_\Delta(\bm{v}^{(n)});
\quad
\P_\Delta\big(\uu^{(n)}/\vv^{(n)} \big) = \frac{\P_\Delta(\bm{u}^{(n)})}{\P_\Delta(\bm{v}^{(n)})}.\]
Moreover, if $u_\emptyset=v_\emptyset$,
\[\ka_\Delta\big(\la \uu^{(n)} + \mu \vv^{(n)}\big) = 
\frac{1}{(\la+\mu)^{|\Delta|}} \big(\la \ka_\Delta(\bm{u}^{(n)})
+ \mu \ka_\Delta(\vv^{(n)})\big),\]
which implies (5).
\end{proof}
We end this section by a family of examples, for which the SC/QF property holds.

Let $(X_n)_{n \ge 1}$ be a sequence of integers such that $X_n \ge 1$ (for all $n \ge 1$)
and $\lim_{n \to \infty} X_n=+\infty$. Fix $\ell \ge 1$ and nonnegative integers $a_1,\cdots,a_\ell$.
We consider the {\em factorial sequences}
\[u^{(n)}_\Delta(a_1,\cdots,a_\ell)= \left(X_n-\textstyle \sum_{i \in \Delta} a_i \right) !\]
For $n$ sufficiently large, say $n \ge n_0$, the integer $X_n - \sum_{i=1}^\ell a_i$ is non-negative
and the truncated family $\big(\uu^{(n)}(a_1,\cdots,a_\ell)\big)_{n\ge n_0}$ is well-defined.

\begin{proposition}
  We use the notation above and set $\eps_n=1/X_n$.
Then the family $\big(\uu^{(n)}(a_1,\cdots,a_\ell)\big)_{n\ge n_0}$
has the $\eps_n$ SC/QF property.
\label{PropExSCQF}
\end{proposition}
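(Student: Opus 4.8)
The plan is to reduce the statement to the quasi-factorization property and then evaluate a finite difference. Since $X_n\to\infty$, the common edge weight $\eps_n=1/X_n$ of $\WDep^n$ tends to $0$, so \cref{PropEqSCQF} applies and it suffices to prove the $\eps_n$ quasi-factorization property: for every $\Delta\subseteq[\ell]$ with $r:=|\Delta|\ge 2$,
\[ \P_\Delta\big(\uu^{(n)}(a_1,\cdots,a_\ell)\big)=1+O\big(\eps_n^{\,r-1}\big), \]
where I used that $\WDep^n[\Delta]$ is the complete graph on $r$ vertices with all weights $\eps_n$, so each of its spanning trees has weight $\eps_n^{\,r-1}$ and hence $\MWST{\WDep^n[\Delta]}=\eps_n^{\,r-1}$.

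First I would take logarithms. Write $s_\delta=\sum_{i\in\delta}a_i$ and let $f(x)=\log\Gamma(x+1)$, which is smooth on $(-1,\infty)$ and interpolates $k\mapsto\log(k!)$ at the integers. Using the explicit expression $\P_\Delta(\uu)=\prod_{\delta\subseteq\Delta}(u_\delta)^{(-1)^{|\delta|}}$ together with $\log u^{(n)}_\delta=f(X_n-s_\delta)$, one gets
\[ \log\P_\Delta\big(\uu^{(n)}\big)=\sum_{\delta\subseteq\Delta}(-1)^{|\delta|}f\big(X_n-s_\delta\big), \]
which is precisely the $r$-fold mixed finite difference of $f$ with increments $(-a_i)_{i\in\Delta}$, evaluated at $X_n$. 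Writing each first difference as $f(x)-f(x-a)=a\int_0^1 f'(x-ta)\,dt$ and iterating, this equals $\big(\prod_{i\in\Delta}a_i\big)\int_{[0,1]^r}f^{(r)}\big(X_n-\sum_i t_ia_i\big)\,d\bt$, so by the mean value theorem for integrals it equals $\big(\prod_{i\in\Delta}a_i\big)f^{(r)}(\zeta_n)$ for some $\zeta_n\in[X_n-S,X_n]$ with $S:=\sum_{i\in\Delta}a_i$ a fixed constant. (If some $a_i$ with $i\in\Delta$ vanishes this reads $0=0$, consistent with the fact that then $u^{(n)}_\delta$ is insensitive to whether $i\in\delta$ and so $\P_\Delta(\uu^{(n)})=1$ exactly.)

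It then remains to estimate $f^{(r)}$. For $r\ge 2$ one has $f^{(r)}(x)=\psi^{(r-1)}(x+1)$, the $(r-1)$-th polygamma function, and from $\psi^{(m)}(y)=(-1)^{m+1}m!\sum_{k\ge 0}(y+k)^{-(m+1)}$ one reads off $\psi^{(m)}(y)=O(y^{-m})$ for $m\ge1$; hence $f^{(r)}(y)=O\big(y^{-(r-1)}\big)$. Since $S$ is fixed and $X_n\to\infty$, for $n$ large we have $\zeta_n\ge X_n-S\ge X_n/2$, so $f^{(r)}(\zeta_n)=O\big(X_n^{-(r-1)}\big)$, and therefore $\log\P_\Delta(\uu^{(n)})=O\big(X_n^{-(r-1)}\big)=O\big(\eps_n^{\,r-1}\big)$. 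As this tends to $0$, exponentiating gives $\P_\Delta(\uu^{(n)})=1+O\big(\eps_n^{\,r-1}\big)=1+O\big(\MWST{\WDep^n[\Delta]}\big)$, i.e. the quasi-factorization property; by \cref{PropEqSCQF} the $\eps_n$ SC/QF property follows. The only steps that are not pure bookkeeping are the finite-difference mean value theorem and the polygamma decay $\psi^{(m)}(y)=O(y^{-m})$, both classical; the main thing to get right is the identification of $\log\P_\Delta$ with the correct mixed finite difference, and the observation that the hypothesis $r\ge2$ is exactly what forces $f^{(r)}$ to decay (for $r=1$, $f'=\psi$ grows like $\log$, in agreement with the fact that $\P_{\{i\}}(\uu^{(n)})=(X_n-a_i)!/X_n!$ is not close to $1$).
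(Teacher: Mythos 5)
Your argument is correct, but it is genuinely different from the one in the paper. Both proofs reduce, via \cref{PropEqSCQF}, to showing the quasi-factorization estimate $\P_\Delta(\uu^{(n)})-1=O(\eps_n^{|\Delta|-1})$; the difference lies in how that estimate is obtained. The paper proceeds by a double induction (on $\ell$, then on $a_\ell$), peeling off one unit of $a_\ell$ at a time and controlling the resulting alternating product of linear factors by the purely combinatorial cancellation of \cref{LemTechCumulants} (the symmetric-difference bijection showing that all terms of low order cancel). You instead observe that $\log\P_\Delta(\uu^{(n)})=\sum_{\delta\subseteq\Delta}(-1)^{|\delta|}f(X_n-s_\delta)$ with $f=\log\Gamma(\cdot+1)$ is (up to the irrelevant sign $(-1)^{|\Delta|}$) an $|\Delta|$-fold mixed finite difference, so the integral mean-value form of the difference plus the polygamma decay $\psi^{(m)}(y)=O(y^{-m})$, $m\ge1$, gives the bound in one stroke; your remarks on the degenerate case $a_i=0$ and on why $r\ge 2$ is needed (since $\psi$ itself grows like $\log$) are both apt. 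Your route is shorter and more conceptual, makes transparent why the exponent is exactly $r-1$, and generalizes immediately to the variant $(X_n+\sum_{i\in\delta}a_i)!$ mentioned in the paper's closing remark and even to non-integer parameters; the price is an appeal to Gamma/polygamma asymptotics. The paper's induction is more elementary and self-contained (pure polynomial algebra, no special functions), and it yields the slightly stronger structural fact that $\P_\Delta(\uu^{(n)})-1$ is a rational function of $X_n$ of degree at most $-|\Delta|+1$, which is the form in which the estimate is reused elsewhere (e.g.\ in the proof of \cref{LemBorneCumulantsASEP}).
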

The proof is a combination of easy but technical inductions.
It is given in \cref{Sect:Proof_SCQF_Factorials}.

Combining this result with \cref{LemTrivialSCQF} (item 4),
we get that products and quotients of these factorial sequences have the SC/QF property.
Therefore, if the joint moments of some random variables 
are of this form,
we get bounds on their joint cumulants without any computation.
This is used in \cref{SectPerm,SectErdosRenyi,SectMatchings}.

\subsection{Powers of weighted dependency graphs}
\label{SectProd}
The propositions and lemmas of the two previous sections help to establish
that a family of random variables admits a given weighted dependency graph.
In this section, we shall see that when we have a weighted dependency graph
for a family $\{Y_\a,\a \in A\}$,
we can automatically construct a new one for 
monomials $Y_I = \prod_{\a \in I} Y_\a$ in the original variables $Y_\alpha$
(here, the index $I$ is a multiset of elements of $A$).

\begin{proposition}
    Let $\{Y_\a,\a \in A\}$ be a family of random variables
    with a $(\Psi,\bC)$ weighted dependency graph $\WDep$.
    We fix a positive integer $m$ and consider the $m$-the power $\WDep^m$
    of $WDep$, as defined in \cref{DefPowerWG}.

    Assume that $\Psi$ is super-multiplicative.
    Then $\WDep^m$ is a $(\bm{\Psi},\bD_m)$ weighted dependency graph for the family
    $\{Y_I,\, I \in \Mset_{\le m}(A)\}$,
    where:
    \begin{equation}
        \bm{\Psi}(\{I_1,\cdots,I_r\}) = \Psi(I_1 \uplus \cdots \uplus I_r)
    \end{equation}
    and $D_{m,r}$ depends only on
    $m$, $r$, $C_1$, \ldots, $C_{mr}$.
    \label{PropProducts}
\end{proposition}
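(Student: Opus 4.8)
The plan is to expand the cumulant of products $\ka(Y_{I_1},\ldots,Y_{I_r})$ by the Leonov--Shiryaev formula, to bound each resulting term with the weighted dependency graph structure of $\WDep$, and to reassemble the pieces using the super-multiplicativity of $\Psi$ together with \cref{LemProdMWSTPi}. I would fix a multiset $\{I_1,\ldots,I_r\}$ of elements of $\Mset_{\le m}(A)$ and set $B = I_1 \uplus \cdots \uplus I_r$, a multiset of elements of $A$ with $|B| \le mr$; since each $Y_{I_j}$ is a product of finitely many variables with all finite moments it again has all finite moments, so the cumulant is well defined. The Leonov--Shiryaev formula for cumulants of products \cite{LeonovShiryaevCumulants} (the same one used in the proof of \cref{PropAlternate}, now applied with the grouping $\{I_1,\ldots,I_r\}$ of $B$) gives
\[
  \ka\big(Y_{I_1},\ldots,Y_{I_r}\big) = \sum_{\pi} \prod_{i=1}^{s} \ka\big(Y_\a ; \a \in \pi_i\big),
\]
where the sum runs over multiset partitions $\pi=\{\pi_1,\ldots,\pi_s\}$ of $B$ with $\pi \vee \{I_1,\ldots,I_r\} = \{B\}$.

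For each such $\pi$, inequality \eqref{EqFundamental} for $\WDep$ bounds every factor by $\big|\ka(Y_\a;\a \in \pi_i)\big| \le C_{|\pi_i|}\,\Psi(\pi_i)\,\MWST{\WDep[\pi_i]}$. Multiplying over $i$: the super-multiplicativity of $\Psi$ gives $\prod_i \Psi(\pi_i) \le \Psi(B) = \bm{\Psi}(\{I_1,\ldots,I_r\})$, and \cref{LemProdMWSTPi}---whose hypothesis \eqref{eq:Pi_is_Rough} is exactly the join condition imposed on $\pi$---gives $\prod_i \MWST{\WDep[\pi_i]} \le \MWST{\WDep^m[\{I_1,\ldots,I_r\}]}$. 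Since $s\le |B|\le mr$ and each $|\pi_i|\le mr$, the remaining factor satisfies $\prod_i C_{|\pi_i|} \le \big(\max(1,C_1,\ldots,C_{mr})\big)^{mr}$, and by \cref{rmk:Mset_partitions} the number of summands, counted with multiplicity, is at most the $|B|$-th Bell number and hence at most the $mr$-th Bell number. Summing the bounds over all $\pi$ yields \eqref{EqFundamental} for the family $\{Y_I,\ I\in\Mset_{\le m}(A)\}$ with respect to $\WDep^m$ and $\bm{\Psi}$, with $D_{m,r} = \mathrm{Bell}(mr)\,\big(\max(1,C_1,\ldots,C_{mr})\big)^{mr}$, which depends only on $m$, $r$ and $C_1,\ldots,C_{mr}$, as required.

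I do not expect a genuine obstacle: \cref{LemProdMWSTPi} was designed precisely to bound $\prod_i \MWST{\WDep[\pi_i]}$ by $\MWST{\WDep^m[\cdots]}$ under exactly the join condition that the Leonov--Shiryaev expansion produces, so the proof is mostly a matter of aligning hypotheses. The points that need a little care are the combinatorics of multiset partitions (counting with multiplicity as in \cref{rmk:Mset_partitions}, and reading \cref{LemProdMWSTPi} through the explicit representation $B=\{(v,j): j\le r,\ v\in I_j\}$ so that it applies even when the $I_j$ coincide) and the degenerate conventions: a summand with some $\MWST{\WDep[\pi_i]}=0$ vanishes, and if $\WDep^m[\{I_1,\ldots,I_r\}]$ is disconnected then \cref{LemProdMWSTPi} forces every summand to vanish, matching $\MWST{\WDep^m[\{I_1,\ldots,I_r\}]}=0$; if some $I_j$ is empty then $Y_{I_j}$ is deterministic and the bound is trivial, or one simply restricts to nonempty multisets.
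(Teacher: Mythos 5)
Your proposal is correct and follows essentially the same route as the paper's proof: expand $\ka(Y_{I_1},\ldots,Y_{I_r})$ via the Leonov--Shiryaev formula over multiset partitions $\pi$ with $\pi \vee \{I_1,\ldots,I_r\} = \{B\}$, bound each factor by \eqref{EqFundamental}, and combine the pieces using super-multiplicativity of $\Psi$ and \cref{LemProdMWSTPi}. Your explicit Bell-number bound for $D_{m,r}$ and the remarks on degenerate cases are harmless refinements of what the paper leaves implicit.
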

\begin{proof}
    Let $I_1,\cdots, I_r$ be in $\Mset_{\le m}(A)$.
    As above, we use the formula of Leonov and Shiryaev 
    for cumulants of products \cite{LeonovShiryaevCumulants}
    (see also \cite[Theorem 4.4]{Sniady2006fluctuations}):
    \[ \ka\big( Y_{I_1},\cdots,Y_{I_m} \big) =
    \sum_{\pi \perp \bm{I}} \ \prod_{i=1}^\ell \ka( Y_\beta; \beta \in \pi_i),\]
    where the sum runs over multiset partitions $\pi$ of the multiset $I_1 \uplus \cdots \uplus I_r$
    such that 
    \begin{equation}
        \pi \vee \big\{ I_1,\cdots,I_r \big\} = \big\{ I_1 \uplus \cdots \uplus I_r \big\}.
        \label{EqCondIrred}
    \end{equation}
    Since $\WDep$ is a $(\Psi,\bC)$ weighted dependency graph for
    $\{Y_\a,\a \in A\}$, one has the bound
    \[ \bigg| \ka( Y_\beta; \beta \in \pi_i) \bigg| \le
    C_{|\pi_i|} \, \Psi(\pi_i) \, \MWST{\WDep[\pi_i]}.\]
    Hence, for any partition $\pi$ of $I_1 \uplus \cdots \uplus I_r$,
    \[ \left| \prod_{i=1}^\ell \ka( Y_\beta; \beta \in \pi_i) \right|
    \le \left( \prod_{i=1}^\ell C_{|\pi_i|} \right)
    \, \left( \prod_{i=1}^\ell \Psi(\pi_i) \right)
    \left( \prod_{i=1}^\ell \MWST{\WDep[\pi_i]} \right).\]
    But, from the super-multiplicativity of $\Psi$, one has 
    \[\prod_{i=1}^\ell \Psi(\pi_i) \le \Psi(\pi_1 \uplus \cdots \uplus \pi_\ell) = \Psi(I_1 \uplus \cdots \uplus I_r).\]
    On the other hand, from \cref{LemProdMWSTPi},
    when \eqref{EqCondIrred} is satisfied, 
    one has
    \[ \prod_{i=1}^\ell \MWST{\WDep[\pi_i]} \le \MWST{\WDep^m[\{I_1,\cdots,I_r\}]}.\]
    Bringing everything together, we have
    \begin{equation}
      | \ka\big( Y_{I_1},\cdots,Y_{I_m} \big) | \le
    \left( \sum_{\pi \perp \bm{I}}\  \prod_{i=1}^\ell C_{|\pi_i|} \right) \,
    \MWST{\WDep^m[ \{I_1,\cdots,I_r\}] } \, \Psi(I_1 \uplus \cdots \uplus I_r).
    \label{EqBoundCumulantProducts}
  \end{equation}
    The quantity $\left( \sum_{\pi \perp \bm{I}}\, \prod_{i=1}^\ell C_{|\pi_i|} \right)$
    only depends on the sizes of $I_1$, \ldots, $I_r$ and on the values of $C_1$, \ldots, $C_{mr}$
    and thus can be bounded by some $D_{m,r}$, depending on $m$, $r$, $C_1$, \ldots, $C_{mr}$.
\end{proof}

\begin{remark}
  \label{RkJustifAlternateConverse}
  When the $I_j$ are the connected components of the graph $\WDep_{1}[B]$,
  \cref{EqBoundCumulantProducts} specializes to \eqref{EqFundamentalVariant},
  which justifies \cref{RkAlternateConverse}.
\end{remark}
\begin{remark}
    In general we are only interested in a subfamily of $\{Y_I,\, I \in \Mset_{\le m}(A)\}$.
    But clearly, if we have a weighted dependency graph for some family of variables,
    then any subfamily admits the corresponding weighted subgraph
    as weighted dependency graph.
\end{remark}

\section{Crossings in random pair partitions}
\label{SectMatchings}

\subsection{Definitions and basic considerations}
\label{SubsectDefPP}
Recall that $[2n]$ denotes the set of integers $\{1,\cdots,2n\}$
\begin{definition}
    A pair partition of $[2n]$ is a set $H$ of disjoint
    2-element subsets of $[2n]$ whose union is $[2n]$.
\end{definition}
%
Observe that, by definition
for each integer $i$ in $[2n]$, there is a unique $j \ne i$ such
    $\{i,j\}$ is in $H$. We call $j$ the {\em partner} of $i$.

We are interested in the uniform model on pair partitions of $[2n]$.
A uniform random pair partition of $[2n]$ can be constructed as follows.
Take $i_1$ arbitrarily ({\em e.g.} $i_1=1$) and choose its partner $j_1$ uniformly at random
among numbers different for $i_1$ ({\em i.e.} each number different from
$i_1$ is taken with probability $1/(2n-1)$);
then take $i_2$ arbitrarily different from $i_1$ and $j_1$
and choose its partner $j_2$ uniformly at random
among numbers different from $i_1$, $j_1$ and $i_2$ 
(each such number is taken with probability $1/(2n-3)$); 
and so on, until all pairs are created.
In particular, given distinct numbers $i_1,\cdots,i_t$ and $j_1,\cdots,j_t$,
the probability that all pairs $(\{i_s,j_s\})_{s \le t}$
belong to a uniform random pair partition $H$ of $[2n]$ is
\[\frac{1}{(2n-1) \, \cdots \, ( 2n-2t+1 )}.\]
This simple observation is the key to find 
a weighted dependency graph associated to uniform
random pair partitions.

To illustrate the use of this weighted dependency graph,
we study a classical statistics on pair partitions,
called {\em crossing};
see, {\em e.g.}, \cite{CrossingsNestings} and references therein
for enumerative results on this statistics.
\begin{definition}
    A {\em crossing} in a pair partition $H$ is a quadruple $(i,j,k,l)$
    with $i<j<k<l$ such that $\{i,k\}$ and $\{j,l\}$
    belong to $H$.
%
\end{definition}
It is customary to represent pair partitions by putting the numbers
$1$, \ldots, $2n$ on a line and linking partners with an arch in the upper-half plane.
With this representation, crossings as defined above 
correspond to crossings of the corresponding arches.
For example $(1,4,5,7)$ and $(4,6,7,8)$ are the only two crossings of
$H_\ex=\big\{ \{1,5\},\{2,3\},\{4,7\},\{6,8\} \big\}$.
The corresponding graphical representation is given in \cref{FigPP}.
\begin{figure}[ht]
    \begin{center}
        \includegraphics{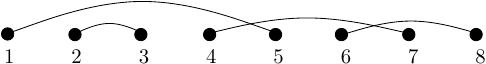}
    \end{center}
    \caption{Example of a pair partitions with two crossings}
    \label{FigPP}
\end{figure}

\subsection{A weighted dependency graphs for random pair partitions}
Let $A_n$ be the set of two element subsets of $[2n]$.
For $\{i,j\} \in A_n$,
we define a random variable $Y_{i,j}$
such that $Y_{i,j}=1$ if $\{i,j\}$ belongs to the random pair partition $\RPP_n$,
and $0$ otherwise.
\begin{proposition}
    Consider the weighted graph $\WDep$ on vertex set $A_n$ defined as follows:
    \begin{itemize}
        \item if two pairs $\a_1$ and $\a_2$ in $A_n$ have an element in common,
            then they are linked in $\WDep$ by an edge of weight $1$;
        \item if two pairs $\a_1$ and $\a_2$ in $A_n$ are disjoint,
            then they are linked in $\WDep$ by an edge of weight $1/n$.
    \end{itemize}

    Then $\WDep$
    is a $(\Psi_n,\bC)$ weighted dependency graph
    for the family $\{ Y_{i,j},\, \{i,j\} \in A_n \}$,
    where
    \begin{itemize}
        \item $\Psi_n(B)=n^{-\#(B)}$ for any multiset $B$
    of elements of $A_n$
        \item   and $\bC=(C_r)_{r \ge 1}$ is a sequence that does not depend on $n$.
    \end{itemize}  
    \label{PropWDGInRandomPairPartitions}
\end{proposition}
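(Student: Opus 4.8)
The plan is to apply \cref{PropAlternate}. Its hypothesis on $\Psi$ holds: $\Psi_n(B)=n^{-\#(B)}$ is super-multiplicative, since $\#(B_1\uplus B_2)\le\#(B_1)+\#(B_2)$ and $n\ge 1$. So it suffices to produce a sequence $\bD=(D_r)$, not depending on $n$, such that for every multiset $B$ of pairs
\[\Big|\ka\Big(\textstyle\prod_{\a\in B_1}Y_\a,\dots,\prod_{\a\in B_\ell}Y_\a\Big)\Big|\ \le\ D_{|B|}\,\Psi_n(B)\,\MWST{\WDep[B]},\]
where $B_1,\dots,B_\ell$ are the vertex sets of the connected components of $\WDep_1[B]$, the subgraph of weight-$1$ edges. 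The one genuinely useful observation is that two \emph{distinct} pairs sharing an element can never both lie in $\RPP_n$ (an element has a unique partner), so the product of the corresponding indicators is identically $0$. Since any two distinct pairs in one connected component of $\WDep_1[B]$ are joined by a path of weight-$1$ edges — i.e.\ a chain of pairs sharing elements — a component $B_i$ containing at least two distinct pairs has $\prod_{\a\in B_i}Y_\a\equiv 0$; the cumulant above then vanishes (a joint cumulant with a constant argument is $0$ for $\ell\ge 2$, and for $\ell=1$ it is the mean $0$), so the inequality is trivial in that case.

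There remains the case where each $B_i$ is a single distinct pair $\a_i$ with multiplicity $m_i\ge 1$; since the $B_i$ are distinct components, $\a_1,\dots,\a_\ell$ are pairwise disjoint, and $\prod_{\a\in B_i}Y_\a=Y_{\a_i}$ by the Bernoulli property. Here $\#(B)=\ell$ and $\MWST{\WDep[B]}=n^{-(\ell-1)}$: a maximal spanning tree combines the weight-$1$ edges inside each cluster of $m_i$ identical copies with $\ell-1$ weight-$1/n$ edges joining the clusters, and no spanning tree can do better because it needs at least $\ell-1$ inter-cluster edges, each of weight $1/n$. As $\ell\le|B|$, it is enough to prove
\[\big|\ka(Y_{\a_1},\dots,Y_{\a_\ell})\big|=O\big(n^{-(2\ell-1)}\big)\]
with an implied constant depending only on $\ell$, uniformly over pairwise disjoint pairs $\a_1,\dots,\a_\ell$ of $A_n$ (for $\ell=1$ this is immediate, the cumulant being the mean $\frac1{2n-1}$).

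This last bound I would obtain from the SC/QF machinery of \cref{SectSCQF}. By the description of the uniform model in \cref{SubsectDefPP}, the joint moments are, for $\Delta\subseteq[\ell]$,
\[u^{(n)}_\Delta:=\esper\Big(\textstyle\prod_{i\in\Delta}Y_{\a_i}\Big)=\frac{1}{(2n-1)(2n-3)\cdots(2n-2|\Delta|+1)}=2^{|\Delta|}\,\frac{n!}{(n-|\Delta|)!}\,\frac{(2n-2|\Delta|)!}{(2n)!},\]
using $(2m-1)!!=(2m)!/(2^m m!)$; in particular they depend only on $|\Delta|$ and $n$, not on the chosen pairs. Now $\Delta\mapsto 2^{|\Delta|}$ is multiplicative, $(n-|\Delta|)!$ and $(2n-2|\Delta|)!$ are factorial sequences (parameters $X_n=n,\ a_i\equiv 1$ and $X_n=2n,\ a_i\equiv 2$), and $n!,(2n)!$ are constant; so by \cref{LemTrivialSCQF,PropExSCQF} each factor has the $1/n$-SC/QF property, hence so does the entrywise product $\uu^{(n)}$. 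Since $u^{(n)}_\emptyset=1$, $\ka_{[\ell]}(\uu^{(n)})=\ka(Y_{\a_1},\dots,Y_{\a_\ell})$, and the SC/QF property — the complete graph on $[\ell]$ with all weights $1/n$ having maximal spanning tree weight $(1/n)^{\ell-1}$ — gives
\[\big|\ka(Y_{\a_1},\dots,Y_{\a_\ell})\big|=\Big(\textstyle\prod_{i=1}^\ell\tfrac1{2n-1}\Big)\cdot O\big(n^{-(\ell-1)}\big)=O\big(n^{-(2\ell-1)}\big),\]
with constant depending only on $\ell$ (as does the whole moment sequence). Choosing $D_m$ to dominate the finitely many constants coming from $\ell\le m$ — and, for the small values of $n$ not covered by \cref{PropExSCQF}, the finitely many cumulants surviving the reduction above — then reading off $\bC$ from \cref{PropAlternate} completes the argument. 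The only real obstacle is the first observation, that $\WDep_1$ forces degenerate products off the matching locus; once it is in place the problem collapses onto pairwise disjoint pairs, where \cref{PropExSCQF} applies essentially verbatim and everything else is bookkeeping.
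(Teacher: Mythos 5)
Your proposal is correct and follows essentially the same route as the paper's proof: reduce via \cref{PropAlternate} to connected components of $\WDep_1[B]$, observe that distinct overlapping pairs have identically vanishing product (so only pairwise disjoint pairs survive, with multiplicities absorbed by the Bernoulli property), and then bound the resulting repetition-free cumulant by writing the explicit joint moments as products and quotients of factorial sequences and invoking \cref{PropExSCQF,LemTrivialSCQF}. The only differences are presentational (you spell out the value of $\MWST{\WDep[B]}$ and the small-$n$ bookkeeping, which the paper dispatches by noting the statement is vacuous for $n<r$).
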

\begin{proof}
    Clearly $\Psi_n$ is super-multiplicative.
    From \cref{PropAlternate}, it is enough to prove that,
    for any multiset $B$ of elements of $A_n$ of size $r$, one has
\begin{equation}
    \left| \ka\left(\prod_{\a \in B_1} Y_{\a},
    \dots, \prod_{\a \in B_\ell}  Y_{\a}  \right) \right| \le
    D_r \, \Psi(B) \, \MWST{\WDep[B]}, 
    \label{EqFundamentalVariantPP}
\end{equation}
where $B_1$, \ldots, $B_\ell$ are the vertex sets of the connected components
of the graph $\WDepOne[B]$,
and for some $D_r$ that does not depend on $n$.
    
But, if $\a_1$ and $\a_2$ are different and linked by an edge of weight $1$,
the product $Y_{\a_1} Y_{\a_2}$ is identically equal to $0$.
Therefore the left-hand side of \cref{EqFundamentalVariantPP} is $0$ unless
each $B_i$ contains only one element (possibly with multiplicity $m$).
Since the $Y_\a$ take value in $\{0,1\}$, we have $Y_\a^m=Y_\a$ 
and the multiplicity does not play any role.

Finally, it is enough to prove
that for {\em disjoint} pairs $\a_1,\cdots,\a_r$ in $A_n$,
we have:
\begin{equation}
    |\ka(Y_{\a_1},\cdots,Y_{\a_r})| \le D_r \, \left(\tfrac{1}{n}\right)^{2r-1}.
    \label{EqToProvePP}
\end{equation}

Assume $n \ge r$, otherwise the above statement is vacuous.
From the discussion in \cref{SubsectDefPP}, we have that, for any subset $\Delta$ of $[r]$,
\[M^{(n)}_\Delta := \esper \prod_{i \in \Delta} Y_{\a_i} 
= \frac{1}{(2n-1)\, \cdots \, (2n-2|\Delta|+1)}
=\frac{(2n-2|\Delta|)! \, n! \, 2^{|\Delta|}}{(2n)!\, (n-|\Delta|)!}.\]
Note that it does not depend on $\a_1$, \ldots, $\a_r$.
From \cref{LemTrivialSCQF} (items 1, 2 and 3) and \cref{PropExSCQF}, each factor of the above expression
has the $\tfrac{1}{n}$ SC/QF property 
and thus $\MM^{(n)}=(M^{(n)}_\Delta)_{\Delta \subseteq [r]}$ also has this property.

Therefore, since $M^{(n)}_\emptyset=1$, one has:
\[ \ka_{[r]}(\MM^{(n)}) = \left( \textstyle\prod_{i=1}^r M^{(n)}_{\{i\}} \right) \cdot \O(n^{-r+1}).\]
But $\ka_{[r]}(\MM^{(n)})= \ka(Y_{\a_1},\cdots,Y_{\a_r})$ and, for each $i$,
one has $M^{(n)}_{\{i\}}=\tfrac{1}{2n-1}$, so that \cref{EqToProvePP} is proved.
\end{proof}

\subsection{Asymptotic normality of the number of crossings}
Let $A'_n$ be the set of quadruples $(i,j,k,l)$ of elements of $[2n]$ with $i<j<k<l$.
For $(i,j,k,l)$ in $A'_n$, we set
$Y'_{i,j,k,l}=Y_{i,k} Y_{j,l}$.
Equivalently, $Y'_{i,j,k,l}=1$ if $(i,j,k,l)$ is a crossing in the random
pair partition $\RPP_n$ and $0$ otherwise.
We also consider
\[\Cr_n = \sum_{i<j<k<l} Y'_{i,j,k,l},\]
which is the number of crossings in the random
pair partition $\RPP_n$.
We will prove the asymptotic normality of $\Cr_n$,
using the weighted dependency graph of the previous section.
\medskip

First, we use \cref{PropProducts} to find a weighted dependency graph
for the variables $Y'_{i,j,k,l}$.
For a multiset $B=\{Y'_{i_t,j_t,k_t,l_t}, \, 1 \le t \le |B|\}$,
we define $\pairs(B)$ as 
\[\pairs(B) = \# \bigg(
\big\{(i_t,k_t), \, 1 \le t \le |B|\} \cup \{(j_t,l_t), \, 1 \le t \le |B|\big\}\bigg).\]
This is the number of {\em distinct} $Y$ variables that appear in the $(Y'_{\a})_{\a \in B}$.

\begin{proposition}
Let $\WDep'$ be the complete graph on $A'_n$ with the following weights:
\begin{itemize}
    \item if two quadruples $\a'_1$ and $\a'_2$ have a non-empty intersection,
        they are linked by an edge of weight $1$;
    \item if they are disjoint, then they are linked by an edge of weight $1/n$.
\end{itemize}
Then $\WDep'$ is a $(\Psi'_n,\bC')$ weighted dependency graph for the family
$\{Y'_{i,j,k,l}, (i,j,k,l) \in A'_n\}$,
where $\Psi'_n(B)=n^{-\pairs(B)}$ and
$\bC'=(C'_r)_{r \ge 1}$ is a sequence that does not depend on $n$.
\end{proposition}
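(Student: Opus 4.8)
The plan is to deduce this proposition directly from the stability of weighted dependency graphs under products, \cref{PropProducts}, applied with $m=2$. First I would observe that for $\a'=(i,j,k,l)\in A'_n$ with $i<j<k<l$, the variable $Y'_{i,j,k,l}=Y_{i,k}Y_{j,l}$ is the monomial $Y_{I_{\a'}}$ attached to the two-element subset $I_{\a'}=\{\{i,k\},\{j,l\}\}\in\Mset_{\le 2}(A_n)$ (the two pairs are distinct since $i<j<l$), and that $\a'\mapsto I_{\a'}$ is injective: the pair of $I_{\a'}$ with the smaller minimum is $\{i,k\}$, the other is $\{j,l\}$, and within each pair the order is forced. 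I would also recall that $\Psi_n(B)=n^{-\#(B)}$ is super-multiplicative, as already noted in the proof of \cref{PropWDGInRandomPairPartitions}, so that \cref{PropProducts} does apply.

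Combining \cref{PropWDGInRandomPairPartitions} with \cref{PropProducts}, I then get that $\WDep^2$ is a $(\bm{\Psi_n},\bD_2)$ weighted dependency graph for $\{Y_I:I\in\Mset_{\le 2}(A_n)\}$, where $\bm{\Psi_n}(\{I_1,\cdots,I_r\})=\Psi_n(I_1\uplus\cdots\uplus I_r)$ and $\bD_2=(D_{2,r})_{r\ge 1}$ depends only on $\bC$, hence not on $n$; by the remark following \cref{PropProducts}, the same holds for the subfamily indexed by $A'_n$, together with the induced weighted subgraph $\WDep^2[\{I_{\a'}:\a'\in A'_n\}]$.

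The remaining work, which is the only slightly delicate point, is to check that this induced subgraph is literally $\WDep'$ and that $\bm{\Psi_n}$ restricts to $\Psi'_n$. For the weight function: given a multiset $B=\{\a'_1,\cdots,\a'_r\}$ of elements of $A'_n$, the multiset $I_{\a'_1}\uplus\cdots\uplus I_{\a'_r}$ lists exactly the $Y$-variables occurring in the $Y'_{\a'_t}$, so its number of distinct elements is $\pairs(B)$ and $\bm{\Psi_n}(\{I_{\a'_1},\cdots,I_{\a'_r}\})=n^{-\pairs(B)}=\Psi'_n(B)$. For the edges: if two quadruples $\a'_1,\a'_2$ share an index, then some pair of $I_{\a'_1}$ and some pair of $I_{\a'_2}$ meet (or coincide), so $W(I_{\a'_1},I_{\a'_2})=1$; if $\a'_1\cap\a'_2=\emptyset$, then every pair of $I_{\a'_1}$ is disjoint from every pair of $I_{\a'_2}$, each joined in $\WDep$ by a weight-$1/n$ edge, so $W(I_{\a'_1},I_{\a'_2})=1/n$. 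In both cases this agrees with the weight prescribed in $\WDep'$, so $\WDep^2[\{I_{\a'}:\a'\in A'_n\}]$ and $\WDep'$ coincide as weighted graphs under the bijection $\a'\mapsto I_{\a'}$, hence have the same induced subgraphs and the same maximum spanning tree weights $\MWST{\WDep'[B]}=\MWST{\WDep^2[\{I_{\a'}:\a'\in B\}]}$. Plugging this in, the bound \eqref{EqFundamental} for $\{Y'_{\a'}\}$ relative to $\WDep'$, $\Psi'_n$ and $\bC':=\bD_2$ is exactly the bound delivered by \cref{PropProducts}, so we are done. I do not expect a genuine obstacle here; the main thing to be careful about is the bookkeeping identifying the power graph $\WDep^2$ on the relevant vertices with the explicitly described $\WDep'$, and matching $\#$ of the merged multiset of pairs with $\pairs(B)$.
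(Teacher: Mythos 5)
Your proposal is correct and follows exactly the paper's route: the paper's proof is the one-line statement that the proposition is a direct application of \cref{PropProducts} to the weighted dependency graph of \cref{PropWDGInRandomPairPartitions}, and you have simply made explicit the (correct) bookkeeping identifying the induced power graph with $\WDep'$ and $\bm{\Psi_n}$ with $\Psi'_n$.
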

\begin{proof}
    This is a direct application of \cref{PropProducts}
    to the weighted dependency graph given in \cref{PropWDGInRandomPairPartitions}.
\end{proof}

We can now prove the asymptotic normality result.
\begin{theorem}
    As above, we denote $\Cr_n$ the number of crossings in 
    a uniform random pair partition of the set $[2n]$.
    Then, in distribution,
    \[\frac{\Cr_n-\esper \Cr_n}{\sqrt{\Var(\Cr_n)}} \to \N(0,1).\]
    \label{ThmAsympNormPP}
\end{theorem}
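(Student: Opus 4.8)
The strategy is to apply the asymptotic normality criterion \cref{ThmMain} to the weighted dependency graph $\WDep'$ on the vertex set $A'_n$ constructed in the previous proposition, for which $\Psi'_n(B)=n^{-\pairs(B)}$ and the edge weights are $1$ (intersecting quadruples) or $1/n$ (disjoint quadruples). Concretely, I need to estimate the quantities $\R_n$ and $T_{\ell,n}$ of \cref{EqDefR,EqDefT}, exhibit a suitable $Q_n$, and bound $\sigma_n^2=\Var(\Cr_n)$ from below.

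Since a single quadruple $\a=(i,j,k,l)$ involves exactly the two distinct variables $Y_{i,k}$ and $Y_{j,l}$, one has $\Psi'_n(\{\a\})=n^{-2}$ for every $\a\in A'_n$, and hence $\R_n=|A'_n|\,n^{-2}=\binom{2n}{4}\,n^{-2}\asymp n^2$. To bound $T_{\ell,n}$ I would fix quadruples $\a_1,\dots,\a_\ell$, write $V\subseteq[2n]$ for the set of elements they cover (so $|V|\le 4\ell$) and $p_1,\dots,p_m$ (with $m\le 2\ell$) for the distinct $Y$-variables appearing among them, and split the sum over $\beta$ into cases. If $\beta$ is disjoint from $V$ then $W(\{\beta\},\{\a_1,\dots,\a_\ell\})=1/n$ and $\beta$ contributes two fresh $Y$-variables, so these $\beta$ contribute at most $|A'_n|\cdot n^{-3}=O(n)$ in total. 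If $\beta$ meets $V$ one distinguishes according to how many of the two crossing pairs of $\beta$ already lie among $p_1,\dots,p_m$: both (then $\beta$ is determined by a pair of existing arches, $O(\ell^2)$ terms, each contributing $1$); exactly one (fixing that arch pins down two of the four elements of $\beta$, leaving $O(n^2)$ choices, each contributing $n^{-1}$); neither (then one of the four elements of $\beta$ lies in $V$, giving $O(\ell n^3)$ choices, each contributing $n^{-2}$). Summing, $T_{\ell,n}=O(\ell\, n)$, so one may take $Q_n=n$ in \cref{EqBoundUnifT} with $D_r$ a constant depending only on $r$.

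The main obstacle is the lower bound on the variance; I would prove $\Var(\Cr_n)\asymp n^3$. The upper bound $\Var(\Cr_n)=O(n^3)$ is immediate from \cref{LemBorneCumulant} (which gives $|\ka_2(\Cr_n)|\le 2C'_2\,\R_n T_{1,n}\asymp n^3$). For the matching lower bound one expands $\Var(\Cr_n)=\sum_{\a,\beta\in A'_n}\Cov(Y'_\a,Y'_\beta)$: the dominant contribution comes from ordered pairs $(\a,\beta)$ of distinct quadruples sharing exactly one common arch, i.e. $Y'_\a=Y_pY_q$ and $Y'_\beta=Y_pY_r$ with $q\ne r$. There are $\asymp n^6$ such pairs, and for each, using the explicit formula of \cref{SubsectDefPP}, $\Cov(Y_pY_q,Y_pY_r)=\esper[Y_pY_qY_r]-\esper[Y_pY_q]\,\esper[Y_pY_r]=\tfrac{1}{(2n-1)(2n-3)}\bigl[\tfrac{1}{2n-5}-\tfrac{1}{(2n-1)(2n-3)}\bigr]\asymp n^{-3}$ with a positive sign whenever the three arches $p,q,r$ are pairwise vertex-disjoint, which holds for all but $O(n^5)$ of these pairs. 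One checks that the remaining pairs (sharing both arches, which forces $\a=\beta$; sharing a vertex but no arch; or disjoint) contribute only $O(n^2)$, by the same quasi-factorization estimate. This yields $\Var(\Cr_n)\sim c\,n^3$ for an explicit constant $c>0$; the somewhat tedious bookkeeping would be deferred to an appendix.

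Finally, with $\R_n\asymp n^2$, $Q_n=n$ and $\sigma_n\asymp n^{3/2}$, the hypothesis \cref{EqHypoMainThm} reads $(\R_n/Q_n)^{1/s}\,(Q_n/\sigma_n)\asymp n^{1+1/s}/\sigma_n\asymp n^{1/s-1/2}$, which tends to $0$ as soon as $s\ge 3$. Applying \cref{ThmMain} with $s=3$ then gives $(\Cr_n-\esper\Cr_n)/\sigma_n\to_d\N(0,1)$ (and, by the remark following \cref{ThmMain}, convergence of all moments as well).
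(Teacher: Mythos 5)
Your choice of weighted dependency graph, the computation $\R_n\asymp n^2$, the case analysis giving $T_{\ell,n}=O(n)$, the choice $Q_n=n$, and the final application of \cref{ThmMain} with $s=3$ all coincide with the paper's argument. The gap is in your variance lower bound, which is the one step the paper treats differently (and with good reason).

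Your claim that the pairs other than those sharing exactly one arch contribute only $O(n^2)$ is false, and in two ways. First, if $\a$ and $\beta$ share a vertex but no arch, then two distinct arches of $\a$ and $\beta$ share an endpoint, so $Y'_\a Y'_\beta=0$ and $\Cov(Y'_\a,Y'_\beta)=-\esper[Y'_\a]\esper[Y'_\beta]\asymp -n^{-4}$; there are $\Theta(n^7)$ such ordered pairs, giving a \emph{negative} contribution of order $n^3$. Second, for disjoint pairs one has
\[
\Cov(Y'_\a,Y'_\beta)=\frac{1}{(2n-1)(2n-3)}\left[\frac{1}{(2n-5)(2n-7)}-\frac{1}{(2n-1)(2n-3)}\right]
=\frac{16n-32}{(2n-1)^2(2n-3)^2(2n-5)(2n-7)}\asymp \frac{1}{4n^5},
\]
and there are $\sim\binom{2n}{4}^2\sim \tfrac{4}{9}n^8$ such pairs, so these alone contribute $\sim n^3/9$ --- already five times the true asymptotic variance $n^3/45$. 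So the class you single out is not dominant: at least three classes of pairs contribute at order $n^3$ with different signs, and the leading coefficient emerges only after substantial cancellation. Your argument as written does not rule out that the total is $o(n^3)$ (or even negative at leading order of your bookkeeping), so the lower bound $\sigma_n^2\gtrsim n^3$ --- the one ingredient \cref{ThmMain} really needs --- is not established.

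The paper sidesteps this cancellation entirely: it shows (\cref{AppVarPP}) that $(2n-1)^2(2n-3)^2(2n-5)(2n-7)\Var(\Cr_n)$ is a polynomial in $n$ of degree at most $9$ (polynomiality of each covariance class times a polynomial count, plus the upper bound $\Var(\Cr_n)=O(n^3)$ from \cref{LemBorneCumulant} to cap the degree), and then determines it by interpolation from ten exactly computed values, yielding $\Var(\Cr_n)=n(n-1)(n-3)/45$. If you want to keep a direct expansion, you must compute the exact leading coefficient of every class of pairs with at most $7$ or $8$ distinct vertices and verify that the sum of these coefficients is the positive number $1/45$; the "tedious bookkeeping deferred to an appendix" is precisely where the content lies.
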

\begin{proof}
We use the notation of \cref{SubsecNormalityCriterion},
for the above described sequence of weighted dependency graphs.
We have 
\begin{equation}
    R_n= \sum_{\a \in A'_n} \Psi_n'(\{\a\}) = \binom{n}{4} \, \tfrac{1}{n^2} \asymp n^2.
    \label{EqBoundRPP}
\end{equation}
To find an upper bound for $T_{\ell,n}$, first fix $\a_1,\cdots,\a_\ell$ in $A'_n$.
We want to give an upper bound for
\begin{equation}
    \sum_{\beta \in A'_n} W(\{\beta\},\{\a_1,\cdots,\a_\ell \}) 
    \frac{\Psi'_n\big(\{\a_1,\cdots,\a_\ell,\beta\}\big) }{\Psi'_n\big(\{\a_1,\cdots,\a_\ell\}\big) }.
    \label{EqTech2}
\end{equation}
To do that let us split the sum into different parts
(all constants in $\O$ symbols in the discussion below
depend on $\ell$ but can be chosen independent from $\a_1,\cdots,\a_\ell$):
\begin{itemize}
    \item if $\beta$ has no element in common with any of the $\a_i$,
        then $W(\{\beta\},\{\a_1,\cdots,\a_\ell \})=1/n$ and 
        \[\frac{\Psi'_n\big(\{\a_1,\cdots,\a_\ell,\beta\}\big) }
        {\Psi'_n\big(\{\a_1,\cdots,\a_\ell\}\big) }=1/n^2.\]
        The number of such terms is obviously bounded by $\O(n^4)$
        (which bounds the total number of terms in $A'_n$)
        so that the total contribution of this case is $\O(n)$.
    \item Assume that $\beta$ has an element in common with at least one of the $\a_i$,
        but that we nevertheless have
        \[\pairs\big(\{\a_1,\cdots,\a_\ell,\beta\}\big)
        =\pairs\big(\{\a_1,\cdots,\a_\ell\}\big) +2.\] 
        In this case, we have $W(\{\beta\},\{\a_1,\cdots,\a_\ell \})=1$ and
        \[\frac{\Psi'_n\big(\{\a_1,\cdots,\a_\ell,\beta\}\big) }
        {\Psi'_n\big(\{\a_1,\cdots,\a_\ell\}\big) }=1/n^2.\]
        The number of such terms is bounded by $\O(n^3)$:
        indeed we should choose which element of which $\a_i$ is in common with 
        $\beta$ (constant number of choices)
        and then choose other elements of $\beta$ ($\O(n^3)$ choices).
        Finally, the total contribution of such terms is also $\O(n)$.
    \item We now look at the case where
        \[\pairs\big(\{\a_1,\cdots,\a_\ell,\beta\}\big)               
        =\pairs\big(\{\a_1,\cdots,\a_\ell\}\big) +1.\]
        This implies that $\beta$ has at least two elements in common with $\bigcup_{i \le \ell} \a_i$,
        thus the number of such terms is $\O(n^2)$.
        But in this case $W(\{\beta\},\{\a_1,\cdots,\a_\ell \})=1$ and                           
        \[\frac{\Psi'_n\big(\{\a_1,\cdots,\a_\ell,\beta\}\big) }                                      
        {\Psi'_n\big(\{\a_1,\cdots,\a_\ell\}\big) }=1/n,\]
        so that the total contribution of such terms is also $\O(n)$.
    \item The last case consist is $\beta \in A'_n$ such that
        \[\pairs\big(\{\a_1,\cdots,\a_\ell,\beta\}\big)               
        =\pairs\big(\{\a_1,\cdots,\a_\ell\}\big).\]
        This implies in particular that $\beta$ is included in $\bigcup_{i \le \ell} \a_i$,
        hence there is only a constant number of such terms.
        In this case $W(\{\beta\},\{\a_1,\cdots,\a_\ell \})=1$ and
        \[\frac{\Psi'_n\big(\{\a_1,\cdots,\a_\ell,\beta\}\big) }                                      
        {\Psi'_n\big(\{\a_1,\cdots,\a_\ell\}\big) }=1,\]
        so that the total contribution of such terms is $\O(1)$.
\end{itemize}
Finally, we see that, for any $\a_1,\cdots,\a_\ell$ in $A'_n$,
the quantity \eqref{EqTech2} is $\O(n)$, 
with a constant in $\O$ symbol depending on $\ell$, but not on $\a_1,\cdots,\a_\ell$.
Thus $T_{\ell,n}$ is $\O(n)$ 
and we can choose $Q_n=n$ in \cref{ThmMain}.
The variance of $\Cr_n$ is computed in \cref{AppVarPP}
and we see that $\sigma_n \asymp n^{3/2}$.
Therefore \eqref{EqHypoMainThm} is fulfilled for $s=3$
and we infer from \cref{ThmMain} the asymptotic normality of $\Cr_n$.
\end{proof}

\section{Erd\H{o}s-Rényi model $G(n,m)$}
\label{SectErdosRenyi}
\label{SectGraphs}
\subsection{The model}
For each $n$, let $m_n$ be an integer between $0$ and $\binom{n}{2}$.
As in \cref{ExWeightedDepGraphTriangle}, we consider the
Erd\H{o}s-Rényi random graph model $G(n,m_n)$,
\ie $G$ is a graph with vertex set $V:=[n]$ and an edge set
$E$ of size $m_n$, chosen uniformly at random among all 
possible edge sets of size $m_n$.

Set $p_n=m_n/\binom{n}{2}$.
For any 2-element subset $\{i,j\}$ of $V$,
we define a random variable $Y_{i,j}$
such that $Y_{i,j}=1$ if the edge $\{i,j\}$ belongs to the random graph $G$,
and $0$ otherwise.
Clear,  $Y_{i,j}=1$
with probability $p_n$.
However, unlike in $G(n,p_n)$, these random variables are not independent.
We can nevertheless compute their joint moments:
if $\a_1$,\ldots,$\a_r$ are distinct 2-element subsets of $V$,
then 
\[ \esper\big( Y_{\a_1}\, \dots\,Y_{\a_r}  \big) 
= \binom{E_n -r}{m_n-r} \bigg/ \binom{E_n}{m_n},\]
where $E_n=\binom{n}{2}$. Indeed, 
the numerator is the number of graphs with vertex set $[n]$ and $m_n$ edges
containing $\a_1$,\ldots,$\a_r$, while the denominator is the total number of
graphs with vertex set $[n]$ and $m_n$ edges.
This simple explicit formula for joint moments is
the starting point to find a weighted dependency graph in $G(n,m)$,
as we shall do in \cref{subsec:WDG_Gnm}.

We then use this dependency graph structure to give 
a new proof of Janson's central limit theorem
for subgraph count statistics in $G(n,m_n)$;
see \cref{subsec:CLT_Subgraph}.

\subsection{A weighted dependency graph in $G(n,m)$.}
\label{subsec:WDG_Gnm}
Let $A_n$ be the set of two element subsets of $[n]$.
\begin{proposition}
    Assume $m_n$ tends to infinity.
    Set $\eps_n=1/m_n$ and $\Psi_n(B)=p_n^{\#(B)}$ for any multiset $B$
    of elements of $A_n$.

    Then the complete graph on $A_n$ with weight $\eps_n$ on each edge
    is a $(\Psi_n,\bC)$ weighted dependency graph
    for the family $\{ Y_{i,j},\, \{i,j\} \in A_n \}$,
    where $\bC=(C_r)_{r \ge 1}$ is a sequence that does not depend on $n$.
    \label{PropWDGInGnm}
\end{proposition}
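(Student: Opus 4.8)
The plan is to follow essentially verbatim the proof of \cref{PropWDGInRandomPairPartitions}, since the explicit formula for joint moments in $G(n,m_n)$ has the same shape as the one for uniform pair partitions, and the weighted graph here is even simpler (all edges have the same weight).

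First I would record that $\Psi_n(B)=p_n^{\#(B)}$ is super-multiplicative: for any multisets $B_1,B_2$ one has $\#(B_1\uplus B_2)\le\#(B_1)+\#(B_2)$, and since $p_n\in[0,1]$ this gives $\Psi_n(B_1\uplus B_2)\ge\Psi_n(B_1)\Psi_n(B_2)$. This lets me invoke \cref{PropAlternate}, so it suffices to verify \eqref{EqFundamentalVariant}. As $m_n\to\infty$ I may assume $m_n\ge 2$, so every edge of $\WDep$ has weight $\eps_n=1/m_n<1$ and $\WDep_1$ is the discrete graph; each connected component of $\WDep_1[B]$ is then a single vertex of $A_n$ taken with some multiplicity. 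Since the $Y_\a$ are $\{0,1\}$-valued we have $Y_\a^m=Y_\a$, so --- exactly as in \cref{RmkDiscreteBernoulli} --- it is enough to bound $\ka(Y_\a;\a\in B)$ for \emph{subsets} $B=\{\a_1,\ldots,\a_r\}$ of $A_n$, i.e.\ to prove
\[ |\ka(Y_{\a_1},\ldots,Y_{\a_r})|\le D_r\,p_n^r\,\eps_n^{r-1} \]
for some $D_r$ independent of $n$ and of the chosen distinct edges $\a_1,\ldots,\a_r$; here I used $\MWST{\WDep[B]}=\eps_n^{r-1}$ (a complete graph on $r$ vertices with all weights $\eps_n$) and $\Psi_n(B)=p_n^r$.

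Next I would exploit the SC/QF machinery of \cref{SectSCQF}. Writing $E_n=\binom n2$, the explicit moment formula gives, for $\Delta\subseteq[r]$,
\[ M^{(n)}_\Delta:=\esper\prod_{i\in\Delta}Y_{\a_i}=\binom{E_n-|\Delta|}{m_n-|\Delta|}\bigg/\binom{E_n}{m_n}=\frac{(E_n-|\Delta|)!\,m_n!}{E_n!\,(m_n-|\Delta|)!}, \]
which crucially depends only on $|\Delta|$, not on the edges $\a_i$. Now $\Delta\mapsto(E_n-|\Delta|)!$ and $\Delta\mapsto(m_n-|\Delta|)!$ are factorial sequences, so by \cref{PropExSCQF} they have respectively the $1/E_n$ and the $1/m_n$ SC/QF property; since $E_n\ge m_n$ and $1/m_n\to 0$, item~(3) of \cref{LemTrivialSCQF} upgrades the first one to the $1/m_n=\eps_n$ SC/QF property as well. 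The constant sequences $\Delta\mapsto E_n!$ and $\Delta\mapsto m_n!$ have the $\bm{0}$-SC/QF property by item~(1) of \cref{LemTrivialSCQF}, hence also the $\eps_n$ one. Taking the product of the first two sequences and the entry-wise quotient by the last two, item~(4) of \cref{LemTrivialSCQF} yields that $\MM^{(n)}=(M^{(n)}_\Delta)_{\Delta\subseteq[r]}$ has the $\eps_n$ SC/QF property. Since $M^{(n)}_\emptyset=1$, the small cumulant property then reads $\ka_{[r]}(\MM^{(n)})=\big(\prod_{i=1}^r M^{(n)}_{\{i\}}\big)\cdot\O(\eps_n^{r-1})$, with an $\O$-constant depending only on $r$ (because $\MM^{(n)}$ does not depend on the chosen edges). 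As $M^{(n)}_\emptyset=1$ we have $\ka_{[r]}(\MM^{(n)})=\ka(Y_{\a_1},\ldots,Y_{\a_r})$, and $M^{(n)}_{\{i\}}=m_n/E_n=p_n$ for each $i$; hence $|\ka(Y_{\a_1},\ldots,Y_{\a_r})|\le D_r\,p_n^r\,\eps_n^{r-1}$ with $D_r$ uniform in $n$ and in $\a_1,\ldots,\a_r$. Feeding this into \cref{PropAlternate} produces a sequence $\bC$ depending only on $\bD=(D_r)_{r\ge1}$ --- hence not on $n$ --- and completes the proof.

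There is no genuinely hard step: everything reduces to recognizing that the joint moments factor into factorial sequences and then turning the crank on \cref{PropExSCQF}, \cref{LemTrivialSCQF} and \cref{PropEqSCQF}. The only point deserving a little attention is that all constants in play ($\O$-constants, then $D_r$, then $\bC$) must be uniform in $n$; this is automatic here because $M^{(n)}_\Delta$ depends solely on the size $|\Delta|$ and not on the particular edges, so the relevant induced weighted graphs and moments are "the same" for every choice of $\a_1,\ldots,\a_r$.
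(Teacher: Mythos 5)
Your proposal is correct and follows essentially the same route as the paper's own proof: reduce via \cref{PropAlternate} and \cref{RmkDiscreteBernoulli} to bounding cumulants of distinct edge indicators, then apply the SC/QF machinery (\cref{PropExSCQF}, \cref{LemTrivialSCQF}) to the explicit factorial expression for the joint moments. The only difference is that you spell out the upgrade from the $1/E_n$ to the $1/m_n$ SC/QF property via item (3) of \cref{LemTrivialSCQF}, a detail the paper leaves implicit.
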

\begin{proof}
    Clearly $\Psi_n$ is super-multiplicative.
    From \cref{PropAlternate} --- see also \cref{RmkDiscreteBernoulli} ---, it is enough to prove that,
    for any {\em distinct} $\a_1,\cdots,\a_r$, one has
    \begin{equation}
        |\ka(Y_{\a_1},\cdots,Y_{\a_r})| \le C'_r \, \left(\tfrac{1}{m_n}\right)^{r-1} \, p_n^r, 
        \label{EqToProve}
    \end{equation}
    for some $C'_r$ that does not depend on $n$.

    If $\Delta$ is a subset of $[r]$, denote
    \[M^{(n)}_\Delta = \esper\left( \prod_{i\in \Delta} Y_{\a_i} \right).\]
    Recall from the previous section that this has an explicit expression:
\[M^{(n)}_\Delta = \binom{E_n -|\Delta|}{m_n-|\Delta|} \bigg/ \binom{E_n}{m_n}.\]
Note that it does not depend on $\a_1,\cdots,\a_r$.
    Moreover, as soon as $m_n \ge r$, which happens for $n$ big enough, say $n \ge n_0$, one can write
    \begin{equation}
      M^{(n)}_\Delta = \frac{(E_n -|\Delta|)! \, m_n!}{(m_n-|\Delta|)! \, E_n!}.
      \label{eq:MnDelta}
    \end{equation}
    We see $\MM^{(n)}:=(M^{(n)}_\Delta)_{\Delta \subseteq [r]}$ as a sequence of lists, 
    each indexed by subset of $[r]$, and 
    we use the notation and terminology of \cref{SectSCQF}.
    All the factors in \eqref{eq:MnDelta} have the $\tfrac{1}{m_n}$
    SC/QF property, and hence $\MM^{(n)}$ also has it --- see Lemmas \ref{LemTrivialSCQF} (items 1,3 and 4) and \ref{PropExSCQF}.
    Therefore, since $M^{(n)}_\emptyset=1$, one has:
    \[ \ka_{[r]}(\MM^{(n)}) =\left(\textstyle \prod_{i=1}^r M^{(n)}_{\{i\}}\right) \cdot  \O(m_n^{-r+1}).\]
    But $\ka_{[r]}(\MM^{(n)})= \ka(Y_{\a_1},\cdots,Y_{\a_r})$ and, for each $i$,
    one has $M^{(n)}_{\{i\}}=\tfrac{m_n}{E_n}=p_n$, so that \cref{EqToProve} is proved.
\end{proof}

\subsection{A CLT for subgraph counts in $G(n,m_n)$}
\label{subsec:CLT_Subgraph}
Fix some graph $H$ with at least one edge.
Let $A^H_n$ be the set of subgraphs $H'$ of
the complete graph $K_n$ on vertex set $[n]$
that are isomorphic to $H$:
there are $n(n-1)\cdots(n-v_H+1)/\Aut(H)$ such subgraphs,
where $\Aut(H)$ is the number of automorphisms of $H$.

As before, let $G$ be a random graph with the distribution of the model $G(n,m_n)$.
For $H'$ in $A^H_n$, we denote
\[Y_{H'} = \bm{1}_{H' \subset G} = \prod_{\{i,j\} \in E_{H'}} Y_{i,j}.\]
Then the random variable
\[X^H_n = \sum_{H' \in A^H_n} Y_{H'}\]
counts the number of subgraphs of $G$ that are isomorphic to $H$.
This is called the {\em subgraph count statistics}
and is a classical object of study in random graph theory
--- see, {\em e.g.}, \cite[Sections 3 and 6]{JansonRandomGraphs}.
The goal of this section is to prove the asymptotic normality of this statistics,
using weighted dependency graphs.

We first observe that
the above-defined family $\{Y_{H'}, \, H' \in A_n\}$ 
admits a weighted dependency graph.
To do that, if $B=\{H'_1,\cdots,H'_r\}$ is a multiset
of elements of $A^H_n$, we define $e(B)$ as the total number of edges in this multiset,
that is:
\[e(B) = \left| \bigcup_{i=1}^r E_{H'_i} \right|.\]
\begin{proposition}
    Assume $m_n$ tends to infinity.
    Set $\eps_n=1/m_n$ and 
    $\Psi_n(B)=p_n^{e(B)}$ for any multiset $B$
    of elements of $A^H_n$.
    
    Consider the complete graph with vertex set $A^H_n$ and assign weights on edges as follows:
    \begin{itemize}
        \item 
    if two copies $H'_1$ and $H'_2$ of $H$ have an edge in common
    (as subgraphs of $K_n$), then the edge $(H'_1,H'_2)$ gets weight $1$;
\item otherwise, the edge $(H'_1,H'_2)$ gets weight $1/m_n$.
    \end{itemize}
    We denote the resulting weighted graph $\WDep^H$.

    Then $\WDep^H$ is a $(\Psi_n,\bC)$ weighted dependency graph
    for the family $\{Y_{H'}, \, H' \in A^H_n\}$,
    for some sequence $\bC=(C_r)_{r \ge 1}$ that does not depend on $n$
    (but depends on $H$).
    \label{PropWDGSubgraphsInGnm}
\end{proposition}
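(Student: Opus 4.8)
The plan is to obtain this proposition as a direct application of \cref{PropProducts} to the weighted dependency graph of \cref{PropWDGInGnm}, exactly as was done for the $Y'_{i,j,k,l}$ in the previous section. First I would note that the function $\Psi_n(B)=p_n^{\#(B)}$ of \cref{PropWDGInGnm} is super-multiplicative: since $p_n\le 1$ and $\#(B_1\uplus B_2)\le \#(B_1)+\#(B_2)$, we get $p_n^{\#(B_1\uplus B_2)}\ge p_n^{\#(B_1)}p_n^{\#(B_2)}$. Let $m$ be the number of edges of $H$. Each variable $Y_{H'}$ with $H'\in A^H_n$ is the monomial $Y_{E_{H'}}=\prod_{\a\in E_{H'}}Y_\a$, where $E_{H'}$ is a subset of $A_n$ of size $m$ and $Y_{H'}$ depends only on $E_{H'}$; hence $\{Y_{H'},\,H'\in A^H_n\}$ is a subfamily of $\{Y_I,\,I\in\Mset_{\le m}(A_n)\}$.

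By \cref{PropWDGInGnm} and \cref{PropProducts}, the $m$-th power $\WDep^m$ of the complete graph on $A_n$ with weight $\eps_n$ on each edge is a $(\bm{\Psi},\bD_m)$ weighted dependency graph for $\{Y_I,\,I\in\Mset_{\le m}(A_n)\}$, where $\bm{\Psi}(\{I_1,\dots,I_r\})=\Psi_n(I_1\uplus\cdots\uplus I_r)$ and $\bD_m$ depends only on $m$ and on the sequence $\bC$ of \cref{PropWDGInGnm}; in particular it does not depend on $n$, but it does depend on $H$ through $m$. Passing to the subfamily $\{Y_{H'},\,H'\in A^H_n\}$ (any subfamily of a family carrying a weighted dependency graph inherits the corresponding induced weighted subgraph, see the remark following \cref{PropProducts}), it remains only to identify $\bm{\Psi}$, restricted to multisets of copies of $H$, with $\Psi_n(B)=p_n^{e(B)}$, and the weighted subgraph of $\WDep^m$ induced on $\{E_{H'}:H'\in A^H_n\}$ with $\WDep^H$.

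The first identification is immediate: for $B=\{H'_1,\dots,H'_r\}$ one has $\bm{\Psi}(B)=p_n^{\#(E_{H'_1}\uplus\cdots\uplus E_{H'_r})}=p_n^{|E_{H'_1}\cup\cdots\cup E_{H'_r}|}=p_n^{e(B)}$. For the second, note that the underlying unweighted graph of the weighted dependency graph of \cref{PropWDGInGnm} is the complete graph on $A_n$, so no two nonempty multisets of elements of $A_n$ are disconnected; by \cref{DefPowerWG}, every pair of distinct vertices of $\WDep^m$ is then joined by an edge, of weight $W(I,J)$. For $I=E_{H'_1}$ and $J=E_{H'_2}$ this weight is $1$ when $E_{H'_1}\cap E_{H'_2}\ne\emptyset$, i.e. when $H'_1$ and $H'_2$ share an edge, and is otherwise the maximal weight of a base-graph edge joining a vertex of $E_{H'_1}$ to one of $E_{H'_2}$, namely $\eps_n=1/m_n$ since all base edges carry weight $\eps_n$. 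This is exactly the weighting of $\WDep^H$, which completes the argument. I do not expect a genuine obstacle here; the only slightly delicate point is the bookkeeping in this last paragraph, where one must correctly unwind the definitions of $W(I,J)$ and of the $m$-th power to see that the ``disjoint and disconnected'' exclusion of \cref{DefPowerWG} never applies and that the edge-disjoint case produces weight $1/m_n$ rather than $0$.
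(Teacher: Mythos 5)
Your proposal is correct and follows exactly the paper's own (one-line) proof: apply \cref{PropProducts} to the weighted dependency graph of \cref{PropWDGInGnm} and pass to the subfamily indexed by copies of $H$, identifying $\WDep^H$ with the induced subgraph of the $e_H$-th power. The extra bookkeeping you supply (super-multiplicativity of $p_n^{\#(B)}$, the identification $\bm{\Psi}(B)=p_n^{e(B)}$, and the computation of the edge weights) is accurate and merely makes explicit what the paper leaves to the reader.
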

\begin{proof}
    Indeed, $\WDep^H$ is a subgraph of the $e_H$-th power of
    the weighted dependency graph $\Dep$ given in \cref{PropWDGInGnm}
    --- see \cref{PropProducts}.
\end{proof}
We use the notation of \cref{ThmMain}.
Then we have 
\begin{equation}
    R_n= \sum_{H' \in A^H_n} \Psi(\{H'\})=
\frac{n(n-1)\cdots(n-v_H+1)}{\Aut(H)} \, p_n^{e_H} \asymp n^{v_H} \, p_n^{e_H}.
\label{EqRGnm}
\end{equation}

Estimates of $T_{\ell,n}$ and the variance $\Var(X^H_n)$ are given in the \cref{LemTSigmaGnm} below.
Let us introduce the notation involved in these estimates.
\begin{itemize}
  \item 
As in \cite{JansonRandomGraphs}, we denote
\begin{equation}
    \Phi_H = \min_{K \subseteq H, e_K>0} n^{v_K} \, p_n^{e_K}.
    \label{EqDefPhiH}
\end{equation}
In particular, $\Phi_H \le n^2 p_n$: indeed, $H$ has at least one subgraph $K$
  with two vertices and one edge.
In the following, we assume $\Phi_H$ tends to infinity.
\item
We also consider the following quantity:
\[\tPhi_H = \min_{K \subseteq H, e_K>1} n^{v_K} \, p_n^{e_K}.\]
Note that, unlike in the definition of $\Phi$,
the minimum is taken over graphs $K$ with {\em at least 2 edges}.
In the following, we assume that the graph $L_2$ with three vertices and two edges
is included in $H$ --- see a discussion on this hypothesis at the end of the Section.
In particular, this implies that $\Phi_H, \tPhi_H \le n^3\, p_n^2$
and $n^3\, p_n^2 \to \infty$ (since $\Phi_H \to \infty$).
\end{itemize}
\begin{lemma}
    Fix $\ell \ge 1$. Then
    \begin{equation}
        T_{\ell,n} \le  C_{H,\ell} \, \frac{n^{v_H} \, p_n^{e_H}}{\Phi_H}, 
        \label{EqBoundTGnm}
    \end{equation}
    for some constant $C_{H,\ell}$ depending on $H$ and $\ell$, but not on $n$.

    Assume furthermore $n(1-p_n)^2 \gg 1$.
    Then we have the following estimate for the variance:
    \begin{equation}
        \Var(X^H_n) \ge C \, \frac{(n^{v_H} \, p_n^{e_H})^2}{\tPhi_H}\, (1-p_n)^2,
        \label{EqAympVarianceGnm}
    \end{equation}
    for some constant $C>0$ and $n$ sufficiently large.
    \label{LemTSigmaGnm}
\end{lemma}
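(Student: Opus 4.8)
First, to bound $T_{\ell,n}$ I would argue directly from \cref{EqDefT}. Fix copies $\a_1,\dots,\a_\ell\in A^H_n$ and put $U=\bigcup_i V(\a_i)$, a set of at most $\ell v_H$ vertices. For a copy $\beta$ of $H$, the graph $K$ formed by the edges of $\beta$ already lying in $\bigcup_i E(\a_i)$ is (isomorphic to) a subgraph of $H$ whose vertices lie in $U$; one has $\Psi_n(\{\a_1,\dots,\a_\ell,\beta\})/\Psi_n(\{\a_1,\dots,\a_\ell\})=p_n^{\,e_H-e_K}$ and $W(\{\beta\},\{\a_1,\dots,\a_\ell\})$ equals $1$ if $e_K\ge1$ and $1/m_n$ if $e_K=0$. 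I then split the sum over $\beta$ according to $K$. If $e_K=0$ there are $\O(n^{v_H})$ copies $\beta$ in all, each contributing $\tfrac1{m_n}p_n^{e_H}$, and since $m_n=p_n\binom n2\ge\tfrac14 n^2p_n\ge\tfrac14\Phi_H$ this case contributes $\O(n^{v_H}p_n^{e_H}/\Phi_H)$. If $e_K\ge1$, then, $U$ being bounded, there are only $\O(1)$ possible graphs $K$, and for each of them the $v_H-v_K$ remaining vertices of $\beta$ can be chosen in $\O(n^{v_H-v_K})$ ways, so the contribution is $\O\!\big(n^{v_H-v_K}p_n^{e_H-e_K}\big)=\O\!\big(n^{v_H}p_n^{e_H}/(n^{v_K}p_n^{e_K})\big)\le\O(n^{v_H}p_n^{e_H}/\Phi_H)$ by \cref{EqDefPhiH}. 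Summing the finitely many cases gives \cref{EqBoundTGnm}.

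For the variance the starting point is $\Var(X^H_n)=\sum_{H'_1,H'_2}\Cov(Y_{H'_1},Y_{H'_2})$, where each covariance is an explicit function of $e(H'_1\cup H'_2)=2e_H-e(H'_1\cap H'_2)$ read off from the formula for $\esper(Y_{\a_1}\cdots Y_{\a_r})$. The feature specific to $G(n,m)$ --- and the reason $\Phi_H$ gets replaced by $\tPhi_H$ --- is that the first-order fluctuation of $X^H_n$ vanishes. In the Hoeffding (ANOVA) decomposition $X^H_n=\sum_{j\ge0}X^{(j)}_n$ relative to the uniform choice of the $m_n$-element edge set, the degree-$1$ part is a linear combination of the indicators $\mathbf 1[e\in G]$ with all coefficients equal, since by symmetry of $K_n$ every edge lies in the same number of copies of $H$; hence $X^{(1)}_n$ is a deterministic multiple of $\sum_e\mathbf 1[e\in G]=m_n$, so $\Var(X^{(1)}_n)=0$. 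Consequently $\Var(X^H_n)=\sum_{j\ge2}\Var(X^{(j)}_n)\ge\Var(X^{(j)}_n)$ for every $j\ge2$. (Equivalently, one may view $G(n,m_n)$ as $G(n,p_n)$ conditioned on its number of edges and subtract the corresponding edge-count variance, which cancels the order-$\Phi_H$ term.)

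Next I would lower bound individual second-order terms. Each $\Var(X^{(j)}_n)$ is a sum of non-negative contributions indexed by $j$-sets of edges of $K_n$; using that the degree-$j$ component of a $j$-fold product of indicators in sampling without replacement has variance of order $p_n^{\,j}(1-p_n)^{\,j}$, and that a $j$-tuple of edges spanning a fixed $j$-edge graph $J$ lies in $\asymp n^{v_H-v_J}$ copies of $H$, one gets $\Var(X^{(j)}_n)\gtrsim (n^{v_H}p_n^{e_H})^2(1-p_n)^{\,j}\big/(n^{v_{J^*_j}}p_n^{\,j})$, where $J^*_j$ minimises $v_J$ over the $j$-edge subgraphs of $H$. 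I apply this with $j=2$ (the hypothesis $L_2\subseteq H$ provides a $2$-edge subgraph) and with $j=j_0:=e_{K^*}$, where $K^*$ is a subgraph of $H$ with $e_{K^*}\ge2$ realising $\tPhi_H$: one has $n^{v_{J^*_{j_0}}}p_n^{\,j_0}=\tPhi_H$, while $n^{v_{J^*_2}}p_n^{2}\le n^{v_{K^*}}p_n^{2}=\tPhi_H\,p_n^{\,2-j_0}$ because $K^*$ contains a $2$-edge subgraph on at most $v_{K^*}$ vertices. Hence
\[\Var(X^H_n)\ \ge\ \max\big(\Var(X^{(2)}_n),\Var(X^{(j_0)}_n)\big)\ \gtrsim\ \frac{(n^{v_H}p_n^{e_H})^2}{\tPhi_H}\ \max\big((1-p_n)^2p_n^{\,j_0-2},(1-p_n)^{j_0}\big).\]
Since $\max(p_n,1-p_n)\ge\tfrac12$ and $j_0\le e_H=\O(1)$, the last factor is $\gtrsim(1-p_n)^2$, which is \cref{EqAympVarianceGnm}. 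The hypotheses $\Phi_H\to\infty$ and $n(1-p_n)^2\gg1$ are what force $E_n-m_n\to\infty$ quickly enough that the degree-$\ge2$ ANOVA components have the orders claimed, so that the $\asymp$ and $\gtrsim$ estimates above are legitimate.

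The bound on $T_{\ell,n}$ is routine book-keeping; the real difficulty is the variance. One must recognise and exploit the vanishing of the first-order part --- this is the mechanism that turns $\Phi_H$ into $\tPhi_H$ --- and then track the powers of $1-p_n$ carefully. Running a raw covariance sum instead of the ANOVA decomposition is possible but treacherous: the positive contribution of pairs of copies sharing exactly one edge and the negative contribution of edge-disjoint pairs have the same order and cancel to leading order, and it is precisely this cancellation that produces the surplus factor $1-p_n$ and forces the hypothesis $n(1-p_n)^2\gg1$.
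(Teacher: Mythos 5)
Your bound on $T_{\ell,n}$ is correct and is essentially the paper's own argument: split the sum over $\beta$ according to the intersection $K$ of $\beta$ with $\bigcup_i \a_i$, count $\O(n^{v_H-v_K})$ extensions for each of the $\O(1)$ possible $K$, and absorb the edge-disjoint case using $m_n \asymp n^2 p_n \ge c\,\Phi_H$. Nothing to add there.

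The variance bound is where the two arguments diverge, and where your proposal has a genuine gap. The paper works directly with $\Var(X^H_n)=\sum_{H'_1,H'_2}\Cov(Y_{H'_1},Y_{H'_2})$: it computes the negative total contribution of edge-disjoint pairs, splits the covariance of overlapping pairs as $A+B$ via the inequality $\tfrac{1-p^e}{1-p}\ge e\,p^{e-1}+(1-p)\delta_{e>1}$, shows that $\sum A$ cancels the edge-disjoint main term exactly (both are $\pm\tfrac{2e_H^2}{\Aut(H)^2}(n)_{v_H}(n-2)_{v_H-2}\,p_n^{2e_H-1}(1-p_n)$ up to smaller errors), and extracts the surviving term $\sum B \asymp (n^{v_H}p_n^{e_H})^2(1-p_n)^2/\tPhi_H$ from pairs sharing at least two edges --- which is where $L_2\subseteq H$ and $n(1-p_n)^2\gg1$ enter. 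You correctly identify this cancellation as the heart of the matter, but you route around it through an ANOVA decomposition, and that is where your argument is incomplete.

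Concretely: for sampling without replacement, the identities ``$\Var(X^H_n)=\sum_{j\ge 2}\Var(X^{(j)}_n)$'' and, much more seriously, ``$\Var(X^{(j)}_n)$ is a sum of non-negative contributions indexed by $j$-sets of edges, each of order $c_S^2\,p_n^{\,j}(1-p_n)^{\,j}$'' are not automatic. The edge indicators of $G(n,m_n)$ are exchangeable but dependent; the naive conditional-expectation Hoeffding projection does not make the level-$j$ terms $f_S$, $f_{S'}$ (for $S\ne S'$) orthogonal, so $\Var(X^{(j)}_n)$ is a priori a sum containing off-diagonal covariances that could cancel the diagonal. Constructing a decomposition for $G(n,m)$ in which pieces indexed by distinct edge sets are genuinely orthogonal, and lower-bounding the norms of the projections, is precisely the technical content of Janson's memoir \cite{JansonOrthogonalDecomposition}; as written, your variance argument either silently imports that machinery (which this paper is explicitly trying to avoid) or leaves its key inequality unjustified. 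The vanishing of the degree-one part by symmetry is correct and is indeed the right explanation for why $\Phi_H$ is replaced by $\tPhi_H$, and your final arithmetic with $\max\big((1-p_n)^2p_n^{\,j_0-2},(1-p_n)^{j_0}\big)\gtrsim(1-p_n)^2$ is fine; the missing piece is the within-level orthogonality and the per-level norm estimates.
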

\begin{remark}
    Note in particular that, in many case ({\em e.g.} $p_n=p$ constant)
    the variance of $\Var(X^H_n)$
    has a different order of magnitude than in the 
    {\em independent model} $G(n,p_n)$.
    This phenomenon has already been observed by Janson 
    \cite{JansonTCLSubgraphsGnt}.
\end{remark}
\begin{proof}
    We prove here only \cref{EqBoundTGnm}.
    The proof of \cref{EqAympVarianceGnm} is postponed to \cref{AppVarGnm}.
    
    We denote $\Lambda=\WDep^H_{1}$ the subgraph of $\WDep^H$ formed by edges of weight $1$.
    Since $\WDep^H$ has only edges of weight $1$ and $1/m_n$, we have:
    \begin{multline}
        T_{\ell,n} = \max_{\substack{H'_1,\ldots,H'_{\ell} \in A^H_n}} 
        \left( \sum_{H'' \in \cN_{\Lambda}(H'_1,\ldots,H'_{\ell})} 
    \frac{\Psi\big(\{H'_1,\cdots,H'_{\ell},H''\}\big)}{\Psi\big(\{H'_1,\cdots,H'_{\ell} \}\big)}  
    \right. \\  \left.
    + \, \frac{1}{m_n} \cdot \sum_{H'' \not\in \cN_{\Lambda}(H'_1,\ldots,H'_{\ell})} 
        \frac{\Psi\big(\{H'_1,\cdots,H'_{\ell},H''\}\big)}{\Psi\big(\{H'_1,\cdots,H'_{\ell} \}\big)}\right).
        \label{EqRappelDefT}
    \end{multline}
    Fix some  $H'_1$, $\ldots$, $H'_{\ell}$ in $A^H_n$
    and consider the first summand in the above definition.
    In this summand, we sum over graphs $H''$ in $\cN_{\Lambda}(H'_1,\ldots,H'_{\ell})$,
    that is over graphs $H''$ with vertex set included in $[n]$
    that have at least an edge in common with either
    $H'_1$, $H'_2$, \ldots or $H'_{\ell}$.
    Denote $K$ the intersection of $H''$ with the union $\bigcup_{i=1}^\ell H'_i$.
    Then
    \[
        \frac{\Psi\big(\{H'_1,\cdots,H'_{\ell},H''\}\big)}{\Psi\big(\{H'_1,\cdots,H'_{\ell} \}\big)}
        = p_n^{e\left[H'' \cup \left (\bigcup_{i=1}^\ell H'_i \right) \right]
        - e\left[ \left (\bigcup_{i=1}^\ell H'_i \right) \right]} 
     = p_n^{e_{H''} - e\left[H'' \cap \left (\bigcup_{i=1}^\ell H'_i \right) \right]}
     = p_n^{e_H-e_K}.
     \]
     On the other hand, for a fixed $K$, the number of graphs $H''$ with $V_{H''} \subset [n]$,
     which are isomorphic to $H$, and whose intersection
     with $\bigcup_{i=1}^\ell H'_i$ is given by $K$ is bounded by $(\ell v_H)^{v_K} n^{v_H-v_K}$.
     Indeed the latter is an upper bound for the number of ordered choices of $v_H$ vertices,
    the first $v_K$ of them among the vertices of $\bigcup_{i=1}^\ell H'_i$ 
    (which has at most $\ell v_H$ vertices) and the last $v_H-v_K$ are chosen freely in $[n]$.
    Therefore
    \[
        \sum_{H'' \in \cN_{\Lambda}(H'_1,\ldots,H'_{\ell})}                                         
    \frac{\Psi\big(\{H'_1,\cdots,H'_{\ell},H''\}\big)}{\Psi\big(\{H'_1,\cdots,H'_{\ell} \}\big)}
    \le \sum_{K \subseteq H} (\ell v_H)^{v_K} n^{v_H-v_K} p_n^{e_H-e_K}
    \le D_{H,\ell} \frac{n^{v_H} p_n^{e_H}}{\Phi_H},
    \]
    where $D_{H,\ell}$ is a constant depending only on $H$ and $\ell$.
\medskip

Consider now the second summand in \cref{EqRappelDefT} 
($H'_1$, \ldots, $H'_{\ell}$ are still fixed).
Here we sum over graphs $H''$ which are not in $\cN_{\Lambda}(H'_1,\ldots,H'_{\ell})$,
which means that they do not share any edge with any of the $H'_i$.
In this case
 \[\frac{\Psi\big(\{H'_1,\cdots,H'_{\ell},H''\}\big)}{\Psi\big(\{H'_1,\cdots,H'_{\ell} \}\big)}
 = p_n^{e(H)}.\]
 There are at most $n^{v_H}$ graphs $H''$ with $V_{H''} \subset [n]$ that are isomorphic to $H$
 and we shall use this upper bound for the number of $H''$ not in $\cN_{\Lambda}(H'_1,\ldots,H'_{\ell})$.
 Therefore
 \[\frac{1}{m_n}\sum_{H'' \notin \cN_{\Lambda}(H'_1,\ldots,H'_{\ell})} 
     \frac{\Psi\big(\{H'_1,\cdots,H'_{\ell},H''\}\big)}{\Psi\big(\{H'_1,\cdots,H'_{\ell} \}\big)}
    \le \frac{n^{v_H} p_n^{e(H)}}{m_n} \asymp n^{v_H-2} p_n^{e_H-1}.\]
  Recall that $\Phi_H \le n^2 p_n$   so that $n^{v_H-2} p_n^{e_H-1} \le \frac{n^{v_H} p_n^{e_H}}{\Phi_H}$.
 \medskip

Putting everything together, we get that 
\(T_{\ell,n} \le C_{H,\ell} \frac{n^{v_H} p_n^{e_H}}{\Phi_H},\)
as claimed.
\end{proof}

We can now establish the following central limit theorem,
originally proved by Janson \cite{JansonTCLSubgraphsGnt,JansonOrthogonalDecomposition}.
\begin{theorem}
  \label{Thm:CLT_SubgraphCounts}
    \cite[Theorem 19]{JansonOrthogonalDecomposition}
    Let $m_n$ be an integer sequence tending to infinity with $m_n \le \binom{n}{2}$.
    Set $p_n=m_n/\binom{n}{2}$ and consider a random graph $G$
    taken with Erd\H{o}s-Rényi distribution $G(n,m_n)$.
    
    Fix some graph $H$ that contains $L_2$.
    Assume $\Phi_H$ tends to infinity and that for some $\eps >0$,
    we have
    $n^{1-\eps}(1-p_n)^2 \gg 1$.
    We denote $X^H$ the number of copies of $H$ in the random graph $G$.

    Then, in distribution
    \[\frac{X^H - \esper X^H}{\sqrt{\Var X^H}} \to \N(0,1).\]
\end{theorem}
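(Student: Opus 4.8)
The plan is to apply \cref{ThmMain} to the weighted dependency graph $\WDep^H$ constructed in \cref{PropWDGSubgraphsInGnm}, feeding it the estimates already available. Writing $\mu_n:=n^{v_H}p_n^{e_H}$ and $\sigma_n^2:=\Var(X^H_n)$, we have $R_n\asymp\mu_n$ by \eqref{EqRGnm}, and \eqref{EqBoundTGnm} gives $T_{\ell,n}\le C_{H,\ell}\,\mu_n/\Phi_H$; moreover the hypothesis $n^{1-\eps}(1-p_n)^2\gg1$ forces $\eps<1$ (since $(1-p_n)^2\le1$) and in particular implies $n(1-p_n)^2\gg1$, so \cref{LemTSigmaGnm} applies and yields $\sigma_n\gtrsim\mu_n(1-p_n)/\sqrt{\tPhi_H}$ by \eqref{EqAympVarianceGnm}. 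Accordingly I would set
\[ Q_n=\frac{n^{v_H}p_n^{e_H}}{\Phi_H},\qquad D_r=C_{H,r}, \]
so that \eqref{EqBoundUnifT} holds, and then $R_n/Q_n\asymp\Phi_H$, whence
\[ \left(\frac{R_n}{Q_n}\right)^{1/s}\frac{Q_n}{\sigma_n}\ \lesssim\ \Phi_H^{1/s}\cdot\frac{\mu_n/\Phi_H}{\mu_n(1-p_n)/\sqrt{\tPhi_H}}\ =\ \frac{\sqrt{\tPhi_H}}{\Phi_H^{1-1/s}\,(1-p_n)}. \]
It remains only to choose an integer $s\ge3$ making the right-hand side tend to $0$; this is the whole content of the argument, everything else being mechanical.

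To verify this I would do a short case analysis based on $\Phi_H=\min\!\big(n^2p_n,\ \tPhi_H\big)$, the first term being the value of $n^{v_K}p_n^{e_K}$ for $K$ a single edge. If $\Phi_H=\tPhi_H$, the right-hand side above is $\Phi_H^{1/s-1/2}/(1-p_n)$; when $p_n\le1/2$ this is $\le2\,\Phi_H^{1/s-1/2}\to0$ since $\Phi_H\to\infty$ and $s\ge3$, and when $p_n>1/2$ one uses $\Phi_H\ge n^2p_n^{e_H}\ge 2^{-e_H}n^2$ together with $(1-p_n)\gg n^{(\eps-1)/2}$ to bound it by $O\big(n^{2/s-1/2-\eps/2}\big)$, which is $o(1)$ as soon as $s>4/(1+\eps)$. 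If instead $\Phi_H=n^2p_n<\tPhi_H$, one uses the bound $\tPhi_H\le n^3p_n^2$ (valid because $L_2\subseteq H$) to get right-hand side $\lesssim n^{3/2}p_n/\big((n^2p_n)^{1-1/s}(1-p_n)\big)\le n^{2/s-1/2}/(1-p_n)\ll n^{2/s-\eps/2}$, which is $o(1)$ as soon as $s>4/\eps$. Since $\eps<1$ we have $4/\eps>4$, so taking $s=\lfloor4/\eps\rfloor+1$ satisfies all three requirements and is automatically $\ge3$; for this $s$, hypothesis \eqref{EqHypoMainThm} holds and \cref{ThmMain} gives the claimed convergence of $(X^H-\esper X^H)/\sqrt{\Var X^H}$ to $\N(0,1)$.

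The genuinely hard part is not this assembly but the variance lower bound \eqref{EqAympVarianceGnm}, whose proof is deferred to \cref{AppVarGnm}: obtaining the correct order of magnitude of $\sigma_n$ — and in particular the appearance of $\tPhi_H$ rather than $\Phi_H$, which reflects the fact that the single-edge overlap contribution that dominates the variance in $G(n,p)$ is here damped by a factor $1-p_n$ because the number of edges of $G$ is fixed — is the delicate point of the section. A minor additional subtlety is that $p_n$ may oscillate between the regimes above, so the case analysis must be organized so that the upper bound on $\big(\tfrac{R_n}{Q_n}\big)^{1/s}\tfrac{Q_n}{\sigma_n}$ holds for every $n$ simultaneously, i.e. by dominating that quantity, uniformly in $n$, by the maximum of the three explicit sequences produced above, each of which tends to $0$.
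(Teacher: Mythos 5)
Your proposal is correct and follows essentially the same route as the paper: apply \cref{ThmMain} to the weighted dependency graph of \cref{PropWDGSubgraphsInGnm} with $Q_n=n^{v_H}p_n^{e_H}/\Phi_H$, use the bounds of \cref{LemTSigmaGnm}, and split according to whether the minimum defining $\Phi_H$ is attained by a single edge or not. The only (welcome) refinement is that you handle the subcase $p_n>1/2$ inside the regime $\Phi_H=\tPhi_H$ explicitly and insist on a choice of $s$ valid uniformly in $n$, where the paper instead disposes of that subcase with the parenthetical remark that $p_n\to 0$ there.
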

\begin{proof}
    Since, for each $n \ge 1$, the family $\{Y_{H'}, \, H' \in A^H_n\}$ admits a weighted dependency graph
    --- \cref{PropWDGSubgraphsInGnm} ---, it is enough to check the hypothesis of our normality
    criterion, \cref{ThmMain}.

    From \cref{LemTSigmaGnm}, one can choose $Q_n=\frac{n^{v_H} \, p_n^{e_H}}{\Phi_H}$,
    while $\sigma_n^2$ is bounded from below by \cref{EqAympVarianceGnm}.
    We therefore have
    \[\tfrac{Q_n}{\sigma_n} \le C^{-1/2} \frac{\sqrt{\tPhi_H}}{\Phi_H (1-p_n)}.\]
    Note also that $\tfrac{R_n}{Q_n}=\Phi_H \le n^2$.
    We distinguish two cases.
    \begin{itemize}
        \item If the minimum in \eqref{EqDefPhiH} (the definition of $\Phi_H$) is achieved by the graph $H$
            with two vertices and one edge, then $\Phi_H=n^2 p_n$ and 
            we use the inequality $\tPhi_H \le n^3 p_n^2$.
            Thus
            \[\tfrac{Q_n}{\sigma_n} \le C^{-1/2} \frac{1}{n^{1/2} (1-p_n)} \le C^{-1/2} \frac{1}{n^{\eps/2}}.\]
            In particular \eqref{EqHypoMainThm} is fulfilled for any integer $s \ge 4/\eps$.
        \item Otherwise, one has $\Phi_H= \tPhi_H$. We also know that $p_n$ tends to $0$
            (otherwise $n^2 p_n$ clearly minimizes \eqref{EqDefPhiH}), so that
            \[\tfrac{Q_n}{\sigma_n} \le 2\, C^{-1/2} \frac{1}{\sqrt{\Phi_H}}.\]
            Since $\Phi_H$ tends to infinity, \eqref{EqHypoMainThm} is fulfilled for $s=3$.\qedhere
    \end{itemize}
\end{proof}

\begin{remark}
  [Discussion of the hypotheses]
  The hypothesis ``$\Phi_H \to \infty$'' is clearly necessary for asymptotic normality:
otherwise, with probability not tending to zero,
$G(n,m_n)$ does not contain any copy of $H$ \cite[Section 3.1]{JansonRandomGraphs},
which rules out the possibility that $X^H_n$ satisfies a central limit theorem.

On the other hand, the hypotheses ``$H$ contains a copy of $L_2$'' and
``$n^{1-\eps}(1-p_n)^2 \gg 1$'' are limits of our method.
Indeed, Janson prove asymptotic normality with the less restrictive
hypotheses ``$n^3 p_n^2 \to \infty$'' and ``$n^3 (1-p_n)^2 \to \infty$".

Janson describes also the limit distributions of {\em induced subgraph counts}
\cite[Theorems 21 and 23]{JansonOrthogonalDecomposition}.
Some of these results could be also derived with weighted dependency graphs,
but certainly not all since the limit law is not always Gaussian.

The method presented in this article has nevertheless an important advantage:
it can be applied to other combinatorial objects where a coupling with an independent model
is not available, as illustrated in the other sections of this article.
\end{remark}

\section{Random permutations}
\label{SectPerm}
\subsection{A weighted dependency graph for random permutations}
We consider in this section a uniform random permutation $\Pi_n$ of size $n$.
Let $A_n$ be the set $[n]^2$. For $(i,l) \in A_n$, we denote 
\[Y_{i,l} =\begin{cases}
    1 &\text{if } \Pi_n(i)=l;\\
    0 & \text{otherwise.}
\end{cases}\]
Joint moments of these variables have simple expressions.
If either $i=j$ or $l=k$, but not both, then $Y_{i,l}$ and $Y_{j,k}$
are incompatible, {\em i.e.} $Y_{i,l}\, Y_{j,k}=0$.
Moreover, if we consider distinct integers $i_1,\cdots,i_r$ and $l_1,\cdots,l_r$,
then 
\begin{equation}
    \esper \left( \prod_{h=1}^r Y_{i_h,l_h} \right) = \frac{1}{n \, (n-1)\, \cdots \, (n-r+1)}.
    \label{EqJointMomentPerm}
\end{equation}

\begin{proposition}
    Consider the weighted graph $\WDep$ on vertex set $A_n$ defined as follows:
    \begin{itemize}
        \item if two pairs $\a_1=(i_1,l_1)$ and $\a_2=(i_2,l_2)$ in $A_n$
            satisfy either $i_1=i_2$ or $l_1=l_2$,
            then they are linked in $\WDep$ by an edge of weight $1$.
        \item otherwise,
            they are linked in $\WDep$ by an edge of weight $1/n$.
    \end{itemize}

    Then $\WDep$
    is a $(\Psi_n,\bC)$ dependency graph,
    for the family $\{ Y_{i,l},\, (i,l) \in A_n \}$,
    where
    \begin{itemize}
        \item $\Psi_n(B)=n^{-\#(B)}$ for any multiset $B$ of elements of $A_n$
        \item   and $\bC=(C_r)_{r \ge 1}$ is a sequence that does not depend on $n$.
    \end{itemize}  
    \label{PropWDGInRandomPerm}
\end{proposition}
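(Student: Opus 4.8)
The plan is to transcribe the proof of \cref{PropWDGInRandomPairPartitions} almost verbatim, since the combinatorial setup is the same: the $Y_{i,l}$ are Bernoulli, two of them agreeing in a coordinate are incompatible (their product vanishes), and the joint moment of $r$ pairwise non-conflicting variables is the factorial-type expression \eqref{EqJointMomentPerm}. First I would note that $\Psi_n(B)=n^{-\#(B)}$ is super-multiplicative, because $\#(B_1 \uplus B_2) \le \#(B_1)+\#(B_2)$; this allows me to invoke \cref{PropAlternate}, and since the $Y_{i,l}$ are Bernoulli, \cref{RmkDiscreteBernoulli} further reduces the task to bounding $\big|\ka(Y_{\a_1},\dots,Y_{\a_r})\big|$ over \emph{subsets} $\{\a_1,\dots,\a_r\}$ of $A_n$.

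The key reduction is as follows. If a connected component of $\WDep_1[B]$ contains two distinct vertices $\a \ne \a'$, then (being connected) it contains a weight-$1$ edge between two distinct vertices, i.e.\ two distinct elements of $A_n=[n]^2$ agreeing in a coordinate; hence $Y_\a Y_{\a'}=0$ and the whole product over that component is identically zero. Consequently the relevant cumulant of products vanishes unless every connected component of $\WDep_1[B]$ consists of a single element of $A_n$ (possibly repeated), and then $Y_\beta^{m}=Y_\beta$ makes the multiplicities irrelevant. Thus it remains to show that for \emph{distinct} pairs $\a_1,\dots,\a_r$ in $A_n$ no two of which agree in a coordinate — so that $\#(\{\a_1,\dots,\a_r\})=r$ and, all edges of $\WDep$ on these vertices having weight $1/n$, $\MWST{\WDep[\{\a_1,\dots,\a_r\}]}=(1/n)^{r-1}$ — one has
\[ \big|\ka(Y_{\a_1},\dots,Y_{\a_r})\big| \le D_r \Big(\tfrac1n\Big)^{2r-1} \]
for some $D_r$ independent of $n$ (and of the $\a_i$). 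This is vacuous for $n<r$, so assume $n \ge r$.

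To prove this bound I would apply the SC/QF machinery of \cref{SectSCQF}. By \eqref{EqJointMomentPerm}, for $\Delta \subseteq [r]$ the joint moment $M^{(n)}_\Delta := \esper\big(\prod_{i \in \Delta} Y_{\a_i}\big) = (n-|\Delta|)!/n!$ does not depend on $\a_1,\dots,\a_r$ and is the entrywise quotient of the factorial sequence $\Delta \mapsto (n-|\Delta|)!$ by the ($\Delta$-independent) sequence $\Delta \mapsto n!$. By \cref{PropExSCQF} both have the $\tfrac1n$ SC/QF property, hence so does $\MM^{(n)}=(M^{(n)}_\Delta)_{\Delta \subseteq [r]}$ by \cref{LemTrivialSCQF}. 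Since $M^{(n)}_\emptyset=1$, the small cumulant property gives $\ka_{[r]}(\MM^{(n)}) = \big(\prod_{i=1}^r M^{(n)}_{\{i\}}\big) \cdot \O(n^{-r+1})$, the maximum spanning tree of the complete graph on $[r]$ with all weights $1/n$ being $(1/n)^{r-1}$; as $M^{(n)}_{\{i\}}=1/n$ and $\ka_{[r]}(\MM^{(n)})=\ka(Y_{\a_1},\dots,Y_{\a_r})$, this is exactly the claimed bound. Plugging the constants $D_r$ into \cref{PropAlternate} produces the sequence $\bC$ and completes the argument.

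I do not expect a real obstacle: this is a routine adaptation of the pair-partition proof. The only point needing a little care is the reduction step above — verifying that a non-singleton connected component of $\WDep_1[B]$ forces the product of the corresponding $Y$'s to vanish — which relies on the incompatibility relation $Y_{i,l}Y_{j,k}=0$ whenever $(i,l)\ne(j,k)$ agree in exactly one coordinate, noted just before the proposition.
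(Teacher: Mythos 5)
Your proof is correct and follows essentially the same route as the paper's: reduce via \cref{PropAlternate} and the incompatibility relation $Y_{\a_1}Y_{\a_2}=0$ for weight-$1$ neighbours to bounding $\ka(Y_{\a_1},\dots,Y_{\a_r})$ for pairwise disconnected $\a_i$, then deduce the bound $D_r(1/n)^{2r-1}$ from the explicit joint moments \eqref{EqJointMomentPerm} together with the $1/n$ SC/QF property (\cref{PropExSCQF,LemTrivialSCQF}). The paper states these steps more tersely, but the argument is the same.
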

\begin{proof}
    The proof is similar to that of \cref{PropWDGInRandomPairPartitions,PropWDGInGnm}.
    Again $\Psi_n$ is clearly multiplicative and $Y_{\a_1}\, Y_{\a_2}=0$
    whenever $\a_1$ and $\a_2$ are linked by an edge of weight $1$,
    so that it is enough to prove the following (analogue of \cref{EqToProvePP}):
  for disconnected $\a_1,\cdots,\a_r$, one has
 \begin{equation}
    |\ka(Y_{\a_1},\cdots,Y_{\a_r})| \le D_r \, \left(\frac{1}{n}\right)^{2r-1}.
    \label{EqToProvePerm}
 \end{equation}
 This inequality is proved exactly as in \cref{PropWDGInRandomPairPartitions},
 using the explicit expression \cref{EqJointMomentPerm} for joint moments.
\end{proof}
Using \cref{PropProducts}, we also have dependency graphs for monomials in the variables $Y_{i,l}$.
In particular, in \cref{SubsecDoubleIndexPermStat}, we consider degree $2$ monomials $Y_{i,j}\, Y_{k,l}$.
Following \cref{SectProd}, we denote:
\begin{itemize}
    \item $A'_n:=\Mset_{2}(A_n)$ is the set of multisets of size $2$ of elements of $A_n$.
    \item $\WDep^2$ is the complete graph on $A'_n$ such that
        the weight of the edge between $\{\a_1,\a_2\}$ and $\{\beta_1,\beta_2\}$
        is $1$ if some $\a_i$ shares its first, respectively second,
        element with some $\beta_j$ and $1/n$ otherwise.
      \item $\bm{\Psi}$ is the function of multiset of $A'_n$ defined by:
        $\bm{\Psi}(\{\a'_1,\cdots,\a'_r\})=n^{-p(\{\a'_1,\cdots,\a'_r\})}$,
        where $p(\{\a'_1,\cdots,\a'_r\})=\#(\a'_1 \cup \cdots \cup \a'_r)$ is the number of distinct pairs in 
        $\a'_1 \cup \cdots \cup \a'_r$.
\end{itemize}
\begin{proposition}
    \label{PropWDGInRandomPermForPairs}
    The weighted graph $\WDep^2$ is a $(\bm{\Psi},\bD)$-dependency graph
    for the family of random variables $\{ Y_{i,l} Y_{j,k},\, \{(i,l),\, (j,k)\} \in A'_n \}$,
 where $\bD=(C_r)_{r \ge 1}$ is a sequence that does not depend on $n$.
\end{proposition}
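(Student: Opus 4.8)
The plan is to deduce this directly from \cref{PropProducts} (stability of weighted dependency graphs under taking monomials), applied with $m=2$ to the weighted dependency graph $\WDep$ of \cref{PropWDGInRandomPerm}, in exact parallel with the way \cref{PropProducts} was invoked for crossings in random pair partitions. Indeed, each variable $Y_{i,l}Y_{j,k}$ is the monomial $Y_I$ associated with the multiset $I=\{(i,l),(j,k)\}\in\Mset_2(A_n)$, and $\Mset_2(A_n)\subseteq\Mset_{\le 2}(A_n)$. Since the restriction of a weighted dependency graph to a subfamily of variables is again a weighted dependency graph for that subfamily (the induced weighted subgraph works), it suffices to produce a $(\bm{\Psi},\bD)$ weighted dependency graph for the whole family $\{Y_I,\ I\in\Mset_{\le 2}(A_n)\}$.

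First I would record that $\Psi_n(B)=n^{-\#(B)}$ is super-multiplicative: from $\#(B_1\uplus B_2)\le \#(B_1)+\#(B_2)$ we get $\Psi_n(B_1\uplus B_2)=n^{-\#(B_1\uplus B_2)}\ge n^{-\#(B_1)}n^{-\#(B_2)}=\Psi_n(B_1)\Psi_n(B_2)$. Then \cref{PropProducts} applies and says that the $2$-nd power $\WDep^2$ of $\WDep$, in the sense of \cref{DefPowerWG}, is a $(\bm{\Psi},\bD)$ weighted dependency graph for $\{Y_I,\ I\in\Mset_{\le 2}(A_n)\}$, where $\bm{\Psi}(\{I_1,\dots,I_r\})=\Psi_n(I_1\uplus\cdots\uplus I_r)$ and $\bD=(D_{2,r})_{r\ge 1}$ depends only on $2$, on $r$ and on the constants $C_1,\dots,C_{2r}$ of \cref{PropWDGInRandomPerm}; in particular $\bD$ does not depend on $n$.

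It then remains to check that this abstract output matches the explicit description given just before the statement. For $\bm{\Psi}$: the multiset $I_1\uplus\cdots\uplus I_r$ consists precisely of the pairs occurring in $\a'_1\cup\cdots\cup\a'_r$, so $\#(I_1\uplus\cdots\uplus I_r)=p(\{\a'_1,\dots,\a'_r\})$ and hence $\bm{\Psi}(\{\a'_1,\dots,\a'_r\})=n^{-p(\{\a'_1,\dots,\a'_r\})}$, as claimed. For the weights: $\WDep$ is the complete graph on $A_n$ with every edge weight in $\{1,1/n\}$, so no two vertices of $\WDep$ are ever disconnected; by \cref{DefPowerWG} this makes $\WDep^2$ the complete graph on $\Mset_{\le 2}(A_n)$, with the edge $\{I,J\}$ carrying weight $W(I,J)=\max_{\a\in I,\ \beta\in J} w_{\{\a,\beta\}}$ (with the convention $w=1$ when $\a=\beta$). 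This maximum equals $1$ exactly when some $\a\in I$ either coincides with, or shares its first or second coordinate with, some $\beta\in J$ — which is the condition in the statement — and equals $1/n$ otherwise. Restricting to $\Mset_2(A_n)=A'_n$ yields exactly the weighted graph $\WDep^2$ described before the proposition, which concludes the argument. There is no genuine obstacle here, as everything is a specialization of \cref{PropProducts}; the only step demanding a little care is precisely this identification of the power-graph weights with the coordinate-sharing condition and of $\bm{\Psi}$ with $n^{-p(\cdot)}$.
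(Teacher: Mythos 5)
Your proposal is correct and follows exactly the route the paper takes: the paper derives \cref{PropWDGInRandomPermForPairs} as a direct application of \cref{PropProducts} (with $m=2$) to the weighted dependency graph of \cref{PropWDGInRandomPerm}, restricted to the subfamily indexed by $A'_n$. Your additional verifications (super-multiplicativity of $\Psi_n$, identification of the power-graph weights with the coordinate-sharing condition, and of $\bm{\Psi}$ with $n^{-p(\cdot)}$) are the details the paper leaves implicit, and they check out.
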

\begin{remark}
    This weighted dependency graph and its powers (see \cref{PropProducts})
    correspond to the bounds on cumulants given in
    \cite[Theorem 1.4]{FerayRandomPermutationsCumulants}.
    Thanks to the results of this article, proving these bounds on cumulants
    is now easier (in particular we do not need to consider
    {\em truncated cumulants} anymore as in
    \cite[Section 2.4]{FerayRandomPermutationsCumulants}).
    Yet, some ideas of this article dedicated to random permutations
    are crucial here to build the general theory of weighted dependency graphs.
\end{remark}

\begin{remark}
 When $\Pi_n$ is distributed with {\em Ewens distribution} --- see, {\em e.g.},
 \cite{SurveyUniformPermutations} for background on this measure ---,
 the family $\{ Y_{i,l},\, (i,l) \in A_n \}$ still admits a weighted dependency graph.
 The only difference is that $Y_{i,l}$ and $Y_{j,k}$ share an edge of weight $1$ as
 soon as $\{i,l\} \cap \{j,k\} \neq \emptyset$.
 Nevertheless, most central limit theorems for Ewens distribution can be inferred
 from a corresponding central limit theorem for uniform random permutations
 using a coupling argument
 (the Chinese restaurant process yields a coupling between Ewens distributed permutations
 and uniform permutations, where only $\O_p(\ln(n))$ values differ).
 Therefore we have decided to restrict here to the uniform model.
\end{remark}

\subsection{A functional central limit theorem for simply indexed permutation statistics}
\label{SubsecSingleIndexPermStat}
In this section, we prove a weaker version
of a functional central limit theorem, due to Barbour and Janson \cite{BarbourJansonFunctionalCLT}.

Let $(a_0^{(n)}(i,l))_{i,l \le n}$ ($n \ge 1$) be a sequence of real matrices.
Take $t$ in $[0,1]$, an integer $n$ and a permutation $\pi$ of size $n$.
If $nt$ is an integer,
then we define
\[X^\pi_{n}(t)= \sum_{i=1}^{nt} a_0^{(n)}(i,\pi(i)).\]
We then extend $X^\pi_n$ to a continuous function on $[0,1]$,
by requiring that $X^\pi_n$ is affine on each interval $[j/n,(j+1)/n]$ (for $0 \le j \le n-1$).
More explicitly we set, for $t$ in $[0,1]$,
\[X^\pi_{n}(t)= \sum_{i=1}^{\lfloor nt \rfloor} a_0^{(n)}(i,\pi(i)) + 
(nt- \lfloor nt \rfloor) a_0^{(n)}(\lfloor nt \rfloor+1,\pi(\lfloor nt \rfloor+1)),\]
where $\lfloor x \rfloor$ denotes, as usual, the integer value of $x$.

Consider now a uniform random permutation $\Pi$ of size $n$ and set $X_{n}=X^\Pi_n$.
Then $X_n$ is a random continuous function on $[0,1]$
and we want to study its asymptotics.

The quantity $X_n(1)=\sum_{i=1}^{n} a_0^{(n)}(i,\pi(i))$ is a classical combinatorial statistics on permutation,
originally introduced by Hoeffding \cite{HoeffdingCombinatorialCLT},
while the process $X_n$ is a slight deformation of the one considered by Barbour and Janson in \cite{BarbourJansonFunctionalCLT}
(theirs is a step function, while ours is continuous piecewise-affine).

We now perform a centering by defining
\[a^{(n)}(i,l)=a_0^{(n)}(i,l) - n^{-1} \sum_{k=1}^n a_0{(n)}(i,k).\]
Then, for all $i$ and $n$, $\sum_{k=1}^n a^{(n)}(i,k)=0$ and, for $t$ in $[0,1]$,
\[X_n(t) - \esper X_n(t) = \sum_{i=1}^{nt} a^{(n)}(i,\Pi(i)).\]

We assume that:
\begin{itemize}
    \item the entries of the matrices $a^{(n)}$ are uniformly bounded by a constant $M$;
    \item The functions $f_n$ and $g_n$ defined by
        \begin{align}
            f_n(t) &= n^{-2} \sum_{i=1}^{\lfloor nt \rfloor} \sum_{l=1}^n \big(a^{(n)}(i,l)\big)^2,
            \label{EqDefF}\\
            g_n(t,u) &= n^{-3} \sum_{i=1}^{\lfloor nt \rfloor}
            \sum_{j=1}^{\lfloor nt \rfloor} \sum_{l=1}^n a^{(n)}(i,l) \, a^{(n)}(j,l)
            \label{EqDefG}
        \end{align}
        have pointwise limits $f$ and $g$.
\end{itemize}
Note that these hypotheses are in particular fulfilled when $a^{(n)}(i,l)=\a(i/n,l/n)$ for some
fixed piecewise continuous function $\a:[0,1]^2 \to \R$ independent of $n$.
The latter is a natural hypothesis to get a limit for a renormalized version of $X_n$.

We consider convergence in the space $C[0,1]$ of real-valued continuous functions on $[0,1]$,
endowed with the uniform metric. Denote $t \wedge u=\min(t,u)$.
\begin{theorem}
    We use the notation and assumptions above.
    Then there exists a zero-mean continuous Gaussian process $Z$ on $[0,1]$
    with covariance function given by
    \[\Cov(Z(t),Z(u)) = \si(t,u) := f(t \wedge u)-g(t,u)\]
    and, in distribution in $C[0,1]$, we have
    \[\frac{X_n(t) - \esper X_n(t)}{\sqrt{n}} \to Z.\]
    \label{ThmFunctionalCLTSimplePerm}
\end{theorem}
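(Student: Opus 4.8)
The plan is to establish convergence in $C[0,1]$ by the standard two-step route: (i) convergence of finite-dimensional distributions to those of $Z$, and (ii) tightness. For step (i), fix $0 \le t_1 < \cdots < t_m \le 1$ and real numbers $\lambda_1,\dots,\lambda_m$; I would show that the linear combination $S_n := \sum_{k=1}^m \lambda_k \frac{X_n(t_k)-\esper X_n(t_k)}{\sqrt n}$ is asymptotically normal. Up to the affine-interpolation correction (which contributes a single term of size $O(M/\sqrt n)$ and is negligible), $S_n = \frac{1}{\sqrt n}\sum_{i,l} b^{(n)}(i,l)\, Y_{i,l}$ where $b^{(n)}(i,l) = \big(\sum_{k:\, i \le n t_k} \lambda_k\big) a^{(n)}(i,l)$ is again a uniformly bounded matrix (bounded by $(\sum_k|\lambda_k|)M$), and $\{Y_{i,l}\}$ is the family with weighted dependency graph $\WDep$ from \cref{PropWDGInRandomPerm}. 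So $S_n$ is a sum of the variables $Z_{i,l} := b^{(n)}(i,l) Y_{i,l}$, which by multilinearity of cumulants and \cref{EqFundamental} still admit $\WDep$ as a $(\Psi_n',\bC)$ weighted dependency graph with $\Psi_n'(B) = \big(\prod_{\alpha \in B}\|b^{(n)}\|_\infty\big)\,\Psi_n(B)$ — or, more simply, I would just feed the bounds on $\ka(Y_{\alpha_1},\dots,Y_{\alpha_r})$ directly through multilinearity. Applying \cref{ThmMain} (or directly \cref{LemBorneCumulant}) with $\Psi$ constant gives $R_n = |A_n| = n^2$, and the weighted degree is $\Delta_n = O(1/n \cdot n^2 + 1) = O(n)$ — wait, more carefully: each vertex $(i,l)$ has $2(n-1)$ neighbours of weight $1$ and the rest of weight $1/n$, so $\Delta_n = O(n)$, hence $T_{\ell,n} = O(n)$ and one may take $Q_n = n$. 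Then $(R_n/Q_n)^{1/s}(Q_n/\sigma_n) = n^{1/s}\cdot n/\sigma_n$, so one needs $\sigma_n := \sqrt{\Var(S_n)} \gg n^{1-1/s}$ for some $s \ge 3$, i.e.\ essentially $\Var(S_n)/n \to$ a positive constant (then pick $s$ large). This forces me to first compute $\lim_n \Var(S_n)/n$ and check it is $\sum_{k,k'}\lambda_k\lambda_{k'}\si(t_k,t_{k'})$, which I would do by expanding $\Var\big(\frac{1}{\sqrt n}\sum b^{(n)}(i,l)Y_{i,l}\big)$ using $\esper(Y_{i,l}Y_{j,k}) = \frac{1}{n(n-1)}$ for the four distinct-index patterns and $\esper(Y_{i,l}) = 1/n$, $\esper(Y_{i,l}Y_{j,l}) = \esper(Y_{i,l}Y_{i,k}) = 0$; the row-sum-zero normalization $\sum_k a^{(n)}(i,k) = 0$ is what makes the bookkeeping collapse to $f_n(t\wedge u) - g_n(t,u)$, which converges by hypothesis to $f(t\wedge u)-g(t,u) = \si(t,u)$. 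If this limit is strictly positive the CLT for $S_n$ follows; the degenerate case ($\si \equiv 0$ on the chosen grid) is handled trivially since then $\Var(S_n) = o(n)$ forces $S_n/\sqrt n \to 0 = $ the degenerate Gaussian. By the Cramér–Wold device, the finite-dimensional distributions of $\big(X_n(t)-\esper X_n(t)\big)/\sqrt n$ converge to those of $Z$; joint convergence of all moments (from \cite[Theorem 1]{JansonDependencyGraphs}) gives the covariance structure rigorously.

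For step (ii), tightness in $C[0,1]$, I would invoke a moment criterion of Kolmogorov–Chentsov type: it suffices to bound $\esper\big[(X_n(u)-X_n(t))^4\big] \le C\,|u-t|^{?}\cdot n^2$ uniformly (after dividing by $n^2$), with the appropriate exponent, for $t,u$ at grid points $j/n$, together with continuity of the interpolation; the paper states in its outline that an appendix supplies ``adequate tightness criteria for the functional central limit theorems,'' so I would cite that. Concretely, $X_n(u)-X_n(t) = \sum_{i=nt+1}^{nu} a^{(n)}(i,\Pi(i))$ (again modulo the $O(M)$ interpolation term) is a sum of $n(u-t)$ of the variables $Y_{i,l}$ with bounded coefficients, and by \cref{LemBorneCumulant} applied to this shorter sum we get $\ka_4 = O\big(n(u-t)\cdot n^3 \cdot \tfrac{1}{n^3}\big)$ — I need to redo this: the relevant $R$ here is $\sum_\alpha \Psi(\{\alpha\})$ over the restricted index set, which is $\asymp n(u-t)\cdot n$ pairs $(i,l)$ with $nt < i \le nu$, times $\Psi(\{\alpha\}) = 1/n$... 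Since $\Psi$ here should really encode the coefficient bound, let me instead use the crude route: with $\Psi\equiv 1$, $R \asymp n^2(u-t)$ and $T_\ell \asymp n$, so $|\ka_r| \le C_r r!\, n^2(u-t)\, n^{r-1}$; combined with the coefficient bound $M$ this gives $|\ka_r(X_n(u)-X_n(t))| \le C_r' (u-t)\, n^{r+1}$, whence $\Var = O((u-t)n^2)$ and the fourth central moment $= 3\Var^2 + \ka_4 = O\big((u-t)^2 n^4\big) + O\big((u-t)n^5\big)$. Dividing by $n^2$: $\esper[(\widetilde X_n(u)-\widetilde X_n(t))^4] = O\big((u-t)^2\big) + O\big((u-t)n\big)$, where the second term is too weak when $u-t \ll 1/n$; but in that regime $u-t < 1/n$ means $t,u$ lie in a single interpolation interval and $|\widetilde X_n(u)-\widetilde X_n(t)| \le M|u-t|n/\sqrt n = M\sqrt n|u-t| \le M/\sqrt n$ deterministically, so those increments are uniformly small and contribute nothing to tightness. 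Thus the standard Billingsley tightness criterion (e.g.\ \cite[Theorem 13.5]{BillingsleyProbMeasure}, via the fourth-moment modulus-of-continuity bound) is satisfied, and together with finite-dimensional convergence this yields $\widetilde X_n \to Z$ in distribution in $C[0,1]$; the limit $Z$ is a continuous Gaussian process with the stated covariance, existence of a continuous version itself following from the Kolmogorov continuity theorem applied to $\si$.

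The main obstacle is not the central limit mechanism — that is an essentially mechanical application of \cref{ThmMain}/\cref{LemBorneCumulant} once the right (constant) $\Psi$ and the right $Q_n = n$ are identified — but rather the two analytic computations surrounding it: first, the variance identification, i.e.\ carefully expanding $\Var(S_n)/n$ over the four index-coincidence patterns and checking that the row-centering makes it converge to $\sum_{k,k'}\lambda_k\lambda_{k'}\,\big(f(t_k\wedge t_{k'})-g(t_k,t_{k'})\big)$ with the correct signs and the $\tfrac{1}{n(n-1)}$ vs.\ $\tfrac1n\cdot\tfrac1n$ discrepancy absorbed into lower-order terms; and second, making the tightness argument clean, in particular correctly handling sub-$1/n$ increments via the deterministic Lipschitz bound on the piecewise-affine interpolant so that the cumulant bounds only need to be invoked for $u-t \ge 1/n$. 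I would relegate the variance bookkeeping to the same appendix that handles the variance estimates for the other examples, and cite the appendix's tightness criterion for step (ii), keeping the main-text proof to: (a) reduce to $\frac1{\sqrt n}\sum b^{(n)}(i,l)Y_{i,l}$; (b) quote the weighted dependency graph \cref{PropWDGInRandomPerm} and the bound $Q_n = n$, $R_n = n^2$; (c) invoke \cref{ThmMain} after the variance lemma; (d) Cramér–Wold plus the quoted tightness criterion.
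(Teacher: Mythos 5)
Your overall architecture (finite-dimensional convergence via cumulant bounds from the weighted dependency graph of \cref{PropWDGInRandomPerm}, then tightness via a fourth-moment criterion on grid points with the sub-$1/n$ increments handled by piecewise-affinity) is exactly the paper's, but there is a concrete error in the parameter bookkeeping that breaks both halves of the argument: you discard the function $\Psi_n(B)=n^{-\#(B)}$ and work with $\Psi\equiv 1$. With $\Psi\equiv 1$ you correctly get $R_n=n^2$, $T_{\ell,n}=O(n)$, $Q_n=n$; but then, since $\sigma_n^2=\Var\big(\sum b^{(n)}(i,l)Y_{i,l}\big)\asymp n$, the quantity in \eqref{EqHypoMainThm} is $(R_n/Q_n)^{1/s}\,Q_n/\sigma_n = n^{1/s}\cdot n^{1/2}\to\infty$, so \cref{ThmMain} simply does not apply (your intermediate claim that one needs $\sigma_n\gg n^{1-1/s}$ is also an algebra slip --- the condition is $\sigma_n\gg n^{1+1/s}$ --- but either way $\sigma_n\asymp\sqrt n$ fails it). Equivalently, \cref{LemBorneCumulant} with $\Psi\equiv 1$ only gives $|\ka_r(\sum Y)|=O(n^{r+1})$, so $\ka_r(\widetilde{X_n})=O(n^{r/2+1})$, which does not tend to $0$. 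The fix is the one the paper uses: keep $\Psi'_n(B)=M^{|B|}\,n^{-\#(B)}$ (your first instinct, before the ``more simply'' detour). Then $R_n=\sum_{(i,l)}M/n=Mn$ and, because the ratio $\Psi'_n(\{\a_1,\dots,\a_\ell,\beta\})/\Psi'_n(\{\a_1,\dots,\a_\ell\})$ is $M/n$ for all but $O(1)$ choices of $\beta$, one gets $T_{\ell,n}=O(1)$, hence $|\ka_r(\sum Y)|=O(n)$ and $\ka_r(\widetilde{X_n})=O(n^{1-r/2})\to 0$ for $r\ge 3$. (The paper does not use Cram\'er--Wold at all: it bounds the joint cumulants $\ka(\widetilde{X_n}(t_1),\dots,\widetilde{X_n}(t_r))$ directly via \cref{LemBoundJointCumulants}, which also sidesteps your degenerate-variance case distinction.)

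The same loss of the factor $n^{-\#(B)}$ sinks your tightness estimate. Your bound $\esper[(\widetilde X_n(u)-\widetilde X_n(t))^4]=O((u-t)^2)+O((u-t)n)$ is useless not only for $u-t\ll 1/n$ but already at the critical scale $u-t=1/n$ of distinct grid points, where the second term is $O(1)$ instead of the required $O(n^{-2})$; restricting to a single interpolation interval does not rescue this. With the correct $\Psi'_n$, the restricted graph on $\{(i,l): nt<i\le ns\}$ has $R_n(s,t)=Mn(s-t)$ and $T_{\ell,n}(s,t)=O(1)$, giving $\ka_4(\widetilde X_n(s)-\widetilde X_n(t))=O(n^{-1}(s-t))=O((s-t)^2)$ for grid points with $s-t\ge 1/n$, which is exactly what \eqref{EqTightnessCriterion1D} and \cref{LemOnlyOnLattice1D} need. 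Your variance identification (the expansion over index-coincidence patterns using the row-centering of $a^{(n)}$) matches the paper's and is fine.
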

\begin{proof}
    The first step is to prove the convergence of the finite-dimensional laws
    (note that this step does not require the existence of $Z$).
    We do that by proving the convergence of joint cumulants;
    since a multidimensional
    Gaussian vector is determined by its joint moments, 
    this is enough to establish convergence in distribution.

    Both sides are centered so that there is nothing to prove for the expectation.

    For covariances, first write, for $t \in [0,1]$,
    \[\widetilde{X_n(t)}:= \frac{X_n(t) - \esper X_n(t)}{\sqrt{n}}
    = n^{-1/2} \sum_{i=1}^{\lfloor nt \rfloor} \sum_{l=1}^n a^{(n)}(i,l) Y_{i,l}.\] 
    Then for $0 \le t \le u \le 1$, we have
    \begin{equation}
        \Cov\left(\widetilde{X_n(t)}, \widetilde{X_n(u)}\right) = \esper \left(\widetilde{X_n(t)}\, \widetilde{X_n(u)}\right)
        = n^{-1} \sum_{i=1}^{\lfloor nt \rfloor} \sum_{j=1}^{\lfloor nu \rfloor} 
        \left[ \sum_{1 \le l,k \le n} a^{(n)}(i,l) a^{(n)}(j,k) \esper(Y_{i,l} \, Y_{j,k}) \right]
    \label{EqCovSinglePerm}
\end{equation}
If $i=j$, then $\esper(Y_{i,l} \, Y_{j,k})=\tfrac{1}{n}$ if $l=k$ and $0$ otherwise.
Thus the expression
in the bracket reduces to $n^{-1} \sum_{l=1}^n a^{(n)}(i,l)^2$
and the total contribution of terms with $i=j$ in \eqref{EqCovSinglePerm} is $f_n(t \wedge u)$.

On the other hand, if $i \ne j$ then $\esper(Y_{i,l} \, Y_{j,k})=0$ if $l=k$ and $\tfrac{1}{n(n-1)}$ otherwise.
Thus, for $i \ne j$
\[
\left[ \sum_{1 \le l,k \le n} a^{(n)}(i,l) a^{(n)}(j,k) \esper(Y_{i,l} \, Y_{j,k}) \right]
= \frac{1}{n(n-1)} \sum_{1 \le l,k \le n \atop l \ne k} a^{(n)}(i,l) a^{(n)}(j,k).\]
Since $a^{(n)}$ is centered, the same sum without the restriction $l \ne k$ equals to $0$.
Thus, the sum with condition $l \ne k$ is the opposite of the sum with condition $l=k$ and, if $i \ne j$, one has
\[\left[ \sum_{1 \le l,k \le n} a^{(n)}(i,l) a^{(n)}(j,k) \esper(Y_{i,l} \, Y_{j,k}) \right]    
= \frac{-1}{n(n-1)} \sum_{l=1}^n a^{(n)}(i,l) a^{(n)}(i,l).\]
As a consequence, the total contribution of terms with $i \ne j$ in \eqref{EqCovSinglePerm} is
\[ -\frac{n}{n-1} g(t,u) + \frac{1}{n-1} f(t \wedge u).\]

Finally we get the piecewise limit 
\[\lim_{n \to \infty} \Cov\left(\widetilde{X_n(t)}, \widetilde{X_n(u)}\right) = f(t \wedge u)-g(t,u),\]
as wanted.
\medskip

Let us now consider higher order cumulants.
Recall that the family
$\{Y_{(i,l)}, (i,l) \in A_n\}$ admits $\WDep$ as a 
$(\Psi_n,\bC)$ weighted dependency graph where $\WDep$, $\Psi_n$ and $\bC$
are defined in \cref{PropWDGInRandomPerm}.
Since $a^{(n)}(i,l)$ is uniformly bounded by $M$, the family 
\[ \big\{ a^{(n)}(i,l) Y_{(i,l)}, (i,l) \in A_n \big\}\]
has the same dependency graph, replacing simply $\Psi_n$ by
\[\Psi'_n(B):=M^{|B|} \Psi_n(B).\]
For this dependency graph, using the notation of \cref{SubsecNormalityCriterion}, one has
\[R_n = \sum_{(i,l) \in A_n} \frac{M}{n} = M \, n.\]
Let us now establish a bound for $T_{r,n}$. Fix $\a_1,\cdots,\a_r$ in $A_n$
($\a_h=(i_h,l_h)$ for $h \le \ell$)
and consider the sum
\begin{equation}
    \sum_{\beta \in A_n} W(\{\beta\},\{\a_1,\cdots,\a_r \}) 
    \frac{\Psi'_n\big(\{\a_1,\cdots,\a_\ell,\beta\}\big) }{\Psi'_n\big(\{\a_1,\cdots,\a_r\}\big) }.
    \label{EqTech3}
\end{equation}
As in previous sections, we split this sum into different parts. Write $\beta=(i,l)$.
Constants in $\mathcal{O}$ symbols below can be chosen independent of $\a_1,\cdots,\a_r$, but depend on $r$.
\begin{itemize}
    \item If fulfills $i \ne i_1,\cdots,i_r$ and $l \ne l_1,\cdots,l_r$,
        then $W(\{\beta\},\{\a_1,\cdots,\a_r \})=1/n$ and 
        \[\frac{\Psi'_n\big(\{\a_1,\cdots,\a_\ell,\beta\}\big) }{\Psi'_n\big(\{\a_1,\cdots,\a_r\}\big) } = \frac{M}{n}.\]
        Since there are $\O(n^2)$ such terms, the total contribution of these terms is $\O(1)$.
    \item If $i \in \{i_1,\cdots,i_r\}$, but $(i,s) \notin \{(i_1,l_1),\cdots,(i_r,l_r)\}$,
        then $W(\{\beta\},\{\a_1,\cdots,\a_r \})=1$ and 
        \[\frac{\Psi'_n\big(\{\a_1,\cdots,\a_\ell,\beta\}\big) }{\Psi'_n\big(\{\a_1,\cdots,\a_r\}\big) } = \frac{M}{n}.\]
        There are $\O(n)$ such terms which gives a total contribution of $\O(1)$.
    \item The total contribution of terms with $s \in \{s_1,\cdots,s_r\}$ but $(i,s) \notin \{(i_1,l_1),\cdots,(i_r,l_r)\}$
        is $\O(1)$ from the same argument.
    \item Finally, if $(i,s) \in \{(i_1,l_1),\cdots,(i_r,l_r)\}$,
        we have $W(\{\beta\},\{\a_1,\cdots,\a_r \})=1$ and 
        \[\frac{\Psi'_n\big(\{\a_1,\cdots,\a_\ell,\beta\}\big) }{\Psi'_n\big(\{\a_1,\cdots,\a_r\}\big) } = M.\]
        But the number of such terms is bounded by $r$, so
        that their total contribution is also $\O(1)$.
\end{itemize}
Finally, we get that, for any $\a_1,\cdots,\a_r$, the quantity \eqref{EqTech3}
is bounded by a constant $D_r$, uniformly on $\a_1,\cdots,\a_r$.
Thus, for each $r \ge 1$,
the sequence $(T_{r,n})_{n \ge 1}$ is bounded.

Using \cref{LemBoundJointCumulants}, we can now write: for $r>2$ and $t_1,\cdots,t_r$ in $[0,1]$,
\begin{multline*}
    \left| \ka_r\left( \widetilde{X_n(t_1)},\cdots,\widetilde{X_n(t_r)} \right) \right|
    = n^{-r/2} \left| \ka_r\left( X_n(t_1),\cdots,X_n(t_r) \right) \right|  \\
    \le n^{-r/2} C_r r! \, R_n \, T_{1,n} \cdots T_{r-1,n} \le C_r r!\, D_1 \cdots D_{r-1} \, M \, n^{1-r/2}.
\end{multline*}
The right hand side tends to $0$ so that $\left| \ka_r\left( \widetilde{X_n(t_1)},\cdots,\widetilde{X_n(t_r)} \right) \right|$
tends to $0$.
This proves the convergence of the finite-dimensional laws towards Gaussian vectors.
\medskip

It remains now to prove that the sequence of random functions $\widetilde{X_n}$ is tight in $C[0,1]$.
This will prove the existence of the continuous Gaussian process $Z$,
and the convergence of $X_n$ towards $Z$ as well.

To do this, we use a moment criterion that can be found in a book of Kallenberg
\cite[Corollary 16.9 for $d=1$]{KallenbergBookProba}:
a sufficient condition for $\widetilde{X_n}$ to be tight is 
that $\widetilde{X_n}(0)$ is tight and that,
for some positive constants $a$, $b$ and $\la$,
\begin{equation}
    \esper \big[|\widetilde{X_n}(s)- \widetilde{X_n}(t)|^a \big] \le \la \, |s-t|^{1+b} \ \text{ for all }s,t \in [0,1], n \ge 1.
    \label{EqTightnessCriterion1D}
\end{equation}
In our case, $X_n(0)$ is identically equal to $0$ so that only the inequality \eqref{EqTightnessCriterion1D}
needs to be checked.
Moreover, since $\widetilde{X_n}$ is affine in each interval $[j/n, (j+1)/n]$,
it is in fact sufficient to prove this inequality when $nt$ and $ns$ are integers;
see \cref{AppOnlyOnLattice} (this reduction needs $a \ge 1+b$, which is the case in what follows).

Let $n \ge 1$ be an integer and $s$ and $t$ in $[0,1]$ such that $ns$ and $nt$ are integers.
Assume $t<s$.
We consider the case $a=4$, that is the fourth moment of $\widetilde{X_n}(s)- \widetilde{X_n}(t)$. 
Since $\widetilde{X_n}(s)- \widetilde{X_n}(t)$ is centered, 
from the moment cumulant formula \eqref{EqCumulant2Moment}, we get
\[ \esper \big[ (\widetilde{X_n}(s)- \widetilde{X_n}(t))^4 \big]
= \kappa_4( \widetilde{X_n}(s)- \widetilde{X_n}(t) ) +3 \kappa_2 ( \widetilde{X_n}(s)- \widetilde{X_n}(t) )^2.\]
But
\[n^{1/2} \big(\widetilde{X_n}(s)- \widetilde{X_n}(t) \big) = \sum_{i=nt+1}^{ns} \sum_{l=1}^n a^{(n)}(i,l) Y_{i,l} \]
and its cumulants can be bounded by \cref{LemBorneCumulant}.
Note that we consider here the restriction of the dependency graph
above to the family $\{a^{(n)}(i,l) Y_{i,l}, \ {\bf nt<i \le ns}\text{ and } 1\le l \le n\}$.
Then we have
\[R_n(s,t) := \sum_{i=nt+1}^{ns} \sum_{l=1}^n \Psi'_n(\{(i,l)\}) = M \, n \, (s-t).\]
On the other hand the parameter $T_{\ell,n}(s,t)$ associated to this restricted graph
is bounded by the same bound as in the non-restricted case above: $T_{\ell,n}(s,t)=\O(1)$.
Therefore, from \cref{LemBorneCumulant}, we have
\[ |n \ka_2\big(\widetilde{X_n}(s)- \widetilde{X_n}(t) \big) | \le D_2 \, n \, (s-t),\quad
|n^2 \ka_4\big(\widetilde{X_n}(s)- \widetilde{X_n}(t) \big) | \le D_4 \, n \, (s-t),\]
for some constants $D_2$ and $D_4$.
Putting all together,
\[\esper \big[ (\widetilde{X_n}(s)- \widetilde{X_n}(t))^4 \big] \le
D_4 \, n^{-1} \, (s-t) +3D_2^2 (s-t)^2 \le (D_4+3D_2)\, (s-t)^2,\]
where the last equality comes from the fact that $s-t \ge n^{-1}$ 
since $ns$ and $nt$ are distinct integers.
Thus \eqref{EqTightnessCriterion1D} is proved for $a=4$, $b=1$ and $\la=D_4+3D_2$,
which ends the proof of the theorem.
\end{proof}
To illustrate this theorem, we use the same example as Barbour and Janson 
\cite[Theorem 5.1]{BarbourJansonFunctionalCLT}.
Let $a_0^{(n)}(i,l)=[l \ge i]$, that is $1$ if $l \ge i$ and $0$ otherwise.
Then $X_n(t)$ is the number of {\em weak exceedances} of $\Pi$ of index at most $nt$. 

After centering, we have $a^{(n)}(i,l)=[l \ge i]- (n-i+1)/n$, which is obviously uniformly bounded.
As explained in \cite{BarbourJansonFunctionalCLT}, if $0 \le t \le u \le 1$, then one has
\[f(t) = \lim_{n \to \infty} f_n(t) = \tfrac{1}{2}\, t^2-\tfrac{1}{2}\, t^3; \ g(t,u)=\lim_{n \to \infty} g_n(t,u)=\tfrac{1}{2}\, t^2 u -\tfrac{1}{6}\, t^3-\tfrac{1}{4}\, t^2 u^2.\]
All our hypotheses are fulfilled and we obtain that there exists a continuous Gaussian process $Z$
with covariance function $\si(Z(t),Z(u))=\tfrac{1}{2}t^2(1-u+\tfrac{1}{2}u^2)-\tfrac{1}{6}t^3$
and that, in distribution in $C[0,1]$,
\[\frac{X_n - \esper X_n}{\sqrt{n}} \to Z.\]

\begin{remark}
    The hypotheses given here are stronger than 
    the ones of Barbour and Janson \cite{BarbourJansonFunctionalCLT},
    who use a bound on the Lyapounov ratio,
    instead of our uniformly bounded assumption.
    However, as seen above, the example of exceedances, which motivated their work,
    also fits in our framework.
    Note also that Barbour and Janson also give a bound on the speed of convergence,
    which we cannot achieve.

    Another difference between their theorem and ours is that
    they consider convergence in Skorohod space $D[0,1]$,
    while we work in $C[0,1]$, but since the limit is continuous,
    this is just a matter of taste.
\end{remark}

\subsection{A functional central limit theorem for doubly indexed permutation statistics}
\label{SubsecDoubleIndexPermStat}
An advantage of the method of the previous section is
that it can be easily adapted to more involved permutation statistics,
such as {\em doubly indexed permutation statistics (DIPS)}.
By definition a DIPS is a statistics of the following form:
let $\zeta_0^{(n)}(i,j,k,l)_{i,j,k,l \in [n]}$ be a sequence of multi-indexed
real numbers, then,
for a permutation $\pi$ of size $n$, we set
\[ X_n(\pi)= \sum_{1 \le i,j \le n} \zeta_0^{(n)}(i,j,\pi(i),\pi(j)).\]
A central limit theorem for DIPS with control on the speed of convergence
is given in \cite{CLT_DIPS}.
In this section, we provide a \,\!\! {\em functional} CLT for this class of statistics.
\medskip

To this end let us associate with a DIPS and a permutation $\pi$ a continuous function on $[0,1]^2$ as follows.
If $nt_1$ and $nt_2$ are integers, then
\[X^\pi_n(t_1,t_2) = \sum_{i=1}^{nt_1} \sum_{j=1}^{nt_2} \zeta_0^{(n)}(i,j,\pi(i),\pi(j)).\]
The function $X^\pi_n$ is then extended to $[0,1]^2$ by requiring
that, for any pair $(i,j)$ with $0 \le i,j \le n-1$,
the function $X^\pi$ is affine on the square $[i/n;(i+1)/n] \times [j/n;(j+1)/n]$.

We now consider a uniform random permutation $\Pi$ of size $n$ and 
the associated random function $X_n:=X_n^\Pi$.
We perform the following centering:
\[\zeta^{(n)}(i,j,k,l)=\begin{cases}
    \zeta_0^{(n)}(i,j,k,l) - \frac{1}{n(n-1)} \sum_{k' \neq l'} \zeta_0^{(n)}(i,j,k',l') & \text{if }i \ne j;\\
    \zeta_0^{(n)}(i,j,k,l) - \frac{1}{n} \sum_{k'} \zeta_0^{(n)}(i,j,k',k') & \text{if }i = j.
\end{cases}.\]
With this definition, if $nt_1$ and $nt_2$ are integers, we have
\[X_n(t_1,t_2) -\esper X_n(t_1,t_2) = \sum_{i=1}^{nt_1} \sum_{j=1}^{nt_2} \zeta^{(n)}(i,j,\pi(i),\pi(j)).\]
We assume that:
\begin{itemize}
    \item the real numbers $\zeta^{(n)}(i,j,k,l)$ ($n \ge 1, i,j,k,l \le n$)
        are uniformly bounded by a constant $M$;
    \item the rescaled covariance
        $n^{-3} \Cov\big(X_n(t_1,t_2), X_n(u_1,u_2) \big)$
        has a pointwise limit $\sigma(t_1,t_2;u_1,u_2)$.
        (It may be possible to give sufficient conditions such as \cref{EqDefF,EqDefG}
        for this convergence, but this would be technical and not enlightening.)
\end{itemize}
We consider here the space $C[0,1]^2$ of real-valued continuous functions on $[0,1]^2$,
with the topology of uniform convergence. We then have the following theorem.
\begin{theorem}
    We use the notation and assumptions above.
    Then there exists a zero-mean continuous Gaussian process $Z$ on $[0,1]^2$
    with covariance function given by
    \[\Cov(Z(t_1,t_2),Z(u_1,u_2)) = \si(t_1,t_2;u_1,u_2)\]
    and, in distribution in $C[0,1]^2$, we have
    \[ \widetilde{X_n}(t_1,t_2) := \frac{X_n(t_1,t_2) - \esper X_n(t_1,t_2)}{n^{3/2}} \to Z.\]
    \label{ThmFunctionalCLTDIPS}
\end{theorem}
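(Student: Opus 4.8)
The plan is to follow the proof of \cref{ThmFunctionalCLTSimplePerm} almost verbatim, splitting the statement into the convergence of the finite-dimensional laws and the tightness of $(\widetilde{X_n})_{n\ge 1}$ in $C[0,1]^2$. The starting observation is that, for $nt_1,nt_2$ integers,
\[ \widetilde{X_n}(t_1,t_2) = n^{-3/2}\sum_{i=1}^{nt_1}\sum_{j=1}^{nt_2}\sum_{1\le k,l\le n}\zeta^{(n)}(i,j,k,l)\,Y_{i,k}\,Y_{j,l}, \]
where $Y_{i,k}Y_{j,l}$ vanishes as soon as $i=j$ and $k\ne l$ (and equals $Y_{i,k}$ when $i=j=k=l$, since $Y_{i,k}^2=Y_{i,k}$). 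As $\zeta^{(n)}$ is uniformly bounded by $M$, \cref{PropWDGInRandomPermForPairs} shows — exactly as in the single-index case, where bounded coefficients were absorbed into the function $\Psi$ — that the family $\{Y_{i,k}Y_{j,l}\}$ admits $\WDep^2$ as a $(\Psi'_n,\bD)$ weighted dependency graph with $\Psi'_n(B)=M^{|B|}\,\bm{\Psi}(B)$ and $\bD$ independent of $n$. With the notation of \cref{SubsecNormalityCriterion}, a routine case analysis splitting according to how a pair $\beta'$ overlaps the fixed ones (as in \cref{SectMatchings}) gives $R_n=O(n^2)$ and, for each fixed $\ell$, $T_{\ell,n}=O(n)$; the ``diagonal'' variables $Y_{i,k}$ contribute only $O(1)$ to $T_{\ell,n}$.

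For the finite-dimensional laws, fix $(t_1^{(1)},t_2^{(1)}),\dots,(t_1^{(m)},t_2^{(m)})$ in $[0,1]^2$. Both sides are centered, so the expectation vanishes, and the convergence of $n^{-3}\Cov\big(X_n(t_1,t_2),X_n(u_1,u_2)\big)=\Cov\big(\widetilde{X_n}(t_1,t_2),\widetilde{X_n}(u_1,u_2)\big)$ towards $\sigma$ is our hypothesis. For $r\ge 3$, \cref{LemBoundJointCumulants} applied to the relevant index subsets yields
\[ \Big|\ka\big(\widetilde{X_n}(t_1^{(1)},t_2^{(1)}),\dots,\widetilde{X_n}(t_1^{(r)},t_2^{(r)})\big)\Big| \le n^{-3r/2}\,C_r\,r!\,R_n\,T_{1,n}\cdots T_{r-1,n}=O\big(n^{1-r/2}\big)\to 0. \]
By multilinearity the same bound holds for the cumulants of order $\ge 3$ of any linear combination of the $\widetilde{X_n}(t_1^{(h)},t_2^{(h)})$, whose variance converges; hence every such linear combination is asymptotically Gaussian, and since a Gaussian vector is determined by its moments this gives the convergence of the finite-dimensional laws to a centered Gaussian vector with covariance $\sigma$ (this step does not yet require the existence of $Z$).

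It remains to prove tightness in $C[0,1]^2$. Since $\widetilde{X_n}$ is piecewise affine on the grid of mesh $1/n$, it suffices (the two-dimensional analogue of \cref{AppOnlyOnLattice}) to control increments at grid points; and since $\widetilde{X_n}$ vanishes on the two axes, the two-dimensional moment criterion for tightness in $C[0,1]^2$ reduces the problem to bounding, for a rectangle $R=[t_1,s_1]\times[t_2,s_2]$ with vertices on the grid, the fourth moment of the rectangular increment $\Delta_R\widetilde{X_n}$. This increment is $n^{-3/2}$ times a sum of variables $\zeta^{(n)}(i,j,k,l)\,Y_{i,k}Y_{j,l}$ with $t_1 n<i\le s_1 n$ and $t_2 n<j\le s_2 n$; restricting the weighted dependency graph to that vertex set gives $T_{\ell,n}(R)=O(n)$ and $R_n(R)=O\big(n^2|R|+n\sqrt{|R|}\big)$, the second term coming from the diagonal variables $Y_{i,k}$ with $i$ ranging over an interval of length $\le n\sqrt{|R|}$. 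Feeding this into \cref{LemBorneCumulant} and using $|R|\ge n^{-2}$ yields $|\ka_2(\Delta_R\widetilde{X_n})|=O(|R|)$ and $|\ka_4(\Delta_R\widetilde{X_n})|=O(|R|^{3/2})$, hence
\[ \esper\big[(\Delta_R\widetilde{X_n})^4\big]=\ka_4(\Delta_R\widetilde{X_n})+3\,\ka_2(\Delta_R\widetilde{X_n})^2=O\big(|R|^{3/2}\big), \]
which verifies the criterion with exponent $1+\tfrac12$ and proves tightness; this produces simultaneously the continuous Gaussian process $Z$ and the convergence $\widetilde{X_n}\to Z$ in $C[0,1]^2$.

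The main obstacle is this last step: one must set up the correct two-dimensional tightness criterion (an analogue of the Kallenberg-type criterion used in the single-index case, controlling rectangular increments and the behaviour on the lower-dimensional faces) and then check that the weighted-dependency-graph estimates deliver a power of $|R|$ strictly larger than $1$ — delicate precisely because the diagonal contributions to $R_n(R)$ scale only like $n\sqrt{|R|}$ rather than $n^2|R|$ — together with the two-dimensional reduction to grid points. By contrast, the finite-dimensional part is a direct transcription of the single-index argument.
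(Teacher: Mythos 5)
The finite-dimensional part of your argument is essentially identical to the paper's: same weighted dependency graph from \cref{PropWDGInRandomPermForPairs} with $\bm{\Psi}$ replaced by $M^{|B|}\bm{\Psi}(B)$, same estimates $R_n=O(n^2)$, $T_{\ell,n}=O(n)$, same application of \cref{LemBoundJointCumulants} giving joint cumulants of order $r\ge 3$ of size $O(n^{1-r/2})$. That part is correct.

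The tightness step, however, has a genuine gap. The criterion the paper uses (Kallenberg's \cite[Corollary 16.9]{KallenbergBookProba} for $d=2$, together with the grid reduction of \cref{LemOnlyOnLattice2D}) controls the \emph{plain} increment $\widetilde{X_n}(s_1,s_2)-\widetilde{X_n}(t_1,t_2)$ and requires an upper bound $\la\,\delta^{2+b}$ with $b>0$, where $\delta=|s_1-t_1|+|s_2-t_2|$ --- the exponent must strictly exceed the dimension $2$. Your reduction to the rectangular increment $\Delta_R\widetilde{X_n}$ is not that criterion, and the passage from rectangular increments back to plain increments reintroduces the ``strip'' terms $\widetilde{X_n}(s_1,t_2)-\widetilde{X_n}(t_1,t_2)$ and $\widetilde{X_n}(t_1,s_2)-\widetilde{X_n}(t_1,t_2)$. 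For a strip the relevant index set has about $n^2\delta$ pairs $(i,j)$ (not $n^2\delta^2$), so $R_n(\mathrm{strip})=O(n^2\delta)$ and the fourth moment only yields $O(\delta/n)+O(\delta^2)=O(\delta^2)$: exponent exactly $2$, which fails the criterion. This is precisely why the paper does not stop at the fourth moment: it bounds all cumulants of the plain increment by $D_r\,\delta\,n^{1-r/2}$ and then expands the \emph{sixth} moment, $\esper(\Delta X^6)=\ka_6+15\ka_4\ka_2+10\ka_3^2+15\ka_2^3$, obtaining $O(\delta^3)=O(\delta^{2+1})$ after using $\delta\ge n^{-1}$. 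To repair your proof you should abandon the rectangular-increment formulation and redo the computation with the sixth moment of the plain increment, exactly as in the one-dimensional case but with $a=6$, $b=1$.
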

\begin{proof}
    The structure of the proof is the same as for simply-indexed permutation statistics.
    We first prove the convergence of finite-dimensional laws by controlling joint cumulants.
    Both sides are centered and we have assumed the convergence of covariances,
    so that we can focus on joint cumulants of order at least 3.

    Note that, if $nt_1$ and $nt_2$ are integers, we can rewrite $\widetilde{X_n}(t_1,t_2)$ as
    \[\widetilde{X_n}(t_1,t_2) = n^{-3/2} \sum_{i=1}^{nt_1} \sum_{j=1}^{nt_2} \sum_{k=1}^{n} \sum_{l=1}^{n}
    \zeta^{(n)}(i,j,k,l) Y_{i,k} Y_{j,l}.\]
    Recall from \cref{PropWDGInRandomPermForPairs} that the family
$\{Y_{i,k} Y_{j,l} , (i,k),\, (j,l) \in A_n\}$ admits $\WDep^2$ as a 
$(\bm{\Psi},\bD)$ weighted dependency graph.
Since $\zeta^{(n)}(i,j,k,l)$ is uniformly bounded by $M$,
the family \[\{\zeta^{(n)}(i,j,k,l) Y_{i,k} Y_{j,l} , (i,k),\, (j,l) \in A_n\}\]
has the same dependency graph, replacing $\bm{\Psi}$ by $\bm{\Psi}'(B):= M^{|B|} \bm{\Psi}(B)$.
For this dependency graph, we have $R_n= M n^2$.
A case analysis similar to the one above shows that $T_{r,n}=\O(n)$
(with a constant depending on $r$).
We sketch here briefly the argument.
Recall that we want to bound, for fixed $\a'_1,\cdots,\a'_r$, the sum
\begin{equation}
    \sum_{\beta' \in A'_n} W(\{\beta'\},\{\a'_1,\cdots,\a'_r \}) 
    \frac{\bm{\Psi}'_n\big(\{\a'_1,\cdots,\a'_\ell,\beta'\}\big) }{\bm{\Psi}'_n\big(\{\a'_1,\cdots,\a'_r\}\big) }.
    \label{EqTech4}
\end{equation}
\begin{itemize}
    \item If $\beta'$ in $A'_n$ does not share any element with $\a'_1,\cdots,\a'_r$,
        then the quotient of $\bm{\Psi}'$ is $M/n^2$ and the $W$ factor is equal to $1/n$.
        Since there are fewer than $|A'_n| \asymp n^4$ such terms,
        their total contribution is $\O(n)$.
    \item If $\beta'$ has an element, but no pair in common with one of the $\a'_i$,
        then the quotient of $\bm{\Psi}'$ is also $M/n^2$, while the $W$ factor is $1$.
        But there are $\O(n^3)$ such terms, so that the total contribution of such terms is also $\O(n)$.
    \item If $\beta'$ has exactly pair in common with one of the $\a'_i$,
        then the quotient of $\bm{\Psi}'$ is also $M/n$ and the $W$ factor is also $1$.
        There are $\O(n^2)$ such terms, so that the total contribution of such terms is also $\O(n)$.
    \item Finally if both pairs in $\beta'$ already appear in the $\a'_i$,
        then the quotient of $\bm{\Psi}'$ is $M$ and the $W$ factor is also $1$.
        But this implies that the pairs of $\beta'$ are chosen within a finite family,
        so that there is only a constant number of such terms and their total contribution is $\O(1)$.
\end{itemize}
Finally, as claimed above, for $r \ge 1$, there exists a constant $D_r$ such that
$T_{r,n} \le D_r \, n$.

Let $((t_1^1,t_2^1),\cdots,(t_1^r,t_2^r))$ be an $r$-uple of points in $[0,1]^2$ ($r \ge 3$).
From \cref{LemBoundJointCumulants} and the discussion above, we get 
\[
    \left| \ka_r\left( \widetilde{X_n(t_1^1,t_2^1)},\cdots,\widetilde{X_n(t_1^r,t_2^r)} \right) \right|
    \le n^{-3r/2} C_r r! \, R_n \, T_{1,n} \cdots T_{r-1,n} \le C_r r!\, D_1 \cdots D_{r-1} \, M \, n^{1-r/2}.
    \]
The right hand side tends to $0$ so that all joint cumulants 
of the family $(X_n(t_1,t_2))_{(t_1,t_2) \in [0,1]^2}$ of order at least $3$
tend to $0$.
This proves the convergence of the finite-dimensional laws towards Gaussian vectors.

We now prove the tightness of the random functions $(\widetilde{X_n})_{n \ge 1}$
in the space $C[0,1]^2$.
We again use the moment criterion \cite[Corollary 16.9]{KallenbergBookProba}, but this time for $d=2$.
Since $\widetilde{X_n}(0,0)$ is tight (it is identically equal to $0$, for all $n$),
we should prove that there exist
positive constants $a$, $b$ and $\la$,
\begin{equation}
    \esper \big[|\widetilde{X_n}(s_1,s_2)- \widetilde{X_n}(t_1,t_2)|^a \big] \le \la \, 
    (|s_1-t_1|+|s_2-t_2|)^{2+b}
    \label{EqTightnessCriterion2D}
\end{equation}
for all $(s_1,s_2),(t_1,t_2)$ in $[0,1]^2$ and $n \ge 1$.
As in dimension $1$, since $\widetilde{X_n}$ is 
affine on each square $[i/n;(i+1)/n] \times [j/n;(j+1)/n]$ ($0 \le i,j \le n-1$),
it is enough to prove \eqref{EqTightnessCriterion2D}
when $n s_1$, $n s_2$, $n t_1$ and $n t_2$ are integers;
see \cref{AppOnlyOnLattice}.

Let us first give bounds depending on $(s_1,s_2),(t_1,t_2)$ for cumulants of the difference
$\widetilde{X_n}(s_1,s_2)- \widetilde{X_n}(t_1,t_2)$.
If $t_1<s_1$ and $t_2 <s_2$, then
\[\widetilde{X_n}(s_1,s_2)- \widetilde{X_n}(t_1,t_2)
= n^{-3/2} \sum_{i,j} \sum_{k=1}^n \sum_{l=1}^n \zeta^{(n)}(i,j,k,l) Y_{i,k} Y_{j,l},\]
where the first sum runs over pairs $(i,j)$ such that $i \le n s_1$, $j \le s_2$ and
either $nt_1 < i \le n s_1$ or $nt_2 < j \le n s_2$.
There are fewer than $n^2 (s_1-t_1+s_2-t_2)$ such pairs $(i,j)$, so that, 
by the same argument as in the one-dimensional case (restricting the dependency graph), we have
\begin{equation}
    \big|\ka_r\big(\widetilde{X_n}(s_1,s_2)- \widetilde{X_n}(t_1,t_2) \big) \big|
\le D_r (|s_1-t_1|+|s_2-t_2|) \, n^{1-r/2},
\label{EqBoundCumulantDiff2D}
\end{equation}
for some constant $D_r$ that depends only on $r$.
The same bound obviously holds without the assumption {\em $t_1<s_1$ and $t_2 <s_2$}.

We now have to consider the moment of order $6$
of $\Delta X:=\widetilde{X_n}(s_1,s_2)- \widetilde{X_n}(t_1,t_2)$.
In terms of cumulants it writes as:
\[\esper(\Delta X^6)=\ka_6(\Delta X) + 15 \ka_4(\Delta X) \ka_2 (\Delta X) + 10 \ka_3(\Delta X)^2
+ 15 \ka_2(\Delta X)^3.\]
Set $\delta= |s_1-t_1|+|s_2-t_2|$. From \eqref{EqBoundCumulantDiff2D}, we have
\[\esper \big[\Delta X^6 \big]
\le D_6 n^{-2} \delta + (15 D_4 D_2 + 10 D_3^2) n^{-1} \delta^2 +15 D_2^3 \delta^3
\le D \delta^3,\]
for some constant $D$.
For the last inequality, note that since $n s_1$, $n s_2$, $n t_1$ and $n t_2$ are integers,
we have $n^{-1} \le \delta=|s_1-t_1| + |s_2-t_2|$ (we can assume that either $s_1\ne t_1$ or $s_2 \ne t_2$,
otherwise \eqref{EqTightnessCriterion2D} is trivial).
This ends the proof of \eqref{EqTightnessCriterion2D} (for $a=6$ and $b=1$)
and hence of the theorem.
\end{proof}
As an example we consider {\em positive alignments} in random permutations.
A positive alignment in a permutation $\pi$ is a pair $(i,j)$ such that $j<i \le \si(i) < \si(j)$.
This statistics mixes somehow the classical notions of inversions and exceedances:
it is studied together with many similar statistics in \cite{CorteelCrossingsAlignmentsPerm}.
Let us set $\zeta_0^{(n)}(i,j,k,l)=[j<i \le k<l]$ (i.e. 1 if $j<i \le k<l$ and $0$ otherwise)
and define the associated random function $X_n^\Pi$ in $C[0,1]^2$ as above.
In particular $X_n^\Pi(1,1)$ is the number of positive alignments in the uniform random permutation $\Pi$.

It is clear that $\zeta_0^{(n)}(i,j,k,l)$ and hence $\zeta_0^{(n)}(i,j,k,l)$ is uniformly bounded.
Besides, an easy adaptation of \cref{LemVarPoly} shows that $\Cov\big(X_n(t_1,t_2), X_n(u_1,u_2) \big)$
is a polynomial in $n, t_1, t_2, u_1, u_2$.
Moreover, from the same arguments as in the proof above to bound 
general joint cumulants,
we know that, for fixed $t_1, t_2, u_1 u_2$, it behaves as $\O(n^3)$.
Thus, for any $t_1, t_2, u_1 u_2$ in $[0,1]$, the rescaled covariance
$n^{-3} \Cov\big(X_n(t_1,t_2), X_n(u_1,u_2) \big)$ has indeed a limit.

Thus, our theorem applies and $\widetilde{X_n}$ converges in probability in $C[0,1]^2$
towards a zero-mean continuous Gaussian process in $[0,1]^2$.
It is possible to compute the covariances of the limiting process,
but it would be a lengthly computation.

\begin{remark}
    The extension of the above result to $k$-indexed permutation statistics
    for fixed $k$ is straightforward (with a convergence in distribution in $C[0,1]^k$).
    However, it becomes rather difficult to do any explicit computation.
\end{remark}

\section{Symmetric simple exclusion process}

\subsection{Background on the model}
The symmetric simple exclusion process with open boundaries ({\em SSEP} for short) is a
continuous-time Markov chain defined as follows:
we consider particles on a discrete line with $N$ sites.
More formally, the space state is $\{0,1\}^{N}$:
a state of the {\em SSEP} is encoded as a word in $0$ and $1$ of length $N$,
where the entries with value $1$ correspond to the positions of the occupied sites.
The system evolves as follows:
\begin{itemize}
    \item each particle has an exponential clock with rate $1$.
      When it rings the particle jumps to the right if 
      it is not in the right-most site and if the site at its right is empty.
      Otherwise, the jump is suppressed.
    \item Similarly, each particle has another exponential clock with rate $1$
      and attempts to jump to its left when it rings (with similar rules as above).
    \item if the left-most (resp. right-most) site is empty,
      an exponential clock with rate $\alpha$ (resp. $\delta$) is associated with it.
      When it rings, a particle is added to the left-most (resp. right-most) site.
    \item if the left-most (resp. right-most) site is full,
      an exponential clock with rate $\gamma$ (resp. $\beta$) is associated with it.
      When it rings, the particle in the left-most (resp. right-most) site is removed.
\end{itemize}
All the above mentioned exponential clocks are independent.
The conditions for particles to jump or be added in the extremities
ensure that no two particles are in the same site at the same moment
(which explains the terminology {\em simple exclusion} in the name of the model).
The transition are schematically represented on \cref{StateSSEP}.

\begin{figure}[t]
  \begin{center}
    \includegraphics{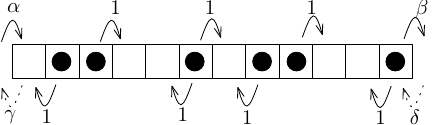}
  \end{center}
  \caption{A state of the SSEP with the possible transitions and the rates of the corresponding clocks.
  Dotted transtitions do not apply to the represented state.}
  \label{StateSSEP}
\end{figure}

As common in the field, we define $\rho_a=\tfrac{\a}{\a+\gamma}$ and $\rho_b=\tfrac{\beta}{\beta+\delta}$.
We are interested here in a random state $\tau$ in  $\{0,1\}^{N}$,
distributed according to the steady state of the SSEP,
that is the invariant measure of this Markov process.
The correlation functions of the particles, that are
the joint moments of the coordinates $(\tau_i)_{1 \le i \le N}$,
can be described using the so-called {\em matrix Ansatz}
of Derrida, Ewans, Hakim and Pasquier \cite{ASEPMatrixAnsatz}.
Based on this matrix Ansatz, Derrida, Lebowitz and Speer have found
an inductive formula to compute joint cumulants of the family $(\tau_i)_{1 \ge i \ge N}$
(called {\em truncated correlation functions} in this context).

To state it, we first need to introduce the {\em discrete difference operator} $\Delta$.
If $f$ in a function on positive integers, we set $\Delta f(N) = f(N)-f(N-1)$.
Note that $\Delta f$ is not defined for $N=1$,
but this is irrelevant as we shall make $N$ tends to infinity, while we apply
$\Delta$ a fixed number of times.

Fix a positive integer $r$ and a some integers $1 \le i_1 < \cdots < i_{r+1} \le N$.
As the formula involves SSEP with different number $N$ of sites,
we make it explicit in the notation
and denote $\ka^N_{r}(\tau_{i_1},\ldots,\tau_{i_r})$ the joint cumulants of 
$\tau_{i_1}$,\ldots, $\tau_{i_r}$.
Derrida, Lebowitz and Speer \cite[Eq (A.11)]{DerridaLongRangeCorrelation}  have proved
that
\begin{equation}
  \ka^N_{r+1}(\tau_{i_1},\ldots,\tau_{i_r},\tau_{i_{r+1}})
= (\esper(\tau_{i_{r+1}})-\rho_b) \sum_{\pi \in \PPP([r])}
\prod_{B \in \pi} \Delta \ka^N_{|B|}(\tau_{i_t}; t \in B).
\label{eq:InductionCumulantSSEP}
\end{equation}
Expectations can be easily computed (see, {\em e.g.} \cite[Eq. (42)]{derrida2007SurveyASEP}):
\begin{equation}
  \esper(\tau_i)= \frac{\rho_a\big(N+1/(\beta+\delta)-i\big) + \rho_b\big(i-1+1/(\a+\gamma)\big)}
{N+1/(\a+\gamma) +1/(\beta+\delta)-1 }.
\label{eq:EsperSSEP}
\end{equation}
\cref{eq:InductionCumulantSSEP,eq:EsperSSEP} determine the joint cumulants 
of distinct variables in the family $(\tau_i)_{1 \le i \le N}$.
We will use this to find a weighted dependency graph for this family in the next section.

\subsection{A weighted dependency graph in SSEP}
We start by a lemma, bounding {\em repetition-free} joint cumulants of the family $(\tau_i)_{1 \le i \le N}$.
\begin{lemma}
  Let $r \ge 1$. Then there exists a constant $D_r$ such that
  for each $N \ge r$ and $(i_1,\cdots,i_r)$ with $1 \le i_1 < \cdots < i_r \le N$,
  we have
  \[\big| \ka^N_r(\tau_{i_1},\ldots,\tau_{i_r}) \big|
  \le D_r N^{-r+1}. \vspace{-8mm}\]
  \label{LemBorneCumulantsASEP}
\end{lemma}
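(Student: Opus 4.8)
The plan is to prove the bound by induction on $r$, using the inductive formula \eqref{eq:InductionCumulantSSEP} of Derrida, Lebowitz and Speer together with the explicit expression \eqref{eq:EsperSSEP} for the one-point function. The base case $r=1$ is immediate since $\tau_{i_1}$ is a Bernoulli variable, so $|\ka_1^N(\tau_{i_1})| = |\esper(\tau_{i_1})| \le 1 = D_1 N^0$. The key structural observation, already visible in \eqref{eq:EsperSSEP}, is that $\esper(\tau_i)$ is an affine function of $i$ up to a correction of relative size $O(1/N)$; more precisely, writing $\esper(\tau_i) = \rho_b + (\rho_a-\rho_b)\frac{N+1/(\beta+\delta)-i}{N+1/(\a+\gamma)+1/(\beta+\delta)-1}$, one sees that $\esper(\tau_{i_{r+1}}) - \rho_b = O(1)$ uniformly (it is bounded since it is a convex-type combination of $\rho_a-\rho_b$ and $0$), which controls the prefactor in \eqref{eq:InductionCumulantSSEP}.

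The heart of the argument is to understand the effect of the discrete difference operator $\Delta$ applied in the site variable. I would first prove, as an auxiliary claim, that for fixed $r$ the function $N \mapsto \ka_r^N(\tau_{i_1},\dots,\tau_{i_r})$ is (for $N$ large, with $i_1 < \dots < i_r$ fixed relative to $N$, or more precisely treating all $i_j$ and $N$ as the relevant variables) not merely $O(N^{-r+1})$ but that each application of $\Delta$ in the largest index gains an extra factor of $N^{-1}$: that is, $\Delta \ka_{|B|}^N(\tau_{i_t}; t\in B) = O(N^{-|B|})$. This is the natural strengthening that makes the induction close. Granting it, in \eqref{eq:InductionCumulantSSEP} a set partition $\pi \in \PPP([r])$ with blocks $B_1,\dots,B_s$ contributes a term of size $O(1) \cdot \prod_{j=1}^s O(N^{-|B_j|}) = O(N^{-\sum_j |B_j|}) = O(N^{-r})$, and since the number of set partitions of $[r]$ is a constant (the Bell number $B_r$), we get $|\ka_{r+1}^N| = O(N^{-r})$, which is exactly the claimed bound at level $r+1$.

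The main obstacle is justifying the claim that $\Delta$ in the site variable gains a factor $N^{-1}$. This requires a more careful bookkeeping than the bare cumulant bound: one needs to track the dependence of $\ka_r^N(\tau_{i_1},\dots,\tau_{i_r})$ on both $N$ and the $i_j$'s, showing it has the form (rational function or asymptotic expansion) where the leading term, of order $N^{-r+1}$, depends on the $i_j$ only through the ratios $i_j/N$ in a smooth (e.g.\ piecewise-polynomial) way, so that a finite difference $\Delta$ in $N$ — or equivalently a shift of the largest index — produces a genuine derivative-like gain of $N^{-1}$. I would handle this by strengthening the induction hypothesis to assert a joint bound of the form $|\ka_r^N(\tau_{i_1},\dots,\tau_{i_r})| \le D_r N^{-r+1}$ \emph{and} $|\Delta\, \ka_r^N(\tau_{i_1},\dots,\tau_{i_r})| \le D_r' N^{-r}$ (difference in $N$), then checking that both survive passage through \eqref{eq:InductionCumulantSSEP}: the prefactor $\esper(\tau_{i_{r+1}})-\rho_b$ is itself $O(1)$ with $\Delta$-difference $O(1/N)$ by \eqref{eq:EsperSSEP}, and a product/sum of functions each controlled in value and in $\Delta$-difference is again controlled in both (discrete Leibniz rule), so the bound propagates. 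Once \cref{LemBorneCumulantsASEP} is in place, it will serve — presumably together with \cref{PropAlternate} and the product tool \cref{PropProducts} — to exhibit the homogeneous weighted dependency graph (all weights $1/N$) announced in the introduction for SSEP.
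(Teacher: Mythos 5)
Your overall strategy is the same as the paper's: induct on $r$ through the Derrida--Lebowitz--Speer recursion \eqref{eq:InductionCumulantSSEP}, with the whole difficulty concentrated in showing that each application of $\Delta$ gains a factor $N^{-1}$. However, the mechanism you propose for that key step has a genuine gap: the two-term strengthened induction hypothesis ($|\ka_r^N|\le D_r N^{-r+1}$ together with $|\Delta\ka_r^N|\le D_r' N^{-r}$) does not close. The recursion expresses $\ka_{r+1}^N$ as a prefactor times $\sum_\pi\prod_{B\in\pi}\Delta\ka_{|B|}^N$, so the factors entering the product are already first differences. To propagate the second half of your hypothesis you must bound $\Delta\ka_{r+1}^N$, and the discrete Leibniz rule then requires bounds on $\Delta\bigl(\Delta\ka_{|B|}^N\bigr)=\Delta^2\ka_{|B|}^N$ for the lower-order cumulants --- a quantity your hypothesis does not control. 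Iterating, one needs \emph{all} iterated differences: the correct closable statement is $|\Delta^k\ka_r^N|\le D_{r,k}\,N^{-r+1-k}$ for every $k\ge 0$, proved simultaneously. (There is also a small misreading to fix: the operator $\Delta$ in \eqref{eq:InductionCumulantSSEP} is a difference in the system size $N$, not in the site variable or ``the largest index''; your parenthetical ``difference in $N$'' is the right reading.)

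The paper avoids this bookkeeping by proving a stronger \emph{algebraic} statement: $\ka_r^N(\tau_{i_1},\dots,\tau_{i_r})$ is a polynomial in $i_1,\dots,i_r$ with coefficients that are rational functions of $N$, of total degree at most $-r+1$ in $N,i_1,\dots,i_r$. For such ``nice functions'' every application of $\Delta$ automatically lowers the total degree by one, products add degrees, and the prefactor $\esper(\tau_{i_{r+1}})-\rho_b$ has degree $0$ by \eqref{eq:EsperSSEP}; the induction then closes in one pass and the bound is uniform in the $i_j$ for free. Your analytic approach can be repaired by carrying the full hierarchy of iterated-difference bounds (checking uniformity in the $i_j$ at each stage), but as written the induction hypothesis is too weak to reproduce itself at the next level.
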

\begin{proof}
  We will in fact prove a stronger statement: 
  \begin{quote}
  The quantity $\ka^N_r(\tau_{i_1},\ldots,\tau_{i_r})$ is a polynomial in $i_1,\cdots,i_r$
  with coefficients that are rational functions in $N$.
    Moreover, its total degree in $N,i_1,\cdots,i_r$ is at most $-r+1$.
  \end{quote}
  To simplify the discussion below, we call such a function a {\em nice function}
  of degree at most $-r+1$.
  It is clear that, if $f(N;i_1,\cdots,i_r)$ is a nice function of degree at most $d$, then
  \[\max_{i_1,\ldots,i_r \in [N]} f(N;i_1,\cdots,i_r) = \mathcal{O}(N^d).\]
  Therefore proving the above claim proves the lemma.
  
  We prove this statement by induction on $r$.
  For $r = 1$, it follows immediately from the explicit formula \eqref{eq:EsperSSEP}.
  Take $r \ge 1$ and suppose that our statement holds for any $r' \le r$.
  We consider the quantity $\ka^N_{r+1}(\tau_{i_1},\ldots,\tau_{i_r},\tau_{i_{r+1}})$
  and its expression given in \cref{eq:InductionCumulantSSEP}.
  Fix a set partition $\pi$ in $\PPP([r])$.
  \begin{itemize}
    \item 
  By induction hypothesis, for each block $B$ of $\pi$,
   $\ka^N_{|B|}(\tau_{i_t}; t \in B)$ is a nice function of degree at most $-|B|+1$.
 \item 
   Applying the operator $\Delta$ turns it into a nice function of degree at most $-|B|$.
 \item 
   Multiplying these nice functions for different blocks $B$ of $\pi$ 
   gives a nice function of degree at most $-\sum_{B \in \pi} |B|=-r$.
  \end{itemize}
   The sum of these nice functions (over set partitions $\pi$ in $\PPP([r])$)
   is also a nice function of degree at most $-r$.
   We then multiply by $\esper(\tau_{i_{r+1}})-\rho_b$ which, as can be seen on \cref{eq:EsperSSEP}, 
   is a nice function of degree $0$ and we still have a nice function of degree at most $-r$.
   Therefore $\ka^N_{r+1}(\tau_{i_1},\ldots,\tau_{i_r},\tau_{i_{r+1}})$ is a nice function
   of degree  at most $-r$, which ends the proof of the lemma.
\end{proof}

We are ready to present a weighted dependency graph associated with SSEP.
\begin{proposition}
  Let $N \ge 1$ and $\tau=(\tau_1,\cdots,\tau_N)$ be a random $\{0,1\}$ vector
  distributed according to the steady state of SSEP on $N$ sites.
  Let $A_N=[N]$ and consider the family of variables $\{\tau_i, i \in A_N\}$.
  We consider the complete graph $\WDep$ with weight $1/N$ on each edge
  and the function $\Psi_N$ on multiset of elements of $A_N$ that is identically equal to $1$.
  Then $\WDep$ is a $(\Psi_N,\bC)$ weighted dependency graph for the family 
  $\{\tau_i, i \in N\}$,
  for some sequence $\bC=(C_r)_{r \ge 1}$ that does not depend on $N$.
  \label{PropWDGInSSEP}
\end{proposition}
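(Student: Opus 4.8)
The proof combines \cref{LemBorneCumulantsASEP} with the reduction provided by \cref{PropAlternate}. The plan is as follows. Since the function $\Psi_N \equiv 1$ is trivially super-multiplicative, \cref{PropAlternate} applies, so it suffices to establish the variant bound \cref{EqFundamentalVariant}: for every multiset $B$ of elements of $A_N$,
\[
\left| \ka\left( \prod_{\a \in B_1} \tau_\a, \ldots, \prod_{\a \in B_\ell} \tau_\a \right) \right| \le D_{|B|} \, \MWST{\WDep[B]},
\]
where $B_1, \ldots, B_\ell$ are the vertex sets of the connected components of $\WDep_1[B]$, and for some constants $D_r$ not depending on $N$. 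For $N \ge 2$ (the case $N = 1$ being trivial, as $A_N$ is then a singleton) every edge of $\WDep$ joining two distinct vertices has weight $1/N < 1$, so the connected components of $\WDep_1[B]$ are exactly the groups of copies of a single element. Thus if $\beta_1, \ldots, \beta_k$ are the distinct elements of $B$ (so $k = \#(B)$ and $\ell = k$), the component $B_i$ consists only of copies of $\beta_i$, and since each $\tau_i$ is $\{0,1\}$-valued one has $\prod_{\a \in B_i} \tau_\a = \tau_{\beta_i}$; hence the quantity to bound is simply $|\ka(\tau_{\beta_1}, \ldots, \tau_{\beta_k})|$.

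Next I would compute $\MWST{\WDep[B]}$. The induced weighted graph $\WDep[B]$ carries weight $1$ on edges between copies of the same $\beta_i$ and weight $1/N$ on edges between vertices corresponding to distinct $\beta_i$'s. A spanning tree of maximal weight is obtained by taking the weight-$1$ edges needed to connect the copies within each of the $k$ groups (contributing a factor $1$) and then $k-1$ weight-$1/N$ edges to join the $k$ groups together; therefore $\MWST{\WDep[B]} = N^{-k+1}$. Since $\beta_1, \ldots, \beta_k$ are distinct elements of $[N]$, we automatically have $k \le N$, so \cref{LemBorneCumulantsASEP} applies and yields $|\ka(\tau_{\beta_1}, \ldots, \tau_{\beta_k})| \le D_k N^{-k+1} = D_k\, \MWST{\WDep[B]}$ (for $k = 1$ this is just $|\esper(\tau_{\beta_1})| \le 1$). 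Taking $D_r := \max_{1 \le k \le r} D_k$ then gives \cref{EqFundamentalVariant}, and \cref{PropAlternate} furnishes a sequence $\bC = (C_r)_{r \ge 1}$, depending only on $\bD$ and hence not on $N$, for which $\WDep$ is a $(\Psi_N, \bC)$ weighted dependency graph for $\{\tau_i, i \in A_N\}$.

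There is no serious obstacle here: all the analytic content has already been packaged into \cref{LemBorneCumulantsASEP}, which itself rests on the Derrida--Lebowitz--Speer induction \cref{eq:InductionCumulantSSEP}. The only remaining work is bookkeeping — passing from repetition-free cumulants to arbitrary multisets via the Bernoulli identity $\tau_i^m = \tau_i$, verifying that the number of distinct indices is automatically at most $N$, and carrying out the elementary computation of the maximum-weight spanning tree of a complete graph with equal edge weights.
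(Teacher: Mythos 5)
Your proof is correct and follows exactly the same route as the paper's: reduce to repetition-free cumulants of distinct variables via \cref{PropAlternate} (the paper invokes \cref{RmkDiscreteBernoulli} for precisely the discrete-$\WDep_1$-plus-Bernoulli situation you spell out), compute $\MWST{\WDep[B]}=N^{-\#(B)+1}$, and conclude from \cref{LemBorneCumulantsASEP}. You simply make explicit the bookkeeping that the paper compresses into a two-sentence argument.
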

\begin{proof}
  We note the three following fact: (1) $\Psi_n$ is trivially super-multiplicative, 
  (2) the $\tau_i$ are Bernoulli variables and (3) $\WDep$ has no edges of weight $1$.
  From \cref{RmkDiscreteBernoulli} (which uses \cref{PropAlternate}),
  it is enough to prove bounds on cumulants of sets of distinct variables 
  (instead of cumulants of all multisets of variables).
  Namely, we should prove that,
  for any $r \ge 1$ and any {\em distinct} $i_1,\cdots,i_r$ in $[N]$, one has
  \[\big| \ka^N_r(\tau_{i_1},\ldots,\tau_{i_r}) \big|
  \le D_r N^{-r+1},\]
  for a constant $D_r$ that does not depend on $N$.
  But this is exactly \cref{LemBorneCumulantsASEP}.
\end{proof}

\begin{remark}
  In \cite{DerridaLongRangeCorrelation}, Derrida, Lebowitz and Speer have proved that,
  for any $x_1,\cdots,x_r$ in $[0,1]$ the quantity
  $N^{r-1} \ka^N_r\big(\tau_{\lfloor N\, x_1\rfloor},\ldots,\tau_{\lfloor N\, x_r \rfloor}\big)$
  has a limit when $N$ tends to infinity.
  This of course implies that the joint cumulant is $\O(N^{-r+1})$.
  However the constant in the $\O$ symbol could a priori depend on $x_1,\cdots,x_r$,
  while we need a bound which is uniform in $i_1,\cdots, i_r$.
  This explains why we need \cref{LemBorneCumulantsASEP}
  and we can not use directly the result of Derrida, Lebowitz and Speer.
  Nevertheless, the key identity in the proof is the induction formula \eqref{eq:InductionCumulantSSEP},
  due to these authors.
   \end{remark}

\subsection{A functional central limit for the number of particles}
\label{SubsecSSEPParticles}
Let $N \ge 1$ and $t$ in $[0,1]$.
We consider a random state $\tau$ in $\{0,1\}^N$,
distributed according to the steady state of the SSEP on $N$ sites.
If $Nt$ is an integer, we define $X_N(t)$ as the number of particles
in the first $Nt$ cells of $\tau$. Formally, this means $X_N(t)= \sum_{i=1}^{Nt} \tau_i$.
We then extend $X_N$ to a continuous function on $[0,1]$,
by requiring that it is affine on each segment $[i/N;(i+1)/N]$.

This function measures the repartition of the particles in $\tau$.
Informally, it is the integral of the density of particles, 
often considered in the physics literature; see, {\em e.g.}, \cite[Section 3]{derrida2007SurveyASEP}.

Since there are explicit formulas for the expectations and covariances of the $\tau_i$
\cite[Eq. (2.3) and (2.4)]{DerridaLongRangeCorrelation}, the expectations and covariances
of $(X_N(t))_{t \in [0,1]}$ are easy to evaluate asymptotically:
\begin{align}
  \lim_{N \to \infty} N^{-1} \esper(X_N(t)) &= \rho_a (1-t) + \rho_b \, t; \\
  \label{eq:Cov_SEP} \lim_{N \to \infty} N^{-1} \Cov(X_N(t),X_N(u)) &= \int_0^{t \wedge u} \big(\rho_a(1-x)+\rho_b \, x \big)
  \, \big(1-\rho_a(1-x)-\rho_b \, x \big) dx \\
 \nonumber & \qquad \qquad - \int_0^t \int_0^u x\wedge y \, (1- x \vee y) \, (\rho_a-\rho_b)^2 dx dy.
\end{align}
In the last formula, $x\wedge y:=\min(x,y)$ and $x \vee y:=\max(x,y)$.
We denote $\si(u,v)$ the right-hand side of \cref{eq:Cov_SEP}.
\begin{theorem}
  We use the notation above. 
  There exists a zero-mean continuous Gaussian process $Z$ on $[0,1]$ 
  with covariance function given by
  \[\Cov(Z(t),Z(u))= \si(u,t) \]
  and, in distribution in $C[0,1]$, we have, when $N$ tends to infinity,
  \[\widetilde{X_N}(t):=\frac{X_N(t) - \esper X_N(t)}{\sqrt{N}} \to Z.\]
  \label{ThmFCLTParticles}
\end{theorem}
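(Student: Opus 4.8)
The plan is to follow the two-step scheme used for \cref{ThmFunctionalCLTSimplePerm}: first prove the convergence of the finite-dimensional distributions of $\widetilde{X_N}$ towards centered Gaussian vectors with covariances $\sigma$, and then prove tightness of $(\widetilde{X_N})_{N\ge 1}$ in $C[0,1]$. Together these yield both the existence of the continuous Gaussian process $Z$ and the convergence $\widetilde{X_N}\to Z$ in distribution in $C[0,1]$; the only genuinely model-specific input is the uniform cumulant bound of \cref{LemBorneCumulantsASEP}, which is exactly what makes \cref{PropWDGInSSEP} available, and everything else will be a mechanical application of the general tools of \cref{SectWDG}.

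For the finite-dimensional convergence, both sides are centered and the convergence of the covariances is the already recorded limit \eqref{eq:Cov_SEP}, so it will suffice to show that all joint cumulants of order $r\ge 3$ of $\big(\widetilde{X_N}(t_1),\dots,\widetilde{X_N}(t_r)\big)$ tend to $0$; since a multivariate Gaussian vector is determined by its moments, this gives convergence in distribution. By \cref{PropWDGInSSEP} the family $\{\tau_i,\ i\in[N]\}$ admits as a $(\Psi_N,\bC)$ weighted dependency graph the complete graph $\WDep$ on $[N]$ with weight $1/N$ on every edge and $\Psi_N\equiv 1$; the same graph is a weighted dependency graph for the rescaled family $\{c_i\tau_i\}$ whenever $|c_i|\le 1$, in particular for the coefficients appearing in the affine interpolation defining $\widetilde{X_N}(t)$. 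Because $\Psi_N$ is constant, \cref{rmk:RAndTPsiOne} gives $R_N=N$ and $T_{\ell,N}\le \ell\,\Delta_N$ with $\Delta_N\le (N-1)/N\le 1$, hence $T_{\ell,N}=\mathcal{O}(1)$ for every fixed $\ell$. Applying \cref{LemBoundJointCumulants} to the appropriate subfamilies then yields, for $r\ge 3$,
\[
\Big|\ka_r\big(\widetilde{X_N}(t_1),\dots,\widetilde{X_N}(t_r)\big)\Big|
\le N^{-r/2}\, C_r\, r!\, R_N\, T_{1,N}\cdots T_{r-1,N} = \mathcal{O}\big(N^{1-r/2}\big)\xrightarrow[N\to\infty]{}0,
\]
which settles the finite-dimensional convergence.

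For tightness I would use the moment criterion \cite[Corollary 16.9 with $d=1$]{KallenbergBookProba}: since $\widetilde{X_N}(0)\equiv 0$, it is enough to find constants $a,b,\la>0$ with $\esper\big[|\widetilde{X_N}(s)-\widetilde{X_N}(t)|^a\big]\le \la\,|s-t|^{1+b}$ for all $s,t\in[0,1]$ and all $N$, and since $\widetilde{X_N}$ is affine on each interval $[j/N,(j+1)/N]$ it suffices (using $a\ge 1+b$) to check this when $Ns$ and $Nt$ are integers; see \cref{AppOnlyOnLattice}. Fix such $s>t$ and set $\Delta X=\widetilde{X_N}(s)-\widetilde{X_N}(t)$. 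Restricting $\WDep$ to $\{\tau_i:\ Nt<i\le Ns\}$ gives parameters $R_N(s,t)=N(s-t)$ and $T_{\ell,N}(s,t)=\mathcal{O}(1)$ uniformly in $s,t$, so \cref{LemBorneCumulant} yields $|\ka_r(\Delta X)|\le E_r\,N^{1-r/2}(s-t)$ for constants $E_r$; in particular $|\ka_2(\Delta X)|\le E_2(s-t)$ and $|\ka_4(\Delta X)|\le E_4\,N^{-1}(s-t)$. As $\Delta X$ is centered, $\esper[(\Delta X)^4]=\ka_4(\Delta X)+3\,\ka_2(\Delta X)^2$, and since $Ns,Nt$ are distinct integers we have $s-t\ge N^{-1}$, whence
\[
\esper\big[(\Delta X)^4\big]\le E_4\,N^{-1}(s-t)+3E_2^2(s-t)^2\le (E_4+3E_2^2)\,(s-t)^2 .
\]
This gives the criterion with $a=4$, $b=1$, $\la=E_4+3E_2^2$, proving tightness and completing the proof. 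The step I would flag as the real content is not any of these verifications, which are routine once the weighted dependency graph is in hand, but rather \cref{LemBorneCumulantsASEP} itself: obtaining cumulant bounds that are \emph{uniform} in the positions $i_1,\dots,i_r$ (rather than pointwise in the macroscopic variables $x_j=i_j/N$), which is what the induction on $r$ via \eqref{eq:InductionCumulantSSEP} provides.
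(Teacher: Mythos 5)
Your proposal is correct and follows essentially the same route as the paper: finite-dimensional convergence via the covariance limit \eqref{eq:Cov_SEP} together with \cref{PropWDGInSSEP}, \cref{rmk:RAndTPsiOne} and \cref{LemBoundJointCumulants} to kill all cumulants of order $r\ge 3$ at rate $N^{1-r/2}$, then tightness via the Kallenberg moment criterion with $a=4$, $b=1$ using the restricted dependency graph and $s-t\ge N^{-1}$. The paper simply states that the tightness step is ``virtually identical'' to the proof of \cref{ThmFunctionalCLTSimplePerm}, whereas you spell it out; your closing remark that the real content lies in the uniformity of \cref{LemBorneCumulantsASEP} matches the paper's own emphasis.
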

\begin{proof}
  As usual, we start by proving the convergence of the finite-dimensional laws.
  To do that, we prove the convergence of joint cumulants.
  Expectations clearly converge as both sides are centered.
  Covariances also converge, by definition of $\si(u,t)$.

  Let us consider now higher order cumulants.
  We recall that the family
  $\{\tau_i, i \in N\}$ admits a weighted dependency graph $\WDep$;
  see \cref{PropWDGInSSEP}. Call $R_N$ and $Q_N$ the associated parameters,
  as in \cref{SubsecNormalityCriterion}.
  From \cref{rmk:RAndTPsiOne}, since $\Psi_N$ is the constant function equal to $1$,
  $R_N$ is simply the number of vertices of $\WDep$, which is $N$.
  Moreover, $T_\ell \le \ell \Delta$, where $\Delta-1$ is the maximal weighted degree in the graph,
  which is here smaller than $1$ (i.e. $\Delta <2$).
  From \cref{LemBoundJointCumulants}, we get, that for any $r \ge 3$ and $t_1,\cdots,t_r$ in $[0,1]$
  (such that $Nt_1,\cdots,Nt_r$ are integers),
  \[\bigg|\ka_r\big(\widetilde{X_N}(t_1),\cdots,\widetilde{X_N}(t_r) \big)\bigg|
  = N^{-r/2} \bigg| \ka_r \big( \textstyle \sum_{i=1}^{Nt_1} \tau_i, \cdots, 
  \textstyle \sum_{i=1}^{Nt_r} \tau_i \big) \bigg|
  \le C_r \, r! \, 2^{r-1} \, (r-1)! \, N^{1-r/2}. \]
  In particular, all joint cumulants of order $3$ or more tend to $0$,
  which ends the proof of multidimensional laws toward Gaussian vectors.

  The proof of tightness is virtually identical to that in the proof of \cref{ThmFunctionalCLTSimplePerm}.
\end{proof}
\begin{remark}
  Thanks to the stability of weighted dependency graph by product
  (the function $\Psi$ here, identically equally to $1$, is super-multiplicative),
  it is possible to obtain functional central limits for more complicated quantities
  that involve products of $\tau_i$.
  For example if we are interested in the number and repartition of 
  {\em particles that can jump to their right},
  we should define 
$X'_N(t)= \sum_{i=1}^{Nt} \tau_i (1-\tau_{i+1})$.
Its joint cumulants are easily bounded, using the weighted dependency graph $\WDep^2$
for the set of products $\{\tau_i \tau_j, (i,j) \in [N]^2\}$.
It suffices then to compute the asymptotics of the covariances $(X'_N(t))_{t \in [0,1]}$,
which should be an elementary but cumbersome computation starting
from the explicit formulas that exist for (truncated) correlation functions
\cite{DerridaLongRangeCorrelation}.
\end{remark}

\begin{remark}
  In the recent years, combinatorial models have been given
  to describe the steady state $\tau$ of SSEP (and more generally of the {\em asymetric}
  simple exclusion process); see \cite{CorteelWilliamsASEPGeneral} and references therein.
  In the particular case where $\alpha=\beta=1$ and $\gamma=\delta=0$,
  this relates particles in $\tau$ to exceedances in permutations; 
  see \cite[Section 5.2]{FerayRandomPermutationsCumulants} for details.
  In this sense, the example at the end of \cref{SubsecSingleIndexPermStat}
  can be seen as a particular case of \cref{ThmFCLTParticles}.
\end{remark}

\section{Markov chains}
\label{SectMarkov}
%
%
We consider here an aperiodic irreducible
Markov chain $(M_k)_{k \ge 0}$ on a finite state space $S$.
We denote by $P$ the transition matrix, namely $P(s,t)$ is the probability
that $M_{k+1}=t$ if $M_k=s$ (for any $k \ge 0$).
Let $\pi_0$ be the initial distribution, that is the law of $M_0$.
We also denote $\pi$ the stationary distribution
(seen as a row vector), characterized by $\pi \, P= \pi$.

For $s \in S$ and $i \ge 0$, define $Y^s_i=1$ if $M_i=s$ and $0$ otherwise.
The joint moment of these variables have simple matrix expressions:
if $E_{s,s}$ denotes the matrix with entries $0$ except a $1$ at coordinates $(s,s)$,
 and $\One$ the column vector with all entries equal to $1$,
then we have
\begin{equation}
  \esper[Y^{s_1}_{i_1} \cdots Y^{s_r}_{i_r}]=
\pi_0 P^{i_1} E_{s_1,s_1} P^{i_2-i_1} E_{s_2,s_2} 
\cdots E_{s_{r-1},s_{r-1}} P^{i_r-i_{r-1}} E_{s_r,s_r} \One. 
\label{Eq:Joint_Moments_Markov}
\end{equation}

From now on, we shall suppose that the initial distribution $\pi_0$ 
is equal to the stationary distribution $\pi$.
We will prove in \cref{Subsec:WDG_Markov} that there is a natural weighted dependency graph
structure on the $(Y^s_i)_{i \ge 1; s \in S}$.
The weight of the edge joining $Y^s_i$ and $Y^t_j$ is $\la_2^{j-i}$,
where $\la_2 \in [0,1)$ is the second biggest modulus of an eigenvalue of the transition matrix $P$.
This encodes the fact that far apart elements of the Markov chains are almost-independent.
In \cref{Subsect:TCL_Markov},
this weighted dependency graph structure is used to prove a central limit
theorem for the number of occurrences of a given subword $u$ in $w_n=(M_0,\cdots,M_n)$,
as announced in the introduction.

\subsection{Bounds for boolean and classical cumulants}
The goal of this section is to bound the joint cumulants
of the variables $(Y^s_i)_{i \ge 1; s \in S}$.
Such bound of cumulants can be found in the monograph of
Saulis and Statulevičius \cite[Chapter 4]{LivreOrange:Cumulants};
nevertheless, to keep this section self-contained,
we present a proof here for the simple case of finite-state Markov chain.

Instead of working directly with classical (joint) cumulants,
we first give bounds for {\em boolean cumulants}.
Corresponding bounds for classical cumulants will then follow easily,
thanks to a formula linking these different types of cumulants
recently established by Arizmendi, Hasebe, Lehner and Vargas in \cite{ArizmendiHasebeLehnerVargas2014}
(see also \cite[Lemma 1.1]{LivreOrange:Cumulants};
in {\em loc. cit.}, boolean cumulants are called centered moments).

Let $Z_1,\cdots,Z_r$ be random variables with finite moments defined on the same probability space.
By definition, their boolean (joint) cumulant is
\begin{equation}
  B_r(Z_1,\cdots,Z_r) = \sum_{l=0}^{r-1} (-1)^{l} \sum_{1\le d_1 < \ldots <d_l \le r-1} 
\esper(Z_1 \cdots Z_{d_1}) \, \esper(Z_{d_1+1} \cdots Z_{d_2}) \, \cdots \, \esper(Z_{d_l+1} \cdots Z_r).
\label{Eq:def_boolean_cumulants}
\end{equation}
While not at first sight, this definition is quite similar to the definition of classical (joint) cumulants,
replacing the lattice of all set partitions by the lattice of interval set partitions;
see \cite[Section 2]{ArizmendiHasebeLehnerVargas2014} for details.
Note however that, unlike classical cumulants, boolean cumulants are not symmetric functionals.

%
We recall that $(M_k)_{k \ge 0}$ is an aperiodic irreducible Markov chain with transition matrix $P$,
such that $M_0$ is distributed according to the stationary distribution $\pi$ of the chain.
Recall also that $Y_i^s$ is the indicator function of the event $M_i=s$.
Finally, $\la_2$ is the biggest modulus of an eigenvalue of $P$, except $1$.
\begin{lemma}
  Let $r>0$. With the above notation, there exists a constant $C_{P,r}$ depending on $P$ and $r$
  with the following property.
  For any integers $i_1 < i_2 <\cdots < i_r$ and states $s_1,\cdots,s_r$, we have
  \[ |B_r(Y^{s_1}_{i_1},\cdots,Y^{s_r}_{i_r}) | \le C_{P,r} \la_2^{i_r-i_1}.\]
  \label{Lem:Bound_Boolean}
\end{lemma}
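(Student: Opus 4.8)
The plan is to derive an \emph{exact} matrix formula for the boolean cumulant $B_r(Y^{s_1}_{i_1},\dots,Y^{s_r}_{i_r})$ and then read off the bound. First I would use stationarity in \eqref{Eq:Joint_Moments_Markov}: since $\pi_0=\pi$ and $\pi P=\pi$, the prefix $\pi_0 P^{i_1}$ collapses to $\pi$, so writing $g_k:=i_{k+1}-i_k\ge 1$ for the gaps (note $\sum_{k=1}^{r-1}g_k=i_r-i_1$),
\[
\esper\big[Y^{s_1}_{i_1}\cdots Y^{s_r}_{i_r}\big]=\pi\,E_{s_1,s_1}\,P^{g_1}\,E_{s_2,s_2}\,P^{g_2}\cdots P^{g_{r-1}}\,E_{s_r,s_r}\,\One .
\]

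Next, let $\Pi:=\One\,\pi$ be the rank-one matrix all of whose rows equal $\pi$, and set $Q_g:=P^g-\Pi$. Since $P\One=\One$, $\pi P=\pi$, $\pi\One=1$, one checks $Q_g=(P-\Pi)^g$, and the spectral decomposition of an aperiodic irreducible $P$ gives a constant $C_P$ with $\|Q_g\|\le C_P\,\lambda_2^{\,g}$ for all $g\ge 0$, in the operator norm attached to the sup norm on $\mathbb{R}^S$ (if $P$ is not diagonalizable, read $\lambda_2$ as any fixed number strictly between the subdominant eigenvalue modulus and $1$). Substituting $P^{g_k}=\Pi+Q_{g_k}$ in each of the $r-1$ slots and expanding yields a sum over subsets $D\subseteq\{1,\dots,r-1\}$; because $\Pi=\One\pi$, every factor $\Pi$ in a slot splits the product, $\cdots E_{s}\One\,\pi\,E_{s'}\cdots=(\cdots E_{s}\One)\,(\pi\,E_{s'}\cdots)$. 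Identifying $D$ with the corresponding interval partition of $\{1,\dots,r\}$, this gives
\[
\esper\big[Y^{s_1}_{i_1}\cdots Y^{s_r}_{i_r}\big]=\sum_{I}\ \prod_{[a,b]\in I}\widetilde m[a,b],\qquad
\widetilde m[a,b]:=\pi\,E_{s_a,s_a}\,Q_{g_a}\,E_{s_{a+1},s_{a+1}}\cdots Q_{g_{b-1}}\,E_{s_b,s_b}\,\One ,
\]
the sum running over interval partitions $I$ of $\{1,\dots,r\}$, with $\widetilde m[a,a]=\pi(s_a)=\esper[Y^{s_a}_{i_a}]$.

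Then I would compare this with the boolean moment--cumulant formula, i.e. the Möbius inversion of \eqref{Eq:def_boolean_cumulants} over interval partitions (see \cite[Section 2]{ArizmendiHasebeLehnerVargas2014}), which expresses the same joint moment as $\sum_I\prod_{[a,b]\in I}B_{b-a+1}(Y^{s_a}_{i_a},\dots,Y^{s_b}_{i_b})$. Both expansions agree at $\widetilde m[a,a]=B_1(Y^{s_a}_{i_a})$, so an induction on $b-a$ (peel off the coarsest interval partition; all other terms involve strictly shorter intervals) gives the exact identity
\[
B_r\big(Y^{s_1}_{i_1},\dots,Y^{s_r}_{i_r}\big)=\widetilde m[1,r]=\pi\,E_{s_1,s_1}\,Q_{g_1}\,E_{s_2,s_2}\,Q_{g_2}\cdots Q_{g_{r-1}}\,E_{s_r,s_r}\,\One .
\]
Submultiplicativity of the operator norm, together with $|\pi x|\le\|x\|_\infty$, $\|E_{s,s}\|=1$, $\|\One\|_\infty=1$ and $\|Q_{g_k}\|\le C_P\lambda_2^{g_k}$, then yields
\[
\big|B_r(Y^{s_1}_{i_1},\dots,Y^{s_r}_{i_r})\big|\le\prod_{k=1}^{r-1}\|Q_{g_k}\|\le C_P^{\,r-1}\,\lambda_2^{\,g_1+\cdots+g_{r-1}}=C_P^{\,r-1}\,\lambda_2^{\,i_r-i_1},
\]
so one may take $C_{P,r}=C_P^{\,r-1}$.

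The routine parts are the norm estimate $\|P^g-\Pi\|\le C_P\lambda_2^g$ and the bookkeeping of the $2^{r-1}$-term expansion. The one genuinely structural step — and the point I would be most careful about — is the identification of this expansion with the interval-partition (boolean) moment--cumulant expansion, i.e. verifying that each factor $\Pi$ cuts the matrix product precisely into the $Q$-weighted ``connected'' blocks $\widetilde m[a,b]$, so that the uniqueness of the interval decomposition forces $B_r=\widetilde m[1,r]$. Once this is established the lemma is immediate, and this same matrix identity is exactly what makes the weighted dependency graph with weights $\lambda_2^{\,j-i}$ appear.
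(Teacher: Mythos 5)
Your proposal is correct and is essentially the paper's proof: both arguments reduce the lemma to the exact matrix identity $B_r(Y^{s_1}_{i_1},\dots,Y^{s_r}_{i_r})=\pi\,E_{s_1,s_1}(P^{g_1}-\One\pi)E_{s_2,s_2}\cdots(P^{g_{r-1}}-\One\pi)E_{s_r,s_r}\One$ and then conclude by submultiplicativity of the operator norm together with the Perron--Frobenius bound on $P^{g}-\One\pi$. The only (cosmetic) difference is the direction in which the identity is obtained --- the paper substitutes the matrix expressions for the moments directly into \eqref{Eq:def_boolean_cumulants} and recollects the alternating sum into a product, whereas you expand the joint moment over interval partitions and invoke uniqueness of the boolean moment--cumulant relation; your explicit caveat about non-diagonalizable $P$ is in fact slightly more careful than the paper's statement that $P^{\ell}-\One\pi$ ``has operator norm $\la_2^{\ell}$.''
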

\begin{proof}
  Fix integers $i_1 < i_2 <\cdots < i_r$ and $s_1,\cdots,s_r$ in $S$.
  To make notation lighter, we write $E(j)=E_{s_j,s_j}$, $\,\ell(j)=i_{j+1}-i_j$ and $Z_j=Y^{s_j}_{i_j}$.
  As in the summation index of \eqref{Eq:def_boolean_cumulants},
  we consider $l \ge 0$ and $1\le d_1 < \ldots <d_l \le r-1$.
  Since the initial distribution $\pi_0$ is the stationary distribution $\pi$,
  one has $\pi_0 P^i=\pi$ and  
  formula \eqref{Eq:Joint_Moments_Markov} for joint moments simplifies a little.
  We have
  \[\esper(Z_{d_j+1} \cdots Z_{d_{j+1}}) =
  \pi \, E(d_j+1) \, P^{\ell(d_j+1)} \, E(d_j+2) \,
  \cdots \, E(d_{j+1}-1) \, P^{\ell(d_{j+1}-1)} \, E(d_{j+1}) \, \One.\]
  Multiplying such expressions, we get
  \begin{multline*}
\esper(Z_1 \cdots Z_{d_1}) \, \esper(Z_{d_1+1} \cdots Z_{d_2}) \, \cdots \, \esper(Z_{d_l+1} \cdots Z_r) \\
=\pi \, E(1)\, Q(1)\, E(2)\, Q(2)\, \cdots\, E(r-1) \, Q(r-1)\, E(r)\, \One,
\end{multline*}
where we set, for $1 \le k \le r-1$,
\[Q(k) = \begin{cases}
  \One\, \pi &\text{ if }k \in \{d_1,\cdots,d_l\}; \\
  P^{\ell(k)} &\text{ otherwise.}
\end{cases}\]
The boolean cumulant $B_r(Z_1,\cdots,Z_r)$ now writes as
\[B_r(Z_1,\cdots,Z_r) = \pi \, E(1) \, (P^{\ell(1)} - \One\, \pi)\, E(2) \, \cdots \, 
E(r-1) \, (P^{\ell(r-1)} - \One\, \pi) \, E(r) \, \One.\]
By Perron-Frobenius theorem, the matrix $P$ has a unique eigenvalue of modulus $1$
and $\One \pi$ is the projector on the corresponding eigenvector; see \cite[p 674]{meyer2000matrix}.
Therefore, for any $\ell$, the matrix $(P^\ell -\One \, \pi)$ has operator norm $\la_2^\ell$.
The result follows immediately.
\end{proof}
We now recall the expression of classical cumulants in terms of boolean cumulants
given in \cite{ArizmendiHasebeLehnerVargas2014}.
Let us first introduce some terminology.
A set partition $\rho$ of $[r]$ is called {\em reducible} if there exists $\ell$ in $\{1,\cdots,r-1\}$
such that $\rho \le \big\{ \{1,\cdots,\ell\}, \{\ell+1,\cdots,r\} \big\}$;
otherwise, it is called {\em irreducible}.
The set of irreducible set partitions of $[r]$ is denoted by $\PPP_{\irr}[r]$.
The following statement is a less precise version of \cite[Theorem 1.4]{ArizmendiHasebeLehnerVargas2014}
(see also \cite[Lemma 1.1]{LivreOrange:Cumulants}).
\begin{lemma}
  Let $r \ge 1$. There exist universal constants $d_\rho$, indexed by {\em irreducible}
  set partitions of $[r]$, with the following property.
  For any random variables $Z_1,\cdots,Z_r$ with finite moments 
  defined on the same probability space, one has
  \begin{equation} \ka_n(Z_1,\cdots,Z_r) = \sum_{\rho \in \PPP_{\irr}[r]} d_\rho \cdot
  \left[ \prod_{C \in \rho} B_{|C|}(Z_j; j \in C) \right]. 
  \label{Eq:From_boolean_to_classical}
\end{equation}
\end{lemma}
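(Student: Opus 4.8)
The statement is a weakening of \cite[Theorem~1.4]{ArizmendiHasebeLehnerVargas2014} (see also \cite[Lemma~1.1]{LivreOrange:Cumulants}), where the $d_\rho$ are moreover given explicitly and are visibly supported on $\PPP_{\irr}[r]$; so the quickest route is to quote that result. For a self-contained proof, here is the plan I would carry out. \emph{Step~1 (moments in terms of boolean cumulants).} Grouping the summands of \eqref{Eq:def_boolean_cumulants} by the set $\{d_1<\dots<d_l\}\subseteq[r-1]$ of cut points exhibits $B_r(Z_1,\dots,Z_r)$ as the Möbius transform, on the Boolean lattice of subsets of $[r-1]$, of the function $S\mapsto m_S$, where $m_S$ is the product of the $\esper(\prod_{j\in D}Z_j)$ over the blocks $D$ of the interval partition of $[r]$ with cut set $S$. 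Inverting this transform (the Boolean lattice has Möbius function $(-1)^{|\cdot|}$) gives, for any finite family $(Z_j)_{j\in C}$ indexed by a linearly ordered set $C$,
\[
  \esper\Big(\textstyle\prod_{j\in C}Z_j\Big)\;=\;\sum_{\sigma}\ \prod_{D\in\sigma}B_{|D|}(Z_j;\,j\in D),
\]
the sum over interval partitions $\sigma$ of $C$. I would also isolate here the observation that drives the vanishing below: \emph{if the moments of $(Z_j)_{j\in C}$ factorise along a fixed cut of $C$ — in particular if the two sides of that cut are independent — then $B_{|C|}(Z_j;j\in C)=0$}; indeed the associated moment function is then constant on each pair $\{S,S\cup\{\text{cut}\}\}$ of cut sets, so the alternating sum collapses.

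\emph{Step~2 (substitution and collection).} Insert the expansion of Step~1 into every factor of the moment--cumulant formula \eqref{EqMoment2Cumulant}. Choosing $\pi\in\PPP([r])$ together with an interval partition of each of its blocks is the same as choosing a set partition $\rho$ of $[r]$ and a coarsening $\pi\geq\rho$ such that the blocks of $\rho$ restrict to an interval partition inside every block of $\pi$ (the case $\pi=\rho$ is always admissible). Regrouping by $\rho$ turns \eqref{EqMoment2Cumulant} into \eqref{Eq:From_boolean_to_classical} with
\[
  d_\rho\;=\;\sum_{\pi}\mu\big(\pi,\{[r]\}\big),
\]
the sum taken over those admissible $\pi\geq\rho$. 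Since $d_\rho$ depends only on $\rho$, the universal constants exist.

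\emph{Step~3 (vanishing for reducible $\rho$; the crux).} Fix $\ell\in\{1,\dots,r-1\}$ and apply \eqref{Eq:From_boolean_to_classical} to families in which $(Z_1,\dots,Z_\ell)$ is independent of $(Z_{\ell+1},\dots,Z_r)$. The left-hand side vanishes, a joint cumulant being zero once its arguments split into two independent groups; on the right, every $\rho$ with a block straddling $\ell$ contributes $0$ by the observation of Step~1. Hence $\sum_{\rho}d_\rho\prod_{C\in\rho}B_{|C|}(Z_j;j\in C)=0$, the sum now over partitions $\rho$ whose blocks all lie on one side of $\ell$. For such families the moments within each side are unconstrained, and since $B_{|C|}(Z_j;j\in C)=\esper\big(\prod_{j\in C}Z_j\big)$ plus products of two or more moments of proper subsets, the change of unknowns from the products $\prod_{C\in\rho}B_{|C|}$ to the monomials $\prod_{C\in\rho}\esper\big(\prod_{j\in C}Z_j\big)$ is unitriangular with respect to the refinement order; the latter monomials being linearly independent, so are the former, whence $d_\rho=0$ for every $\rho$ all of whose blocks lie on one side of $\ell$. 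Letting $\ell$ run over $\{1,\dots,r-1\}$ shows $d_\rho=0$ for every reducible $\rho$, completing the proof.

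The only genuinely delicate point is Step~3: one must check that the unitriangularity / linear-independence argument survives the restriction to ``independent across $\ell$'' families (it does, because each side keeps all of its moments free) and that the factorisation fact of Step~1 is applied with the cut that $\ell$ actually induces on each block of $\rho$. All of this bookkeeping is performed cleanly in \cite{ArizmendiHasebeLehnerVargas2014}, which is why invoking that reference directly is the most economical option.
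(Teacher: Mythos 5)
Your primary route --- simply quoting \cite[Theorem 1.4]{ArizmendiHasebeLehnerVargas2014} (or \cite[Lemma 1.1]{LivreOrange:Cumulants}) --- is exactly what the paper does: the lemma is stated there without proof, as a weakened form of that theorem, with the remark that the explicit Tutte-polynomial description of the $d_\rho$ is not needed. Your additional self-contained sketch is a genuinely different (and correct) route that the paper does not carry out: Step 1 (moments as sums over interval partitions of products of boolean cumulants, plus the vanishing of a boolean cumulant across an independence cut via the pairing $S\leftrightarrow S\cup\{c\}$ in the alternating sum), Step 2 (substitution into \eqref{EqMoment2Cumulant} and regrouping by the common refinement $\rho$, which shows the $d_\rho$ exist and are universal), and Step 3 (specialising to families independent across a cut at $\ell$ and using unitriangularity of the passage from products of boolean cumulants to products of moments to conclude $d_\rho=0$ for all $\rho\le\{[1,\ell],[\ell+1,r]\}$) all go through. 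The one point worth making explicit in Step 3 is that the linear-independence argument is best run at the level of formal moment variables (or of moments ranging over an open set of achievable values), so that positivity constraints on genuine moment sequences do not interfere; with that caveat your argument is complete, whereas the paper buys brevity by outsourcing everything to the cited reference.
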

Arizmendi, Hasebe, Lehner and Vargas relate $d_\rho$ 
with a specialization of the Tutte polynomial of a specific graph associated with $\rho$,
but we do not need this description of $d_\rho$ here.
For our purpose, the crucial aspect in this boolean-to-classical cumulant formula
is that the sum ranges only over irreducible set partitions.
We can now establish our bound on classical cumulants.

\begin{lemma}
\label{Lem:Bound_Cumulants_Markov}
As above, let $(M_k)_{k \ge 0}$ be an aperiodic irreducible Markov chain with transition matrix $P$,
such that $M_0$ is distributed according to the stationary distribution $\pi$ of the chain.
  Let $r>0$. Then there exists a constant $D_{P,r}$ depending on the transition matrix 
  $P$ and on $r$
  with the following property.
  For any distinct integers $i_1 < i_2 < \cdots < i_r$ and states $s_1,\cdots,s_r$, we have
  \[ |\ka_r(Y^{s_1}_{i_1},\cdots,Y^{s_r}_{i_r}) | \le D_{P,r} \la_2^{i_r-i_1}.\]
\end{lemma}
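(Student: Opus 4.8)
The plan is to combine the two preceding lemmas directly: \cref{Lem:Bound_Boolean} bounds boolean cumulants, and \cref{Eq:From_boolean_to_classical} expresses a classical joint cumulant as a finite sum, indexed by \emph{irreducible} set partitions, of products of boolean cumulants. So I would start by fixing $r>0$, distinct integers $i_1 < i_2 < \cdots < i_r$ and states $s_1,\dots,s_r$, writing $Z_j = Y^{s_j}_{i_j}$, and applying \eqref{Eq:From_boolean_to_classical} to get
\[
\ka_r(Z_1,\dots,Z_r) = \sum_{\rho \in \PPP_{\irr}[r]} d_\rho \prod_{C \in \rho} B_{|C|}(Z_j; j\in C).
\]
Then for each block $C=\{c_1 < \cdots < c_k\}$ of $\rho$, \cref{Lem:Bound_Boolean} gives $|B_{|C|}(Z_j; j\in C)| \le C_{P,k}\, \la_2^{i_{c_k}-i_{c_1}}$.

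The crucial combinatorial point is the following: if $\rho$ is \emph{irreducible}, then $\sum_{C\in\rho}(i_{\max C} - i_{\min C}) \ge i_r - i_1$. Indeed, for an irreducible set partition of $[r]$, the blocks, thought of as intervals $[\min C, \max C]$, must cover $[1,r]$ and moreover cannot be split at any point $\ell$ (which is exactly the irreducibility condition $\rho \not\le \{\{1,\dots,\ell\},\{\ell+1,\dots,r\}\}$); consequently for every $\ell\in\{1,\dots,r-1\}$ there is a block $C$ with $\min C \le \ell < \max C$. Since $i_1<\cdots<i_r$ are (strictly increasing) integers, we have $i_{\max C} - i_{\min C} \ge \max C - \min C = \sum_{\ell=\min C}^{\max C -1} 1$, so $\sum_{C\in\rho}(i_{\max C}-i_{\min C}) \ge \sum_{C\in\rho}(\max C - \min C) \ge \#\{\ell : 1\le \ell \le r-1\} = r-1$; more to the point, using the original spacings, $\sum_{C}(i_{\max C}-i_{\min C}) = \sum_{C}\sum_{\ell=\min C}^{\max C-1}(i_{\ell+1}-i_\ell) \ge \sum_{\ell=1}^{r-1}(i_{\ell+1}-i_\ell) = i_r - i_1$, where the inequality holds because each difference $i_{\ell+1}-i_\ell$ is nonnegative and is counted at least once (by irreducibility). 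Since $\la_2 \in [0,1)$, this yields $\prod_{C\in\rho}\la_2^{i_{\max C}-i_{\min C}} \le \la_2^{i_r - i_1}$.

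Putting this together, $\bigl|\prod_{C\in\rho} B_{|C|}(Z_j;j\in C)\bigr| \le \bigl(\prod_{C\in\rho} C_{P,|C|}\bigr)\la_2^{i_r-i_1}$, and summing over the finitely many irreducible set partitions $\rho \in \PPP_{\irr}[r]$ with weights $|d_\rho|$ gives
\[
|\ka_r(Y^{s_1}_{i_1},\dots,Y^{s_r}_{i_r})| \le \Bigl( \sum_{\rho\in\PPP_{\irr}[r]} |d_\rho| \prod_{C\in\rho} C_{P,|C|} \Bigr) \la_2^{i_r - i_1} =: D_{P,r}\, \la_2^{i_r-i_1},
\]
where $D_{P,r}$ depends only on $P$ and $r$. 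The main obstacle — really the only non-bookkeeping step — is establishing the covering inequality $\sum_{C\in\rho}(i_{\max C}-i_{\min C}) \ge i_r - i_1$ for irreducible $\rho$; everything else is assembling the two cited lemmas and absorbing finitely many constants. One should be slightly careful that \cref{Lem:Bound_Boolean} is stated for indices in increasing order, so within each block $C$ one applies it to the subsequence $(i_j)_{j\in C}$, which is automatically increasing; no reordering issue arises because the $i_j$ are globally ordered.
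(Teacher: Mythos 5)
Your proof is correct and follows the same route as the paper: apply the boolean-to-classical cumulant formula over irreducible set partitions, bound each block's boolean cumulant via \cref{Lem:Bound_Boolean}, and use the covering inequality $\sum_{C\in\rho}(i_{\max C}-i_{\min C}) \ge i_r - i_1$ for irreducible $\rho$. The paper merely asserts that inequality as ``easily checked,'' whereas you supply the straddling-block argument explicitly, which is a welcome addition but not a different approach.
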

\begin{proof}
  For any subset $C$ of $[r]$, we know by \cref{Lem:Bound_Boolean} that
  \[ |B_{|C|}(Y^{s_j}_{i_j}; j \in C)| \leq \text{cst} \, \la_2^{i_{\max(C)}-i_{\min(C)}}.\]
  If $\rho$ is an irreducible set partition, one can easily check that
  \[\sum_{C \in \rho} i_{\max(C)}-i_{\min(C)} \ge i_r -i_1.\]
  Therefore each summand in \eqref{Eq:From_boolean_to_classical} is 
  bounded in absolute value by a constant times $\la_2^{i_r-i_1}$,
  which proves the lemma.
\end{proof}

\subsection{A weighted dependency graph for Markov chains}
\label{Subsec:WDG_Markov}
We denote by $\NN_{\ge 0}$ the set of nonnegative integers.
\begin{proposition}
As above, let $(M_k)_{k \ge 0}$ be an aperiodic irreducible Markov chain on a finite state space $S$,
such that $M_0$ is distributed according to the stationary distribution $\pi$ of the chain.
Recall that $Y_i^s$ is the indicator function of the event $M_i=s$.

  We consider the complete graph $\WDep$ on $A:=\NN_{\ge 0} \times S$ with weight $\la_2^{j-i}$
   on the edge $\{(i,s),(j,t)\}$ (for any nonnegative integers $i<j$ and states $s,t$ in $S$).
  Finally, let $\Psi$ be the function on multisets of elements of $A$
  that is identically equal to $1$.

  Then $\WDep$ is a $(\Psi,\bC)$ weighted dependency graph for the family 
  $\{Y_i^s;\, (i,s) \in A\}$ for some sequence $\bC=(C_r)_{r \ge 1}$.
  \label{PropWDGInMarkov}
\end{proposition}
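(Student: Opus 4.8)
The plan is to verify the defining inequality \eqref{EqFundamental} by reducing, via \cref{PropAlternate}, to the repetition-free cumulant bound already established in \cref{Lem:Bound_Cumulants_Markov}. First observe that $\Psi\equiv 1$ is super-multiplicative, so \cref{PropAlternate} applies: it suffices to bound $\big| \ka\big( \prod_{\a\in B_1} Y_\a,\dots,\prod_{\a\in B_\ell} Y_\a \big)\big|$ by $D_r\,\MWST{\WDep[B]}$ for an arbitrary multiset $B$ of $r$ elements of $A$, where $B_1,\dots,B_\ell$ are the vertex sets of the connected components of $\WDep_1[B]$. Since the only edges of weight $1$ in $\WDep$ join $(i,s)$ and $(i,t)$ for a common time index $i$ (the graph $\WDep_1$ is not discrete here, so we cannot invoke \cref{RmkDiscreteBernoulli} directly), each $B_k$ is precisely the set of elements of $B$ sharing a fixed time index. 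Two cases arise: if some $B_k$ contains two distinct elements $(i,s)$ and $(i,t)$ with $s\ne t$, then $\prod_{\a\in B_k} Y_\a$ has the factor $Y_i^s Y_i^t$, which is identically $0$ because $M_i$ cannot take two values, so the cumulant vanishes and there is nothing to prove; otherwise each $B_k$ is a multiset of copies of a single vertex $(i_k,s_k)$, and since the $Y_i^s$ are Bernoulli, $\prod_{\a\in B_k} Y_\a=Y_{i_k}^{s_k}$. Moreover the $i_k$ are pairwise distinct (elements at the same time lie in the same $\WDep_1$-component), so after reindexing $i_1<i_2<\cdots<i_\ell$ we are left to prove $\big|\ka\big(Y_{i_1}^{s_1},\dots,Y_{i_\ell}^{s_\ell}\big)\big|\le D_r\,\MWST{\WDep[B]}$.

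The second step is to identify $\MWST{\WDep[B]}=\la_2^{i_\ell-i_1}$. For the lower bound, exhibit an explicit spanning tree of $\WDep[B]$: connect the copies of each $(i_k,s_k)$ by weight-$1$ edges, then link consecutive groups through the edges $\{(i_k,s_k),(i_{k+1},s_{k+1})\}$ of weight $\la_2^{i_{k+1}-i_k}$; this tree has weight $\prod_{k=1}^{\ell-1}\la_2^{i_{k+1}-i_k}=\la_2^{i_\ell-i_1}$. For the upper bound, write the weight of any spanning tree $T$ as $\la_2^{\sum_{e\in T} d(e)}$ with $d\{(i,s),(j,t)\}=|i-j|$; the unique path in $T$ joining a copy of $(i_1,s_1)$ to a copy of $(i_\ell,s_\ell)$ already contributes $\sum d\ge i_\ell-i_1$ by the triangle inequality on time indices, hence $\sum_{e\in T} d(e)\ge i_\ell-i_1$ and $w(T)\le\la_2^{i_\ell-i_1}$. (The degenerate case $\la_2=0$, occurring for instance when the chain is i.i.d., is consistent with the conventions $0^0=1$ and $\MWST{\cdot}=0$ for disconnected graphs.) Combining this with \cref{Lem:Bound_Cumulants_Markov}, which gives $\big|\ka_\ell(Y_{i_1}^{s_1},\dots,Y_{i_\ell}^{s_\ell})\big|\le D_{P,\ell}\,\la_2^{i_\ell-i_1}$, and setting $D_r=\max_{\ell\le r} D_{P,\ell}$, the hypothesis of \cref{PropAlternate} is met and it produces the desired sequence $\bC$ depending only on $\bD=(D_r)$.

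The genuinely substantial work — the bound $|\ka_r(Y^{s_1}_{i_1},\dots,Y^{s_r}_{i_r})|\le D_{P,r}\la_2^{i_r-i_1}$ — has already been done in \cref{Lem:Bound_Boolean,Lem:Bound_Cumulants_Markov} through boolean cumulants and the Arizmendi--Hasebe--Lehner--Vargas formula. So I expect no real obstacle in the present proof beyond bookkeeping: the only points requiring care are the component analysis of $\WDep_1[B]$ (so that the Bernoulli collapse and the ``two states at one time'' cancellation both apply) and the maximum-weight spanning tree computation, which is the elementary fact that the minimum spanning tree of points on a line is the path through consecutive points, with total length equal to the range. In short, \cref{PropWDGInMarkov} is essentially the repackaging of \cref{Lem:Bound_Cumulants_Markov} into the weighted dependency graph formalism.
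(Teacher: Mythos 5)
Your proof is correct and follows essentially the same route as the paper's: reduce via \cref{PropAlternate} using the Bernoulli collapse $(Y_i^s)^2=Y_i^s$ and the incompatibility $Y_i^sY_i^t=0$ for $s\ne t$, identify $\MWST{\WDep[B]}=\la_2^{i_r-i_1}$, and conclude from \cref{Lem:Bound_Cumulants_Markov}. The only difference is that you spell out the spanning-tree computation and the component analysis of $\WDep_1[B]$, which the paper leaves as ``easy to observe''; both details are right.
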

\begin{proof}
Consider a multiset $B=\{(i_1,s_1),\dots, (i_r,s_r)\}$ of elements of $A$
and the induced graph $\WDep[B]$. Assume, without loss of generality that $i_1 < \dots < i_r$.
Then it is easy to observe that the maximum weight of a spanning tree in $\WDep[B]$
is
$\MWST{\WDep[B]} =  \la_2^{i_r-i_1}$.

We use \cref{PropAlternate}.
Vertices $(i,s)$ and $(j,t)$ in $\WDep$ are connected by an edge of weight 1 if and only if $i=j$.
But $(Y_i^s)^2=Y_i^s$ and $Y_i^s\, Y_i^t=0$ if $s \ne t$.
Therefore, it is enough to prove that, for any fixed $r>0$,
 there exists a constant $D_r$ with the following property:
 for any {\em distinct} integers $i_1 < \dots < i_r$ and any states $s_1, \dots, s_r$,
 we have
\[ |\ka_r(Y^{s_1}_{i_1},\cdots,Y^{s_r}_{i_r}) | \le D_r \MWST{\WDep[B]} = D_r \la_2^{i_r-i_1}.\]
The existence of such a constant is given by \cref{Lem:Bound_Cumulants_Markov}.
\end{proof}

\subsection{Subword counts in strings generated by a Markov source}
\label{Subsect:TCL_Markov}
We consider the following pattern matching problem.
Let $u_1,\dots,u_d$ be {\em finite} words on a finite alphabet $S$
of respective lengths $\ell_1,\cdots,\ell_d$.
An occurrence of $\LLL=(u_1,\dots,u_d)$ in $w$ is a factorization
$w=w_0 u_1 w_1 \cdots u_d w_d$, where the $w_i$'s are (possibly
empty) words on the alphabet $S$.
This corresponds to an occurrence of the $u=u_1\cdots u_d$ as a subwords,
where letters from the same $u_i$ are required to be consecutive.

As before, let $(M_k)_{k \ge 0}$ be an aperiodic irreducible Markov chain on $S$,
such that $M_0$ is distributed according to the stationary distribution $\pi$ of the chain.
We are interested in the number $X_N$ of occurrences of $\LLL$
in the random word $W_N=(M_0,\cdots,M_N)$.
\medskip

The position of such an occurrence is a $d$-uple $(i_1,\cdots,i_d)$,
where each $i_j$ is the index of the first letter of $u_j$ in $w$
(in particular, we always have $i_{j+1} \ge i_j +\ell_j$).
Denote $\III$ the set of possible positions of occurrences that is
\[\III=\{(i_1,\cdots,i_d) \in \NN_{\ge 0}^d \text{ such that, for all }
j \le d-1,\, i_{j+1} \ge i_j +\ell_j\}.\]
We also define $\III_N$ as the same set with the additional condition $i_d +\ell_d-1 \le N$.
For $I \in \III$, we denote $Y_I$ the indicator function
of the event ``$W$ has an occurrence of $\LLL$ in position $I$''.
Using the above variables $Y_i^s$, we can write
\[Y_I = \prod_{j=1}^d \left( \prod_{k=1}^{\ell_j} Y_{i_j+k-1}^{(u_j)_k} \right)
\text{ and }
X_N = \sum_{I \in \III_N} Y_I.\]

An estimate for the variance of $X_N$ is given by Bourdon and Vallée 
\cite[Theorem 3]{Bourdon_Vallee:Pattern_Matching}:
\begin{equation}
  \Var(X_N) = \sigma^2(\LLL) N^{2d-1} \left( 1+\O(\tfrac{1}{n}) \right),
  \label{Eq:Var_Estimate_Markov}
\end{equation}
where $\sigma^2(\LLL)$ is an explicit constant
depending on both the pattern $\LLL$ and the transition matrix $P$ of the Markov chain.

Our main result in this section is that the fluctuations 
of order $N^{d-1/2}$ of $X_N$ are Gaussian (possibly degenerate if $\sigma(\LLL)=0$).
\begin{theorem}
  With the above notation, we have the convergence in distribution
  \[ \widetilde{X_N}=\frac{X_N - \esper(X_N)}{N^{d-1/2}} \to \N(0,\sigma(\LLL)).\]
  \label{Thm:TCL_Patterns_Markov_Sources}
\end{theorem}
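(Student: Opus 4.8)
The plan is to deduce the statement from the normality criterion \cref{ThmMain} applied to the family $\{Y_I,\ I\in\III_N\}$. First I would observe that, since an occurrence position satisfies $i_{j+1}\ge i_j+\ell_j$, the $\ell:=\ell_1+\cdots+\ell_d$ pairs $(i_j+k-1,(u_j)_k)$ appearing in the monomial $Y_I=\prod_{j}\prod_{k}Y_{i_j+k-1}^{(u_j)_k}$ are pairwise distinct; hence each $I$ may be viewed as an $\ell$-element subset of $A=\NN_{\ge 0}\times S$ and $Y_I=\prod_{\alpha\in I}Y_\alpha$. By \cref{PropWDGInMarkov} the basic family $\{Y_i^s,(i,s)\in A\}$ admits the $(\Psi,\bC)$ weighted dependency graph $\WDep$ with $\Psi\equiv 1$ and edge weights $\la_2^{|i-j|}$. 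As $\Psi\equiv 1$ is super-multiplicative, \cref{PropProducts} with $m=\ell$ produces the weighted dependency graph $\WDep^\ell$ for $\{Y_B,\ B\in\Mset_{\le\ell}(A)\}$, where $\bm{\Psi}\equiv 1$ and $\bD$ does not depend on $N$; restricting to the subfamily indexed by $\III_N$ gives a $(\bm{\Psi},\bD)$ weighted dependency graph for $\{Y_I,\ I\in\III_N\}$.

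Next I would compute the parameters of \cref{SubsecNormalityCriterion} for this graph. Since $\bm{\Psi}\equiv 1$, we have $R_N=|\III_N|\asymp N^d$ and $T_{r,N}=\max_{I_1,\dots,I_r\in\III_N}\sum_{J\in\III_N}W(\{J\},\{I_1,\dots,I_r\})$. The key estimate — the step I expect to require the most care — is the uniform bound $T_{r,N}=\O(N^{d-1})$. To prove it, fix $I_1,\dots,I_r$, write $U$ for the set of time indices occupied by the words of $I_1,\dots,I_r$ (so $|U|\le r\ell$) and, for $J=(j_1,\dots,j_d)\in\III_N$, set $U_J=\{j_a+k-1 : a\in[d],\ 1\le k\le\ell_a\}$ (so $|U_J|=\ell$). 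Then $W(\{J\},\{I_1,\dots,I_r\})=\max\{\la_2^{|p-q|} : q\in U,\ p\in U_J\}\le\sum_{q\in U}\sum_{a=1}^d\sum_{k=1}^{\ell_a}\la_2^{|j_a+k-1-q|}$. Summing over $J\in\III_N$: for each fixed $a\in[d]$ and $q\in U$ one sums over the $\O(N^{d-1})$ choices of the coordinates $(j_b)_{b\ne a}$ and over $j_a\in\{0,\dots,N\}$, and since $\la_2<1$ the sum $\sum_{j_a\ge 0}\sum_{k=1}^{\ell_a}\la_2^{|j_a+k-1-q|}$ is bounded by a constant depending only on $\ell_a$ and $\la_2$. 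As $a$ and $q$ run over finite index sets, this yields $\sum_{J\in\III_N}W(\{J\},\{I_1,\dots,I_r\})=\O(N^{d-1})$ with a constant independent of $N$ and of $I_1,\dots,I_r$. Hence one may take $Q_N=N^{d-1}$ and suitable constants $D_r$, so that \eqref{EqBoundUnifT} holds.

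Finally I would check hypothesis \eqref{EqHypoMainThm}. Using the variance estimate \eqref{Eq:Var_Estimate_Markov} of Bourdon and Vallée, when $\sigma(\LLL)>0$ we have $\sigma_N^2=\Var(X_N)\asymp N^{2d-1}$, so $\tfrac{R_N}{Q_N}\asymp N$ and $\tfrac{Q_N}{\sigma_N}\asymp N^{-1/2}$, giving $\big(\tfrac{R_N}{Q_N}\big)^{1/s}\,\tfrac{Q_N}{\sigma_N}\asymp N^{1/s-1/2}\to 0$ for $s=3$. \cref{ThmMain} then gives $(X_N-\esper X_N)/\sigma_N\to_d\N(0,1)$, and since \eqref{Eq:Var_Estimate_Markov} also gives $\sigma_N/N^{d-1/2}=\sigma(\LLL)(1+\O(1/N))\to\sigma(\LLL)$, Slutsky's lemma yields $\widetilde{X_N}=(X_N-\esper X_N)/N^{d-1/2}\to_d\N(0,\sigma(\LLL))$. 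In the degenerate case $\sigma(\LLL)=0$, \eqref{Eq:Var_Estimate_Markov} forces $\Var(\widetilde{X_N})=\Var(X_N)/N^{2d-1}\to 0$, so the centered variable $\widetilde{X_N}$ tends to $0$ in $L^2$, hence in distribution to the degenerate law $\N(0,0)$, which is again the asserted limit. Apart from the combinatorial estimate on $T_{r,N}$, the whole argument is a routine application of \cref{ThmMain,PropProducts}.
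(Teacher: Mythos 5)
Your proof is correct and follows essentially the same route as the paper: the same weighted dependency graph obtained from \cref{PropWDGInMarkov} via \cref{PropProducts}, the same estimates $R_N\asymp N^d$ and $T_{r,N}=\O(N^{d-1})$ (proved by the same geometric-sum argument), and the Bourdon--Vall\'ee variance estimate. The only cosmetic difference is the final step: the paper bounds the cumulants of $\widetilde{X_N}$ directly via \cref{LemBorneCumulant} and concludes by the method of moments, which absorbs the degenerate case $\sigma(\LLL)=0$ automatically, whereas you route through \cref{ThmMain} and Slutsky's lemma and therefore handle that case separately.
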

\begin{proof}
  \cref{PropWDGInMarkov} gives a weighted dependency graph
  for the variables $(Y_i^s)_{i \ge 0, s \in S}$.
  Using \cref{PropProducts}, we get a weighted dependency graph
  for monomials in these variables (with a fixed bound on degrees),
  so in particular for the $(Y_I)_{I \in \III}$.
  The weight of the edge between $Y_I$ and $Y_J$ in this dependency graph
  is $\la_2^{d(I,J)}$ where $d(I,J)$ is the minimal distance between elements
  of the sets
  \[\{i_t+k-1;\, 1 \le t \le d,\, 1\le k \le \ell_t\} \text{ and }
  \{j_t+k-1;\, 1 \le t \le d,\, 1\le k \le \ell_t\}.\]
  It is clear that 
  \[d(I,J)\ge \min_{i \in I,\, j \in J} |j-i| -m,
  \text{ where }m=\max_{1 \le t \le d} \ell_t,\]
  and thus
  \[\la_2^{d(I,J)} \le \la_2^{-m} \max_{i \in I,\, j \in J} \la_2^{|j-i|}.\]
  The corresponding function $\bm{\Psi}$ is simply the constant function equal to $1$.

  Consider the restriction of this weighted dependency graph to $\III_N$.
  Using the notation of \cref{SubsecNormalityCriterion},
  we have $R_N=|\III_N|=\O(N^d)$.
  To find an upper bound for $T_{\ell,N}$, let us fix $I_1$,\ldots, $I_\ell$ 
  and set $I= \bigcup_{j=1}^\ell I_j$.
  Then for $J$ in $\III$, we have 
  \[ W(\{J\},\{I_1,\cdots,I_\ell\}) = \max_{1 \le u \le \ell} \la_2^{d(J,I_u)} 
  \le \la_2^{-m} \, \left( \max_{i \in I \atop j \in J} \la_2^{|j-i|} \right)
  \le \la_2^{-m} \, \sum_{i \in I \atop j \in J} \la_2^{|j-i|}.\]
  Therefore,
  \[\sum_{J \in \III} W(\{J\},\{I_1,\cdots,I_\ell\}) 
  \le \la_2^{-m} \sum_{i \in I} \, \sum_{j=1}^N \, \sum_{J \in \III \atop J \ni j} \la_2^{|j-i|}.\]
  The summand does not depend on $J$, so that the last summation symbol can be replaced with the number
  of sets $J$ in $\III$ containing $j$. This number is smaller than $N^{d-1}$.
  Moreover for a fixed $i$, the sum $\sum_{j=1}^N \la_2^{|j-i|}$ is bounded by
  the constant $\tfrac{2}{1-\la_2}$.
  Finally we have
  \[\sum_{J \in \III} W(\{J\},\{I_1,\cdots,I_\ell\}) \le \la_2^{-m}\, |I|\, \tfrac{2}{1-\la_2}\, N^{d-1}
  \le \frac{2\, \la_2^{-m}\, \ell\, d\, }{1-\la_2} N^{d-1}.\]
  Since this holds for any $I_1$,\ldots, $I_\ell$ in $\III$,
  we have $T_{\ell,N}=\O(N^{d-1})$.

  Using \cref{LemBorneCumulant}, we have
  \begin{multline*}
    \left| \ka_r(\widetilde{X_N}) \right|=\left|\tfrac{1}{N^{r(d-1/2)}} \ka_r(X_N) \right| \\
  \le \tfrac{1}{N^{r(d-1/2)}} R_N T_{1,N} \cdots T_{r-1,N} = 
  \tfrac{1}{N^{r(d-1/2)}} \O(N^{d + (d-1)(r-1)})=\O(N^{-r/2+1}).
\end{multline*}
Therefore cumulants of $\widetilde{X_N}$ of order at least 3 tend to $0$.
On the other hand, its expectation and variance tend to $0$ and $\sigma(\LLL)$
respectively.
This concludes the proof using the method of moments.
\end{proof}

\begin{remark}
  The upper bound in Bourdon and Vallee's estimate \eqref{Eq:Var_Estimate_Markov}
  for the variance of $X_N$ can be obtained from the weighted dependency graph structure
  and \cref{LemBorneCumulant}.
  This upper bound alone implies the concentration result advertised by these authors.
  Note however that their result is proved for more general sources 
  and pattern problems.
\end{remark}

\appendix
\section{Proof of \cref{PropExSCQF}}
\label{Sect:Proof_SCQF_Factorials}
We start by a lemma.
\begin{lemma}\label{LemTechCumulants}
For any nonnegative integers $a_1,\dots,a_{\ell-1}$, the following rational function in $t$
has degree at most $-\ell+1$:
\[R(t) = \prod_{\delta \subseteq [\ell-1]} \left( t - \sum_{j \in \delta} a_j \right)^{(-1)^{|\delta|+1}} - 1. \]
\end{lemma}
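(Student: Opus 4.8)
The plan is to study the expansion of $f(t):=R(t)+1=\prod_{\delta\subseteq[\ell-1]}\bigl(t-s_\delta\bigr)^{(-1)^{|\delta|+1}}$ as a formal Laurent series in $t^{-1}$ at infinity, where I write $s_\delta:=\sum_{j\in\delta}a_j$, and to show that this series equals $1+\O(t^{-\ell+1})$; this is exactly the assertion that $R(t)=f(t)-1$ has degree at most $-\ell+1$. The case $\ell=1$ is immediate ($R(t)=t^{-1}-1$, which has degree $0=-\ell+1$), so I would assume $\ell\ge 2$ from the outset.

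The first step is to take logarithms. Using $\log(t-s_\delta)=\log t-\sum_{k\ge 1}\frac{s_\delta^k}{k}\,t^{-k}$, one obtains
\[
\log f(t)=b_0\log t-\sum_{k\ge 1}\frac{b_k}{k}\,t^{-k},\qquad\text{where }b_k:=\sum_{\delta\subseteq[\ell-1]}(-1)^{|\delta|+1}s_\delta^k .
\]
Thus the whole statement reduces to showing $b_0=0$ and $b_k=0$ for $1\le k\le\ell-2$.

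The key step — and the main obstacle — is a finite-difference identity. For a polynomial $g$ and a real number $a$, set $(D_ag)(x):=g(x)-g(x+a)$. By a short induction on $\ell-1$, splitting the sum according to whether $\ell-1\in\delta$, one proves
\[
\sum_{\delta\subseteq[\ell-1]}(-1)^{|\delta|}\,g(s_\delta)=\bigl(D_{a_1}\circ\cdots\circ D_{a_{\ell-1}}\,g\bigr)(0).
\]
Each operator $D_a$ lowers the degree of a polynomial by at least one, so a composition of $\ell-1$ of them annihilates every polynomial of degree at most $\ell-2$. Applying this to $g\equiv 1$ (degree $0\le\ell-2$ since $\ell\ge 2$) gives $b_0=0$, and applying it to $g(x)=x^k$ with $1\le k\le\ell-2$ gives $b_k=0$. (I would not need the value of $b_k$ for $k\ge\ell-1$.) Hence $\log f(t)=-\sum_{k\ge\ell-1}\frac{b_k}{k}\,t^{-k}=\O(t^{-\ell+1})$.

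Finally, exponentiate: $f(t)=1+\log f(t)+\tfrac12(\log f(t))^2+\cdots$, and since $\log f(t)=\O(t^{-(\ell-1)})$ with $\ell-1\ge 1$, every term of order $\ge 2$ is $\O(t^{-2(\ell-1)})$, which is in particular $\O(t^{-\ell+1})$. Therefore $f(t)=1+\O(t^{-\ell+1})$, so $R(t)$ has degree at most $-\ell+1$. The only nonroutine ingredient is the finite-difference identity together with the degree-drop property of $D_a$; the rest is bookkeeping with Laurent expansions.
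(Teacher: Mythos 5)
Your proof is correct, and it takes a genuinely different route from the one in the paper. The paper writes $R=(R_\odd-R_\ev)/R_\ev$, where $R_\ev$ (resp.\ $R_\odd$) is the product of the factors $t-\sum_{j\in\delta}a_j$ over subsets $\delta$ of even (resp.\ odd) size, expands both polynomials, and cancels all terms of low codegree in the difference $R_\odd-R_\ev$ by a sign-reversing bijection (taking the symmetric difference of each $\delta_h$ with a fixed singleton $\{j_0\}$ not occurring among the chosen indices). You instead pass to $\log f(t)$, which linearizes the product into the alternating power sums $b_k=\sum_\delta(-1)^{|\delta|+1}(\sum_{j\in\delta}a_j)^k$, and kill $b_0,\dots,b_{\ell-2}$ via the identity $\sum_{\delta\subseteq[\ell-1]}(-1)^{|\delta|}g(s_\delta)=(D_{a_1}\circ\cdots\circ D_{a_{\ell-1}}g)(0)$ together with the degree-drop property of each difference operator $D_a$; exponentiating back is harmless since $\log f=\O(t^{-(\ell-1)})$. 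The two cancellation mechanisms are close cousins (your $(\ell-1)$-fold finite difference is essentially the same inclusion--exclusion that powers the paper's involution), but the packaging differs: the paper's argument is entirely elementary polynomial bookkeeping, at the cost of manipulating products of $2^{\ell-2}$ factors, while yours isolates the conceptual content (the first $\ell-2$ alternating power sums vanish) and avoids the expansion, at the cost of justifying the Laurent/logarithm manipulations at infinity --- which you do correctly, including the non-issue of the $b_0\log t$ term, since $b_0=0$ is established as part of the argument.
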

\begin{proof}
    This corresponds to \cite[Lemma 2.4]{FerayRandomPermutationsCumulants},
    but we copy the proof for completeness.

Define $R_\ev$ (resp. $R_\odd$) as
\[\prod_\delta \left( t - \sum_{j \in \delta} a_j \right),\]
where the product runs over subsets of $[\ell-1]$ of even (resp. odd) size.
Clearly, $R(t)=\tfrac{R_\odd-R_\ev}{R_\ev}$.
Expanding the product, one gets
\[R_\ev = \sum_{m \geq 0} \ \ \frac{1}{m!} \ 
\sum_{\delta_1,\dots,\delta_m} \ \ \sum_{j_1\in \delta_1,\dots,j_m \in \delta_m}  (-1)^m  
a_{j_1} \dots a_{j_m} t^{2^{\ell-2} - m}.\]
The index set of the second summation symbol is the set of lists of $m$ distinct
(but not necessarily disjoint) subsets of $[\ell-1]$ of even size.
Of course, a similar formula with subsets of odd size holds for $R_\odd$.

Let us fix an integer $m<\ell-1$ and a list $j_1,\dots,j_m$.
Denote $j_0$ the smallest integer in $[\ell-1]$ different from $j_1,\dots,j_m$
(as $m<\ell-1$, such an integer necessarily exists).
Then one has a bijection:
\[ \begin{array}{rcl}
\left\{ \begin{array}{c}
    \text{lists of subsets}\\
\delta_1,\dots,\delta_m \text{ of even size such}\\
\text{that, }\forall\ h\leq m, j_h \in \delta_h
\end{array}\right\}
& \to &
\left\{ \begin{array}{c}
    \text{lists of subsets}\\
\delta_1,\dots,\delta_m \text{ of odd size such}\\
\text{that, }\forall\ h\leq m, j_h \in \delta_h
\end{array}\right\}\\
(\delta_1,\dots,\delta_m)
&\mapsto&
(\delta_1 \nabla \{j_0\},\dots,\delta_m \nabla \{j_0\}),
\end{array}\]
where $\nabla$ is the symmetric difference operator.
Thus the summand 
\hbox{$(-1)^m a_{j_1} \dots a_{j_m} t^{2^{\ell-2} - m}$}
appears as many times
in $R_\ev$ as in $R_\odd$.
Finally, all terms corresponding to values of $m$ smaller than $\ell-1$
cancel in the difference
$R_\odd - R_\ev$ and $R_\odd - R_\ev$ has degree at most $2^{\ell-2} - \ell +1$.
Dividing by $R_\ev$, which has degree $2^{\ell-2}$, this ends the proof.
\end{proof}

We now prove \cref{PropExSCQF}, using the notation defined there.
\begin{proof}[Proof of \cref{PropExSCQF}]
We proceed by induction first on $\ell$, and then on $a_\ell$. 

For $\ell=1$, there is nothing to prove. Consider $\ell>1$ and 
assume that the statement holds for all $\ell'<\ell$.
In particular, for any $\Delta \subsetneq [\ell]$, the subfamily 
\[\big(u^{(n)}_\delta(a_i; i \in \Delta)\big)_{\delta \subseteq \Delta, n \ge n_0}\]
has the $\eps_n$ SC/QF property and
\[ \P_\Delta\big(\uu^{(n)}(a_1,\dots,a_\ell)\big) -1 = \O(X_n^{-|\Delta|+1}).\]
We thus have to prove that 
\begin{equation}
    \P_{[\ell]}\big(\uu^{(n)}(a_1,\dots,a_\ell)\big) -1 = \O(X_n^{-\ell+1}).
    \label{EqToProve2}
\end{equation}

If $a_\ell=0$, then for any $\Delta \subseteq [\ell-1]$,
\[u^{(n)}_\Delta\big(a_1,\dots,a_\ell\big) = u^{(n)}_{\Delta \cup \{\ell\}}\big(a_1,\dots,a_\ell\big),\]
so that $\P_{[\ell]}\big(\uu^{(n)}(a_1,\dots,a_\ell)\big) =1$ and \cref{EqToProve2} trivially holds.

Assume that \cref{EqToProve2} for $a_\ell=k$ and consider the case $a_\ell=k+1$.
Observe that, if $\ell \in \Delta$,
\[ u^{(n)}_\Delta\big( a_1,\dots,a_{\ell-1},k \big)
= \left (X_n - \sum_{i \in \Delta \atop i \ne \ell} a_i -k\right)
u^{(n)}_\Delta\big(a_1,\dots,a_{\ell-1},k+1 \big).\]
On the other hand, if $\ell \notin \Delta$,
\[ u^{(n)}_\Delta\big(a_1,\dots,a_{\ell-1},k \big)  
= u^{(n)}_\Delta\big(a_1,\dots,a_{\ell-1},k+1 \big).\]
Finally,
\[
    \P_{[\ell]}\big(\uu^{(n)}(a_1,\dots,a_{\ell-1},k) \big)
= \prod_{\Delta \subseteq [\ell] \atop \ell \in \Delta} 
\left (X_n - \sum_{i \in \Delta \atop i \ne \ell} a_i -k\right)^{(-1)^{|\Delta|}} \!
\cdot \P_{[\ell]}\big(\uu^{(n)}(a_1,\dots,a_{\ell-1},k+1) \big).
\]
Subsets $\Delta$ of $[\ell]$ that contains $\ell$ are in trivial bijection with
subsets of $[\ell]-1$, so that the product above correspond to the rational function $R(t)+1$
from \cref{LemTechCumulants}, evaluated in $X_n-k$.
Thus it is $1+\O(X_n^{-\ell+1})$.
By induction hypothesis
\[ \P_{[\ell]}\big(\uu^{(n)}(a_1,\dots,a_{\ell-1},k) \big) = 1+\O(X_n^{-\ell+1}) \]
and thus
\[ \P_{[\ell]}\big(\uu^{(n)}(a_1,\dots,a_{\ell-1},k+1) \big) = 1+\O(X_n^{-\ell+1}), \]
which ends the proof.
\end{proof}
\begin{remark}
    The exact same proof works for the family
\[v^{(n)}_\Delta(a_1,\cdots,a_\ell)= \left(X_n+\textstyle \sum_{i \in \Delta} a_i \right)!\] 
\end{remark}
\section{Variance computations}
\subsection{Crossings in random pair partitions}
\label{AppVarPP}
The goal of this section is to compute (asymptotically)
the variance of $\Cr_n$, the number of crossings             
    in a uniform random pair partitions of $[2n]$.
We first establish a polynomiality result for it.

\begin{lemma}
    The quantity
    \[(2n-1)^2 \, (2n-3)^2\, (2n-5) \, (2n-7) \, \Var(\Cr_n) \]
    is a polynomial in $n$ of degree at most $9$.
    \label{LemVarPoly}
\end{lemma}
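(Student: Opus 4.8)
The plan is to expand the variance over pairs of crossings and to evaluate each covariance with the explicit moment formula recalled in \cref{SubsectDefPP}. First I would write
\[
\Var(\Cr_n)=\sum_{\alpha,\beta\in A'_n}\Cov(Y'_\alpha,Y'_\beta),
\]
and use that $Y'_{i,j,k,l}=Y_{i,k}\,Y_{j,l}$, so that for $\alpha=(i_1,j_1,k_1,l_1)$ and $\beta=(i_2,j_2,k_2,l_2)$ the product $Y'_\alpha Y'_\beta$ is a product of indicator variables, one for each of the distinct pairs occurring among $\{i_1,k_1\},\{j_1,l_1\},\{i_2,k_2\},\{j_2,l_2\}$; since the two pairs of $\alpha$ (and those of $\beta$) are disjoint, there are between $2$ and $4$ distinct such pairs. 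If two distinct such pairs share an endpoint then $Y'_\alpha Y'_\beta\equiv 0$; otherwise these distinct pairs form a partial matching of size $t\in\{2,3,4\}$ and, by \cref{SubsectDefPP},
\[
\esper(Y'_\alpha Y'_\beta)=\frac{1}{(2n-1)(2n-3)\cdots(2n-2t+1)}.
\]
In all cases $\esper(Y'_\alpha)=\esper(Y'_\beta)=\frac{1}{(2n-1)(2n-3)}$, so $\sum_{\alpha,\beta}\esper(Y'_\alpha)\esper(Y'_\beta)=\esper(\Cr_n)^2$ and each of its summands equals $\frac{1}{(2n-1)^2(2n-3)^2}$.

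Next I would group the pairs $(\alpha,\beta)$ by their \emph{order pattern}: the datum of which of the eight coordinates coincide together with the full relative order of the values they take (after an increasing relabelling of those values, two pairs with the same pattern become identical). There are finitely many patterns $P$, independently of $n$; each uses $m_P\le 8$ distinct values, and the number $N_P(n)$ of pairs realizing $P$ equals $\binom{2n}{m_P}$, a polynomial in $n$ of degree $m_P$. Moreover whether $Y'_\alpha Y'_\beta\equiv 0$, and the value of $t$ if not, depend only on $P$, so $\Cov(Y'_\alpha,Y'_\beta)=:\Cov_P(n)$ is a fixed rational function of $n$ for each pattern. The denominators occurring are $(2n-1)^2(2n-3)^2$ and $(2n-1)(2n-3)\cdots(2n-2t+1)$ with $t\le 4$, all of which divide $(2n-1)^2(2n-3)^2(2n-5)(2n-7)$; hence $(2n-1)^2(2n-3)^2(2n-5)(2n-7)\,\Cov_P(n)$ is a polynomial, and
\[
(2n-1)^2(2n-3)^2(2n-5)(2n-7)\,\Var(\Cr_n)=\sum_P N_P(n)\cdot\Big[(2n-1)^2(2n-3)^2(2n-5)(2n-7)\,\Cov_P(n)\Big]
\]
(a polynomial identity, valid for $n$ large and hence as rational functions) exhibits the left-hand side as a finite sum of products of polynomials, hence as a polynomial in $n$.

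For the degree bound I would compute the bracketed quantity case by case. When $Y'_\alpha Y'_\beta\equiv 0$ it equals $-(2n-5)(2n-7)$ (degree $2$), and then some coordinate of $\alpha$ coincides with one of $\beta$, so $m_P\le 7$. When $Y'_\alpha Y'_\beta\not\equiv 0$, the $t$ distinct pairs use $2t$ distinct endpoints, so $m_P=2t$, and the bracket equals $(2n-5)(2n-7)\big[(2n-1)(2n-3)-1\big]$ if $t=2$ (degree $4$), $(2n-7)\big[(2n-1)(2n-3)-(2n-5)\big]$ if $t=3$ (degree $3$), and $(2n-1)(2n-3)-(2n-5)(2n-7)=16n-32$ if $t=4$ (degree $1$, the leading $4n^2$ terms cancelling). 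In every case the $P$-summand $N_P(n)\cdot\big[\cdots\big]$ has degree $m_P+\deg\big[\cdots\big]\le 9$, the bound being attained by the generic pattern with all eight coordinates distinct ($m_P=8$, $t=4$); summing over patterns gives the claim. The point to watch is that the naive decomposition $\Var(\Cr_n)=\esper(\Cr_n^2)-\esper(\Cr_n)^2$ produces, after clearing denominators, two competing terms of degree $10$: working with covariances from the start builds the cancellation $(2n-1)(2n-3)-(2n-5)(2n-7)=16n-32$ into each pattern and keeps every summand of degree $\le 9$. I expect the only genuine work in a full proof to be the bookkeeping --- enumerating the patterns (or at least their invariants $m_P$ and $t$) and checking $N_P(n)=\binom{2n}{m_P}$ --- which is elementary.
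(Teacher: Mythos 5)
Your proof is correct, and its polynomiality half proceeds exactly as in the paper: both arguments expand $\Var(\Cr_n)=\sum_{\alpha,\beta}\Cov(Y'_\alpha,Y'_\beta)$, split the sum according to which of the eight coordinates coincide, observe that the covariance depends only on that coincidence/order pattern while the number of realizations of a pattern is a polynomial in $n$, and clear the denominators supplied by the explicit moment formula of \cref{SubsectDefPP}. Where you genuinely diverge is the degree bound. The paper computes nothing here: it invokes \cref{LemBorneCumulant} with the parameters $R_n=\O(n^2)$ and $T_{1,n}=\O(n)$ already obtained from the weighted dependency graph in the proof of \cref{ThmAsympNormPP}, concludes $\Var(\Cr_n)=\O(n^3)$, and notes that a polynomial which is $\O(n^9)$ has degree at most $9$. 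You instead verify the bound pattern by pattern, which forces you to exhibit the cancellation $(2n-1)(2n-3)-(2n-5)(2n-7)=16n-32$ in the generic case ($t=4$, $m_P=8$) --- precisely the cancellation that the cumulant bound encodes wholesale, and without which the naive degree count would give $10$. Your computations check out (in particular $m_P\le 7$ when $Y'_\alpha Y'_\beta\equiv 0$, and $N_P(n)=\binom{2n}{m_P}$ since a pattern is realized by any increasing injection of its value set into $[2n]$). The trade-off: your route is elementary and self-contained, at the cost of the case analysis; the paper's is softer and shorter, but only because the dependency-graph machinery has already been set up for the central limit theorem.
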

\begin{proof}
    We use the decomposition
    \[\Var(\Cr_n)=\sum_{i_1<j_1<k_1<l_1 \atop i_2<j_2<k_2<l_2} 
       \Cov(Y'_{i_1,j_1,k_1,l_1}, Y'_{i_2,j_2,k_2,l_2}). \]
    We split the sum depending on which summation indices are equal.
    For a given set of equalities 
    ({\em e.g.} $i_1=j_2$ and $l_1=l_2$, but all other indices are distinct),
    the covariance is always the same and the corresponding number of terms
    is a polynomial in $n$.
    
    Moreover, from the discussion of \cref{SubsectDefPP} on the probability
    that a random pair partitions contains a given set of pairs,
    we see that 
    \[(2n-1)^2 \, (2n-3)^2\, (2n-5) \, (2n-7) \, \Cov(Y'_{i_1,j_1,k_1,l_1}, Y'_{i_2,j_2,k_2,l_2})\]
    is always a polynomial in $n$.
    This proves that
    \[(2n-1)^2 \, (2n-3)^2\, (2n-5) \, (2n-7) \,\Var(\Cr_n) \]
    is a polynomial in $n$, as claimed.

    Besides, from \cref{LemBorneCumulant}, we know that $\Var(\Cr_n)=\O(n^3)$
    (recall from the proof of \cref{ThmAsympNormPP} that,
    in this case, $R_n= \O(n^2)$ and $T_{1,n}=\O(n)$).
    Therefore the degree of the above polynomial is at most $9$.
\end{proof}

A polynomial of degree at most $9$ can be determined by polynomial interpolation
from its values on the set $\{0,\cdots,9\}$.
But $\Var(\Cr_n)$ can be easily computed with the help of a computer algebra software
for small values of $n$.
We performed this computation using sage \cite{sage}.
The code has been embedded in the pdf file for interested readers of the electronic version.
We obtain the following result.
\begin{proposition}
    Let $\Cr_n$ be the number of crossings
    in a uniform random pair partition of $[2n]$.
    We have
    \[\Var(\Cr_n)= \frac{n(n-1)(n-3)}{45}.\]
\end{proposition}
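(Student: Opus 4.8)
The plan is to deduce the exact value from the polynomiality statement of \cref{LemVarPoly}, a finite computation, and Lagrange interpolation. Set
\[ Q(n) := (2n-1)^2 \, (2n-3)^2 \, (2n-5) \, (2n-7) \, \Var(\Cr_n). \]
By \cref{LemVarPoly}, $Q$ is a polynomial in $n$; moreover its degree is at most $9$, since the six linear factors contribute degree $6$ and the variance estimate $\Var(\Cr_n)=\O(n^3)$ obtained via \cref{LemBorneCumulant} in the proof of \cref{ThmAsympNormPP} contributes at most $3$. A polynomial of degree at most $9$ is determined by its values at any $10$ distinct arguments, so it suffices to compute $Q$ at $n=0,1,\dots,9$ and interpolate.

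First I would compute $\Var(\Cr_n)$ for $n=0,1,\dots,9$ directly: for each such $n$, enumerate the $(2n-1)!!$ pair partitions of $[2n]$, count for each the quadruples $i<j<k<l$ with $\{i,k\}$ and $\{j,l\}$ among its blocks, and form the variance of this statistic using exact rational arithmetic. The cases $n=0$ and $n=1$ give $\Var(\Cr_n)=0$ immediately, and the largest case $n=9$ involves only $17!!\approx 3.4\cdot 10^{7}$ partitions, so this is an easy computer-algebra task; it is exactly the Sage computation \cite{sage} whose source is embedded in the electronic version of the paper. Multiplying each value by the number $(2n-1)^2(2n-3)^2(2n-5)(2n-7)$, which is nonzero because $n$ is an integer, yields $Q(0),\dots,Q(9)$.

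Then I would interpolate these ten values to obtain $Q$ explicitly, factor it, and compare with $(2n-1)^2(2n-3)^2(2n-5)(2n-7)$: the remaining factor is the claimed closed form for $\Var(\Cr_n)$. Equivalently, one checks that $Q(n)$ and $\tfrac{n(n-1)(n-3)}{45}\,(2n-1)^2(2n-3)^2(2n-5)(2n-7)$ take the same value at $n=0,\dots,9$; since both are polynomials of degree at most $9$, they then coincide for all $n$, and dividing by the common factor gives $\Var(\Cr_n)=\tfrac{n(n-1)(n-3)}{45}$.

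There is no genuine mathematical obstacle here: the substance is already contained in \cref{LemVarPoly}, and the rest is bookkeeping. The two points deserving care are, first, that the degree bound inherited from \cref{LemBorneCumulant} is indeed $9$, so that ten interpolation nodes suffice, and, second, that the small-case variances are computed with exact rational arithmetic rather than floating point, so that the interpolated polynomial — and in particular the clean factorisation above — is obtained exactly rather than approximately.
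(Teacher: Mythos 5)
Your proposal is correct and follows essentially the same route as the paper: invoke \cref{LemVarPoly} to reduce the problem to determining a polynomial of degree at most $9$, compute $\Var(\Cr_n)$ exactly for $n=0,\dots,9$ by computer algebra (the paper does this in Sage, with the code embedded in the electronic version), and interpolate. The only difference is presentational — you spell out the interpolation and verification step that the paper leaves implicit.
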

We refer to \cite[Theorem 3]{FlajoletNoy:CLT_Crossings} for another proof of this result,
which also explains the polynomiality in $n$,
but relies on the ``remarkable exact formula'' for the generating series of crossings.

\subsection{Subgraph counts: proof of \cref{EqAympVarianceGnm}}
\label{AppVarGnm}
We first write.
\[\Var(X^H_n) = \sum_{H'_1,H'_2 \in A^H_n}
\Cov(Y_{H'_1},Y_{H'_2})\]
Observe that, if $H'_1 \cap H'_2 \simeq K$, then
\begin{equation}
    \Cov(Y_{H'_1},Y_{H'_2}) = \frac{(m_n)_{2e_H-e_K}}{(E_n)_{2e_H-e_K}}-
\left( \frac{(m_n)_{e_H}}{(E_n)_{e_H}} \right)^2.
\label{EqCovK}
\end{equation}
Unlike in the $G(n,p)$ model, this covariance can be negative.
More precisely, it is negative if and only if the copies $H'_1$ and $H'_2$ of $H$
are edge-disjoint.
The total contribution of such pairs is given
in the following lemma.
\begin{lemma}
    One has
    \[
        \sum_{H'_1,H'_2 \in A^H_n \atop E_{H'_1} \cap E_{H'_2} =\emptyset}
    \Cov(Y_{H'_1},Y_{H'_2})
    = - \frac{2 e_H^2}{\Aut(H)^2} (n)_{v_H} (n-2)_{v_H-2} \, p_n^{2e_H-1} (1-p_n) 
    + O\big[n^{2v_H-4} p_n^{2e_H-2}\big] .
    \]
    \label{LemTCovNoEdges}
\end{lemma}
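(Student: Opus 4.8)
The plan is to use the crucial feature of \eqref{EqCovK}: when $H'_1$ and $H'_2$ are edge-disjoint one has $e_K=0$, so that $\Cov(Y_{H'_1},Y_{H'_2})$ takes the \emph{same} value
\[
c_0 := \frac{(m_n)_{2e_H}}{(E_n)_{2e_H}} - \left(\frac{(m_n)_{e_H}}{(E_n)_{e_H}}\right)^2
\]
for every such pair (valid for $n$ large enough, so that $m_n \ge 2e_H$). Hence the sum in the lemma equals $N_0\, c_0$, where $N_0$ is the number of ordered pairs of copies of $H$ in $K_n$ with disjoint edge sets, and it suffices to estimate these two factors separately.

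First I would expand $c_0$. Since $p_n=m_n/E_n$, $m_n\to\infty$, and $i$ ranges over a bounded set, $\frac{m_n-i}{E_n-i}=p_n\bigl(1+\tfrac{i(p_n-1)}{m_n}+O(m_n^{-2})\bigr)$, whence $\frac{(m_n)_k}{(E_n)_k}=p_n^{k}\bigl(1+\tfrac{p_n-1}{m_n}\binom k2+O(m_n^{-2})\bigr)$ for any fixed $k$. Plugging $k=2e_H$ and $k=e_H$ into the definition of $c_0$, the leading powers $p_n^{2e_H}$ cancel, and using the identity $\binom{2e_H}{2}-2\binom{e_H}{2}=e_H^2$ together with $m_n=\tfrac12 p_n\,n(n-1)$ one gets
\[
c_0 = -\frac{2e_H^2\,(1-p_n)\,p_n^{2e_H-1}}{n(n-1)} + O\!\left(\frac{p_n^{2e_H-2}}{n^4}\right).
\]
Next, the total number of ordered pairs of copies of $H$ in $K_n$ is $\bigl((n)_{v_H}/\Aut(H)\bigr)^2$, and a pair sharing at least one edge necessarily shares at least two vertices; the number of such pairs is $O(n^{2v_H-2})$ (choose $H'_1$, then place $\ge 2$ vertices of $H'_2$ among the $v_H$ vertices of $H'_1$ and the remaining ones freely). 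Therefore $N_0 = \bigl((n)_{v_H}/\Aut(H)\bigr)^2 + O(n^{2v_H-2})$.

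Finally I would multiply the two estimates. The product of the two main terms is $-\tfrac{2e_H^2}{\Aut(H)^2}\cdot\tfrac{(n)_{v_H}^2}{n(n-1)}\,p_n^{2e_H-1}(1-p_n)$, and since $(n)_{v_H}=n(n-1)(n-2)_{v_H-2}$ one has $(n)_{v_H}^2/\bigl(n(n-1)\bigr)=(n)_{v_H}(n-2)_{v_H-2}$, which gives exactly the stated main term. The three remaining contributions (main $\times$ error, error $\times$ main, error $\times$ error), using $p_n\le 1$ and $m_n\asymp p_n n^2$, are each $O(n^{2v_H-4}p_n^{2e_H-2})$, matching the claimed error. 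The only genuinely delicate point is that the $p_n^{2e_H}$ contributions to $c_0$ cancel, so the expansion of the falling-factorial ratios must be pushed to relative order $m_n^{-2}$; noticing the cancellation $\binom{2e_H}{2}-2\binom{e_H}{2}=e_H^2$ is what produces the coefficient $2e_H^2$, and everything else is routine bookkeeping of orders.
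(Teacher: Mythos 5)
Your proposal is correct and follows essentially the same route as the paper: both expand the falling-factorial ratios to relative order $m_n^{-2}$, use the cancellation $\binom{2e_H}{2}-2\binom{e_H}{2}=e_H^2$ to extract the coefficient, estimate the number of edge-disjoint pairs as $\big((n)_{v_H}/\Aut(H)\big)^2+O(n^{2v_H-2})$ via the shared-vertices argument, and multiply. The only (cosmetic) difference is that you factor the sum explicitly as $N_0\,c_0$ before estimating, which makes the bookkeeping slightly cleaner.
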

\begin{proof}
    Consider two edge-disjoint copies $H'_1$ and $H'_2$ of $H$
    and let us look at \cref{EqCovK} in this case.
    We have
    \begin{multline*}
        p_n^{-2e_H} \frac{(m_n)_{2e_H}}{(E_n)_{2e_H}}
        = \frac{\prod_{i=0}^{2e_H-1} \left( 1-\frac{i}{m_n} \right)}
        {\prod_{i=0}^{2e_H-1} \left( 1-\frac{i}{E_n} \right)} 
        =\prod_{i=0}^{2e_H-1} \left( 1-i\left( \frac{1}{m_n} - \frac{1}{E_n} \right)\right) +\O(m_n^{-2})\\
        = 1 - \frac{2e_H(2e_H-1)}{2} \frac{1}{m_n} (1 - p_n) + \O(m_n^{-2}).
    \end{multline*}
    Similarly,
    \[  p_n^{-2e_H} \left( \frac{(m_n)_{e_H}}{(E_n)_{e_H}} \right)^2
    = 1 - 2 \frac{e_H(e_H-1)}{2} \frac{1}{m_n} (1 - p_n) + \O(m_n^{-2}).\]
    Putting both equations together, we get
    \[ \Cov(Y_{H'_1},Y_{H'_2}) = - p_n^{2e_H} e_H^2 \frac{1}{m_n} (1 - p_n) + \O( p_n^{2e_H} m_n^{-2} ).\] 

    On the other hand, we claim that the total number of pairs $(H'_1,H'_2)$ of copies of $H$
    that do not share an edge is asymptotically $(n)_{v_H}^2/\Aut(H)^2 (1+\O(n^{-2}))$.
    Indeed, $(n)_{v_H}^2/\Aut(H)^2$ is the number of pairs $(H'_1,H'_2)$ of copies of $H$.
    If we think at such a pair being taken independently uniformly at random,
    the vertex sets of $H'_1$ and $H'_2$ are independent uniform random $v_H$-element subsets of $[n]$
    and the probability that they have at least two vertices in common in $\O(n^{-2})$.
    This explains the above claim.

    Bringing both estimates together, we get:
    \begin{multline*}
        \sum_{H'_1,H'_2 \in A^H_n \atop E_{H'_1} \cap E_{H'_2} = \emptyset}
        \Cov(Y_{H'_1},Y_{H'_2})
        = - \frac{e_H^2}{\Aut(H)^2} \frac{(n)^2_{v_H} p_n^{2e_H}}{m_n} (1-p_n) \\
        +  O\left(\frac{n^{2v_H-2} p_n^{2e_H} (1-p_n)}{m_n}\right) + 
        O\left(\frac{n^{2v_H} p_n^{2e_H}}{m_n^2} \right).
    \end{multline*}
    Substituting $m_n=\tfrac{p_n \, n(n-1)}{2}$ and observing that the second error term
    is bigger than the first complete the proof.
\end{proof}

Consider now pairs
$(H'_1,H'_2)$ with a non-trivial edge intersection.
Denote by $e_K$ the number of edges in the intersection
of $H'_1$ and $H'_2$.

Consider the expression of $\Cov(Y_{H'_1},Y_{H'_2})$ given in \cref{EqCovK}.
A straightforward computation gives:
    \begin{multline}
        \Cov(Y_{H'_1},Y_{H'_2})=p_n^{2e_H-e_K} \left[ 1- p_n^{e_K} + O\left(\frac{1}{m_n}\right) \right]\\
        = p_n^{2e_H-e_K}(1- p_n^{e_K})\bigg\{ 1+O\big[ m_n^{-1}(1-p_n)^{-1} \big] \bigg\}.
        \label{EqTechCovK}
    \end{multline}
We use the easy inequality (for $0 \le p \le 1$ and $e$ positive integer)
\[\frac{1-p^e}{1-p}=1 + \cdots + p^{e-1} \ge e p^{e-1} + (1-p) \delta_{e>1},\]
so that the estimate \eqref{EqTechCovK} gives
\begin{multline*}
    \Cov(Y_{H'_1},Y_{H'_2}) \ge e_K\, p_n^{2e_H-1}(1-p_n) \bigg\{ 1+O\big[ m_n^{-1}(1-p_n)^{-1} \big] \bigg\}\\
+ \delta_{e_K>1} \, p_n^{2e_H-e_K}(1-p_n)^2 \bigg\{ 1+O\big[ m_n^{-1}(1-p_n)^{-1} \big] \bigg\}.
\end{multline*}
Call $A_{H'_1,H'_2}$ and $B_{H'_1,H'_2}$ the first and second term in the right-hand side.
\begin{lemma}
    One has
    \begin{equation*}
        \sum_{H'_1,H'_2 \in A^H_n \atop E_{H'_1} \cap E_{H'_2} \neq \emptyset}  A_{H'_1,H'_2}
        = \frac{2 e_H^2}{\Aut(H)^2} (n)_{v_H} (n-2)_{v_H-2} p_n^{2e_H-1} (1-p_n) \\
            + O\big[n^{2v_H-4} p_n^{2e_H-2}\big].
    \end{equation*}
    \label{LemSumA}
\end{lemma}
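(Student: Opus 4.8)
The plan is to reduce the sum to an exact combinatorial count by double counting, exploiting the edge-transitivity of the complete graph $K_n$. Recall that, by construction, $A_{H'_1,H'_2} = e_K\, p_n^{2e_H-1}(1-p_n)\big\{1+O[m_n^{-1}(1-p_n)^{-1}]\big\}$, where $e_K=e_K(H'_1,H'_2)$ is the number of edges common to $H'_1$ and $H'_2$; the constant in the $O$ symbol is uniform in the pair $(H'_1,H'_2)$ because it comes from \eqref{EqTechCovK}, where it depends only on $e_H$ (which bounds $e_K$). Factoring out the common terms, I would write
\[\sum_{\substack{H'_1,H'_2\in A^H_n\\ E_{H'_1}\cap E_{H'_2}\neq\emptyset}} A_{H'_1,H'_2}
= p_n^{2e_H-1}(1-p_n)\,\Sigma_n\,\big(1+O[m_n^{-1}(1-p_n)^{-1}]\big),\qquad
\Sigma_n:=\sum_{H'_1,H'_2\in A^H_n} e_K ,\]
where the summation range for $\Sigma_n$ has been extended to all ordered pairs, since $e_K=0$ when the two copies are edge-disjoint.

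The heart of the argument is to compute $\Sigma_n$ exactly. Writing $e_K$ as the number of edges of $K_n$ lying in both $E_{H'_1}$ and $E_{H'_2}$ and exchanging the order of summation gives $\Sigma_n = \sum_{e} N_e^2$, where $e$ runs over the edges of $K_n$ and $N_e$ is the number of copies $H'\in A^H_n$ with $e\in E_{H'}$. By edge-transitivity of $K_n$, $N_e$ does not depend on $e$; call its common value $N_1$. Counting incidences $(H',e)$ with $H'\in A^H_n$ and $e\in E_{H'}$ in two ways yields $\binom n2\,N_1 = |A^H_n|\, e_H = \tfrac{(n)_{v_H}}{\Aut(H)}\,e_H$, hence
\[\Sigma_n = \binom n2\, N_1^2 = \frac{(n)_{v_H}^2\, e_H^2}{\Aut(H)^2\binom n2}
= \frac{2\, e_H^2}{\Aut(H)^2}\,(n)_{v_H}\,(n-2)_{v_H-2},\]
where the last equality uses $\binom n2 = \tfrac{n(n-1)}{2}$ and the identity $(n)_{v_H} = n(n-1)(n-2)_{v_H-2}$ (valid since $v_H\ge 2$).

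It then remains to assemble the pieces. Multiplying $p_n^{2e_H-1}(1-p_n)$ by the exact value of $\Sigma_n$ produces precisely the claimed leading term, while the error contribution is
\[p_n^{2e_H-1}(1-p_n)\cdot O[m_n^{-1}(1-p_n)^{-1}]\cdot\Sigma_n
= O\!\Big(\tfrac{p_n^{2e_H-1}}{m_n}\,n^{2v_H-2}\Big)
= O\big(n^{2v_H-4}\, p_n^{2e_H-2}\big),\]
using $\Sigma_n = O(n^{2v_H-2})$ and $m_n = p_n\binom n2 \asymp p_n n^2$, which matches the asserted bound. I do not expect a genuine obstacle: once the double-counting identity for $\Sigma_n$ is in place the rest is routine bookkeeping, and the only point to keep an eye on is the uniformity (in the pair $H'_1,H'_2$) of the $O$ constants, which is immediate from the derivation of \eqref{EqTechCovK}.
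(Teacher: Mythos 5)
Your proof is correct and follows the same overall strategy as the paper's: factor the uniform $\{1+O[m_n^{-1}(1-p_n)^{-1}]\}$ out of the sum, reduce the leading term to the exact count $\Sigma_n=\sum_{H'_1,H'_2}e_K$, and absorb the remainder using $m_n\asymp p_n n^2$. The only real difference is how $\Sigma_n$ is evaluated. The paper counts triples $(H'_1,H'_2,e)$ with $e$ a marked common edge by direct construction (choose the vertex list of $H'_1$, the marked edge in each copy, then the remaining $v_H-2$ vertices of $H'_2$), dividing by $\Aut(H)^2$ for the overcounting. You instead write $\Sigma_n=\sum_e N_e^2$, invoke edge-transitivity of $K_n$ to get $N_e\equiv N_1$, and determine $N_1$ from the incidence count $\binom{n}{2}N_1=|A^H_n|\,e_H$; the two computations give the same exact value $\tfrac{2e_H^2}{\Aut(H)^2}(n)_{v_H}(n-2)_{v_H-2}$ (a quick sanity check with $H=K_3$ confirms both reduce to $\binom{n}{2}(n-2)^2$). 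Your version is arguably cleaner since it sidesteps the $\Aut(H)^2$ overcounting discussion entirely, at the cost of being specific to the statistic $e_K$ rather than to pairs with a prescribed intersection type; either works here. Your closing remark on uniformity of the $O$ constant is the right point to flag: it is indeed uniform because the constant in \cref{EqTechCovK} depends only on $e_H$ and on $e_K\in\{1,\dots,e_H\}$, which ranges over a finite set.
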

\begin{proof}
    From the definition,
    \begin{equation}
        \sum_{H'_1,H'_2 \in A^H_n \atop E_{H'_1} \cap E_{H'_2} \neq \emptyset}  A_{H'_1,H'_2}
    = \left(\sum_{H'_1,H'_2 \in A^H_n \atop E_{H'_1} \cap E_{H'_2} \neq \emptyset} e_K \right)
    p_n^{2e_H-1}(1-p_n) \bigg\{ 1+O\big[ m_n^{-1}(1-p_n)^{-1} \big] \bigg\}.
    \label{EqSumA}
\end{equation}
    The parenthesis counts the number of pairs $(H'_1,H'_2)$ of copies of $H$
    with a marked common edge.
    Such a pair can be constructed as follows:
    choose successively 
    \begin{enumerate}
        \item a list of vertices $v_1,\cdots,v_k$ for the vertices of $H'_1$
            ($(n)_{v_H}$ choices),
        \item the edge of $H'_1$ that will be the marked common edge ($e_H$ choices),
        \item the edge of $H'_2$  that will be the marked common edge ($e_H$ choices);
            this determines up to a switch (2 choices) two vertices of $H'_2$.
        \item choose a list of vertices $w_1,\cdots,w_{k-2}$ for the other vertices
            of $H'_2$, which does not contain the extremities of the marked common edges
             ($(n-2)_{v_H-2}$ choices).
    \end{enumerate} 
    Doing so, we construct $\Aut(H)^2$ times each pair $(H'_1,H'_2)$.
    Thus we have
    \[\left(\sum_{H'_1,H'_2 \in A^H_n \atop E_{H'_1} \cap E_{H'_2} \neq \emptyset} e_K \right)
    = \frac{2 e_H^2}{\Aut(H)^2} (n)_{v_H} (n-2)_{v_H-2}. \]
    We plug this in \cref{EqSumA} and expand the remainder
    (recall that $m_n \asymp n^2 p_n$) to get the statement in the lemma.
\end{proof}

The last lemma estimates the sum of $B_{H'_1,H'_2}$.
\begin{lemma}
    One has
    \[\sum_{H'_1,H'_2 \in A^H_n \atop \big|E_{H'_1} \cap E_{H'_2}\big|>1}
    B_{H'_1,H'_2} \asymp \frac{(n^{v_H} \, p_n^{e_H})^2}{\tPhi_H}\, (1-p_n)^2 .\]
    \label{LemSumB}
\end{lemma}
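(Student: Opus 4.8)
The plan is to bound from below the contribution of pairs of copies of $H$ whose edge-intersection $K$ has at least two edges, isolating the ``main'' term $B_{H'_1,H'_2}$. First I would recall that for such a pair, $B_{H'_1,H'_2} = p_n^{2e_H - e_K}(1-p_n)^2 \{1 + O[m_n^{-1}(1-p_n)^{-1}]\}$, and that under our standing hypothesis $n^{1-\eps}(1-p_n)^2 \gg 1$, so in particular $m_n^{-1}(1-p_n)^{-1} = O(n^{-2}p_n^{-1}(1-p_n)^{-1}) \to 0$; hence the curly-bracket factor is bounded between two positive constants for $n$ large. So up to constants the sum is
\[
\sum_{\substack{H'_1,H'_2 \in A^H_n \\ |E_{H'_1}\cap E_{H'_2}|>1}} p_n^{2e_H - e_K}(1-p_n)^2 .
\]

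Next I would organize the sum by the isomorphism type of the overlap. Write $K = H'_1 \cap H'_2$ as a subgraph of $H$ (up to isomorphism), ranging over subgraphs $K \subseteq H$ with $e_K \ge 2$; by hypothesis $L_2 \subseteq H$, so at least one such $K$ exists. For a fixed isomorphism type $K$, the number of pairs $(H'_1,H'_2)$ with intersection isomorphic to $K$ is $\asymp n^{2v_H - v_K}$: one counts ordered choices of the $v_H$ vertices of $H'_1$ ($\asymp n^{v_H}$), then identifies inside $H'_1$ a copy of $K$ (constant number of choices), then extends to $H'_2$ by choosing its remaining $v_H - v_K$ vertices freely ($\asymp n^{v_H - v_K}$) --- this is an upper bound, and it is also a lower bound up to a constant since with positive probability the extra vertices avoid creating unwanted extra coincidences. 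Therefore
\[
\sum_{\substack{H'_1,H'_2 \\ |E_{H'_1}\cap E_{H'_2}|>1}} p_n^{2e_H - e_K}(1-p_n)^2
\asymp (1-p_n)^2 \sum_{\substack{K \subseteq H \\ e_K \ge 2}} n^{2v_H - v_K} p_n^{2e_H - e_K}.
\]
Since the index set $\{K \subseteq H : e_K \ge 2\}$ is finite and does not depend on $n$, the sum on the right is $\asymp \max_{K \subseteq H, e_K \ge 2} n^{2v_H - v_K} p_n^{2e_H - e_K} = n^{2v_H}p_n^{2e_H} \cdot \max_{K} n^{-v_K} p_n^{-e_K} = (n^{v_H}p_n^{e_H})^2 / \tPhi_H$, by the very definition of $\tPhi_H$.

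Combining these gives $\sum B_{H'_1,H'_2} \asymp (n^{v_H}p_n^{e_H})^2 (1-p_n)^2 / \tPhi_H$, as claimed. The one point that requires care --- and which I expect to be the main obstacle --- is making the counting estimate $\asymp n^{2v_H - v_K}$ fully rigorous: one must check both that the upper bound is clean (no overcounting issues beyond a constant $\Aut$-type factor) and, more delicately, that it is genuinely a lower bound of the same order, i.e. that for each fixed overlap type $K$ realized in $H$ there really are $\gg n^{2v_H - v_K}$ pairs with exactly that overlap and no accidental extra edge-coincidences; a short inclusion--exclusion on the ``bad'' events where the free vertices collide handles this. Everything else is bookkeeping: the finiteness of the sum over $K$ converts $\asymp$'s of individual terms into an $\asymp$ of the whole, and the hypothesis $n^{1-\eps}(1-p_n)^2 \gg 1$ is exactly what is needed to absorb the $O[m_n^{-1}(1-p_n)^{-1}]$ corrections into a constant.
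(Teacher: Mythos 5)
Your proposal is correct and follows essentially the same route as the paper: reduce $B_{H'_1,H'_2}$ to $p_n^{2e_H-e_K}(1-p_n)^2$ up to bounded constants, group the pairs by the isomorphism type $K$ of the edge-overlap, use $N_K \asymp n^{2v_H-v_K}$, and observe that a finite $n$-independent sum of nonnegative terms has the order of its largest term, which is $(n^{v_H}p_n^{e_H})^2/\tPhi_H$ by definition. The only cosmetic difference is that you sketch the counting estimate $N_K \asymp n^{2v_H-v_K}$ directly, whereas the paper cites the proof of Lemma 3.5 in Janson--{\L}uczak--Ruci\'nski.
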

\begin{proof}
    From the definition of $B_{H'_1,H'_2}$, we have:
    \[\sum_{H'_1,H'_2 \in A^H_n \atop \big|E_{H'_1} \cap E_{H'_2}\big|>1}                      
        B_{H'_1,H'_2} \asymp
        \sum_{H'_1,H'_2 \in A^H_n \atop \big|E_{H'_1} \cap E_{H'_2}\big|>1}                      
        p_n^{2e_H-e_K}(1-p_n)^2 .\]
    We split the sum depending on isomorphy type $K$ of the intersection $K$
    of $H'_1$ and $H'_2$ and we get
    \[\sum_{H'_1,H'_2 \in A^H_n \atop \big|E_{H'_1} \cap E_{H'_2}\big|>1}                      
        p_n^{2e_H-e_K} (1-p_n)^2
        = \sum_{K \subseteq H \atop e_K>1}
        N_K\, p_n^{2e_H-e_K} (1-p_n)^2,\]
where $N_K$ is the number of pairs $(H'_1,H'_2)$ with intersection isomorphic to $K$.
Note that the summation index does not depend on $n$.
Furthermore, all summands are nonnegative, thus
the order of magnitude of the sum is simply
the maximum of the orders of magnitude of the summands.
It is easy to see that $N_K \asymp n^{2v_H-v_K}$: see, {\em e.g.}, \cite[Proof of Lemma 3.5]{JansonRandomGraphs}.
\[
    \sum_{K \subseteq H \atop e_K>1}             
        N_K\, p_n^{2e_H-e_K} (1-p_n)^2 \asymp
        \max_{K \subseteq H \atop e_K>1} n^{2v_H-v_K} p_n^{2e_H-e_K} (1-p_n)^2 
        =\frac{(n^{v_H} \, p_n^{e_H})^2}{\tPhi_H}\, (1-p_n)^2.
        \]
        This completes the proof of the lemma.
\end{proof}

\begin{proof}
    [Proof of \cref{EqAympVarianceGnm}]
    The variance $\Var(X^H_n)$ is bounded from below by the sum of
    the three terms considered in \cref{LemTCovNoEdges,LemSumA,LemSumB}.
    Note that the main terms in the estimates of \cref{LemTCovNoEdges,LemSumA}
    cancel each other.
    Besides the error term in these lemmas are smaller than 
    the main term in \cref{LemSumB}.
    Indeed, using $\tPhi_H \le n^3 p_n^2$ and $n (1-p_n)^2 \gg 1$, we have:
    \[
        \frac{(n^{v_H} \, p_n^{e_H})^2}{\tPhi_H}\, (1-p_n)^2 
        \ge \frac{(n^{v_H} \, p_n^{e_H})^2}{n^3\, p_n^2} (1-p_n)^2
        \gg \frac{(n^{v_H} \, p_n^{e_H})^2}{n^4\, p_n^2}.
        \]
    Therefore $\Var(X^H_n)$ is asymptotically at least of order 
    $\tfrac{(n^{v_H} \, p_n^{e_H})^2}{\tPhi_H}\, (1-p_n)^2$,
    as claimed.
\end{proof}

\section{\for{toc}{Tightness of piecewise-affine random functions} 
\except{toc}{Moment inequalities and\texorpdfstring{\\}{}
tightness of piecewise-affine random functions}}
\label{AppOnlyOnLattice}
The goal of this last appendix section is to establish the following:
for piecewise-affine random functions,
tightness can be inferred from moment inequalities
{\em for points of the mesh}.
We start by a trivial lemma.
\begin{lemma}
    \label{LemIneqNormEq}
    For any $a>1$, the exists a constant $C_a$ such that,
    \[\text{for all }\ x,y,z\ge 0, \text{ one has } (x+y+z)^a \le C_a (x^{a} + y^{a}+z^{a}).\]
\end{lemma}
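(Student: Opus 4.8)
The statement to prove is \cref{LemIneqNormEq}: for any $a > 1$ there exists $C_a$ with $(x+y+z)^a \le C_a(x^a + y^a + z^a)$ for all $x,y,z \ge 0$.

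The plan is to use a homogeneity and compactness argument, which is the cleanest route. First I would observe that both sides are homogeneous of degree $a$ in $(x,y,z)$, so it suffices to prove the inequality on the compact simplex $\{(x,y,z) : x,y,z \ge 0,\ x+y+z = 1\}$; there the left-hand side is identically $1$, so we only need a lower bound $x^a + y^a + z^a \ge c_a > 0$ on this simplex. The function $(x,y,z) \mapsto x^a + y^a + z^a$ is continuous and strictly positive on the simplex (it vanishes only at the origin, which is not in the simplex), hence attains a positive minimum $c_a$ by compactness; then $C_a = 1/c_a$ works after rescaling back. Alternatively, and perhaps even more elementary, one can avoid compactness entirely: by the power mean inequality (or convexity of $t \mapsto t^a$ for $a > 1$), $\left(\frac{x+y+z}{3}\right)^a \le \frac{x^a + y^a + z^a}{3}$, which rearranges immediately to $(x+y+z)^a \le 3^{a-1}(x^a+y^a+z^a)$, so one may simply take $C_a = 3^{a-1}$. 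I would present this second argument as the main proof since it is self-contained and gives an explicit constant.

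Concretely, I would write: fix $a > 1$. Since $t \mapsto t^a$ is convex on $[0,\infty)$, Jensen's inequality applied to the uniform average of $x$, $y$, $z$ gives
\[
\left(\frac{x+y+z}{3}\right)^a \le \frac{x^a + y^a + z^a}{3},
\]
and multiplying both sides by $3^a$ yields $(x+y+z)^a \le 3^{a-1}(x^a + y^a + z^a)$. Hence $C_a = 3^{a-1}$ satisfies the claim.

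There is essentially no obstacle here — the statement is genuinely elementary, and the only thing to be careful about is invoking convexity of $t \mapsto t^a$ correctly for the range $a > 1$ (it is convex on all of $[0,\infty)$, including at $t = 0$ where the one-sided derivative is $0$, so Jensen applies without fuss). I would keep the proof to two or three lines.

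\begin{proof}
Fix $a>1$. Since the map $t \mapsto t^a$ is convex on $[0,\infty)$, Jensen's inequality (applied to the uniform probability measure on the three points $x$, $y$, $z$) gives
\[
\left( \frac{x+y+z}{3} \right)^a \le \frac{x^a + y^a + z^a}{3}
\]
for all $x,y,z \ge 0$. Multiplying both sides by $3^a$ yields
\[
(x+y+z)^a \le 3^{a-1} \left( x^a + y^a + z^a \right),
\]
so the claim holds with $C_a = 3^{a-1}$.
\end{proof}
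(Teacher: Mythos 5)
Your proof is correct, and it takes a different route from the paper. The paper disposes of this lemma in one line by invoking the equivalence of the norms $(x,y,z) \mapsto |x|+|y|+|z|$ and $(x,y,z) \mapsto \bigl(|x|^a+|y|^a+|z|^a\bigr)^{1/a}$ on $\RR^3$ --- which is, under the hood, exactly the homogeneity-plus-compactness argument you sketch first before setting it aside. Your main argument via convexity of $t \mapsto t^a$ and Jensen's inequality is more self-contained and yields the explicit (and in fact optimal, attained at $x=y=z$) constant $C_a = 3^{a-1}$, whereas the norm-equivalence route gives no explicit constant. Since the constant is irrelevant to the application (tightness of piecewise-affine processes), nothing is gained or lost quantitatively, but your version is arguably the cleaner write-up; both are complete proofs.
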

\begin{proof}
    This comes from the equivalence of the following norms on $\RR^3$:
    \[(x,y,z) \mapsto \big(|x| + |y| +|z| \big) \text{ and }
    (x,y,z) \mapsto \big(|x|^{a} + |y|^{a} +|z|^{a} \big)^{1/a}.
    \qedhere \]
\end{proof}

\subsection{One-dimensional case}
The following lemma can be found in unpublished lecture notes of Marckert.
\begin{lemma}
    Consider a sequence $(X_n)$ of random elements in $C[0,1]$.
    Assume that for each $n$, almost surely $X_n$ is affine on each segment
    $[j/n, (j+1)/n]$ (for $0 \le j \le n-1$) and
    that there exists positive constants $a$, $b$ and $\la$ with \bm{$a \ge 1+b$} such that
\begin{equation}
    \esper \big[|{X_n}(s)- {X_n}(t)|^a \big] \le \la \, |s-t|^{1+b}, 
    \label{EqTightnessCriterion1DApp}
\end{equation}
as soon as $ns$ and $nt$ are integers ($n \ge 1$ and $s,t \in [0,1]$).

Then \eqref{EqTightnessCriterion1DApp} holds as well for any $s$ and $t$ in $[0,1]$
with the same exponents $a$ and $b$ but a different constant $\la'$ instead of $\la$.

As a consequence, if moreover $X_n(0)$ is tight, then the sequence $X_n$ is also tight.
    \label{LemOnlyOnLattice1D}
\end{lemma}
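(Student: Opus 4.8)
The plan is to first upgrade the moment inequality \eqref{EqTightnessCriterion1DApp} from mesh points to \emph{arbitrary} points of $[0,1]$, and then to invoke the moment criterion \cite[Corollary 16.9]{KallenbergBookProba} to conclude tightness. Throughout I may assume $0\le s<t\le 1$, the reverse case being identical.

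The first step is an elementary claim: if $u<v$ lie in a common mesh interval $[j/n,(j+1)/n]$, then $\esper\big[|X_n(u)-X_n(v)|^a\big]\le \la\,(v-u)^{1+b}$. Indeed, affinity of $X_n$ on that interval gives $X_n(u)-X_n(v)=n(u-v)\big(X_n((j+1)/n)-X_n(j/n)\big)$, so taking $a$-th moments and applying \eqref{EqTightnessCriterion1DApp} to the mesh points $j/n$ and $(j+1)/n$ yields $\esper\big[|X_n(u)-X_n(v)|^a\big]\le (n(v-u))^a\,\la\,n^{-(1+b)}$; since $n(v-u)\le 1$ and $a\ge 1+b$ one has $(n(v-u))^a\le (n(v-u))^{1+b}$, and the claim follows after simplification.

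The second step treats general $s<t$. If $s$ and $t$ happen to lie in a common mesh interval, the claim already gives the bound with $\la'=\la$. Otherwise I would set $s^\ast=\lceil ns\rceil/n$ and $t^\ast=\lfloor nt\rfloor/n$; these are mesh points, $s$ and $s^\ast$ lie in a common mesh interval, so do $t^\ast$ and $t$, and one checks $s\le s^\ast\le t^\ast\le t$ (if $\lceil ns\rceil>\lfloor nt\rfloor$ then both $ns$ and $nt$ lie in $(\lceil ns\rceil-1,\lceil ns\rceil]$, forcing $s$ and $t$ into the common mesh interval $[(\lceil ns\rceil-1)/n,\lceil ns\rceil/n]$, a contradiction). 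Writing $X_n(t)-X_n(s)$ as the sum of the three increments $X_n(t)-X_n(t^\ast)$, $X_n(t^\ast)-X_n(s^\ast)$, $X_n(s^\ast)-X_n(s)$, applying \cref{LemIneqNormEq} and taking expectations, I would bound the two outer increments by the claim of the first step (using $|t-t^\ast|\le|t-s|$ and $|s^\ast-s|\le|t-s|$) and the middle increment directly by \eqref{EqTightnessCriterion1DApp} (valid since $s^\ast,t^\ast$ are mesh points and $|s^\ast-t^\ast|\le|t-s|$). This produces \eqref{EqTightnessCriterion1DApp} for all $s,t\in[0,1]$, with the same exponents and the new constant $\la'=3C_a\la$ (note $C_a\ge 1$, so $\la'\ge\la$).

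Finally, with the moment inequality available for \emph{all} $s,t$, tightness of $(X_n)$ — given that $X_n(0)$ is tight — is exactly the conclusion of the cited criterion \cite[Corollary 16.9 for $d=1$]{KallenbergBookProba}. I do not anticipate a genuine obstacle here; the only care needed is in the boundary bookkeeping (when $s$ or $t$ is itself a mesh point, when $s^\ast=t^\ast$, or when $s=0$), but in each such case the relevant increment vanishes or reduces to a mesh-point increment, so the conventions above cover them.
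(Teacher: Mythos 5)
Your proposal is correct and follows essentially the same route as the paper's proof: the same use of affinity to handle increments within a single mesh interval, the same decomposition of a general increment into three pieces via the nearest mesh points combined with \cref{LemIneqNormEq}, and the same appeal to \cite[Corollary 16.9]{KallenbergBookProba} for tightness. The only cosmetic difference is the final constant ($3C_a\la$ versus the paper's $C_a\la$, obtained there by superadditivity of $x\mapsto x^{1+b}$), which is immaterial.
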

\begin{proof}
Let $n \ge 1$ and $s$ and $t$ in $[0,1]$ with $t<s$. We distinguish two cases.
\begin{itemize}
    \item If $s$ and $t$ belong to the same segment $[j/n, (j+1)/n]$ (for some $0 \le j \le n-1$),
        then, since $X_n$ is affine on $[j/n, (j+1)/n]$, one has
        \[{X_n}(s)- {X_n}(t) = n(s-t) \, \bigg[{X_n}\big( (j+1)/n \big)- {X_n}(j/n)\bigg].\]
        The $a$-th moment of the right-hand side can be bounded by \eqref{EqTightnessCriterion1DApp}, so that
        \[
            \esper\big[|{X_n}(s)- {X_n}(t)|^a\big] \le
            n^a(s-t)^a \, \la \, (1/n)^{1+b} = \la \, (s-t)^{1+b} \left(n(s-t)\right)^{a-1-b} \le \la \, (s-t)^{1+b}.
        \]
        The last inequality comes from the fact that $n(s-t) \le 1$ and $a-1-b \ge 0$.
    \item Consider now the case where $nt \in [j/n, (j+1)/n]$ and $ns \in [k/n, (k+1)/n]$ with $j<k$.
        Then set $s'=k/n$ and $t'=(j+1)/n$ so that
        \begin{itemize}
            \item $t \le t' \le s' \le s$;
            \item $ns'$ and $nt'$ are integers;
            \item $s$ and $s'$ belong to $[k/n, (k+1)/n]$; and, 
            \item $t$ and $t'$ belong to $[j/n, (j+1)/n]$.
        \end{itemize}
        Then, using \cref{LemIneqNormEq}, \cref{EqTightnessCriterion1DApp} and the first part of the proof,
        we get 
        \begin{multline}
            \label{EqTech5}
            \hspace{-3mm} \esper\big[|{X_n}(s)- {X_n}(t)|^a\big] \le
        C_a \left( \esper\big[|{X_n}(s)- {X_n}(s')|^a\big]
        + \esper\big[|{X_n}(s')- {X_n}(t')|^a\big]
        + \esper\big[|{X_n}(t')- {X_n}(t)|^a\big] \right) \\
        \le C_a \left( \la (s-s')^{1+b} + \la (s'-t')^{1+b} + \la (t'-t)^{1+b} \right)
        \le C_a \la (s-t)^{1+b}.
    \end{multline}
\end{itemize}
This proves that \eqref{EqTightnessCriterion1DApp} holds for any $s$ and $t$ in $[0,1]$
and $\la'=C_a \la$.
The tightness assertion then follows from \cite[Corollary 16.9 for $d=1$]{KallenbergBookProba}.
\end{proof}

\subsection{Two-dimensional case}
We now state and prove a two-dimensional analogue of the previous lemma.
\begin{lemma}
    Consider a sequence $(X_n)$ of random elements in $C[0,1]^2$.
    Assume that for each $n$, almost surely $X_n$ is affine on each square
    $[i/n,(i+1)/n] \times [j/n, (j+1)/n]$ (for $0 \le i,j \le n-1$) and
    that there exists positive constants $a$, $b$ and $\la$ with \bm{$a \ge 2+b$} such that
\begin{equation}
    \esper \big[|{X_n}(s_1,s_2)- {X_n}(t_1,t_2)|^a \big] \le \la \, (|s_1-t_1|+|s_2-t_2|)^{2+b}, 
    \label{EqTightnessCriterion2DApp}
\end{equation}
as soon as $ns_1$, $ns_2$, $nt_1$ and $nt_2$ are integers ($n \ge 1$ and $s_1,s_2,t_1,t_2 \in [0,1]$).

Then \eqref{EqTightnessCriterion2DApp} holds as well for any $s_1,s_2,t_1,t_2$ in $[0,1]$
with the same exponents $a$ and $b$ but a different constant $\la'$ instead of $\la$.

As a consequence, if moreover $X_n(0,0)$ is tight, then the sequence $X_n$ is also tight.
    \label{LemOnlyOnLattice2D}
\end{lemma}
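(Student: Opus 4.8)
The plan is to reduce the two-dimensional statement to the one-dimensional \cref{LemOnlyOnLattice1D}, applied twice. Exactly as in dimension one, once the moment inequality \eqref{EqTightnessCriterion2DApp} is known for \emph{all} $(s_1,s_2),(t_1,t_2)\in[0,1]^2$, the tightness of $(X_n)$ follows from Kallenberg's criterion \cite[Corollary 16.9 for $d=2$]{KallenbergBookProba} together with the assumption that $X_n(0,0)$ is tight. So the real content is to upgrade the mesh inequality to the inequality at arbitrary points, and the idea is to split an arbitrary increment into two increments each of which moves in a single coordinate.

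Concretely, I would fix $n$ and write
\[ X_n(s_1,s_2)-X_n(t_1,t_2)=\bigl(X_n(s_1,s_2)-X_n(t_1,s_2)\bigr)+\bigl(X_n(t_1,s_2)-X_n(t_1,t_2)\bigr). \]
By \cref{LemIneqNormEq} (or its obvious two-term version) together with $x^{2+b}+y^{2+b}\le(x+y)^{2+b}$ for $x,y\ge0$, it suffices to prove that, for a constant $\la''$ depending only on $a$ and $\la$,
\[ \esper\bigl[|X_n(s_1,s_2)-X_n(t_1,s_2)|^a\bigr]\le\la''\,|s_1-t_1|^{2+b},\qquad \esper\bigl[|X_n(t_1,s_2)-X_n(t_1,t_2)|^a\bigr]\le\la''\,|s_2-t_2|^{2+b}, \]
the second being symmetric to the first (note that no case analysis on the relative order of the coordinates is needed, since \cref{LemOnlyOnLattice1D} is stated for arbitrary arguments).

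For the first inequality I would, for fixed $s_2$, consider the random element $Z_n\colon r\mapsto X_n(r,s_2)$ of $C[0,1]$. It is affine on each segment $[j/n,(j+1)/n]$, being the restriction of the piecewise-affine $X_n$ to a horizontal line. The only point requiring care is that \eqref{EqTightnessCriterion2DApp} controls mesh points only, whereas $s_2$ need not be one; this is resolved by convexity. Writing $j=\lfloor ns_2\rfloor$ and $w=ns_2-j\in[0,1)$, for every mesh point $r=i/n$ the function $X_n(i/n,\cdot)$ is affine on $[j/n,(j+1)/n]$ (restriction of $X_n$ to a vertical mesh edge), hence $Z_n(i/n)=(1-w)X_n(i/n,j/n)+w\,X_n(i/n,(j+1)/n)$. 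Since $x\mapsto|x|^a$ is convex ($a\ge1$), for mesh points $r,r'$,
\[ |Z_n(r)-Z_n(r')|^a\le(1-w)\,|X_n(r,j/n)-X_n(r',j/n)|^a+w\,|X_n(r,(j+1)/n)-X_n(r',(j+1)/n)|^a, \]
and, taking expectations and using \eqref{EqTightnessCriterion2DApp} (the second coordinates now agree, so the relevant $\ell^1$-distance is $|r-r'|$), one gets $\esper[|Z_n(r)-Z_n(r')|^a]\le\la\,|r-r'|^{2+b}$ for all mesh points $r,r'$. Thus $(Z_n)$ satisfies the mesh hypothesis \eqref{EqTightnessCriterion1DApp} of \cref{LemOnlyOnLattice1D} with exponents $a$ and $b':=1+b$; the requirement $a\ge1+b'$ there is precisely our hypothesis $a\ge2+b$. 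The inequality part of \cref{LemOnlyOnLattice1D} (which does not use tightness of $Z_n(0)$) then yields $\esper[|X_n(s_1,s_2)-X_n(t_1,s_2)|^a]=\esper[|Z_n(s_1)-Z_n(t_1)|^a]\le C_a\la\,|s_1-t_1|^{2+b}$ for all $s_1,t_1$. Feeding this and its symmetric counterpart into the displayed decomposition gives \eqref{EqTightnessCriterion2DApp} with $\la'=C_a'\la$, and the tightness assertion follows. The main (and essentially only) obstacle is this mismatch between the mesh-only hypothesis and the non-mesh intermediate coordinate; after the convex-combination trick, the two single-variable moves are entirely handled by \cref{LemOnlyOnLattice1D} and the rest is routine.
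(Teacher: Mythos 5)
Your proof is correct, but it takes a genuinely different route from the paper's. The paper proves the two-dimensional lemma directly by a geometric case analysis on the segment joining $\bs=(s_1,s_2)$ and $\bt=(t_1,t_2)$: if both points lie in one mesh square it exploits affineness there (this is where $a\ge 2+b$ enters), and otherwise it inserts two auxiliary points (grid-line crossings, or the relevant corners when the segment crosses many lines) and concludes with the three-term inequality of \cref{LemIneqNormEq}; it never invokes \cref{LemOnlyOnLattice1D}. You instead decompose the increment along an L-shaped path changing one coordinate at a time, and reduce each leg to the one-dimensional lemma applied to the restriction $r\mapsto X_n(r,s_2)$; the one genuine obstacle --- that the frozen coordinate $s_2$ need not be a mesh point --- is handled by your convex-combination trick, writing $X_n(i/n,s_2)$ as a convex combination of the two adjacent mesh values and using convexity of $|x|^a$ to transfer the mesh moment bound. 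All the bookkeeping is right: the exponent bookkeeping ($b'=1+b$, so the condition $a\ge 1+b'$ of \cref{LemOnlyOnLattice1D} is exactly $a\ge 2+b$), the fact that only the inequality part of \cref{LemOnlyOnLattice1D} is needed (so tightness of $Z_n(0)$ is irrelevant), the uniformity of the resulting constant in $n$ and in $s_2$, and the final recombination via $x^{2+b}+y^{2+b}\le(x+y)^{2+b}$. What each approach buys: the paper's argument is self-contained and purely geometric; yours reuses the one-dimensional lemma and makes the generalization to $C[0,1]^d$ (which the paper only asserts in passing) essentially automatic by iterating the coordinate-freezing step.
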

\begin{proof}
Let $n \ge 1$ and $\bs=(s_1,s_2)$ and $\bt=(t_1,t_2)$ in $[0,1]^2$
and let us prove \eqref{EqTightnessCriterion2DApp}. 
We write $|\bs-\bt|:=|s_1-t_1|+|s_2-t_2|$ and distinguish three cases.
\begin{itemize}
    \item If $\bs$ and $\bt$ belong to the same square $[i/n,(i+1)/n] \times [j/n, (j+1)/n]$ (for some $0 \le i,j \le n-1$),
        then, since $X_n$ is affine on this square, we have
        \begin{multline*}
            {X_n}(\bs)- {X_n}(\bt)=n(s_1-t_1)\left[ {X_n}\left( (i+1)/n,j/n \right) - {X_n}\left( i/n,j/n \right)\right] \\
        + n\left( s_2-t_2) \right) \left[ {X_n}\left( i/n,(j+1)/n \right) - {X_n}\left( i/n,j/n \right) \right].
    \end{multline*}
   Therefore we have
        \begin{multline*}
            \esper\big[|{X_n}(\bs)- {X_n}(\bt)|^a\big] \le
            C_a n^a \left( |s_1-t_1|^a + |s_2 -t_2|^a \right) \, \la \, (1/n)^{2+b} \\
            \le C_a \la \, (|\bs-\bt|)^{2+b} \left(n(|\bs-\bt|)\right)^{a-2-b} \le 2^{a-2-b} C_a \la \, (|\bs-\bt|)^{2+b}.
        \end{multline*}
    \item If the segment $[\bs,\bt]$ crosses at most two lines of the grid, call $\bu$ and $\bv$ the intersection points,
        so that $|\bs-\bt|= |\bs-\bu|+|\bu-\bv|+|\bv-\bt|$.
        Since $\bs$ and $\bu$ (respectively, $\bu$ and $\bv$ and $\bv$ and $\bt$) lie in the same square,
        we can apply the first case to bound $\esper\big[|{X_n}(\bs)- {X_n}(\bu)|^a\big]$
        (respectively, $\esper\big[|{X_n}(\bu)- {X_n}(\bv)|^a\big]$ and $\esper\big[|{X_n}(\bv)- {X_n}(\bt)|^a\big]$).
        Then the same computation as in \eqref{EqTech5} shows that \eqref{EqTightnessCriterion2DApp}
        holds in this case.
    \item If the segment $[\bs,\bt]$ crosses more than two lines, it crosses two lines in the same direction,
        which implies that $|\bs-\bt| \ge 1$. Call $\bu$ and $\bv$ the North-East corners of the squares containing
        $\bs$ and $\bt$, respectively.
        Then $|\bs-\bu| \le 2 \le 2 |\bs-\bt|$. Similarly, $|\bv-\bt| \le 2 |\bs-\bt|$.
        By the triangular inequality $|\bu-\bv| \le |\bs-\bu| + |\bs-\bt| + |\bv-\bt| \le 5 |\bs-\bt|$.
        Bringing everything together, one has $9|\bs-\bt| \ge |\bs-\bu|+|\bu-\bv|+|\bv-\bt|$.
        Now, the same computation as in \eqref{EqTech5} proves \eqref{EqTightnessCriterion2DApp}. 
\end{itemize}
We conclude that \eqref{EqTightnessCriterion2DApp} holds for any $\bs$, $\bt$ in $[0,1]^2$.
The tightness assertion then follows from \cite[Corollary 16.9 for $d=2$]{KallenbergBookProba}.
\end{proof}
This lemma is easily generalized to any dimension,
though we do not need such a generalization in this paper.

\section*{Acknowledgements}
This work has benefited at several levels from discussions with many people
and the author would like to thank them warmly:
Jehanne Dousse, Jean-François Marckert, Pierre-Loïc Méliot, 
Ashkan Nighekbali, Piotr Śniady and Marko Thiel.

\bibliographystyle{abbrv}
\bibliography{../../courant.bib}

\end{document}